\newcommand{\newsection}[1]{\setcounter{equation}{0} \section{#1}}
\numberwithin{equation}{section}
\newtheorem{theorem}{Theorem}[section]
\newtheorem{proposition}[theorem]{Proposition}
\newtheorem{lemma}[theorem]{Lemma}
\newtheorem{coro}[theorem]{Corollary}
\newtheorem{definition}[theorem]{Definition}
\newtheorem{remark}[theorem]{Remark}
\newtheorem{example}[theorem]{Example}
\newtheorem{question}{Question}
\def \D{\mathbb{D}}
\def \T{\mathbb{T}}
\newcommand{\clb}{\mathcal{B}}
\newcommand{\cle}{\mathcal{E}}
\newcommand{\clh}{\mathcal{H}}
\newcommand{\clw}{\mathcal{W}}
\newcommand{\clk}{\mathcal{K}}
\newcommand{\cls}{\mathcal{S}}
\newcommand{\clm}{\mathcal{M}}
\newcommand{\cll}{\mathcal{L}}
\newcommand{\vp}{\varphi}
\newcommand{\Z}{\mbox{$\mathbb Z$}}
\newcommand{\ran}{\mbox{ran}}
\begin{document}

\title[Compact and normal isometric pairs]{Isometric pairs with compact and normal cross-commutator}
	
\author[De]{Sandipan De}
\address{Sandipan De, School of Mathematics and Computer Science, Indian Institute of Technology Goa, Farmagudi, Ponda-403401, Goa, India.}
\email{sandipan@iitgoa.ac.in}

\author[Sarkar]{Jaydeb Sarkar}
\address{Jaydeb Sarkar, Indian Statistical Institute, Statistics and Mathematics Unit, 8th Mile, Mysore Road, Bangalore, 560 059, India.}
\email{jaydeb@gmail.com, jay@isibang.ac.in}
	
\author[Shankar]{P Shankar}
\address{Shankar. P, Department of Mathematics, Cochin University of Science and Technology, Kochi 682022, Kerala, India.}
\email{shankarsupy@cusat.ac.in, shankarsupy@gmail.com}
	
\author[Sankar]{Sankar T.R.}
\address{Sankar T.R., Department of General Sciences,  Birla Institute of Technology and Science Pilani, Dubai Campus, Dubai International Academic City, Dubai, UAE}
\email{sankartr90@gmail.com, sankar@dubai.bits-pilani.ac.in }

\subjclass[2010]{47A15, 32A10, 42B30, 47A13, 30H10, 47A65, 46E30, 47B47, 15A15}
\keywords{Compact normal operators, Toeplitz operators, bidisc, isometries, shift operators, defect operators, ranks, Hardy space, projections.}

%\today
	
\begin{abstract}
We represent and classify pairs of commuting isometries $(V_1, V_2)$ acting on Hilbert spaces that satisfy the condition
\[
[V_1^*, V_2] = \text{compact and normal},
\]
where $[V_1^*, V_2] := V_1^* V_2 - V_2 V_1^*$ is the cross-commutator of $(V_1, V_2)$. The precise description of such pairs also gives a complete and concrete set of unitary invariants. The basic building blocks of representations of such pairs consist of four distinguished pairs of commuting isometries. One of them relies on some peculiar examples of invariant subspaces tracing back to Rudin's intricate constructions of analytic functions on the bidisc. Along the way, we present a rank formula for a general pair of commuting isometries that looks to be the first of its kind.
\end{abstract}

\maketitle

\tableofcontents

\newsection{Introduction}	

A linear operator $V$ on a Hilbert space $\clh$ (all our Hilbert spaces are complex and separable) is said to be an isometry if
\[
\|Vh\| = \|h\| \qquad (h \in \clh).
\]
Isometries are of essential importance in the field of linear analysis. In the context of infinite-dimensional spaces and even in the realm of the basic theory of linear operators, they serve as a building block for bounded linear operators as well as in the construction of elementary $C^*$-algebras. The structure of isometries is simple; they are either shift operators or unitary operators or direct sums of them. More specifically, the classical von Neumann-Wold decomposition theorem says \cite{von, Wold}: If $V$ is an isometry on $\clh$, then there exist unique $V$-reducing closed subspaces $\clh_u$ and $\clh_s$ of $\clh$ (one may be the zero space) such that
\begin{equation}\label{eqn WD 1}
\clh = \clh_u \oplus \clh_s,
\end{equation}
where $V|_{\clh_u}$ is a unitary and $V|_{\clh_s}$ is a shift. Therefore
\[
V = \begin{bmatrix}
\text{unitary} &  0 \\
0 & \text{shift}
\end{bmatrix},
\]
on $\clh = \clh_u \oplus \clh_s$. We recall that an isometry $V$ is said to be a \textit{shift} if $V$ does not have a unitary summand, equivalently
\begin{equation}\label{eqn def shift}
SOT-\lim_{n \rightarrow \infty} V^{*n} =0.
\end{equation}

It is now natural to look into the structure of pairs of commuting isometries. A \textit{pair of commuting isometries}, or simply an \textit{isometric pair}, refers to isometries $V_1$ and $V_2$ acting on some Hilbert space such that
\[
V_1 V_2 = V_2 V_1.
\]
We write this as $(V_1, V_2)$. Despite the fact that this programme was initiated a long time ago, progress on this problem has been rather sluggish, due in part to the vastly more convoluted structure of such objects. In terms of situations, we note that the $C^*$-algebras generated by isometric pairs are uncharted territory (however, see \cite{BCL1978, Ron, Jorgen, Muhly, Seto}). Second, the structure of isometric pairs would also disclose the complex and mysterious structure of shift-invariant subspaces of the Hardy space over the bidisc \cite[Theorem 3.1]{Yang-S}. Finally, isometric pairs represent all contractive linear operators on Hilbert spaces, which is a notoriously complex and year-old field of research. Because of this, it is desirable to set conditions on isometric pairs and look into the classification and computable invariants that can be found between them. In this paper, we identify a large class of isometric pairs and then represent and classify them in terms of concrete (or model) isometric pairs. The model also exhibits complete unitary invariants that are numerical in nature.

To demonstrate the conditions we will put on the isometric pairs under lookup, it is necessary to explain one of the simplest examples of isometric pairs, namely, doubly commuting pairs. A \textit{doubly commuting pair} is an isometric pair $(V_1, V_2)$ such that
\[
[V_1^*, V_2] = 0,
\]
where
\[
[V_1^*, V_2] := V_1^* V_2 - V_2 V_1^*,
\]
is the cross-commutator of the pair $(V_1, V_2)$. These pairs are notably among the most accessible, as a precise analogue of the Wold decomposition theorem is applicable to them: Let $(V_1, V_2)$ be a doubly commuting pair on $\clh$. Then there exist four closed subspaces $\{\clh_{ij}\}_{i,j = u,s}$ of $\clh$ (some of the spaces might potentially be zero) such that
\begin{equation}\label{eqn Wold DC}
\clh = \clh_{uu} \oplus \clh_{us} \oplus \clh_{su} \oplus \clh_{ss},
\end{equation}
where $\clh_{ij}$ reduces $V_k$ for all $i,j=u,s$, and $k=1,2$, and $V_1|_{\clh_{pq}}$ is a shift if $p=s$ and unitary if $p=u$, and $V_2|_{\clh_{pq}}$ is a shift if $q=s$ and unitary if $q=u$. This result is due to S{\l}oci\'{n}ski \cite{Slo} (also, see \cite{JS} for a more recent account). We will refer to this as the \textit{Wold decomposition for doubly commuting pairs}. Evidently, similar to the case of single isometries, the structure of doubly commuting pairs is explicit and simple (compare \eqref{eqn Wold DC} with \eqref{eqn WD 1}).

The objective of this paper is to examine the structure of the next-best isometric pairs that are more nontrivial in nature. Particularly, we obtain explicit representations as well as a complete set of unitary invariants (numerical in nature) of isometric pairs $(V_1, V_2)$ for which
\[
[V_1^*, V_2] = \text{compact and normal}.
\]
At this point it is worth mentioning that even the characterization of submodules 
$\mathcal{M}$
of the Hardy space over the bidisc $H^2(\mathbb{D}^2)$, for which the cross-commutator $\big[(M_z\vert_{\mathcal{M}})^*, M_w\vert_{\mathcal{M}}\big]$ 
has rank one, is still unknown and remains an open problem-let alone for cases involving finite rank or general compact cross-commutators (see Problem $9$ in \cite[Page 246]{Yang-S}).
In the process of our study, we acquire precise surgery of the delicate structure of isometric pairs as well as many results of independent interest. The outcomes of this paper will not only provide a complete understanding of the above class but also highlight the intricate nature of isometric pairs. 

An immediate simplification of the aforementioned class of isometric pairs is achieved by assuming, without any loss of generality, that the products of these pairs are shift operators (see the end of Section \ref{sec preliminaries} for justification). We note that Berger, Coburn, and Lebow \cite{BCL1978, BCL19785} also looked into such pairs in the context of Fredholm theory and $C^*$-algebras. We formalise the pairs examined by them for future reference:

\begin{definition}\label{def BCL pairs}
A BCL pair is an isometric pair $(V_1, V_2)$ such that the product $V_1V_2$ is a shift operator.
\end{definition}

After this reduction, the following subcategory of isometric pairs emerges as especially compelling in terms of representations and complete unitary invariants.

\begin{definition}\label{def comp norm}
A compact normal pair is an isometric pair $(V_1, V_2)$ such that
\begin{enumerate}
\item $(V_1, V_2)$ is a BCL pair, and
\item $[V_2^*, V_1] = \text{compact and normal}$.
\end{enumerate}
\end{definition}

In the above definition, we consider $[V_2^*, V_1]$ instead of $[V_1^*, V_2]$. Without question, this is an unimportant alteration. Our revised objective now is to represent and then compute a complete set of unitary invariants of compact normal pairs. To that end, we show how compact normal pairs are made up of four distinguished building blocks. Each of the four building blocks is non-unitarily equivalent to the others. However, there is a common hallmark; they bear some connection to the Hardy space over the bidisc. Denote by $\D = \{z \in \mathbb{C}: |z| < 1\}$ the open unit disc in $\mathbb{C}$. The \textit{Hardy space} $H^2(\D^2)$ over the bidisc $\D^2$ is defined by
\[
H^2(\D^2) = \overline{\mathbb{C}[z,w]}^{L^2(\T^2)},
\]
where $\T^2 = \partial \D^2$ is the distinguished boundary of $\D^2$. In view of radial limits, square-summable analytic functions on $\D^2$ can be identified with functions in $H^2(\D^2)$. Moreover, the pair of multiplication operators $(M_z, M_w)$ by the coordinate functions $z$ and $w$, respectively, on $H^2(\D^2)$ satisfy the following key properties:

\begin{enumerate}
\item $(M_z, M_w)$ is a BCL pair.
\item $[M_w^*, M_z] = 0$.
\item Denote by $P_{\mathbb{C}}$ the orthogonal projection onto the space of constant functions of $H^2(\D^2)$. Then
\[
I - M_z M_z^* - M_w M_w^* + M_z M_w M_z^* M_w^* = P_{\mathbb{C}}.
\]
\end{enumerate}

The first and second properties of $(M_z, M_w)$ above already appeared in the definition of compact normal pairs (see Definition \ref{def comp norm}). The final identity is also an inherent characteristic of $(M_z, M_w)$ and serves as the driving force behind the concept of defect operators \cite{Guo}:

\begin{definition}\label{def defect operator}
The defect operator of an isometric pair $(V_1, V_2)$ is the operator $C(V_1, V_2)$ defined by
\[
C(V_1, V_2) = I - V_1 V_1^* - V_2 V_2^* + V_1 V_2 V_1^* V_2^*.
\]
\end{definition}

We are now ready to elucidate the main results of this paper. Our first main result is rather general and related to compact self-adjoint operators that are the differences of two projections. This is a subject in its own right \cite{DJS2016}. Within this context, in Theorem \ref{HiNewThm}, we prove the following result, which is a substantial generalization of the central result of the paper by He et. al. \cite[Theorem 4.3]{HQY2015}. Given a bounded linear operator $T$ on some Hilbert space, we denote by $\sigma(T)$ the spectrum of $T$.

\begin{theorem}
Let $A$ be a compact self-adjoint contraction on a Hilbert space $\clh$. Suppose $A$ is the difference of two projections. If $\lambda \in \sigma(A)\setminus\{0, \pm 1\}$, then
\[
-\lambda \in \sigma(A),
\]
and
\[
\dim \ker(A - \lambda I_{\clh}) = \dim \ker(A + \lambda I_{\clh}).
\]	
\end{theorem}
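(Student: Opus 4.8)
The plan is to write $A = P - Q$ for two orthogonal projections $P, Q$ on $\clh$ and to exploit the classical two-projections structure theory (the "generic" position decomposition due to Halmos). Away from the common eigenspaces, the pair $(P,Q)$ decomposes into a direct integral (or direct sum) of $2\times 2$ blocks, and on each block $P - Q$ has eigenvalues $\pm\sin\theta$ for the principal angle $\theta$; this is precisely the mechanism that forces the spectrum of $A$ to be symmetric about $0$. First I would isolate the parts of $\clh$ on which $P$ and $Q$ act reducibly and trivially relative to each other: set
\[
\clh_{00} = \ker P \cap \ker Q, \quad \clh_{11} = \ran P \cap \ran Q, \quad \clh_{01} = \ker P \cap \ran Q, \quad \clh_{10} = \ran P \cap \ker Q,
\]
on which $A$ acts as $0, 0, -I, I$ respectively. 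On the orthogonal complement $\clh_0$ of these four pieces, the pair $(P,Q)$ is in generic position, and Halmos's theorem gives a unitary identification of $\clh_0$ with $\clk \oplus \clk$ under which
\[
P = \begin{bmatrix} I & 0 \\ 0 & 0 \end{bmatrix}, \qquad Q = \begin{bmatrix} C^2 & CS \\ CS & S^2 \end{bmatrix},
\]
where $C, S$ are commuting positive contractions on $\clk$ with $C^2 + S^2 = I$ and both $C, S$ injective. Then $A|_{\clh_0} = P - Q = \begin{bmatrix} S^2 & -CS \\ -CS & -S^2 \end{bmatrix}$.

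The second step is a direct computation showing that on $\clh_0$ the operator $A$ is unitarily equivalent, via a fixed block unitary built from $C$ and $S$, to $\begin{bmatrix} S & 0 \\ 0 & -S \end{bmatrix}$ on $\clk \oplus \clk$. Indeed $A|_{\clh_0}^2 = \begin{bmatrix} S^2 & 0 \\ 0 & S^2 \end{bmatrix}$ (using $S^2 + C^2 = I$), and one checks that the symmetry operator $J = \begin{bmatrix} C & -S \\ -S & -C \end{bmatrix}$ (a self-adjoint unitary, since $C^2 + S^2 = I$ and $C, S$ commute) anticommutes with $A|_{\clh_0}$: $J A|_{\clh_0} J = -A|_{\clh_0}$. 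Consequently $J$ maps $\ker(A - \lambda I)$ isometrically onto $\ker(A + \lambda I)$ for every $\lambda$, and since the four reducing pieces above contribute only to the eigenvalues $0, \pm 1$, for $\lambda \in \sigma(A) \setminus \{0, \pm 1\}$ we get both $-\lambda \in \sigma(A)$ and the equality $\dim\ker(A - \lambda I_{\clh}) = \dim\ker(A + \lambda I_{\clh})$ by restricting $J$ to the relevant eigenspaces.

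The remaining point is to justify that, because $A$ is compact, every nonzero $\lambda \in \sigma(A)$ is an eigenvalue, so that "$\lambda \in \sigma(A)$" and "$\ker(A - \lambda I) \neq 0$" coincide and the dimension statement is meaningful (finite, in fact); this is the standard spectral theorem for compact self-adjoint operators. One subtlety worth checking is that the existence of the generic-position decomposition with $C, S$ injective does not by itself require compactness of $A$ — compactness is used only at the very end to pass from spectrum to point spectrum — so the anticommuting symmetry $J$ is the real engine of the argument. I do not anticipate a serious obstacle here; the only place demanding care is verifying that $A$ genuinely has a representation as a difference of two projections with the clean generic-position normal form, i.e., handling the measure-theoretic (direct integral) version of Halmos's theorem rigorously, but for the purpose of the eigenspace count one may equally work with the spectral projections of the self-adjoint operator $S$ and avoid direct integrals altogether.
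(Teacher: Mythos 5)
Your overall strategy is sound and is genuinely different from the paper's: the paper decomposes $\clh$ into $\ker A \oplus E_1(A) \oplus E_{-1}(A) \oplus \clh_0$ and invokes the structure theorem for differences of projections (\cite[Proposition 2.1 and Remark 3.1]{DJS2016}) to produce a unitary $u:\clk_+\to\clk_-$ intertwining the positive and negative generic parts, then finishes with the spectral theorem; you instead put the pair $(P,Q)$ into Halmos's generic-position normal form and try to exhibit a symmetry that conjugates $A|_{\clh_0}$ to $-A|_{\clh_0}$, using compactness only to convert spectral points into eigenvalues. That route works and is arguably more self-contained. However, the one computation you actually rely on is wrong as stated: the operator $J=\begin{bmatrix} C & -S\\ -S & -C\end{bmatrix}$ does \emph{not} anticommute with $A|_{\clh_0}=\begin{bmatrix} S^2 & -CS\\ -CS & -S^2\end{bmatrix}$. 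Indeed,
\[
JA+AJ=\begin{bmatrix} 4CS^2 & 0\\ 0 & 4CS^2\end{bmatrix},\qquad (JAJ)_{11}=3C^2S^2-S^4\neq -S^2,
\]
and since $C$ and $S$ are injective on the generic part, $CS^2\neq 0$ unless $\clk=\{0\}$. So the claimed identity $JA|_{\clh_0}J=-A|_{\clh_0}$ fails, and with it the asserted unitary equivalence of $\ker(A-\lambda I)$ and $\ker(A+\lambda I)$.

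The gap is a sign slip rather than a conceptual one, and it is easily repaired: take instead
\[
J'=\begin{bmatrix} C & S\\ S & -C\end{bmatrix},\qquad\text{for which}\qquad J'A=\begin{bmatrix} 0 & -S\\ S & 0\end{bmatrix}=-AJ',
\]
so $J'$ is a self-adjoint unitary (using $C^2+S^2=I$ and $CS=SC$) anticommuting with $A|_{\clh_0}$; even the plain rotation $\begin{bmatrix} 0 & I\\ -I & 0\end{bmatrix}$ does the job. With this correction the rest of your argument is fine: the four intersection subspaces are reducing and carry only the eigenvalues $0,\pm 1$, so every eigenvector for $\lambda\in\sigma(A)\setminus\{0,\pm1\}$ lies in $\clh_0$, compactness makes such $\lambda$ an eigenvalue of finite multiplicity, and $J'$ maps $\ker(A-\lambda I_{\clh})$ isometrically onto $\ker(A+\lambda I_{\clh})$, giving both conclusions. (The side claim that $A|_{\clh_0}\cong S\oplus(-S)$ is true but not needed; the anticommuting symmetry alone suffices.)
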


Now we turn to a rank formula for isometric pairs. Let $(V_1, V_2)$ be an isometric pair. We note that, as a difference of two projections, $C(V_1, V_2)$ is a self-adjoint contraction (see \eqref{formula}). Let $\clh_0$ be \textit{generic part} of $C(V_1, V_2)$ (in the sense of Halmos \cite{Hal}; see Section \ref{sec rank formula} for more details). Then $\clh_0$ is known to reduce $C(V_1, V_2)$. Consider the spectral decomposition of $C(V_1, V_2)|_{\clh_0}$ as
\[
C(V_1, V_2)|_{\clh_0} = \int_{\sigma(C(V_1, V_2)|_{\clh_0})} \lambda \, dE_\lambda,
\]
where $E$ denotes the spectral measure of $C(V_1, V_2)$. The \textit{positive generic part} of $(V_1,V_2)$ is the closed subspace $\clk_+$ defined by (see Definition \ref{def + part} for more details)
\[
\clk_+ = E[0,1]\clh_0.
\]
Also, we define the eigenspaces $E_{\pm 1}$ by
\[
E_{\pm 1} = \ker (C(V_1, V_2) \mp I_{\clh}).
\]
Now we are ready to state the rank theorem (see Theorem \ref{index}).

\begin{theorem}\label{thm intro rank}
Let $(V_1, V_2)$ be an isometric pair. Then
\[
\text{rank} C(V_1, V_2) = \text{rank}[V_2^*, V_1] + \dim E_1 + \dim \clk_+.
\]
If, in addition, $\dim E_{-1} < \infty$, then
\[
\text{rank} C(V_1, V_2) = 2 \text{rank}[V_2^*, V_1] + \dim E_1 - \dim E_{-1}.
\]
\end{theorem}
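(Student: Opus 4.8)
The plan is to express everything in terms of the spectral data of the defect operator $C := C(V_1, V_2)$, which is a self-adjoint contraction that is a difference of two projections, so its spectrum lives in $[-1,1]$. First I would recall from Section~\ref{sec rank formula} the orthogonal decomposition $\clh = E_1 \oplus E_{-1} \oplus \clh_0$, where $\clh_0$ is the generic part, and note that $\ker C$ contributes nothing to $\ran C$. On $\clh_0$ the nonzero part of the spectrum splits into the positive and negative pieces; by the structure theory of differences of projections (Halmos two-projection theory, as used for Theorem~\ref{HiNewThm} above), the generic part is unitarily equivalent to a direct integral of $2\times 2$ blocks, which pairs each $\lambda \in (0,1)$ in $\sigma(C|_{\clh_0})$ with $-\lambda$ and sets up a unitary between $E(0,1)\clh_0 = \clk_+$ and $E(-1,0)\clh_0 =: \clk_-$ with $\dim \clk_+ = \dim \clk_-$. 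Hence $\ran C = E_1 \oplus E_{-1} \oplus \clk_+ \oplus \clk_-$, giving
\[
\text{rank}\, C = \dim E_1 + \dim E_{-1} + \dim \clk_+ + \dim \clk_-.
\]

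The second ingredient is a formula relating $\dim E_{-1}$ and $\dim \clk_-$ to $\text{rank}[V_2^*, V_1]$. Here I would use the identity (established earlier, around \eqref{formula}) that writes $C$ as a difference of two projections built from $V_1, V_2$, together with the observation that the negative part of $C$ is governed precisely by the cross-commutator: one shows that $E_{-1} \oplus \clk_-$, i.e. $E[-1,0)\clh$, is exactly the closure of the range of $[V_2^*, V_1]$, or at least has the same dimension as $\overline{\ran}[V_2^*, V_1]$. Concretely, $[V_2^*, V_1]^*[V_2^*, V_1]$ should be expressible through the negative spectral projection of $C$, so that $\text{rank}[V_2^*, V_1] = \dim E_{-1} + \dim \clk_-$. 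Combining this with $\dim \clk_+ = \dim \clk_-$ yields
\[
\text{rank}\, C = \big(\dim E_{-1} + \dim \clk_-\big) + \dim E_1 + \dim \clk_+ = \text{rank}[V_2^*, V_1] + \dim E_1 + \dim \clk_+,
\]
which is the first displayed formula.

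For the second formula, assume $\dim E_{-1} < \infty$. Then from $\text{rank}[V_2^*, V_1] = \dim E_{-1} + \dim \clk_-$ we solve $\dim \clk_- = \text{rank}[V_2^*, V_1] - \dim E_{-1}$, and since $\dim \clk_+ = \dim \clk_-$ this gives $\dim \clk_+ = \text{rank}[V_2^*, V_1] - \dim E_{-1}$ as well. Substituting into the first formula,
\[
\text{rank}\, C = \text{rank}[V_2^*, V_1] + \dim E_1 + \big(\text{rank}[V_2^*, V_1] - \dim E_{-1}\big) = 2\,\text{rank}[V_2^*, V_1] + \dim E_1 - \dim E_{-1},
\]
as claimed. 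The finiteness hypothesis is exactly what is needed to rearrange the identity $\text{rank}[V_2^*,V_1] = \dim E_{-1} + \dim \clk_-$ without running into $\infty - \infty$.

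The main obstacle I anticipate is the second ingredient: proving cleanly that $\text{rank}[V_2^*, V_1]$ equals $\dim E_{-1} + \dim \clk_-$, i.e. that the strictly-negative spectral subspace of the defect operator is matched in dimension by the range of the cross-commutator. This requires unwinding the two-projection model of $C$ and identifying how $[V_2^*,V_1] = V_2^* V_1 - V_1 V_2^*$ sits inside that model — in particular ruling out that part of $\ran[V_2^*,V_1]$ hides in the kernel or the positive part of $C$. The pairing $\dim\clk_+ = \dim\clk_-$ is standard once the direct-integral-of-$2\times2$-blocks picture is in place (it is the same mechanism behind Theorem~\ref{HiNewThm}), so the real work is the bookkeeping that ties the algebraic object $[V_2^*,V_1]$ to the spectral object $E[-1,0)\clh$.
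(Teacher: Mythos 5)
Your skeleton is the same as the paper's: decompose $\text{rank}\, C(V_1,V_2) = \dim E_1 + \dim E_{-1} + \dim \clk_+ + \dim \clk_-$, use $\dim\clk_+ = \dim\clk_-$ from the two-projection (Halmos/\cite{DJS2016}) model, and then rearrange once you have a single identity tying $\text{rank}[V_2^*,V_1]$ to the spectral data. The algebraic bookkeeping at the end, including the role of $\dim E_{-1}<\infty$ in avoiding $\infty-\infty$, is fine and matches the paper.

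The gap is exactly the step you flag as the "second ingredient," and the mechanism you sketch for it would fail. It is not true that $E_{-1}\oplus\clk_-$ is (even up to closure) the range of $[V_2^*,V_1]$, nor that $[V_2^*,V_1]^*[V_2^*,V_1]$ is carried by the negative spectral projection of $C$. Since $V_1^*[V_2^*,V_1]=0$, one always has $\ran[V_2^*,V_1]\subseteq \clw_1=\ker V_1^*$, while any unit vector $g$ with $C g=-g$ satisfies $P_{\clw_1}g=0$ (from $C=P_{\clw_1}-P_{V_2\clw_1}$ and $\langle Cg,g\rangle=-1$); hence $\ran[V_2^*,V_1]\perp E_{-1}$ always, and in the normal case Lemma \ref{contain} even puts $\ran[V_2^*,V_1]$ inside $E_1$, the eigenspace at $+1$ (e.g.\ for irreducible $3$-finite pairs $\ran[V_2^*,V_1]=E_1$ while $\dim E_{-1}+\dim\clk_-=1$). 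So only the dimension identity $\text{rank}[V_2^*,V_1]=\dim E_{-1}+\dim\clk_+$ is correct, and it cannot be obtained by locating the commutator's range in the negative spectral subspace. The paper proves it differently: $[V_2^*,V_1]$ vanishes on $\ran(V_1V_2)$, so its rank is computed on $\clw$, where $\text{rank}[V_2^*,V_1]=\text{rank}\big(V_2^*P_{V_1\clw_2}\big)=\text{rank}\big(P_{V_2\clw_1}P_{V_1\clw_2}\big)$; then the explicit parameterization of the pair of projections $P_{V_2\clw_1}, P_{V_1\clw_2}$ from Theorem \ref{struc}, together with the rank computation of the off-diagonal $2\times 2$ block (Lemma \ref{rank}), gives $\text{rank}\big(P_{V_2\clw_1}P_{V_1\clw_2}\big)=\dim E_{-1}+\dim\clk_+$. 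Without an argument of this kind (or some substitute), your proposal asserts rather than proves the key identity, so as written it is incomplete.
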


The above rank formula is at the centre of the paper and will be one of the most effective tools for describing compact normal pairs. In fact, the above theorem will be mostly used for isometric pairs with compact defect operators. And note that the condition
\[
\dim E_{-1} < \infty,
\]
is automatically satisfied for isometric pairs with compact defects (see Corollary \ref{cor rank formula}). Given that the rank formula applies to any isometric pair, we believe that the above result is independently relevant.

Now we turn to representations of compact normal pairs. Our strategy is to split a compact normal pair into smaller pieces of distinguished building blocks. Unquestionably, and at the very least, a distinguished building block must possess the property of irreducibility:
 
\begin{definition}\label{def irred}
Let $(T_1, T_2)$ be a pair of bounded linear operators on $\clh$, and let $\cls \subseteq \clh$ be a closed subspace. We say that $\cls$ reduces $(T_1, T_2)$ (or $\cls$ is $(T_1, T_2)$-reducing) if
\[
T_i\cls, T_i^*\cls \subseteq \cls \qquad (i=1,2).
\]
We say that $(T_1, T_2)$ is irreducible if there is no non-trivial (that is, nonzero and proper) $(T_1, T_2)$-reducing subspace of $\mathcal{H}$.
\end{definition}

Our distinguished building blocks are irreducible (except for shift-unitary pairs; see Definition \ref{def doubly comm} below) and also compact normal pairs. They exhibit a correlation with property (3) from the list that came before Definition \ref{def defect operator}, which concerns the finite rank of the defect operator. This correlation also serves as a motivation to introduce the following class of isometric pairs:

\begin{definition}\label{def n finite}
Let $n \in \mathbb{N}$. An $n$-finite pair is an isometric pair $(V_1, V_2)$ acting on some Hilbert space that meets the following conditions:
\begin{enumerate}
\item $(V_1, V_2)$ is a compact normal pair, and
\item $\text{rank} C(V_1, V_2) = n$.
\end{enumerate}
\end{definition} 

We remark that the above definition and Definition \ref{def BCL pairs} also have some bearing on some classical theories as well as relatively contemporary results of independent interest. We will explain these connections at the end of this section (see the discussion following Theorem \ref{thm intro unit inv}).

We prove that irreducible $n$-finite pairs are the fundamental building blocks of compact normal pairs. Even more strongly, the following is a fact (see Corollary \ref{cor irred n finite}):

\begin{theorem}\label{thm intro 3 pairs}
An irreducible $n$-finite pair is either $1, 2$, or $3$-finite.
\end{theorem}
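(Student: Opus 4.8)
The plan is to show that an irreducible $n$-finite pair $(V_1, V_2)$ must satisfy $n \le 3$ by exploiting the rank formula of Theorem \ref{thm intro rank} together with the structural constraints that irreducibility imposes on the subspaces $E_{\pm 1}$ and $\clk_+$. Write $r = \text{rank}[V_2^*, V_1]$. Since $(V_1, V_2)$ is a compact normal pair, Corollary \ref{cor rank formula} guarantees $\dim E_{-1} < \infty$, so the second identity in Theorem \ref{thm intro rank} applies:
\[
n = \text{rank}\, C(V_1, V_2) = 2r + \dim E_1 - \dim E_{-1}.
\]
First I would argue that $r \le 1$ for an irreducible pair. The cross-commutator $[V_2^*, V_1]$ is compact and normal, hence diagonalizable; its range, together with the ranges of $V_1, V_2$ acting on it, generates a reducing subspace, and if $r \ge 2$ one can split off a proper nonzero reducing subspace using the spectral projections of $[V_2^*, V_1]$, contradicting irreducibility. (This is the kind of decomposition that the building-block structure theorem is built on, so I expect a version of it to be available from the earlier sections.) Thus $r \in \{0, 1\}$.

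The case $r = 0$ is the doubly commuting case: then $[V_2^*, V_1] = 0$, the defect $C(V_1, V_2)$ is itself a projection (being $I - V_1V_1^* - V_2V_2^* + V_1V_2V_1^*V_2^* = (I - V_1V_1^*)(I - V_2V_2^*)$ when the factors commute), so $C(V_1, V_2) = E_1$ and $n = \dim E_1$. An irreducible doubly commuting pair, by the Wold decomposition for doubly commuting pairs \eqref{eqn Wold DC}, is one of the four homogeneous pieces; irreducibility forces the shift parts to have multiplicity one and the unitary parts to be one-dimensional, so in the shift-shift case $n = \dim E_1 = 1$ (this is precisely property (3) of $(M_z, M_w)$), while in the mixed and unitary cases $n = 0$. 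Hence $r = 0$ gives $n \le 1$.

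For $r = 1$ we get $n = 2 + \dim E_1 - \dim E_{-1}$, so it suffices to show $\dim E_1 \le 1$ and, when $\dim E_1 = 1$, to rule out $n = 3$ being exceeded — i.e. to show $\dim E_1 \le 1$ outright. Here I would use that $E_1 = \ker(C(V_1,V_2) - I) = \ker(I - V_1V_1^*) \cap \ker(I - V_2 V_2^*) \cap \dots$; more usefully, $C(V_1,V_2)h = h$ forces $h$ to lie in the cokernels of both $V_1$ and $V_2$ simultaneously in a strong sense, and such $h$ generates a reducing subspace on which the pair looks like a piece of $(M_z, M_w)$. If $\dim E_1 \ge 2$, a one-dimensional sub-piece splits off as a reducing subspace isomorphic to $H^2(\D^2)$, contradicting irreducibility unless the whole space is that copy — but then $r = 0$, not $1$. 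Combining, $\dim E_1 \le 1$, whence $n \le 2 + 1 - 0 = 3$.

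The main obstacle I anticipate is the reduction $r \le 1$ and the control of $E_1$: both require turning "the range of a commutator / an eigenspace generates a reducing subspace" into a genuine splitting, which depends on knowing that the smallest reducing subspace containing a given vector has a tractable description for isometric pairs. The compactness and normality of $[V_2^*, V_1]$ are essential — normality gives orthogonal spectral projections that commute with enough of the relevant operators, and compactness gives finite-dimensional spectral pieces so the splitting terminates. Once these two facts are in hand, the arithmetic of the rank formula closes the argument immediately, and the three surviving values $n = 1, 2, 3$ correspond exactly to the three irreducible building blocks (alongside the shift-unitary pairs, which are the reducible $0$-finite case excluded in Definition \ref{def irred}'s setup).
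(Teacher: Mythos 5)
Your overall strategy coincides with the paper's: combine the rank identity $n = 2\,\text{rank}[V_2^*,V_1] + \dim E_1 - \dim E_{-1}$ with the fact that irreducibility forces $\dim E_1 \le 1$ (and hence $\text{rank}[V_2^*,V_1]\le 1$, since $\text{ran}[V_2^*,V_1]\subseteq E_1$ by normality, Lemma \ref{contain}), and then finish by arithmetic; this is exactly how Corollary \ref{cor irred n finite} is obtained from the analysis of Section \ref{sec class n finite}. Two small slips in your setup: compactness of $C(V_1,V_2)$, needed to invoke Corollary \ref{cor rank formula}, comes from the $n$-finiteness hypothesis $\text{rank}\,C(V_1,V_2)=n<\infty$, not from the pair merely being compact normal; and shift-unitary pairs are not ``the reducible $0$-finite case'' --- $(M_z,\gamma I)$ on $H^2(\D)$ is an irreducible shift-unitary pair --- they are excluded simply because $n\in\N$ forces $C(V_1,V_2)\neq 0$ (cf. Remark \ref{e1zero}).

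The genuine gap is precisely the step you flag as the anticipated obstacle: that one can split off a proper nonzero reducing subspace when $\dim E_1\ge 2$ (equivalently when $\text{rank}[V_2^*,V_1]\ge 2$). This is the technical heart of the paper's argument (Proposition \ref{dime1}, and Lemmas \ref{Hi}--\ref{ortho}), and it is not automatic: the subspace $\overline{\text{span}}\{V_1^mV_2^nf : m,n\ge 0\}$ is obviously jointly invariant, but co-invariance requires taking $f$ to be an eigenvector of the normal operator $[V_2^*,V_1]|_{E_1}$ and using $E_1=\clw_1\cap\clw_2$ (Lemma \ref{intersection}) together with $[V_2^*,V_1]|_{E_1^\perp}=0$ (Lemma \ref{contain}, where normality enters) to run the induction $V_2^*V_1^{m+1}f=[V_2^*,V_1]V_1^mf+V_1V_2^*V_1^mf=V_1V_2^*V_1^mf$; properness then follows because a second eigenvector of $[V_2^*,V_1]|_{E_1}$ is orthogonal to every $V_1^mV_2^nf$ (the computation of Lemma \ref{lemma inner prod 0}). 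Moreover, your formulation of this step in the case $r=1$ is false as stated: an arbitrary $h\in E_1$ need not generate a reducing subspace (the induction genuinely uses that $h$ is an eigenvector), and even for an eigenvector the restricted pair need not ``look like a piece of $(M_z,M_w)$'' --- when the eigenvalue is nonzero it is a $2$- or $3$-finite pair (Theorems \ref{rank2} and \ref{irrfinal}); to split off a copy of $(M_z,M_w)$ you must choose an eigenvector in the kernel of $[V_2^*,V_1]|_{E_1}$, or better, avoid identifying the piece at all and use only that it is proper, nonzero and reducing. Since this lemma is deferred rather than proved, what remains in your write-up is only the easy rank-formula arithmetic; supplying the eigenvector/induction argument above would close the gap and would essentially reproduce the paper's proof.
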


Subsequently, irreducible $n$-finite pairs, $n = 1, 2, 3$, serve as the fundamental building blocks for compact normal pairs. This observation, along with numerous others, is a result of the robust rank formula that is found in Theorem \ref{thm intro rank}. There will be, however, one more building block (not necessarily irreducible) made up of a simpler class of doubly commuting pairs. To explain this, we recall from \eqref{eqn Wold DC} that there are four summands in the Wold decomposition of doubly commuting pairs. In the present situation, there will be only two summands. More formally:
 

\begin{definition}\label{def doubly comm}
An isometric pair $(V_1, V_2)$ on $\clh$ is said to be shift-unitary if $(V_1, V_2)$ is doubly commuting and the Wold decomposition \eqref{eqn Wold DC} of the pair reduces to
\[
\clh = \clh_{us} \oplus \clh_{su}.
\]
\end{definition}

In Remark \ref{rem shift unit}, we argued that the shift-unitary pairs are indeed simple. There, we will also observe that unitary operators fairly parameterize shift-unitary pairs.

Among $n$-finite pairs, invariant subspaces of $H^2(\D^2)$ will also play an important role. A closed subspace $\cls$ of $H^2(\D^2)$ is an \textit{invariant subspace} if
\begin{equation}\label{eqn: inv sub Hardy}
M_z \cls, M_w \cls \subseteq \cls.
\end{equation}
Before delving into the representations of compact normal pairs, we note that the equality in the category of isometric pairs that we observe will be unitary equivalent. Two isometric pairs $(S_1, S_2)$ on $\clh_1$ and $(T_1, T_2)$ on $\clh_2$  are said to be \textit{unitarily equivalent} if there exists a unitary operator $U: \clh_1 \rightarrow \clh_2$ such that $U S_i = T_i U$ for all $i=1,2$. We often denote this by
\begin{equation}\label{eqn unit equiv}
(S_1, S_2) \cong (T_1, T_2).
\end{equation}
The following result yields concrete representations of compact normal pairs (see Theorem \ref{main}):

\begin{theorem}\label{thm intro}
Let $(V_1, V_2)$ be a compact normal pair on $\clh$. Define
\[
k: = \dim\Big[\ker \Big(C(V_1, V_2) - I_{\clh}\Big)\Big] \in [0, \infty].
\]
Then there exist closed $(V_1, V_2)$-reducing subspaces $\{\clh_i\}_{i=0}^k$ of $\clh$ such that
\[
\clh = \bigoplus_{i=0}^k \clh_i.
\]
If we define
\[
(V_{1,j}, V_{2,j}) = (V_1|_{\clh_j}, V_2|_{\clh_j}) \qquad (j=0, 1, \ldots, k),
\]
then $(V_{1,i}, V_{2,i})$ on $\clh_i$ is irreducible for all $i=1, \ldots, k$. Moreover, we have the following:
\begin{enumerate}
\item $(V_{1,0}, V_{2,0})$ on $\clh_0$ is a shift-unitary type.
\item For each $i=1, \ldots, k$, the pair $(V_{1,i}, V_{2,i})$ on $\clh_i$ is unitarily equivalent to one of the following three pairs:
\begin{enumerate}
\item $(M_z, M_w)$ on $H^2(\D^2)$.
\item $(M_z, {\alpha} M_z)$ on $H^2(\D)$ for some unimodular constant $\alpha$.
\item $(\gamma M_z|_{\cls_{\lambda}}, M_w|_{\cls_{\lambda}})$ on $\cls_{\lambda}$, where
\begin{equation}\label{eqn intro subm}
\cls_{\lambda} = \vp \Big(H^2(\mathbb{D}^2) \bigoplus \Big(\bigoplus_{j = 0}^\infty z^j \text{span} \Big\{ \frac{\bar{w}}{1 - \lambda z \bar{w}}\Big\} \Big) \Big),
\end{equation}
for some $\lambda \in (0,1)$, unimodular constant $\gamma$, and some inner function $\vp \in H^\infty(\D^2)$.
\end{enumerate}
\end{enumerate}
\end{theorem}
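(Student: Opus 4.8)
The plan is to organize the whole argument around the defect operator $C := C(V_1,V_2)$. Writing $Q_1 := I - V_1V_1^*$ and $Q_2 := V_2(I - V_1V_1^*)V_2^*$, one has $C = Q_1 - Q_2$, a difference of two orthogonal projections, so $C$ is a self-adjoint contraction; since $(V_1,V_2)$ is a compact normal pair, $C$ is moreover compact (cf.\ Corollary \ref{cor rank formula}). One also checks quickly that $\ran C$ is contained in the wandering subspace $\clh\ominus V_1V_2\clh$ of the product shift. With $k := \dim\ker(C - I_\clh)$, I would proceed in three stages: (i) peel off a shift-unitary summand on which $C$ vanishes identically; (ii) decompose the remainder into exactly $k$ mutually orthogonal irreducible reducing pieces; (iii) identify each irreducible piece with one of the three model pairs.

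For stage (i), let $\clh_0$ be the orthogonal complement in $\clh$ of the smallest $(V_1,V_2)$-reducing subspace containing $\ran C$. Since $C$ is a $*$-polynomial in $V_1,V_2$, the subspace $\clh_0$ reduces $C$; and because $C\clh_0$ lies in both $\clh_0$ and $\overline{\ran C}$, which are orthogonal, $C|_{\clh_0} = 0$. Applying the rank formula of Theorem \ref{thm intro rank} to $(V_1,V_2)|_{\clh_0}$, the vanishing of $\text{rank}\, C|_{\clh_0}$ forces all three terms on the right-hand side to vanish, in particular $[V_2^*,V_1]|_{\clh_0} = 0$, so this pair is doubly commuting and the Wold decomposition \eqref{eqn Wold DC} applies to it. Its summand $\clh_{uu}$ must be $0$ because $V_1V_2|_{\clh_0}$ is a shift and hence has no unitary part, while $\clh_{ss}$ must be $0$ because on it $C$ would equal the nonzero joint wandering-space projection; therefore $\clh_0 = \clh_{us}\oplus\clh_{su}$, that is, $(V_1,V_2)|_{\clh_0}$ is a shift-unitary pair, which is item (1).

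For stage (ii), set $\clh' := \clh\ominus\clh_0$, the reducing hull of $\ran C$, and note $E_1 := \ker(C - I_\clh) = \ran Q_1\cap\ker Q_2\subseteq\ran C\subseteq\clh'$, so $\dim E_1 = k$. Every $e\in E_1$ is co-invariant for $V_1$ (since $\ran Q_1 = \ker V_1^*$), and I would show, using the normality of the cross-commutator together with the rank formula applied to the reducing subspace generated by $e$ — in its $\dim E_{-1}<\infty$ form, legitimate because the defect is compact (cf.\ Corollary \ref{cor rank formula}) — that this reducing subspace carries a compact normal pair with finite-rank defect, which by Corollary \ref{cor irred n finite} splits into irreducible $n$-finite pieces with $n\in\{1,2,3\}$, each contributing exactly $1$ to $\dim E_1$. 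Running this over an orthonormal basis $\{e_i\}_{i=1}^{k}$ of $E_1$, and using the minimality built into the definition of $\clh'$ to exclude any leftover summand, one obtains mutually orthogonal irreducible reducing subspaces $\clh_1,\dots,\clh_k$ with $\bigoplus_{i=1}^{k}\clh_i = \clh'$.

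It remains to match an irreducible $n$-finite pair $(W_1,W_2)$, $n\in\{1,2,3\}$, with one of the models. For $n=1$ one has $[W_2^*,W_1]=0$, so the Wold decomposition for doubly commuting pairs applies; since the defect has rank $1$ and $(W_1,W_2)$ is irreducible, only the summand $\clh_{ss}$ survives and its joint wandering space is one-dimensional, giving $(W_1,W_2)\cong(M_z,M_w)$ on $H^2(\D^2)$. For $n=2$ the two forms of the rank formula force $\text{rank}[W_2^*,W_1]=1$ and $\dim\ker(C(W_1,W_2)+I_\clh)=1$, from which one deduces $W_2=\alpha W_1$ for a unimodular $\alpha$ with $W_1$ a multiplicity-one shift, i.e.\ $(W_1,W_2)\cong(M_z,\alpha M_z)$ on $H^2(\D)$. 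The main obstacle is the case $n=3$: here the rank formula gives $\text{rank}[W_2^*,W_1]=1$, and one must produce $\lambda\in(0,1)$, a unimodular $\gamma$, and an inner $\vp\in H^\infty(\D^2)$ with $(W_1,W_2)\cong(\gamma M_z|_{\cls_\lambda},M_w|_{\cls_\lambda})$ and $\cls_\lambda$ as in \eqref{eqn intro subm}. For this one reduces, via the Berger--Coburn--Lebow functional model \cite{BCL1978}, to a cyclic submodule of $H^2(\D^2)$; one then pins down the one-parameter family of abstract model spaces $H^2(\D^2)\oplus\bigoplus_{j\ge0}z^j\text{span}\{\bar w/(1-\lambda z\bar w)\}\subseteq L^2(\T^2)$, whose defect is a $3\times3$ difference of projections with eigenvalues $\{1,\lambda,-\lambda\}$; and one invokes Rudin-type constructions of inner functions on the bidisc to realize each such model, after multiplication by a suitable inner $\vp$, as a genuine submodule of $H^2(\D^2)$. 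This last realization, which is where the constructions tracing back to Rudin enter, is the technical heart of the matter. Finally, reassembling $\clh = \clh_0\oplus\bigoplus_{i=1}^{k}\clh_i$ yields the asserted representation.
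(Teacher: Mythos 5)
Your stage (i) is essentially sound and, up to reordering, matches the paper's Lemmas \ref{kernel} and \ref{slocinski}: the defect vanishes off the reducing hull of its range, the rank formula (or \cite[Theorem 6.5]{MSS}) then gives double commutativity there, and the Wold decomposition kills $\clh_{uu}$ and $\clh_{ss}$. The genuine gaps are in stages (ii) and (iii). In stage (ii) you cannot obtain the decomposition of $\clh'$ into exactly $k$ mutually orthogonal \emph{irreducible} reducing pieces the way you describe: Corollary \ref{cor irred n finite} only classifies pairs already known to be irreducible; it does not split a reducible compact normal pair into irreducible summands, and in the paper it is itself deduced from the very theorem you are proving, so invoking it here is circular unless you re-prove it independently (which is possible via the argument of Proposition \ref{dime1} plus the rank formula, but you do not do this). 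Moreover, for an \emph{arbitrary} orthonormal basis $\{e_i\}$ of $E_1$ the reducing subspace generated by $e_i$ need not be irreducible, need not have one-dimensional $E_1$, and the subspaces attached to different $i$ need not be orthogonal. The paper's construction is essential: choose the basis $\{f_i\}$ of $E_1$ to consist of eigenvectors of the normal operator $[V_2^*,V_1]|_{E_1}$, set $\clh_i=\overline{\text{span}}\{V_1^mV_2^nf_i: m,n\ge 0\}$, and verify by direct computation — using $V_2^*V_1f_i=\lambda_i f_i$ and $[V_2^*,V_1]|_{E_1^\perp}=0$, which give $V_2^{*m}V_1^nf_i=\lambda_i^m V_1^{n-m}f_i$ — that each $\clh_i$ is $(V_1,V_2)$-reducing (Lemma \ref{Hi}), irreducible because $\dim E_1(C|_{\clh_i})=1$ (Lemma \ref{irreducuble}), and that $\clh_i\perp\clh_j$ for $i\neq j$ (Lemmas \ref{lemma inner prod 0}, \ref{ortho}). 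Without this the count of pieces being $k$ and the irreducibility assertions are unsupported.

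In stage (iii) the case $n=3$, which you yourself call the technical heart, is not proved but only gestured at, and the plan as stated does not work: the Berger--Coburn--Lebow model produces $(M_{\Phi_1},M_{\Phi_2})$ on a vector-valued $H^2_{\cle}(\D)$, not a ``cyclic submodule of $H^2(\D^2)$,'' so the announced reduction is unjustified; and you neither prove uniqueness (that the nonzero eigenvalue $\beta=\gamma\lambda$ of $[V_2^*,V_1]$ is a \emph{complete} unitary invariant of irreducible $3$-finite pairs) nor existence of a realization for each $\beta$. The paper supplies exactly these two ingredients: a detailed analysis of the BCL unitary $U$ on the orthonormal basis $\{f,\,U^mf_1,\,U^{*m}f_4\}$ (Lemma \ref{Uaction}, Corollary \ref{onb}) yields the completeness of the invariant (Theorem \ref{charac}), and the Izuchi--Izuchi submodules $\cls_\lambda$, twisted by the unimodular $\gamma$ (Examples \ref{bidiskex}, \ref{bidiskex_1}), realize every admissible $\beta$, giving Theorem \ref{irrfinal}; Rudin's construction enters only through the cited existence of $\cls_\lambda$, not through a new realization argument you would have to supply. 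Similarly, in the case $n=2$ the step ``one deduces $W_2=\alpha W_1$'' hides the actual argument: one must show $\clw=E_1\oplus E_{-1}$, hence $\clw_1=\clw_2$ is one-dimensional, invoke the Bercovici--Douglas--Foias theorem (Theorem \ref{shift}) to conclude both isometries are shifts, and then force the commutant symbol $\theta$ to be $cz$. As written, the proposal identifies the correct skeleton but leaves the decisive constructions and classifications unproved or resting on misapplied citations.
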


The pairs in (a), (b), and (c) in the above theorem represent irreducible $n$-finite pairs, for $n = 1, 2$, and $3$, respectively. First, we comment on irreducible $3$-finite pairs. The invariant subspace $\cls_{\lambda}$ mentioned in part (c) above is attributed to Izuchi and Izuchi \cite{II2006}. This particular invariant subspace leans more towards the existential than the construction. In this case, the inner function $\varphi$, which solely depends on $\lambda$, is derived from Rudin's construction \cite[Theorem 5.4.5]{Rudin book}. We prove that pairs of type (c) above, that is, the Izuchi-Izuchi-Ohno examples, are all irreducible $3$-finite pairs (see Theorem \ref{alphalambda} as well as Theorem \ref{irrfinal}):

\begin{theorem}
Let $(V_1, V_2)$ be an irreducible $3$-finite pair on a Hilbert space $\mathcal{H}$. Then the following hold:
\begin{enumerate}
\item There exists $\lambda \in (0, 1)$ such that
\[
\sigma(C(V_1, V_2)) \cap (0,1)  = \{\lambda\}.
\]
\item There exists a unimodular constant $\gamma$ such that
\[
\sigma([V_2^*, V_1]) \setminus \{0\} = \{\gamma \lambda\}.
\]
\item There exists an inner function $\vp \in H^\infty(\D^2)$ (depending on $\lambda$) such that
\[
(V_1, V_2) \cong (\gamma M_z|_{\cls_{\lambda}}, M_w|_{\cls_{\lambda}}),
\]
where $\cls_{\lambda}$ is the invariant subspace of $H^2(\mathbb{D}^2)$ as in \eqref{eqn intro subm}.
\end{enumerate}
\end{theorem}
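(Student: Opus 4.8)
The plan is to reduce the structural statement to the general representation theorem (Theorem \ref{thm intro}) and then to extract the two spectral assertions from the rank formula (Theorem \ref{thm intro rank}) together with the analysis of the three irreducible building blocks. First I would observe that an irreducible $3$-finite pair $(V_1, V_2)$ is, by definition, a compact normal pair with $\text{rank}\, C(V_1,V_2) = 3$, so Theorem \ref{thm intro} applies; since $(V_1,V_2)$ is irreducible it must coincide with a single one of the summands $\clh_i$, and it cannot be of shift-unitary type (those have defect operator of rank $0$, being doubly commuting). Hence $(V_1,V_2)$ is unitarily equivalent to one of the three model pairs (a), (b), (c). To pin down which one, I would compute the rank of the defect operator in each model: for $(M_z, M_w)$ on $H^2(\D^2)$ the defect is $P_{\mathbb C}$, of rank $1$; for $(M_z, \alpha M_z)$ on $H^2(\D)$ a direct computation of $I - M_z M_z^* - \alpha\bar\alpha M_z M_z^* + |\alpha|^2 M_z M_z M_z^* M_z^*$ gives rank $2$; and for the Izuchi--Izuchi type pair on $\cls_\lambda$ the defect has rank $3$ (this is presumably established in the body, e.g.\ in Theorem \ref{alphalambda}). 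Since $\text{rank}\,C(V_1,V_2) = 3$ by hypothesis, only case (c) survives, giving part (3) with $\cls_\lambda$ as in \eqref{eqn intro subm} and some unimodular $\gamma$ and inner $\vp$ depending on $\lambda$.

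For part (1), I would use the rank formula. Since $C(V_1,V_2)$ is compact, Corollary \ref{cor rank formula} gives $\dim E_{-1} < \infty$, and the second formula in Theorem \ref{thm intro rank} reads $3 = 2\,\text{rank}[V_2^*,V_1] + \dim E_1 - \dim E_{-1}$. Because $(V_1,V_2)$ is not doubly commuting (its defect has positive rank, so it is not the shift-unitary/doubly commuting case — more directly, $\text{rank}[V_2^*,V_1] \geq 1$, else the first rank formula would force $\text{rank}\, C = \dim E_1 + \dim\clk_+$ with $[V_2^*,V_1]=0$, contradicting the known structure of doubly commuting pairs whose defect has rank $\le$ number of nonzero summands; I would instead simply cite that case (c) above is the relevant model and read off $\text{rank}[V_2^*,V_1]$ there). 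In the model $(\gamma M_z|_{\cls_\lambda}, M_w|_{\cls_\lambda})$ one checks $\text{rank}[V_2^*,V_1] = 1$, $\dim E_1 = 1$, $\dim E_{-1}=0$, consistent with $3 = 2\cdot 1 + 1 - 0$. The first rank formula then gives $\dim\clk_+ = 3 - 1 - 1 = 1$, so the generic part of $C(V_1,V_2)$ has exactly one positive eigenvalue (with multiplicity one), i.e.\ $\sigma(C(V_1,V_2)) \cap (0,1) = \{\lambda\}$ for a unique $\lambda \in (0,1)$; by Theorem 1.1 (the He--Qin--Yang-type theorem on differences of projections) $-\lambda$ is also in the spectrum with matching multiplicity, consistent with the generic part. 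Identifying this $\lambda$ with the parameter of $\cls_\lambda$ is then a matter of matching spectral data between $(V_1,V_2)$ and the model, which the unitary equivalence in part (3) supplies.

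For part (2), I would relate $[V_2^*, V_1]$ to $C(V_1,V_2)$ via the identity $C(V_1,V_2) = [V_2^*,V_1][V_2^*,V_1]^* - (\text{something})$ or, more cleanly, use the polar/normal structure: since $[V_2^*,V_1]$ is normal and compact, it is diagonalizable, and the rank formula shows it has exactly one nonzero eigenvalue, say $\mu$, of multiplicity one. From the defining relation connecting the cross-commutator and the defect (to be recalled from the body, presumably $C(V_1,V_2) = P - Q$ with $P,Q$ projections built from $V_1,V_2$, and $[V_2^*,V_1]$ entering the off-diagonal blocks), one extracts $|\mu| = \lambda$, so $\mu = \gamma\lambda$ for a unimodular $\gamma$; this $\gamma$ is the same unimodular constant appearing in part (3), which is consistent since in the model the factor $\gamma$ in front of $M_z|_{\cls_\lambda}$ produces exactly the eigenvalue $\gamma\lambda$ in the cross-commutator. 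I expect the main obstacle to be the bookkeeping that ties the three parameters together coherently — namely verifying that the $\lambda$ emerging from the spectral decomposition of the defect operator, the $\gamma$ emerging from the cross-commutator, and the $\lambda, \gamma, \vp$ in the model pair $(\gamma M_z|_{\cls_\lambda}, M_w|_{\cls_\lambda})$ all match up under a single unitary equivalence; this requires knowing that the construction $\cls_\lambda \mapsto$ (its isometric pair) is injective in $\lambda$ on the level of unitary equivalence classes, which in turn rests on the Izuchi--Izuchi analysis and Rudin's function $\vp$ depending only on $\lambda$. The spectral counting via Theorem \ref{thm intro rank} and the model computations are routine; the identification of invariants across the unitary equivalence is where the real content lies, and I would handle it by transporting the spectral measures of $C(V_1,V_2)$ and $[V_2^*,V_1]$ through the unitary and computing them explicitly in the model.
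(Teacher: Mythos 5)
Your plan has a genuine circularity: the statement you are asked to prove is Theorem \ref{irrfinal}, and you propose to deduce it from the general representation theorem for compact normal pairs (Theorem \ref{thm intro}, i.e.\ Theorem \ref{main} in the body). But in the paper Theorem \ref{main} is itself assembled \emph{from} the classification of irreducible $3$-finite pairs --- the text states explicitly that its proof is ``a summary of \ldots{} Theorem \ref{irrfinal}, and Theorem \ref{rank2}, Theorem \ref{summ_1}, and Proposition \ref{summ_2}.'' So there is no independent route to the list (a)--(c) that you could invoke; the assertion that ``only case (c) survives'' presupposes exactly what must be shown, namely that every irreducible $3$-finite pair is (up to a unimodular twist) an Izuchi--Izuchi pair. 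The same circularity infects your derivation of $\mathrm{rank}[V_2^*,V_1]=1$, $\dim E_1=1$, $\dim E_{-1}=0$ ``read off from the model in case (c)''; the paper obtains these directly from irreducibility, by showing that if $\dim E_1>1$ then an eigenvector of the normal operator $[V_2^*,V_1]|_{E_1}$ generates a proper joint reducing subspace (Proposition \ref{dime1}), and then feeding $\dim E_1=1$ into the rank formula.

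Even setting the circularity aside, the real content of part (3) is never addressed: one must prove that an \emph{abstract} irreducible $3$-finite pair with invariants $(\lambda,\gamma)$ is jointly unitarily equivalent to the concrete pair $(\gamma M_z|_{\cls_\lambda}, M_w|_{\cls_\lambda})$. Your closing remark that you would ``transport the spectral measures of $C(V_1,V_2)$ and $[V_2^*,V_1]$ through the unitary'' assumes the unitary whose existence is the whole point; equality of the spectral data of the defect operator and the cross-commutator does not by itself produce a joint unitary equivalence of the pairs. The paper supplies this via Theorem \ref{charac}: the unique nonzero eigenvalue $\beta=\gamma\lambda$ of $[V_2^*,V_1]$ is a \emph{complete} unitary invariant, proved by building an explicit orthonormal basis $\{f, g_1, g_3, U^m g_1, U^{*m}g_4 : m\ge 1\}$ of the wandering space (Lemmas \ref{Uaction}, \ref{Etilde}, Corollary \ref{onb}) and an intertwiner of the BCL triples; combined with Example \ref{bidiskex_1}, which realizes every admissible $\beta$, this yields part (3). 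Finally, the identity $|\beta|=\lambda$ that you need for part (2) is not something one simply ``extracts'' from the difference-of-projections picture; it is Theorem \ref{alphalambda}(5), proved by the computation $\beta=\langle U^*f,Uf\rangle=-\lambda\langle U^*f,f_3\rangle\langle f_2,Uf\rangle$ using the specific vectors $f_2,f_3$ and Lemma \ref{Uaction}. These are the missing steps your outline would have to fill in before it becomes a proof.
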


In this case, one can prove that (see Proposition \ref{dime1})
\[
\text{dim} E_1(C(V_1, V_2)) = 1,
\]
and
\[
\text{dim} E_{-1}(C(V_1, V_2)) = 0,
\]
and hence, Theorem \ref{thm intro rank} implies that
\[
\text{rank}[V_2^*,V_1] = 1.
\]
Moreover, part (2) of the above theorem says that
\[
\beta:= \gamma \lambda,
\]
is the unique nonzero eigenvalue of the cross-commutator $[V_2^*, V_1]$. Clearly
\[
0<|\beta| < 1.
\]
We prove that this number is a complete unitary invariant (see Theorem \ref{charac}): Let $(V_1, V_2)$ on $\clh$ and $(\tilde{V}_1, \tilde{V}_2)$ on $\tilde{\clh}$ be irreducible $3$-finite pairs. Then
\[
(V_1, V_2) \cong (\tilde{V}_1, \tilde{V}_2),
\]
if and only if
\[
\beta = \tilde \beta,
\]
where $\beta$ and $\tilde \beta$ are the unique nonzero eigenvalues of $[V_2^*, V_1]$ and $[\tilde V_2^*, \tilde V_1]$, respectively.

Next, we turn to irreducible $2$-finite pairs. As in the $3$-finite case, here also we have precise spectral synthesis and a complete representation (see Theorem \ref{rank2}):
 

\begin{theorem}
Let $(V_1, V_2)$ be an irreducible $2$-finite pair.  Then the following hold:
\begin{enumerate}
\item $\{\pm 1\}$ are the only nonzero eigenvalues of $C(V_1, V_2)$.
\item $\text{rank}[V_2^*,V_1] = 1$.
\item There exists a unimodular constant $\alpha$ such that
\[
\sigma([V_2^*,V_1])\setminus \{0\} = \{\alpha\}.
\]
\item $(V_1, V_2) \cong (M_z, \overline{\alpha}M_z)$.
\end{enumerate}
Conversely, if $\alpha$ is unimodular constant, then $(M_z, \overline{\alpha}M_z)$ on $H^2(\mathbb{D})$ is an irreducible $2$-finite pair with $\{\pm 1\}$ as the only nonzero eigenvalues of $C(M_z, \overline{\alpha}M_z)$.
\end{theorem}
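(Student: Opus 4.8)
The plan is to exploit the rank formula of Theorem \ref{thm intro rank} as the primary engine, exactly as advertised. Assume $(V_1, V_2)$ is an irreducible $2$-finite pair, so $C := C(V_1, V_2)$ is a compact self-adjoint contraction with $\text{rank}\, C = 2$, and by Corollary \ref{cor rank formula} the condition $\dim E_{-1} < \infty$ holds automatically. I would first rule out the presence of a genuine eigenvalue of $C$ in $(0,1) \cup (-1,0)$: if $\lambda \in \sigma(C) \setminus \{0, \pm 1\}$, then by Theorem \ref{HiNewThm} (the first displayed theorem, applicable since $C$ is a difference of two projections by \eqref{formula}) also $-\lambda \in \sigma(C)$ with equal eigenspace dimensions, and since these eigenvalues are isolated and $C$ is compact, they contribute at least two to $\text{rank}\, C$ via the positive generic part $\clk_+$. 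Combined with $\dim E_1 + \dim E_{-1} \ge 1$ — which must hold, otherwise the two-term rank formula $\text{rank}\, C = 2\,\text{rank}[V_2^*, V_1] + \dim E_1 - \dim E_{-1}$ would force $\text{rank}\, C$ even and $\text{rank}[V_2^*, V_1] = 1$, consistent, but then a careful accounting shows that a nonzero generic part plus the commutator rank overshoots $2$ — one concludes $\sigma(C) \setminus \{0\} \subseteq \{\pm 1\}$, $\clk_+ = \{0\}$, $\dim \clk_+ = 0$. This gives part (1). Feeding $\dim \clk_+ = 0$ into the first rank identity of Theorem \ref{thm intro rank} yields $2 = \text{rank}[V_2^*, V_1] + \dim E_1$, and the second identity gives $2 = 2\,\text{rank}[V_2^*, V_1] + \dim E_1 - \dim E_{-1}$; subtracting, $\text{rank}[V_2^*, V_1] = \dim E_{-1} \ge 0$. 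The case $\text{rank}[V_2^*, V_1] = 0$ is the doubly commuting case, which by the Wold decomposition for doubly commuting pairs \eqref{eqn Wold DC} cannot be irreducible with a shift product unless it collapses trivially — so $\text{rank}[V_2^*, V_1] = 1$ and $\dim E_1 = 1$, $\dim E_{-1} = 1$, giving part (2), and part (3) follows since a rank-one normal operator has a single nonzero eigenvalue $\alpha$, which must be unimodular because $[V_2^*, V_1]$ restricted to its range is a rank-one piece of a difference of projections scaled appropriately — more precisely one reads $|\alpha| = 1$ off the relation between the commutator and $C$ on the relevant one-dimensional subspaces.

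For part (4), the representation, I would show that $E_1 = \ker(C - I)$ is a one-dimensional wandering-type subspace that generates $\clh$ under $V_1$ (equivalently $V_2$), using irreducibility: the smallest $(V_1, V_2)$-reducing subspace containing $E_1$ is all of $\clh$. Since $\dim E_1 = 1$ and $\dim E_{-1} = 1$ with $\sigma(C) \setminus \{0\} = \{\pm 1\}$, the defect operator is $C = P_{E_1} - P_{E_{-1}}$, and the structure of the pair forces a single shift: I would argue $V_1$ is a shift (its unitary part would contribute a reducing summand contradicting irreducibility unless absent, and a pure unitary summand is impossible since the product $V_1 V_2$ is a shift) with multiplicity $\dim \ker V_1^* = 1$, hence $V_1 \cong M_z$ on $H^2(\D)$. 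Then $V_2$ commutes with $M_z$ and is an isometry, so $V_2 = M_\psi$ for an inner $\psi \in H^\infty(\D)$; the requirement that $[V_2^*, V_1]$ have rank one with unimodular eigenvalue, together with $V_1 V_2 = M_{z\psi}$ being a shift (automatic) and $C(M_z, M_\psi)$ having rank two, pins $\psi$ down to a unimodular constant $\bar\alpha$. I expect the computation $C(M_z, M_{\bar\alpha z}) = I - M_z M_z^* - M_z M_z^* + M_z^2 M_z^{*2}$ to simplify to $2 P_{\mathbb{C}} - P_{z \mathbb{C}} \cdot(\text{something})$ — actually one checks directly it equals $P_{\mathbb{C}} - (\text{rank-one negative piece})$ — confirming rank two and $\{\pm 1\}$ eigenvalues, which handles the converse direction simultaneously.

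The main obstacle I anticipate is the step forcing $\psi$ to be \emph{constant}: a priori $V_2 = M_\psi$ for any inner $\psi$, and one must use the full force of $\text{rank}\, C(M_z, M_\psi) = 2$ to conclude $\psi$ is a degree-zero (hence unimodular constant) inner function. This is where the finite-rank hypothesis does real work: for nonconstant inner $\psi$, the defect $C(M_z, M_\psi)$ either fails to be finite rank or has rank exceeding $2$ (for instance $\psi = z$ itself needs checking, and Blaschke products of higher degree clearly inflate the rank). I would prove this by computing $C(M_z, M_\psi)$ explicitly on reproducing kernels of $H^2(\D)$, identifying its range with a space spanned by $1$ and $\psi - \psi(0)$-type functions, and showing the range is two-dimensional iff $\psi$ is an affine function composed appropriately — which under the inner and commuting constraints means $\psi = \bar\alpha z$ up to the constant. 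The verification of the converse (that $(M_z, \bar\alpha M_z)$ is genuinely irreducible) is comparatively routine: any reducing subspace of $M_z$ on $H^2(\D)$ is trivial, since $M_z$ is an irreducible operator, and the second coordinate being a scalar multiple adds no new reducing subspaces.
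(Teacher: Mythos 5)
Your overall strategy (rank formula plus irreducibility, then reduce to $(M_z,M_\psi)$) points in the right direction, but two of its load-bearing steps are genuinely gapped. First, the spectral accounting behind parts (1)--(2) does not close. You claim that if $\dim E_1=\dim E_{-1}=0$ then ``a nonzero generic part plus the commutator rank overshoots $2$''; it does not: the configuration $\operatorname{rank}[V_2^*,V_1]=1$, $\dim E_1=\dim E_{-1}=0$, $\dim\clk_+=\dim\clk_-=1$ satisfies \emph{both} identities of Theorem \ref{index} with $\operatorname{rank}C(V_1,V_2)=1+0+1=2$, so the rank formulas alone cannot exclude a pair of generic eigenvalues $\pm\lambda\in(0,1)$. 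What kills this case in the paper is normality: by Lemma \ref{contain}, $\operatorname{ran}[V_2^*,V_1]\subseteq E_1$, so $E_1=\{0\}$ forces $[V_2^*,V_1]=0$ and then (via \eqref{HiNew8}) $\clk_+=E_{-1}=\{0\}$, contradicting $\operatorname{rank}C=2$; combined with the Proposition \ref{dime1}-type irreducibility argument giving $\dim E_1\le 1$, one gets $\dim E_1=1$ and then Theorem \ref{structure} yields $\dim E_{-1}=1$ and part (1). Your proposal never invokes the containment $\operatorname{ran}[V_2^*,V_1]\subseteq E_1$ in this accounting, does not treat the case $\operatorname{rank}[V_2^*,V_1]=2$, $\dim E_1=0$, $\dim E_{-1}=2$ (also excluded only by that containment), and your exclusion of the doubly commuting case is wrong as stated: $(M_z,M_w)$ on $H^2(\D^2)$ is an irreducible doubly commuting BCL pair, so irreducibility with shift product does not ``collapse trivially''; what excludes it here is that double commutativity with $\operatorname{rank}C=2$ forces $\dim E_1=2$, contradicting the irreducibility bound.

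Second, the representation step (parts (3)--(4)) rests on two assertions you do not prove. You take $\dim\ker V_1^*=1$ for granted, but a priori $\clw_1=E_1\oplus\operatorname{ran}Q$ with an unknown summand in $\mathcal N=\clw\ominus(\ker C)^\perp$; the paper removes it by showing, through explicit computations with the BCL unitary $U$, that $E_1\oplus E_{-1}$ reduces $(U,P_{\clw_1})$, whence irreducibility gives $\clw=E_1\oplus E_{-1}$ and $\clw_1=\clw_2=E_1$, and then Theorem \ref{shift} (Bercovici--Douglas--Foias) makes $V_1,V_2$ shifts of multiplicity one. The same computation is what proves $|\alpha|=1$, which you only gesture at (``one reads $|\alpha|=1$ off the relation between the commutator and $C$''). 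Your fallback---pinning $\psi$ down from $\operatorname{rank}C(M_z,M_\psi)=2$---cannot work: $C(M_z,M_\psi)=P_{\mathbb C}-P_{\mathbb C\psi}$ has rank $2$ for \emph{every} nonconstant inner $\psi$. Concretely, $(M_z,M_{z^2})$ is irreducible, BCL, with rank-two defect and eigenvalues $\pm1$, yet $[M_{z^2}^*,M_z]=1\otimes z$ is not normal, so it is not $2$-finite; thus the normality hypothesis must be used explicitly at this stage (for $\psi=z\theta$ it forces $\theta$ constant, i.e.\ $\psi=\bar\alpha z$), and your sketch never shows where it enters. Unimodularity of $\alpha$ and $\dim\clw_2=1$ (which forces the Blaschke factor to be centered at $0$, as in the paper) are the data that actually pin $\psi$ down, not the rank of the defect.
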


As in the case of $3$-finite pairs, the nonzero eigenvalue $\alpha$ in part (3) above is also a complete unitary invariant. More specifically, if $(\tilde{V_1}, \tilde{V_2})$ on $\tilde{\clh}$ is another irreducible $2$-finite pair, then
\[
(V_1, V_2) \cong (\tilde{V_1}, \tilde{V_2}),
\]
if and only if
\[
\alpha = \tilde{\alpha},
\]
where $\tilde{\alpha}$ is the unique nonzero eigenvalue of $[\tilde{V_2}^*,\tilde{V_1}]$ (see Theorem \ref{thm 2 finite unit inv}).

Finally, we discuss irreducible $1$-finite pairs. This class, in contrast to irreducible $3$ and $2$-finite pairs, is relatively simple and precisely one in nature (see Theorem \ref{thm 1 finite}):

\begin{theorem}
Let $(V_1, V_2)$ be an irreducible isometric pair. Then $(V_1, V_2)$ is $1$-finite if and only if
\[
(V_1, V_2) \cong (M_z, M_w) \text{ on } H^2(\D^2).
\]
\end{theorem}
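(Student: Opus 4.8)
The plan is to treat the two implications separately; the reverse implication is short, and the forward one carries all the content. For the reverse implication, I would verify directly that $(M_z,M_w)$ on $H^2(\D^2)$ is $1$-finite: it is a BCL pair because $M_zM_w$ is the shift $M_{zw}$; the cross-commutator $[M_w^*,M_z]$ vanishes, hence is (trivially) compact and normal; and by property (3) in the list preceding Definition~\ref{def defect operator}, $C(M_z,M_w)=P_{\mathbb{C}}$ is a rank-one projection, so $\text{rank}\,C(M_z,M_w)=1$. Irreducibility of $(M_z,M_w)$ I would obtain by writing $H^2(\D^2)=H^2(\D)\otimes H^2(\D)$ with $M_z=S\otimes I$, $M_w=I\otimes S$ for $S$ the unilateral shift on $H^2(\D)$: since $S$ is irreducible, the $\{M_z,M_z^*\}$-reducing subspaces are precisely $H^2(\D)\otimes\clk$ with $\clk\subseteq H^2(\D)$ closed, and among these only $\clk\in\{0,H^2(\D)\}$ are $\{M_w,M_w^*\}$-reducing.

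For the forward implication, assume $(V_1,V_2)$ is irreducible and $1$-finite, so $C:=C(V_1,V_2)$ has rank one. The starting point is the identity $C=P_1-P_2$, where $P_1:=I-V_1V_1^*$ and $P_2:=V_2(I-V_1V_1^*)V_2^*$ are orthogonal projections with $\ran P_1=\ker V_1^*$ and $\ran P_2=V_2\ker V_1^*$. Being a rank-one self-adjoint contraction, $C$ has a single nonzero eigenvalue $\mu$ with $0<|\mu|\le1$; since $C$ is a difference of two projections, Theorem~\ref{HiNewThm} forbids $\mu\in(-1,1)\setminus\{0\}$ (such a $\mu$ would make $-\mu$ an eigenvalue as well, so $\text{rank}\,C\ge2$). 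Hence $\mu=\pm1$, i.e.\ either $C$ or $-C$ is a rank-one projection.

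The crux is to exclude the case $C\le0$. If $C\le0$ then $P_1\le P_2$, so $\ker V_1^*=\ran P_1\subseteq\ran P_2\subseteq\ran V_2=(\ker V_2^*)^\perp$, giving $\ker V_1^*\perp\ker V_2^*$. On the other hand, for \emph{any} isometric pair the relations $V_1V_2=V_2V_1$ and $V_i^*V_i=I$ give $V_1^*[V_2^*,V_1]=0$ and $[V_2^*,V_1]V_2=0$, hence $\overline{\ran[V_2^*,V_1]}\subseteq\ker V_1^*$ and $\overline{\ran[V_2^*,V_1]^*}\subseteq\ker V_2^*$. Since $(V_1,V_2)$ is a compact normal pair, $[V_2^*,V_1]$ is \emph{normal}, so these two closures coincide and lie in $\ker V_1^*\cap\ker V_2^*=\{0\}$; thus $[V_2^*,V_1]=0$ and the pair is doubly commuting. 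But for a doubly commuting pair one checks $C=(I-V_1V_1^*)(I-V_2V_2^*)$, the orthogonal projection onto $\ker V_1^*\cap\ker V_2^*=\{0\}$, contradicting $\text{rank}\,C=1$. Therefore $C$ is a rank-one \emph{projection}, so $\dim E_1=1$ and $\dim E_{-1}=0$; then the rank formula of Theorem~\ref{thm intro rank} reads $1=\text{rank}[V_2^*,V_1]+1+\dim\clk_+$, forcing $\text{rank}[V_2^*,V_1]=0$, so again the pair is doubly commuting.

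It remains to identify the doubly commuting pairs with rank-one defect. As just noted, $C=(I-V_1V_1^*)(I-V_2V_2^*)$ is the projection onto the joint wandering subspace $\clw:=\ker V_1^*\cap\ker V_2^*$, so $\dim\clw=1$. By the Wold decomposition~\eqref{eqn Wold DC} for doubly commuting pairs and irreducibility, $\clh$ must be the summand $\clh_{ss}$ (the $\clh_{uu}$ summand would make $V_1V_2$ unitary, contradicting the BCL condition; the $\clh_{us},\clh_{su}$ summands would give $C=0$), so $V_1$ and $V_2$ are both shifts. Writing $\clw=\mathbb{C}\xi$ with $\|\xi\|=1$, a routine computation using $V_i^*V_j=V_jV_i^*$ ($i\ne j$), $V_i^*V_i=I$ and $\xi\in\ker V_1^*\cap\ker V_2^*$ shows that $\{V_1^mV_2^n\xi:m,n\ge0\}$ is orthonormal and that its closed span is $(V_1,V_2)$-reducing, hence all of $\clh$; the unitary $\clh\to H^2(\D^2)$, $V_1^mV_2^n\xi\mapsto z^mw^n$, then intertwines $(V_1,V_2)$ with $(M_z,M_w)$. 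I expect the exclusion of the case $C\le0$ to be the genuine obstacle: it is precisely there that the \emph{normality} of the cross-commutator must be invoked (the rank formula of Theorem~\ref{thm intro rank} alone only yields $\text{rank}[V_2^*,V_1]=1$ in that case), while everything else is either bookkeeping with the rank formula or classical structure theory of doubly commuting isometries.
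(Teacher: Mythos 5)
Your proposal is correct and follows essentially the same route as the paper: the eigenvalue dichotomy $\pm 1$ from the symmetric-spectrum result for compact differences of projections, exclusion of the $-1$ case using normality of $[V_2^*,V_1]$, the rank formula to force $[V_2^*,V_1]=0$, and then the Wold decomposition for doubly commuting pairs together with irreducibility to land on $(M_z,M_w)$. The only (harmless) deviations are local: you rule out $C\le 0$ via the projection ordering $P_{\clw_1}\le P_{V_2\clw_1}$ and positivity of the defect for doubly commuting pairs, where the paper instead combines $\mathrm{ran}\,[V_2^*,V_1]\subseteq E_1=\{0\}$ with the rank formula, and you build the intertwining unitary by hand where the paper cites the Wold-type structure theorem for doubly commuting shifts.
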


In this case, it is trivial that
\[
\text{rank}[V_2^*,V_1] = 0,
\]
because $(V_1, V_2)$ in particular is a doubly commuting pair.

As pointed out in Theorem \ref{thm intro 3 pairs}, this exhausts the inventory of irreducible $n$-finite pairs. The representations of all irreducible $n$-finite pairs also yield an explicit set of unitary invariants for compact normal pairs: Let $(V_1, V_2)$ on $\clh$ be a compact normal pair. For simplicity, assume that
\[
k := \dim E_1(C(V_1, V_2)) >0.
\]
We follow the outcome of Theorem \ref{thm intro} and call the pair $(V_{1,0}, V_{2,0})$ on $\clh_0$ obtained there the \textit{shift-unitary part} of $(V_1, V_2)$. We show that (see, for instance, Section \ref{sec unitary inv}) for each $i = 1, \ldots, k$, the cross-commutator $[V_{2,i}^*, V_{1,i}]$ is normal and has rank $0$ or $1$. We define the \textit{fundamental sequence} corresponding to the pair $(V_1, V_2)$ to be the sequence of scalars $\{\alpha_i\}_{i =1}^k$ constructed as follows:
\[
\alpha_i :=
\begin{cases}
0 & \mbox{if } \text{ rank}[V_{2, i}^*, V_{1, i}] = 0 \\
\sigma([V_{2, i}^*, V_{1, i}]) \setminus \{0\} & \mbox{if} \text{ rank} [V_{2, i}^*, V_{1, i}] = 1.
\end{cases}
\]
In Theorem \ref{them complete unit inv}, we prove that the above sequence, along with the shift-unitary part (which is simple; see Remark \ref{rem shift unit}), is essentially a complete set of unitary invariants. In fact, we prove much more (see Theorem \ref{them complete unit inv} for more details and the complete version):

\begin{theorem}\label{thm intro unit inv}
Let $(V_1, V_2)$ be a compact normal pair on $\clh$ with $(V_{1,0}, V_{2,0})$ on $\clh_0$ as the shift-unitary part. We have the following:

(i) $(V_1|_{\clh_0^\perp}, V_2|_{\clh_0^\perp})  \cong M_1 \oplus M_2 \oplus M_3$, where $M_i$ is a direct sum of irreducible $i$-finite pairs, $i=1,2,3$.

(ii) Let $(\tilde{V}_1, \tilde{V}_2)$ on $\tilde{\clh}$ be another compact normal pair with the shift-unitary part $(\tilde V_{1,0}, \tilde V_{2,0})$ on $\tilde \clh_0$. Suppose $\{\tilde \alpha_i\}_{i =1}^{\tilde{k}}$ is the associated fundamental sequence with
\[
\tilde{k} =  \dim E_1(C(\tilde V_1, \tilde V_2)) >0.
\]
Then the following are equivalent:
\begin{enumerate}
\item $(V_1, V_2) \cong (\tilde{V_1}, \tilde{V_2})$.
\item $(V_{1,0}, V_{2,0}) \cong (\tilde V_{1,0}, \tilde V_{2,0})$, and $[V_2^*, V_1]|_{{\clh_0}^\perp} \cong [\tilde{V}_2^*, \tilde{V}_1]|_{{\tilde{\clh}_0^\perp}}$.
\item $(V_{1,0}, V_{2,0}) \cong (\tilde V_{1,0}, \tilde V_{2,0})$, $k = \tilde{k}$, and there exists a permutation $\sigma$ of $\{1, 2, \cdots, k\}$ such that
\[
\alpha_i = \tilde \alpha_{\sigma(i)} \qquad (i = 1, 2, \cdots, k).
\]
\end{enumerate}
\end{theorem}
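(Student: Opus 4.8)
The plan is to establish Theorem~\ref{thm intro unit inv} by decomposing an arbitrary compact normal pair into its distinguished building blocks and then tracking how unitary equivalence interacts with that decomposition. For part (i), I would start from the representation of Theorem~\ref{thm intro} applied to $(V_1, V_2)$: the orthogonal complement $\clh_0^\perp$ splits as $\bigoplus_{i=1}^k \clh_i$, where each $(V_{1,i}, V_{2,i})$ is an irreducible $n$-finite pair with $n \in \{1,2,3\}$ by Theorem~\ref{thm intro 3 pairs}. Collecting the summands with the same value of $n$ gives $M_1 \oplus M_2 \oplus M_3$ with $M_i$ a (possibly trivial, possibly infinite) orthogonal sum of irreducible $i$-finite pairs. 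The only subtlety is checking that the grouping is intrinsic, i.e.\ independent of the particular decomposition produced by Theorem~\ref{thm intro}; this follows because $M_i$ is characterised spectrally: on an irreducible $i$-finite pair, Theorems characterising $1$-, $2$-, and $3$-finite pairs pin down $\sigma(C(V_1,V_2)) \cap (0,1)$ (empty for $i=1,2$; a singleton $\{\lambda\}$ for $i=3$) and $\dim E_{-1}$ ($0$ except for $i=2$), so the three summands are distinguished by the spectral projections of the defect operator and thus are canonical.

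For part (ii), the implications $(1)\Rightarrow(2)\Rightarrow(3)$ and the converse $(3)\Rightarrow(1)$ should be organised as follows. For $(1)\Rightarrow(2)$: a unitary $U$ intertwining $(V_1,V_2)$ and $(\tilde V_1,\tilde V_2)$ automatically intertwines the defect operators $C(V_1,V_2)$ and $C(\tilde V_1,\tilde V_2)$ and the cross-commutators $[V_2^*,V_1]$ and $[\tilde V_2^*,\tilde V_1]$, since both are $*$-polynomial expressions in the $V_i$'s. Because the shift-unitary part $\clh_0$ is defined via the spectral measure of $C(V_1,V_2)$ (it is the part where $C$ vanishes together with the appropriate Wold-type splitting), $U$ must carry $\clh_0$ onto $\tilde\clh_0$ and $\clh_0^\perp$ onto $\tilde\clh_0^\perp$; restricting $U$ to these reducing subspaces gives both claims in (2). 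For $(2)\Rightarrow(3)$: from $[V_2^*,V_1]|_{\clh_0^\perp} \cong [\tilde V_2^*,\tilde V_1]|_{\tilde\clh_0^\perp}$ we read off $\dim E_1 = k = \tilde k$ (using, as recorded after Theorem~\ref{thm intro}, that $\dim E_1$ equals the number of irreducible summands and each contributes exactly $\dim E_1 = 1$ to the $3$-finite ones, etc.), and then the multiset of nonzero eigenvalues of the normal compact operator $[V_2^*,V_1]|_{\clh_0^\perp}$, counted with multiplicity, equals that of the tilde side; since this multiset is exactly $\{\alpha_i : \alpha_i \neq 0\}$ with the $0$'s accounting for the $1$-finite summands, a permutation $\sigma$ matching $\alpha_i$ to $\tilde\alpha_{\sigma(i)}$ exists. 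The delicate point here is bookkeeping the zero entries of the fundamental sequence (coming from $1$-finite summands, where the cross-commutator vanishes) against the kernel of the normal operator $[V_2^*,V_1]|_{\clh_0^\perp}$, so that the cardinalities $k$ and $\tilde k$ and the permutation are consistent; this requires knowing that $k$ counts \emph{all} irreducible summands, not just those with nonzero $\alpha_i$, which is where the identity $\dim E_1 = \sum_i \dim E_1(C(V_{1,i},V_{2,i}))$ and the fact that every irreducible $n$-finite pair has $\dim E_1 = 1$ enter.

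For the converse $(3)\Rightarrow(1)$, I would build $U$ blockwise. On the shift-unitary parts, $(V_{1,0},V_{2,0}) \cong (\tilde V_{1,0},\tilde V_{2,0})$ is given. On the complements, apply Theorem~\ref{thm intro} to both pairs to get decompositions into irreducible summands indexed by $\{1,\dots,k\}$ and $\{1,\dots,\tilde k\}$; since $k = \tilde k$ and $\alpha_i = \tilde\alpha_{\sigma(i)}$, the summand $(V_{1,i},V_{2,i})$ and the summand $(\tilde V_{1,\sigma(i)},\tilde V_{2,\sigma(i)})$ are irreducible $n$-finite pairs with the \emph{same} $n$ (because $\alpha_i = 0$ iff the pair is $1$-finite, $|\alpha_i| = 1$ iff $2$-finite, $0 < |\alpha_i| < 1$ iff $3$-finite) and the \emph{same} nonzero eigenvalue of the cross-commutator when $n \in \{2,3\}$. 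The complete-unitary-invariant results for $1$-, $2$-, and $3$-finite pairs (Theorems \ref{thm 1 finite}, \ref{thm 2 finite unit inv}, \ref{charac}) then give a unitary $U_i$ intertwining these two summands; setting $U = U_0 \oplus \bigoplus_{i=1}^k U_i$ yields the desired global unitary equivalence. The main obstacle I anticipate is not any single hard estimate but rather the careful verification that the decomposition of Theorem~\ref{thm intro} is canonical enough for this matching to make sense — specifically, that the assignment of a summand to its ``type'' $n$ and its invariant $\alpha_i$ is forced by the spectral data of $C(V_1,V_2)$ and $[V_2^*,V_1]$, and independent of the arbitrary choices (such as the enumeration of eigenvectors of $E_1$) made in proving Theorem~\ref{thm intro}; once that canonicity is nailed down, the rest is an exercise in assembling block-diagonal unitaries.
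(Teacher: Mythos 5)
Your part (i), the implication $(1)\Rightarrow(2)$, and the converse $(3)\Rightarrow(1)$ are sound and follow essentially the route the paper itself takes: the paper gives no separate argument for Theorem \ref{them complete unit inv} beyond the discussion preceding it, which is exactly the assembly you describe of Theorem \ref{main} with the classification and invariant results for irreducible $1$-, $2$-, $3$-finite pairs (Theorems \ref{thm 1 finite}, \ref{thm 2 finite unit inv}, \ref{charac}), and your attention to canonicity of the grouping is a welcome addition. One small imprecision in $(1)\Rightarrow(2)$: $\clh_0$ is not ``the part where $C(V_1,V_2)$ vanishes'' ($\ker C(V_1,V_2)$ meets every block $\clh_i$); the correct canonical description is that $\clh_0^\perp$ is the smallest $(V_1,V_2)$-reducing subspace containing $E_1(C(V_1,V_2))$, and it is this description that forces an intertwining unitary to carry $\clh_0$ onto $\tilde\clh_0$.

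The genuine gap is in $(2)\Rightarrow(3)$, at the step where you ``read off $k=\tilde k$'' from $[V_2^*,V_1]|_{\clh_0^\perp}\cong[\tilde V_2^*,\tilde V_1]|_{\tilde\clh_0^\perp}$. The unitary-equivalence class of a compact normal operator records only its nonzero eigenvalues with multiplicities and the dimension of its kernel. The $1$-finite blocks contribute nothing to the cross-commutator, and every block $\clh_i$ is infinite dimensional (being $H^2(\D^2)$, $H^2(\D)$, or $\cls_\lambda$ up to unitary equivalence), so the kernel of $[V_2^*,V_1]|_{\clh_0^\perp}$ has dimension $\aleph_0$ whenever $k\geq 1$, no matter how many zero entries the fundamental sequence has. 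Hence the restricted cross-commutator cannot count the $1$-finite summands, and the identity $\dim E_1=\sum_i\dim E_1(C(V_{1,i},V_{2,i}))$ you invoke does not help, because condition (2) gives you no access to $E_1$. Concretely, take $(V_1,V_2)=(M_z\oplus\gamma M_z|_{\cls_\lambda},\,M_w\oplus M_w|_{\cls_\lambda})$ on $H^2(\D^2)\oplus\cls_\lambda$ and $(\tilde V_1,\tilde V_2)=(\gamma M_z|_{\cls_\lambda},M_w|_{\cls_\lambda})$ on $\cls_\lambda$: both shift-unitary parts are trivial, and both restricted cross-commutators are rank-one normal operators with the single nonzero eigenvalue $\gamma\lambda$ acting on separable infinite-dimensional spaces, hence unitarily equivalent; yet $k=2\neq 1=\tilde k$ and the defect operators have ranks $4$ and $3$, so (1) and (3) fail. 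Thus the inference cannot be repaired by bookkeeping alone: you must feed in data that sees the $1$-finite blocks, for instance by also matching the restricted defect operators (whose eigenvalue-$1$ eigenspaces have dimensions $k$ and $\tilde k$) or by adding $k=\tilde k$ to condition (2) before matching the nonzero eigenvalues and counting zeros. Note that this deficiency is already present in condition (2) as printed in the statement, so any complete write-up should flag and strengthen (2) accordingly rather than attempt to derive (3) from it as it stands.
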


Let us comment on part (3) above. The verification of the unitary equivalence of the shift-unitary part is simple to do (cf. Remark \ref{rem shift unit}). Hence, the determination of the numerical invariant, specifically the fundamental sequence, constitutes the most intricate aspect of the complete unitary invariant.

Now we provide an analysis of the historical context and relevant connections that also underpin the assumptions of $n$-finite pairs and compact normal pairs. As pointed out prior to Definition \ref{def BCL pairs}, BCL pairs were studied by Berger, Coburn, and Lebow \cite{BCL1978} in the context of $C^*$-algebras generated by isometric pairs. One of the keys to their approach was identifying BCL pairs with triples
\[
(\cle, U, P),
\]
where $\cle$ is a Hilbert space, $U$ is a unitary, and $P$ is a projection on $\cle$. The work pointed out the difficulties of the structure of isometric pairs while answering a number of questions along the lines of $C^*$-algebras. While unitary and projection operators are individually among the simplest operators and lack direct interdependence, their interplay gives rise to considerable generality. As such, the structural challenges inherent in isometric pairs persist, though in a transformed guise. Classifying isometric pairs and figuring out computable invariants can still be done with the Berger, Coburn, and Lebow model, but it does so by delving considerably deeper into the realm of linear operator theory and function theory. In this paper, we specifically do so.

Condition (2) in Definition \ref{def comp norm} bears some historical resonance. Let $L_z$ and $L_w$ denote (following Laurent operators) the multiplication by the coordinate functions $z$ and $w$ on $L^2(\T^2)$, respectively. As in \eqref{eqn: inv sub Hardy}, an \textit{invariant subspace} of $L^2(\mathbb{T}^2)$ is a closed subspace $\mathcal{M} \subseteq L^2(\mathbb{T}^2)$ that is invariant under both $L_z$ and $L_w$. Let $\clm$ be an invariant subspace of $L^2(\T^2)$. Clearly, $(L_z|_{\clm}, L_w|_{\clm})$ is an isometric pair on $\clm$. Nakazi speculated in \cite{Nakazi} that
\[
[(L_w|_{\clm})^*, L_z|_{\clm}] = [(L_w|_{\clm})^*, L_z|_{\clm}]^*,
\]
if and only if
\[
[(L_w|_{\clm})^*, L_z|_{\clm}] = 0.
\]
However, in \cite{IO94}, Izuchi and Ohno  provided concrete examples (and hence repudiate Nakazi's conjecture) of invariant subspaces $\clm$ of $L^2(\T^2)$ for which
\[
[(L_w|_{\clm})^*, L_z|_{\clm}] = [(L_w|_{\clm})^*, L_z|_{\clm}]^* \neq 0.
\]
In fact, they fully characterized invariant subspaces of $L^2(\T^2)$ with these characteristics in the same paper. Then Nakazi restricted his question to the analytic part of $L^2(\T^2)$. This time, $\clm$ is an invariant subspace of $H^2(\D^2)$ (see Definition \ref{eqn: inv sub Hardy} for the notion of invariant subspaces). Set
\begin{equation}\label{eqn rest of Hardy inv}
R_z = M_z|_{\clm} \text{ and } R_w = M_w|_{\clm}.
\end{equation}
Then $(R_z, R_w)$ is a BCL pair on $\clm$ (see the closing remark in Section \ref{sec rank formula}). Nakazi refined his question in terms of the existence of invariant subspaces $\clm$ of $H^2(\D^2)$ for which
\[
[R_w^*,R_z] = [R_w^*,R_z]^* \neq 0.
\]
Eventually, K. J. Izuchi and K. H. Izuchi showed intricate examples of such invariant subspaces in \cite{II2006}. Their construction relied heavily on Rudin's description of a specific class of inner functions \cite[Theorem 5.4.5]{Rudin book}. Curiously, all of the cross-commutators provided as examples by Izuchi-Ohno and Izuchi-Izuchi are rank one. In our case, condition (2) is more broad; it needs the cross-commutator to be compact normal rather than rank-one self-adjoint. On the other hand, as demonstrated in Theorem \ref{thm intro}, the representations of irreducible $3$-finite pairs and the construction of examples by Izuchi and Izuchi serve as one of the fundamental building blocks in the representation of compact normal pairs. For more on not-so-standard invariant subspaces of $H^2(\D^2)$, we refer the reader to \cite{ACD, AC, R-jfa}.

We now make some remarks about defect operators (see Definition \ref{def defect operator}). In the context of isometric dilations for pairs of commuting contractions, defect operators were studied decades ago. However, in the context of Beurling-type properties of invariant subspaces of $H^2(\D^2)$ this was analyzed  more closely in \cite{Guo}. Defect operators have been a useful tool in the theory of invariant subspaces of $H^2(\D^2)$. However, see \cite{ACD} for some deviations.

The remaining part of the paper is organized as follows: Section \ref{sec preliminaries} collects some well-known results and fixes notations for future uses of the paper. In this section, we briefly recall the analytic models of BCL pairs. The model that is presented here \cite{MSS}, which will also be extensively used, differs slightly from the original model of Berger, Coburn, and Lebow. The present model is more explicit and, hence, more useful. Here we also prove a result concerning the eigenspaces of compact self-adjoint contractions that are the differences of two projections. This yields a generalization of one of the main results of \cite{HQY2015}. Section \ref{sec rank formula} is the heart of this paper, in which we establish a very useful rank formula for isometric pairs.

As previously mentioned, there are only three irreducible $n$-finite pairs: $n = 1, 2$, and $3$. Out of all the pairs, irreducible $3$-finite pairs are the most complex. Section \ref{sec properties of 3 finite} presents an overview of the characteristics of irreducible $3$-finite pairs. This encompasses an extensive spectral synthesis of irreducible $3$-finite pairs. In Section \ref{sec class of 3 finite}, we proceed with irreducible $3$-finite pairs and explicitly represent them by using the results from the previous section, coupled with additional observations. We also point out that a certain scalar, namely, the unique nonzero eigenvalue of the pairs' cross-commutator, is the complete unitary invariant of $3$-finite pairs. In Section \ref{sec 2 finite}, we represent and classify irreducible $2$-finite pairs. The representation of irreducible $1$-finite pairs is the easiest, and that has been completely explored in the short section, namely Section \ref{sec 1 finite}.

Section \ref{sec class n finite} provides an in-depth description of compact normal pairs. This section compiles all of the machinery developed in previous sections and creates the building blocks required for constructing compact normal pairs. The representation also requires some structural results that have also been developed in this section. Section \ref{sec unitary inv} uses the representations of compact normal pairs obtained in the preceding section to present a complete set of unitary invariants. This section, and hence the paper, concludes with some natural questions for future investigation.

\newsection{Preliminaries}\label{sec preliminaries}

This section serves to establish some notation and refresh the reader's memory of some well-known results that will be applied throughout the remainder of the paper. Along the way, in Theorem \ref{HiNewThm}, we prove a result concerning the eigenspace of certain compact self-adjoint operators that generalizes a result earlier proved in \cite[Theorem 4.3]{HQY2015}. We begin with the analytic representations of shift operators. The \textit{defect operator} of an isometry $T$ acting on some Hilbert space is defined by the operator $I-TT^*$. Observe that the defect operator of $T$ is a measure of $T$ not being unitary. Let $T$ be an isometry on $\clh$. The \textit{wandering subspace} of $T$ is defined by
\[
\mathcal{E} :=\ker T^*.
\]
Clearly, $\mathcal{E} = \text{ran} (I - TT^*)$. It is easy to see that
\[
T^n \cle \perp T^m \cle,
\]
for all $m \neq n$, $m, n \in \Z_+$. If, in addition, $T$ is a shift (that is, $T^{*m} \rightarrow 0$ in SOT; cf. \eqref{eqn def shift}), then we have the orthogonal decomposition of $\clh$ as
\begin{equation}\label{eqn: H Wold decomp}
\mathcal{H} = \bigoplus_{n = 0}^\infty T^n\mathcal{E}.
\end{equation}
This amounts to saying that $T$ is unitarily equivalent to $M_z$ on $H^2_{\cle}(\D)$, where $M_z$ is the operator of multiplication by the coordinate function $z$, and $H^2_{\cle}(\D)$ denotes the $\cle$-valued Hardy space over $\D$. We will often identify $H^2_{\cle}(\D)$ with the Hilbert space tensor product $H^2(\D) \otimes \cle$. Bearing this in mind, we will also often identify $M_z$ on $H^2_\cle(\D)$ with $M_z \otimes I_{\cle}$ on $H^2(\D) \otimes \cle$.

Now we turn to isometric pairs. Given an isometric pair $(V_1, V_2)$ on $\clh$, we will use the following notational convention throughout this paper:
\[
V = V_1 V_2,
\]
and
\[
\clw_i = \ker V_i^*,
\]
for all $i=1,2$. We also set
\[
\clw = \ker V^*.
\]
Therefore, $\clw$ and $\clw_i$ are wandering subspaces corresponding to the isometries $V$ and $V_i$, $i=1,2$, respectively. Recall that the defect operator is defined by (see Definition \ref{def defect operator})
\[
C(V_1, V_2) = I - V_1 V_1^* - V_2 V_2^* + V_1 V_2 V_1^* V_2^*.
\]
Since the right side of the above equals
\[
(I - V_1 V_1^*) - V_2 (I - V_1 V_1^*) V_2^* = (I - V_2 V_2^*) - V_1 (I - V_2 V_2^*) V_1^*,
\]
it follows that
\begin{equation}\label{formula}
C(V_1, V_2) = P_{\clw_1} - P_{V_2 \clw_1} = P_{\clw_2} - P_{V_1 \clw_2}.
\end{equation}
Given a Hilbert space $\clk$ and a closed subspace $\cls$, denote by $P_\cls$ the orthogonal projection onto $\cls$. It is useful to note that $C(V_1, V_2)$ is a self-adjoint operator, that is
\[
C(V_1, V_2)^* = C(V_1, V_2).
\]
Furthermore, as a difference of projections, it is readily apparent that $C(V_1, V_2)$ is a contraction.

We will now discuss the analytic structure of BCL pairs (see Definition \ref{def BCL pairs}). We start with a definition:

\begin{definition}
A BCL triple is an ordered triple $(\cle, U, P)$ consisting of a Hilbert space $\cle$, and a unitary $U$ and a projection $P$ acting on $\cle$.
\end{definition}

BCL pairs and BCL triples are interchangeable; the explanation is as follows: Let $(\cle, U, P)$ be a BCL triple. Consider Toeplitz operators $M_{\Phi_1}$ and $M_{\Phi_2}$ with polynomial analytic symbols
\[
\Phi_1(z) = (P^\perp + z P)U^*, \text{ and } \Phi_2(z) = U(P + zP^\perp) \qquad (z \in \D),
\]
where $P^{\perp} :=I - P$. It is easy to see that $(M_{\Phi_1}, M_{\Phi_2})$ is a BCL pair on $H^2_{\cle}(\D)$. Since, up to unitary equivalence, a shift operator is the multiplication operator $M_z$ on some vector-valued Hardy space (see \eqref{eqn: H Wold decomp} above), an application of the Beurling-Lax-Halmos theorem yields the converse of the above construction. That is, if $(V_1, V_2)$ is a BCL pair on $\clh$, then there exist a BCL triple $(\mathcal{E}, U, P)$ and a unitary $\eta: \clh \rightarrow H^2_{\cle}(\D)$ such that $\eta V_i = M_{\Phi_i} \eta$ for all $i=1,2$. The notation for this is (see \eqref{eqn unit equiv})
\[
(V_1, V_2) \cong (M_{\Phi_1}, M_{\Phi_2}).
\]
This provides an analytic representation of the class of BCL pairs (see \cite{BCL1978} for complete details).

\begin{theorem}\label{thm BCL classic}
Up to joint unitary equivalence, BCL pairs are of the form $(M_{\Phi_1}, M_{\Phi_2})$ on $H^2_{\cle}(\D)$ for BCL triples $(\mathcal{E}, U, P)$.
\end{theorem}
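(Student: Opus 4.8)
The plan is to prove both directions of the correspondence between BCL triples and BCL pairs, which is really the substance of Theorem \ref{thm BCL classic}.

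\textbf{From triples to pairs.} Given a BCL triple $(\cle, U, P)$, first I would verify directly that $M_{\Phi_1}$ and $M_{\Phi_2}$ are isometries on $H^2_\cle(\D)$. Since these are Toeplitz operators with analytic (in fact polynomial, of degree one) operator-valued symbols, it suffices to check that $\Phi_i$ is inner, i.e. that $\Phi_i(z)$ is unitary for a.e.\ $z \in \T$; this is an elementary computation using $P + P^\perp = I$, $P^2 = P = P^*$, and $U^*U = UU^* = I$. For instance $\Phi_1(z)^*\Phi_1(z) = U(P^\perp + \bar z P)(P^\perp + zP)U^* = U(P^\perp + P)U^* = I$ for $|z|=1$. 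Next I would check commutativity $M_{\Phi_1}M_{\Phi_2} = M_{\Phi_2}M_{\Phi_1}$, which reduces to $\Phi_1(z)\Phi_2(z) = \Phi_2(z)\Phi_1(z)$; both products equal $zI_\cle$ after a short calculation (this also shows $V = V_1V_2 \cong M_z \otimes I_\cle$, a shift, so $(M_{\Phi_1}, M_{\Phi_2})$ is genuinely a BCL pair). So a BCL triple produces a BCL pair, and this half is essentially bookkeeping.

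\textbf{From pairs to triples.} Let $(V_1, V_2)$ be a BCL pair on $\clh$. By definition $V = V_1 V_2$ is a shift, so by \eqref{eqn: H Wold decomp} there is a unitary $\eta\colon \clh \to H^2_\cle(\D)$ with $\cle = \clw = \ker V^*$ intertwining $V$ with $M_z$ (equivalently $M_z \otimes I_\cle$). Transport everything through $\eta$, so WLOG $\clh = H^2_\cle(\D)$, $V = M_z$, and $V_1, V_2$ are commuting isometries with product $M_z$. The key point is that each $V_i$ commutes with $V = M_z$: indeed $V_i V = V_i(V_1 V_2) = (V_1 V_2) V_i = V V_i$ using commutativity of the pair. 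Hence each $V_i$ is an analytic Toeplitz operator $M_{\Phi_i}$ for some $\Phi_i \in H^\infty_{\clb(\cle)}(\D)$. Because $V_i$ is an isometry, $\Phi_i$ is inner; because $M_{\Phi_1} M_{\Phi_2} = M_z$ we get $\Phi_1(z)\Phi_2(z) = zI$ a.e., which forces both $\Phi_i$ to be degree-one inner functions (a Blaschke--Potapov/degree argument: the product of their ``degrees'' is $1$). Writing $\Phi_1(z) = A + zB$ with $A, B \in \clb(\cle)$ and imposing that $\Phi_1$ is inner gives $A^*A + B^*B = I$, $A^*B = 0$, $B^*A = 0$ and $AA^* + BB^* = I$; these relations say exactly that $A$ is a partial isometry with $B$ supported on the orthocomplement of its initial/final spaces, and after setting $P = $ the final projection of $A$ (or arguing via the polar decomposition), one extracts a unitary $U$ and projection $P$ on $\cle$ with $\Phi_1(z) = (P^\perp + zP)U^*$ and correspondingly $\Phi_2(z) = U(P + zP^\perp)$. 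This recovers the BCL triple.

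\textbf{Main obstacle.} The genuinely nontrivial step is the last one: showing that an arbitrary degree-one inner $\clb(\cle)$-valued function whose "complementary" factor is also inner must have the rigid normal form $(P^\perp + zP)U^*$. The cleanest route is probably to avoid a brute-force coefficient analysis and instead argue structurally: since $V = M_z = V_1 V_2$ with $V_i$ isometries, decompose $\cle = \clw = \ker V^*$ and observe $\clw = \clw_1 \oplus V_1 \clw_2$ (a standard wandering-subspace identity for the product of two commuting isometries), which directly exhibits the projection $P$ as the projection onto one summand; the unitary $U$ then comes from identifying how $V_2$ maps $\clw_2$ into $\clw$. I would reference \cite{BCL1978} and \cite{MSS} for the details of this identification, since it is classical, and note that the present statement is exactly their theorem repackaged with the explicit symbols $\Phi_1, \Phi_2$ recorded above.
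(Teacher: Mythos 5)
Your proposal is correct in substance, and the forward direction (triple $\Rightarrow$ pair) is exactly the routine verification the paper leaves as ``easy to see.'' For the converse, however, you take a more computational route than the paper: after the Wold identification of $V_1V_2$ with $M_z$ on $H^2_{\clw}(\D)$, the paper simply invokes the Beurling--Lax--Halmos theorem and defers to \cite{BCL1978}, and its explicit triple comes from Theorem \ref{bcl} (quoted from \cite{MSS}) via the wandering-subspace decomposition $\clw = \clw_1 \oplus V_1\clw_2 = \clw_2 \oplus V_2\clw_1$, with $P = P_{\clw_1}$ and $U$ assembled from $V_2|_{\clw_1}$ and $V_1^*|_{V_1\clw_2}$ --- precisely the ``structural'' alternative you flag at the end. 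Your main route (commutant of the vector-valued shift gives $V_i = M_{\Phi_i}$, then pin down the symbols) does work, but two steps should be tightened. First, the ``degree''/Blaschke--Potapov heuristic is unnecessary and not a robust notion for operator-valued inner functions; the clean argument is that $\Phi_1(z)\Phi_2(z)=zI$ together with innerness forces $\Phi_1(e^{it})$ to be unitary a.e.\ and $\Phi_2(e^{it}) = e^{it}\Phi_1(e^{it})^*$, and analyticity of $\Phi_2$ then annihilates every Fourier coefficient $A_n$ of $\Phi_1$ with $n\ge 2$, so both symbols are linear. Second, unitarity (not just isometry) of the boundary values yields the full set of relations, including $AB^* = BA^* = 0$, which you omit but which are needed to see that $A+B$ is unitary and that $AA^*$ and $BB^*$ are complementary projections; with these one sets $U^* = A+B$ and $P = BB^*$ (so $P$ is the final projection of $B$, i.e.\ $P^\perp = AA^*$ is the final projection of $A$, not $P$ as written) to obtain $\Phi_1(z) = (P^\perp + zP)U^*$ and then $\Phi_2(z) = U(P + zP^\perp)$. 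What your symbol-level argument buys is a self-contained proof of the normal form; what the paper's wandering-subspace construction buys is a coordinate-free, explicit description of the triple $(\clw, U, P_{\clw_1})$ directly in terms of $(V_1, V_2)$, which is the form used throughout the rest of the paper.
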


Given a BCL pair, following \cite{MSS}, one can construct the corresponding BCL triple more explicitly. The analytic model of BCL pairs presented below is an explicit reformulation of the original result of Berger, Coburn, and Lebow, which will play an influential role in our analysis.

\begin{theorem}\cite[Lemma 3.1 and Theorem 3.3]{MSS}\label{bcl}
Let $(V_1, V_2)$ be an isometric pair. Then
\[
\clw = \clw_1 \oplus V_1 \clw_2 = \clw_2 \oplus V_2 \clw_1,
\]
and the operator
\[
U = \begin{bmatrix}
V_2|_{\clw_1} & \\
& V_1^*|_{V_1 \clw_2}
\end{bmatrix} : \clw_1 \oplus V_1 \clw_2 \to V_2\clw_1 \oplus \clw_2,
\]
defines a unitary on $\mathcal{W}$. Moreover, if $(V_1, V_2)$ is a BCL pair, then $(\clw, U, P_{\clw_1})$ is the BCL triple corresponding to $(V_1, V_2)$. In particular, $(V_1, V_2, V_1V_2)$ on $\mathcal{H}$ and $(M_{\Phi_1}, M_{\Phi_2}, M_z)$ on $H^2_{\clw}(\mathbb{D})$ are unitarily equivalent, where
\[
\Phi_1(z) = (P_{\clw_1}^\perp + z P_{\clw_1})U^*, \text{ and } \Phi_2(z) = U(P_{\clw_1} + zP_{\clw_1}^\perp) \qquad (z \in \D).
\]
\end{theorem}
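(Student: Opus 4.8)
The plan is to treat the three assertions separately: the decomposition of $\clw$ and the unitarity of $U$ hold for an arbitrary isometric pair, while the identification with $(M_{\Phi_1},M_{\Phi_2},M_z)$ uses only that $V=V_1V_2$ is a shift. For the first decomposition I would start from $\ran V_2=\clh\ominus\clw_2$ and apply the isometry $V_1$: since $V_1$ preserves orthogonality and $V_1\clw_2\subseteq\ran V_1$, this gives $\ran V=\ran(V_1V_2)=\ran V_1\ominus V_1\clw_2$, and taking orthogonal complements in $\clh$ yields $\clw=\clw_1\oplus V_1\clw_2$. Running the same computation with the product written as $V_2V_1$ — legitimate because $V_1$ and $V_2$ commute as operators — gives $\ran V=\ran V_2\ominus V_2\clw_1$ and hence $\clw=\clw_2\oplus V_2\clw_1$. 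Both sums are automatically orthogonal, since $\clw_i=(\ran V_i)^\perp$ whereas $V_j\clw_i\subseteq\ran V_j$.

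For the unitarity of $U$, note that by the previous step its domain $\clw_1\oplus V_1\clw_2$ and its codomain $V_2\clw_1\oplus\clw_2$ both coincide with $\clw$, and both are orthogonal direct sums. The first block $V_2|_{\clw_1}\colon\clw_1\to V_2\clw_1$ is a surjective isometry, hence unitary, and the second block $V_1^*|_{V_1\clw_2}$ is exactly the inverse of the surjective isometry $V_1|_{\clw_2}\colon\clw_2\to V_1\clw_2$, hence also unitary. A block-diagonal operator with unitary blocks between two orthogonal decompositions of the same space is unitary, so $U$ is a unitary on $\clw$.

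For the last part I would invoke the Wold decomposition \eqref{eqn: H Wold decomp}: since $V$ is a shift, there is a unitary identifying $\clh$ with $H^2_\clw(\D)$ that carries $V$ to $M_z$ and $\clw$ to the constants. Because $V_1V=V_1V_1V_2=V_1V_2V_1=VV_1$, and likewise $V_2V=VV_2$, the operators $V_1,V_2$ commute with $M_z$, so they are determined by their action on the constants and it suffices to compute $V_i\xi$ for $\xi\in\clw$ and recognise the symbols $\Phi_1,\Phi_2$. The decisive point is that the two decompositions of $\clw$ above are the adapted coordinates: writing $\xi=V_2a+b$ with $a\in\clw_1,\ b\in\clw_2$ (using $\clw=V_2\clw_1\oplus\clw_2$) one reads off $U^*\xi=a+V_1b$, hence $P_{\clw_1}U^*\xi=a$ and $P_{\clw_1}^\perp U^*\xi=V_1b$, so that
\[
P_{\clw_1}^\perp U^*\xi+V\,P_{\clw_1}U^*\xi=V_1b+V_1V_2a=V_1(V_2a+b)=V_1\xi,
\]
which is precisely $V_1=M_{\Phi_1}$ with $\Phi_1(z)=(P_{\clw_1}^\perp+zP_{\clw_1})U^*$; dually, writing $\xi=c+V_1d$ with $c\in\clw_1,\ d\in\clw_2$ (using $\clw=\clw_1\oplus V_1\clw_2$) gives $UP_{\clw_1}\xi=V_2c$ and $UP_{\clw_1}^\perp\xi=d$, whence $UP_{\clw_1}\xi+V\,UP_{\clw_1}^\perp\xi=V_2c+V_1V_2d=V_2\xi$, i.e. $V_2=M_{\Phi_2}$. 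Consistency with $V_1V_2=V\cong M_z$ is the elementary identity $(P_{\clw_1}^\perp+zP_{\clw_1})(P_{\clw_1}+zP_{\clw_1}^\perp)=zI$.

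The only genuinely delicate point is this last step: one must recognise that $\clw=V_2\clw_1\oplus\clw_2$ is the decomposition adapted to $U^*$ (and computes $V_1$) while $\clw=\clw_1\oplus V_1\clw_2$ is the one adapted to $U$ (and computes $V_2$), and then track carefully which projection and which adjoint of a unitary block acts on which summand — in particular that each $V_i\xi$ has only degree-$0$ and degree-$1$ parts, so the symbols are affine in $z$. Everything else is routine bookkeeping with commuting isometries and the Wold decomposition of the shift $V$.
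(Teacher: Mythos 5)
Your proof is correct: the decomposition $\clw=\clw_1\oplus V_1\clw_2=\clw_2\oplus V_2\clw_1$, the unitarity of $U$ blockwise, and the identification of $V_1,V_2$ with $M_{\Phi_1},M_{\Phi_2}$ via the Wold identification of $V=V_1V_2$ with $M_z$ and the computation of $V_i\xi$ for $\xi\in\clw$ all check out, including the key formulas $U^*(V_2a+b)=a+V_1b$ and $U(c+V_1d)=V_2c+d$. The paper itself gives no proof of this statement (it is quoted from \cite{MSS}), and your argument is essentially the standard one from that reference, so there is nothing further to reconcile.
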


Because of this, in what follows, given a BCL pair $(V_1, V_2)$, we will use the representation $(M_{\Phi_1}, M_{\Phi_2})$ of $(V_1, V_2)$, which corresponds to the BCL triple $(\mathcal{W}, U, P_{\mathcal{W}_1})$.

Let $(V_1, V_2) = (M_{\Phi_1}, M_{\Phi_2})$ be a BCL pair on $H^2_{\clw}(\D)$. A simple computation yields
\[
I - M_{\Phi_1} M_{\Phi_1}^* = P_{\mathbb{C}} \otimes P_{\clw_1},
\]
and
\[
I - M_{\Phi_2} M_{\Phi_2}^* = P_{\mathbb{C}} \otimes U P_{\clw_1}^\perp U^*.
\]
With respect to the orthogonal decomposition $H^2_{\clw}(\D) = \clw \oplus zH^2_{\clw}(\D)$, it also follows that
\[
C(V_1, V_2) = C(M_{\Phi_1}, M_{\Phi_2}) = \begin{bmatrix}
P_{\clw_1} - U P_{\clw_1} U^*  & 0
\\
0 & 0
\end{bmatrix}.
\]
Consequently
\[
(\ker C(V_1, V_2))^\perp \subseteq \clw,
\]
and hence, it suffices to study $C(V_1, V_2)$ only on $\clw$.

At this moment, we need to pause and relook at \eqref{formula}, which says that the defect operator of an isometric pair is a difference of two projections. We also pointed out that the defect operator is a contraction and a self-adjoint operator. This viewpoint was employed in \cite[Theorem 4.3]{HQY2015} to examine the eigenspace structure of defect operators of isometric pairs. With this as motivation, we now explore the eigenspace structure of compact self-adjoint contractions that can be represented as a difference of pairs of projections. First, we set up a notation for eigenspace. For each $\mu \in \mathbb{C}$ and bounded linear operator $X$ on some Hilbert space $\clk$, we define the eigenspace
\begin{equation}\label{eqn def eigenspace 1}
E_{\mu}(X):= \ker \big(X - \mu I_{\clk}\big).
\end{equation}
Suppose $A$ is a self-adjoint contraction acting on a Hilbert space $\clh$. Then $\ker A$, $E_1(A)$, and $E_{-1}(A)$ reduce $A$, and hence there exists a closed $A$-reducing subspace $\clh_0 \subseteq \clh$ such that \cite{Hal}
\begin{equation}\label{eqn H as generic}
\clh = \ker A \oplus E_1(A) \oplus E_{-1}(A) \oplus \mathcal{H}_0.
\end{equation}
The part $A_0:= A|_{\clh_0}$ is known as the \textit{generic part} of $A$ (see Halmos \cite{Hal} for more details). Let $E$ denote the spectral measure of $A$. Then
\[
A_0 = \int_{\sigma(A_0)} \lambda dE_\lambda,
\]
is the spectral representation of $A_0$. Define closed $A_0$-reducing subspaces $\clk_+$ and $\clk_-$ by
\[
\clk_+ = E[0, 1]\clh_0,
\]
and
\[
\clk_- = E[-1, 0]\clh_0.
\]
Now, we prove the eigenspace property for $A$ by assuming that it is the difference of two projections. Although this result is a consequence of
\cite[Proposition 2.1]{DJS2016}, we provide a proof here for the sake of completeness and readers
convenience. The spectral theorem of compact self-adjoint operators, certain projection methods from \cite{DJS2016}, and the above-mentioned Halmos constructions provide the foundation of the proof.

\begin{theorem}\label{HiNewThm}
Let $A$ be a compact self-adjoint contraction on a Hilbert space $\clh$. Suppose $A$ is the difference of two projections. If $\lambda \in \sigma(A)\setminus\{0, \pm 1\}$, then
\[
-\lambda \in \sigma(A),
\]
and
\[
\dim E_\lambda(A) = \dim E_{-\lambda}(A).
\]	
\end{theorem}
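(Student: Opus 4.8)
The plan is to realize $A$ as a difference $P-Q$ of two orthogonal projections, extract from this pair a single bounded self‑adjoint operator $R$ that anticommutes with $A$, and use (the sign of) $R$ to conjugate $A_0 := A|_{\clh_0}$ onto $-A_0$; once $A_0 \cong -A_0$, both assertions fall out immediately. So write $A = P - Q$ and set $R := I - P - Q$, a bounded self‑adjoint operator. Expanding the products and using only $P^2 = P = P^*$, $Q^2 = Q = Q^*$, one gets the two identities
\[
AR + RA = 0 \qquad \text{and} \qquad A^2 + R^2 = I .
\]
(The first is $(P-Q)(I-P-Q)+(I-P-Q)(P-Q) = (QP-PQ)+(PQ-QP)=0$; the second is an equally short bookkeeping of the monomials $P,Q,PQ,QP$.)

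Next I would verify that the subspace $\clh_0$ from \eqref{eqn H as generic} also reduces $R$. From $AR=-RA$ it is immediate that $R$ maps $\ker A$ into itself, $E_1(A)$ into $E_{-1}(A)$, and $E_{-1}(A)$ into $E_1(A)$; hence $R$ leaves $\ker A \oplus E_1(A) \oplus E_{-1}(A)$ invariant, and being self‑adjoint it leaves the orthogonal complement $\clh_0$ invariant too, so $\clh_0$ reduces $R$. Put $A_0 = A|_{\clh_0}$ and $R_0 = R|_{\clh_0}$; then $R_0$ is self‑adjoint, $A_0 R_0 = -R_0 A_0$, and $R_0^2 = I_{\clh_0} - A_0^2$. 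Here compactness enters: since $A$ is compact, its nonzero spectrum consists of eigenvalues that can accumulate only at $0$, while $\pm 1$ are not eigenvalues of $A_0$ by construction; therefore $\|A_0\| < 1$, so $R_0^2 = I - A_0^2$ is bounded below and $R_0$ is an \emph{invertible} self‑adjoint operator on $\clh_0$. (If $\clh_0 = \{0\}$ there is nothing to prove, as then $\sigma(A) \subseteq \{0,\pm 1\}$.)

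Now take the polar decomposition $R_0 = W|R_0|$. Because $R_0$ is invertible and self‑adjoint, $W = \mathrm{sgn}(R_0)$ is a \emph{self‑adjoint unitary}, while $|R_0| = (I - A_0^2)^{1/2}$ is a function of $A_0$ and hence commutes with $A_0$. Substituting $R_0 = W|R_0|$ into $A_0 R_0 = -R_0 A_0$, commuting $|R_0|$ past $A_0$, and cancelling the invertible factor $|R_0|$ on the right, one obtains $A_0 W = -W A_0$, and hence
\[
W A_0 W^* = -A_0 .
\]
Thus $A_0$ is unitarily equivalent to $-A_0$. Finally, if $\lambda \in \sigma(A)\setminus\{0,\pm 1\}$ then $\lambda$ is an eigenvalue of $A$ (compactness again) and $E_\lambda(A)$ is orthogonal to $\ker A$, $E_1(A)$, $E_{-1}(A)$, so $E_\lambda(A) = E_\lambda(A_0) \neq \{0\}$; likewise $E_{-\lambda}(A) = E_{-\lambda}(A_0)$. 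The relation $W A_0 W^* = -A_0$ then gives $-\lambda \in \sigma(A_0) \subseteq \sigma(A)$ and $\dim E_\lambda(A) = \dim E_\lambda(A_0) = \dim E_{-\lambda}(A_0) = \dim E_{-\lambda}(A)$.

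The main obstacle — and the only point where compactness is genuinely used — is the invertibility of $R_0$: one must rule out $1$ being an accumulation point of $\sigma(A)$ so that $\|A_0\| < 1$, which in turn guarantees that the polar part $W$ of $R_0$ is a true unitary (with $W^2 = I$) rather than a mere partial isometry. Without compactness one would instead have to pass through a spectral‑multiplicity / direct‑integral argument, which is essentially the route taken in \cite{DJS2016}; everything else above is routine operator bookkeeping together with the Halmos decomposition \eqref{eqn H as generic}.
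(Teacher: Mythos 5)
Your proof is correct, but it takes a genuinely different route from the paper. The paper works with the Halmos decomposition $\clh = \ker A \oplus E_1(A) \oplus E_{-1}(A) \oplus \clh_0$, splits the generic part as $\clh_0 = \clk_+ \oplus \clk_-$, and then \emph{imports} the key symmetry from \cite[Proposition 2.1 and Remark 3.1]{DJS2016}: there exists a unitary $u : \clk_+ \to \clk_-$ with ${A_0}_- = u\,{A_0}_+ u^*$, after which the spectral theorem for compact self-adjoint operators gives $\sigma(A_0) = -\sigma(A_0)$ and the equality of eigenspace dimensions. You instead construct the intertwiner by hand: from $A = P - Q$ you form $R = I - P - Q$, check the identities $AR + RA = 0$ and $A^2 + R^2 = I$, observe that $\clh_0$ reduces $R$, and use compactness to get $\|A_0\| < 1$, hence $R_0^2 = I - A_0^2$ bounded below, so that the polar part $W = \operatorname{sgn}(R_0)$ is a self-adjoint unitary on $\clh_0$ with $W A_0 W^* = -A_0$; the two conclusions then follow exactly as in the paper (eigenspaces for $\lambda \neq 0, \pm 1$ sit inside $\clh_0$ by orthogonality of eigenspaces, and $\sigma(A_0) \subseteq \sigma(A)$). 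The trade-off: your argument is self-contained and elementary, replacing the citation to Shi--Ji--Du by a few lines of algebra, but it genuinely uses compactness to make $R_0$ invertible; in the general bounded case $1$ may be an accumulation point of $\sigma(A_0)$, $R_0$ is then only injective with dense range, and one has to fall back on the two-subspace/structure-theorem route that the paper (via \cite{DJS2016}) follows, which is why their argument, though less elementary here, is the one that generalizes. Your checks of the anticommutation identity, the reduction of $\clh_0$ by $R$, the commutation of $|R_0| = (I - A_0^2)^{1/2}$ with $A_0$, and the degenerate case $\clh_0 = \{0\}$ are all sound, so no gap remains.
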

\begin{proof}
We proceed with the orthogonal decomposition of $\clh$, the spectral representation of $A$, and the notations $\clk_+$ and $\clk_-$ introduced prior to stating this theorem. Define the restriction operators
\[
\begin{cases}
{A_0}_+ = A_0|_{\clk_+}
\\
{A_0}_- = -A_0|_{\clk_-}.
\end{cases}
\]
Observe that ${A_0}_+$ and ${A_0}_-$ are the positive and negative parts of $A_0$, respectively. We have the matrix representation
\[
A_0 = \begin{bmatrix}
{A_0}_+ &  \\
 & -{A_0}_-
\end{bmatrix},
\]
on
\[
\clh_0 = \clk_+ \oplus \clk_-.
\]
Suppose further that $A$ is a difference of two projections. By \cite[Proposition 2.1]{DJS2016} and \cite[Remark 3.1]{DJS2016}, there is a unitary operator $u: \clk_+ \rightarrow \clk_-$ such that
\[
{A_0}_- = u {A_0}_+ u^*.
\]
We also define Hilbert spaces
\[
\clh_+ = \ker A \oplus E_1(A) \oplus E_{-1}(A) \oplus \clk_+ \oplus \clk_+,
\]
and
\[
\clh_- = \ker A \oplus E_1(A) \oplus E_{-1}(A) \oplus \clk_+ \oplus \clk_-.
\]
Then
\[
U := I_{\ker A} \oplus I_{E_1(A)} \oplus I_{E_{-1}(A)} \oplus I_{\clk_+} \oplus u,
\]
defines a unitary operator $U: \clh_+ \rightarrow \clh_-$. Therefore, the operator
\[
\tilde{A} := U^* A U:\clh_+ \rightarrow \clh_+,
\]
admits the following block-diagonal operator matrix representation
\begin{equation}\label{HiNew00}
\tilde{A} =
\begin{bmatrix}
0 & & & \\
& I & & \\
& & -I & \\
& & & {A_0}_+ & \\
& & & & -{A_0}_+\\
\end{bmatrix}.
\end{equation}
Since $A$ is compact, it then follows from the spectral theorem for compact self-adjoint operators that the spectrum
\[
\sigma(A) \subseteq [-1, 1],
\]
is a countable set, and
\[
\sigma(A_0) = \sigma(A) \cap (-1, 1)\setminus\{0\}.
\]
From the definitions of $\clk_+$ and $\clk_-$, it follows that
\[
\clk_+ = \underset{{\lambda \in \sigma(A_0) \cap (0, 1)}}{\bigoplus}E_\lambda(A_0),
\]
and
\[
\clk_- = \underset{{\lambda \in \sigma(A_0) \cap (-1, 0)}}{\bigoplus}E_\lambda(A_0).
\]
Therefore
\[
\clk_+ = \underset{{\lambda \in \sigma(A) \cap (0, 1)}}{\bigoplus}E_\lambda(A),
\]
and
\[
\clk_- = \underset{{\lambda \in \sigma(A) \cap (-1, 0)}}{\bigoplus}E_\lambda(A).
\]
Also, it follows from the unitary equivalence of ${A_0}_+$ on $\clk_+$ and ${A_0}_-$ on $\clk_-$ that
\[
\sigma({A_0}_+) = \sigma({A_0}_-),
\]
and consequently
\[
\begin{split}
\sigma(A_0) & = \sigma(A_0|_{\clk_+}) \cup \sigma(A_0|_{\clk_-})
\\
& = \sigma({A_0}_+) \cup \sigma(-{A_0}_-)
\\
& = \sigma({A_0}_+) \cup -\sigma({A_0}_-)
\\
& = \sigma({A_0}_+) \cup -\sigma({A_0}_+).
\end{split}
\]
We conclude that
\begin{equation}\label{HiNew3}
\lambda \in \sigma(A_0) \text{ if and only if } -\lambda \in \sigma(A_0).
\end{equation}
Moreover, for each $\lambda \in \sigma({A_0}_+)$, the unitary equivalence of ${A_0}_+$ on $\clk_+$ and ${A_0}_-$ on $\clk_-$ yields
\[
\dim E_\lambda({A_0}_+) = \dim E_\lambda({A_0}_-).
\]
Therefore,   for $\lambda \in \sigma({A_0}_+)$, we have
\[
\begin{split}
\dim E_\lambda(A) & = \dim E_\lambda({A_0}_+)
\\
& = \dim E_\lambda({A_0}_-)
\\
& = \dim E_\lambda(-A_0|_{\clk_-})
\\
& = \dim E_{-\lambda}(A_0|_{\clk_-})
\\
& = \dim E_{-\lambda}(A).
\end{split}
\]
This completes the proof of the theorem.
\end{proof}

Note that using \eqref{HiNew3}, we also have that
\begin{equation}\label{eqn K+}
\clk_+ = \underset{{\lambda \in \sigma(A) \cap (0, 1)}}{\oplus}E_\lambda(A)
\end{equation}
and
\begin{equation}\label{eqn K-}
\clk_- = \underset{{\lambda \in \sigma(A) \cap (0, 1)}}{\oplus}E_{-\lambda}(A).
\end{equation}

Theorem \ref{HiNewThm} significantly unifies a result previously established in \cite[Theorem 4.3]{HQY2015} within the framework of isometric pairs. Since we will be needing the particular version of \cite[Theorem 4.3]{HQY2015}, we elaborate on it in full detail. Let $(V_1, V_2)$ be a BCL pair on $\clh$. Assume, in addition, that $C(V_1, V_2)$ is a compact operator. Note that $C(V_1, V_2) $ is a self-adjoint contraction since it is the difference of two projections (see the remarks following \eqref{formula}). Therefore, as observed earlier
\[
\sigma(C(V_1, V_2)) \subseteq [-1, 1].
\]
Recall from \eqref{eqn def eigenspace 1}, for a bounded linear operator $X$ on a Hilbert space $\clk$, the eigenspace corresponding to $\mu \in \mathbb{C}$ is denoted by
\[
E_{\mu}(X):= \ker \big(X - \mu I_{\clk}\big).
\]
In the case of our isometric pair $(V_1, V_2)$, we simplify the notation as
\begin{equation}\label{eqn def eigenspace 2}
E_{\mu}:= E_{\mu}(C(V_1, V_2)) = \ker \big(C(V_1, V_2) - \mu I \big).
\end{equation}
Set
\[
\Lambda = \sigma(C(V_1, V_2)) \cap (0,1).
\]
Note that $\Lambda$ is at most a countable set. Let $\lambda \in (0,1)$. By Theorem \ref{HiNewThm}, it follows that if $\lambda \in \Lambda$, then $-\lambda \in \sigma(C(V_1, V_2))$, and
\[
\dim E_{\lambda} = \dim E_{-\lambda}.
\]
Since $C(V_1, V_2)$ is a compact self-adjoint operator, this implies
\begin{equation}\label{eq-C perp E1}
(\ker C(V_1, V_2))^{\perp} = E_1 \bigoplus_{\lambda \in \Lambda} E_{\lambda} \bigoplus E_{-1} \bigoplus_{\lambda \in \Lambda} E_{-\lambda},
\end{equation}
and $C(V_1, V_2)|_{(\ker {C(V_1, V_2)})^{\perp}}$ is unitarily equivalent to the diagonal block matrix:
\begin{equation}\label{strucdiag}
C(V_1, V_2)|_{(\ker {C(V_1, V_2)})^{\perp}} \cong
\left[ \begin{array}{cccc}
I_{\mathbb{C}^{l_1}} & 0 & 0 & 0 \\
0 & D & 0 & 0\\
0 & 0 & - I_{\mathbb{C}^{l_{-1}}}  & 0 \\
0 & 0 & 0 & -D\\
\end{array}
\right],
\end{equation}
where $l_1 = \mbox{dim} E_1$, $l_{-1} = \mbox{dim} E_{-1}$, $D = \bigoplus_{\lambda} \lambda I_{\mathbb{C}^{k_{\lambda}}}$ , and
\[
k_{\lambda} = \mbox{dim} E_{\lambda} = \mbox{dim} E_{-\lambda}.
\]
Note that $l_1, l_{-1} \in \mathbb{Z}_+$. Combining the results mentioned above yields the following, which recovers \cite[Theorem 4.3]{HQY2015}. This result will be another important tool for what we do in the next sections.

\begin{theorem}\label{structure}
Let $(V_1, V_2)$ be a BCL pair with a compact defect operator. Then for each
\[
\lambda \in \sigma(C(V_1, V_2)) \setminus \{0, \pm 1\},
\]
we have $-\lambda \in \sigma(C(V_1, V_2))$, and
\[
\dim E_{\lambda} = \dim E_{-\lambda}.
\]
Moreover, the nonzero part of the defect operator $C(V_1, V_2)$ is unitarily equivalent to a block diagonal matrix of the form \eqref{strucdiag}.
\end{theorem}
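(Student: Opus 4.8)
The plan is to deduce Theorem \ref{structure} almost entirely from Theorem \ref{HiNewThm} together with the compact self-adjoint spectral theorem applied to the defect operator $C(V_1, V_2)$. First I would record that for a BCL pair $(V_1, V_2)$, the defect operator $C(V_1, V_2)$ is a self-adjoint contraction by \eqref{formula} and the remarks following it, so its spectrum lies in $[-1, 1]$; adding the hypothesis that $C(V_1, V_2)$ is compact, the spectral theorem for compact self-adjoint operators tells us that $\sigma(C(V_1, V_2))$ is at most countable, accumulating only possibly at $0$, and that the nonzero part of $C(V_1, V_2)$ decomposes as an orthogonal direct sum of finite-dimensional eigenspaces $E_\mu$ over the nonzero eigenvalues $\mu$.

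Next I would invoke Theorem \ref{HiNewThm}, noting that $C(V_1, V_2)$ satisfies its hypotheses: it is a compact self-adjoint contraction and, being a difference of two projections by \eqref{formula}, it falls under the scope of that theorem. Hence for every $\lambda \in \sigma(C(V_1, V_2)) \setminus \{0, \pm 1\}$ we get $-\lambda \in \sigma(C(V_1, V_2))$ and $\dim E_\lambda = \dim E_{-\lambda}$, which is precisely the first assertion of Theorem \ref{structure}. This symmetry lets me group the nonzero spectrum into $\{1\}$ (if present), $\{-1\}$ (if present), and pairs $\{\lambda, -\lambda\}$ with $\lambda \in \Lambda := \sigma(C(V_1, V_2)) \cap (0,1)$.

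For the moreover part, I would assemble the orthogonal decomposition \eqref{eq-C perp E1} of $(\ker C(V_1, V_2))^\perp$ into $E_1$, $E_{-1}$, and the pieces $E_\lambda$, $E_{-\lambda}$ for $\lambda \in \Lambda$, which is legitimate because these eigenspaces are mutually orthogonal (distinct eigenvalues of a self-adjoint operator) and, by compactness, their closed span is exactly the orthogonal complement of the kernel. Restricting $C(V_1, V_2)$ to this invariant subspace, it acts as $I$ on $E_1$, as $-I$ on $E_{-1}$, as $\lambda I$ on each $E_\lambda$ and $-\lambda I$ on each $E_{-\lambda}$; bundling the positive pieces into $D = \bigoplus_{\lambda \in \Lambda} \lambda I_{\mathbb{C}^{k_\lambda}}$ with $k_\lambda = \dim E_\lambda = \dim E_{-\lambda}$, and reordering the summands, gives exactly the block form \eqref{strucdiag} up to the unitary that permutes the summands. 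This reordering is the only slightly delicate bookkeeping point, and it is routine: the unitary equivalence is realized by the identification of $(\ker C(V_1, V_2))^\perp$ with the external direct sum listed in \eqref{strucdiag}, using $\dim E_\lambda = \dim E_{-\lambda}$ to match up the $D$ and $-D$ blocks. There is no real obstacle here—the whole statement is a packaging of Theorem \ref{HiNewThm} and standard spectral theory—so the "hard part," such as it is, is purely notational: keeping track of the pairing $\lambda \leftrightarrow -\lambda$ and writing $(\ker C(V_1, V_2))^\perp$ in the order demanded by \eqref{strucdiag}.
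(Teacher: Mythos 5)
Your proposal is correct and follows essentially the same route as the paper: the paper also derives Theorem \ref{structure} by applying Theorem \ref{HiNewThm} to the defect operator (a compact self-adjoint contraction which is a difference of two projections by \eqref{formula}) and then packaging the spectral theorem for compact self-adjoint operators into the decomposition \eqref{eq-C perp E1} and the block form \eqref{strucdiag}. No gaps.
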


We conclude this section by elucidating the rationale behind the study of BCL pairs among the set of isometric pairs. In fact, the primary obstacle to the characterization problem of isometric pairs is the characterization of BCL pairs. For if $(V_1, V_2)$ is an isometric pair on $\mathcal{H}$, then applying the von Neumann-Wold theorem to
\[
V:= V_1V_2,
\]
one finds unique orthogonal decomposition (see \eqref{eqn WD 1})
\[
\clh = \clh_u \oplus \clh_s,
\]
where $\clh_u$ and $\clh_s$ are closed $V$-reducing subspaces, and $V|_{\clh_u}$ is a unitary, and $V|_{\clh_s}$ is a shift. One can easily show that $\clh_u$ and $\clh_s$ are $(V_1, V_2)$-reducing subspaces \cite[Lemma 6.1]{MSS}. Therefore, $(V_1|_{\mathcal{H}_u}, V_2|_{\mathcal{H}_u})$ is a commuting pair of unitaries and $(V_1|_{\mathcal{H}_s}, V_2|_{\mathcal{H}_s})$ is a BCL pair. As we have a fair understanding of pairs of commuting unitaries (like a definite spectral theorem for tuples of commuting unitaries or even normal operators), it is natural to shift our attention solely to the category of BCL pairs. 

\section{A rank formula}\label{sec rank formula}

The goal of this section is to link the ranks of defect operators and cross-commutators of isometric pairs. This result will be extensively used thereafter. The rank result might be interesting by itself.

First, we again consider the problem of representing self-adjoint contractions, which are the differences of two projections. Recall from the proof of Theorem \ref{HiNewThm} that if $A$ is a self-adjoint contraction on a Hilbert space $\clh$, which is the difference of two projections, then up to unitary equivalence, $\clh$ admits the orthogonal decomposition
\[
\clh = \ker A \oplus \ker(A-I) \oplus \ker (A+I) \oplus \clk \oplus \clk,
\]
for some closed subspace $\clk$ of $\clh$, and with respect to this decomposition of $\clh$, $A$ admits the block-diagonal operator matrix representation (see \eqref{HiNew00})
\[	
A =
\begin{bmatrix}
0 & & & \\
& I & & \\
& & -I & \\
& & & D & \\
& & & & -D\\
\end{bmatrix},
\]
where $D$ is a positive contraction on $\clk$. In other words, the operator $A$ is an example of an operator that can be represented as the difference of pairs of projections. Moreover, the pair of projections can be completely parameterized. More specifically  \cite[Theorem 3.2]{DJS2016}:

\begin{theorem}\label{struc}
With notations as above, the diagonal operator $A$ is a difference of two projections. Moreover, if $A = P - Q$ for some projections $P$ and $Q$, then there exist a projection $R$ defined on $\ker A$ and a unitary $U$ on $\mathcal{K}$ commuting with $D$ on $\mathcal{K}$ such that
\[
P = R \oplus I \oplus 0 \oplus P_U \quad \mbox{and} \quad Q = R \oplus 0 \oplus I \oplus Q_U,
\]
where $P_U$ and $Q_U$ are projections on $\mathcal{K} \oplus \mathcal{K}$ defined by
\[
P_U = \frac{1}{2} \left[ \begin{array}{cc}
I + D & U(I-D^2)^{\frac{1}{2}} \\
U^* (I-D^2)^{\frac{1}{2}} & I - D\\
\end{array}
\right],
\]
and
\[
Q_U = \frac{1}{2} \left[ \begin{array}{cc}
I - D & U(I-D^2)^{\frac{1}{2}} \\
U^* (I-D^2)^{\frac{1}{2}} & I + D\\
\end{array}
\right].
\]
\end{theorem}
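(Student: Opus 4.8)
The plan is to handle the two assertions in the theorem one at a time, the first being a short verification and the second the real work. For the existence statement --- that the block-diagonal $A = 0 \oplus I \oplus (-I) \oplus D \oplus (-D)$ is a difference of two projections --- I would simply exhibit the pair, taking $R = 0$ and $U = I_{\clk}$ (or any unitary on $\clk$ commuting with $D$) and setting $P = 0 \oplus I \oplus 0 \oplus P_U$, $Q = 0 \oplus 0 \oplus I \oplus Q_U$. Self-adjointness of $P_U$ and $Q_U$ is clear because $D$ and $(I - D^2)^{1/2}$ are bounded self-adjoint operators; the identity $P_U^2 = P_U$ amounts, after multiplying out the $2 \times 2$ blocks, to the relations $(I+D)^2 + (I - D^2) = 2(I+D)$, $(I - D^2) + (I - D)^2 = 2(I - D)$, and $(I+D) + (I-D) = 2I$, where the hypotheses $U^*U = UU^* = I$ and $UD = DU$ are exactly what collapses the term $U(I-D^2)^{1/2}U^*(I-D^2)^{1/2}$ to $I - D^2$; the computation for $Q_U$ is identical, and $P_U - Q_U = D \oplus (-D)$ is read off the diagonal. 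Since $I$, $-I$, and $0$ are trivially differences of projections, this gives $A = P - Q$.

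For the converse, let $A = P - Q$ with $P, Q$ projections on $\clh = \ker A \oplus \ker(A - I) \oplus \ker(A + I) \oplus \clk \oplus \clk$. The first step is the classical identity $P A^2 = A^2 P = P - PQP$ (and its analogue for $Q$), which shows that $P$ and $Q$ commute with $A^2$, hence with every spectral projection of $A^2 = 0 \oplus I \oplus I \oplus D^2 \oplus D^2$. Consequently $P$ and $Q$ are block-diagonal for the coarser decomposition into $\ker A$, the summand $\ker(A-I) \oplus \ker(A+I)$, and $\clk \oplus \clk$. On $\ker A$ we have $0 = A = P - Q$, so the two corners coincide; call the common projection $R$. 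On $\ker(A - I) \oplus \ker(A + I)$ we have $(P - Q)^2 = A^2 = I$; the standard consequence that $(P-Q)^2 = I$ forces $PQ = QP = 0$ and $P + Q = I$, together with $2P - I = P - Q = I \oplus (-I)$, shows $P = I \oplus 0$ and $Q = 0 \oplus I$ on that summand.

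The heart of the argument is the block $\clk \oplus \clk$, where $P - Q = D \oplus (-D)$, and I expect the main difficulty to lie here. Writing $P = (P_{ij})$ with $P_{ij}$ acting on $\clk$, commutation with $A^2|_{\clk \oplus \clk} = D^2 \oplus D^2$ gives $P_{ij} D^2 = D^2 P_{ij}$, hence $P_{ij} D = D P_{ij}$ by passing to the positive square root, and likewise for $Q$. Using $Q_{11} = P_{11} - D$, $Q_{22} = P_{22} + D$, $Q_{12} = P_{12}$ and subtracting the diagonal entries of $P^2 = P$ from those of $Q^2 = Q$, the quadratic and off-diagonal terms cancel and one is left with $2D P_{11} = D(I + D)$ and $2D P_{22} = D(I - D)$; since $D$ is injective (it is the generic positive part, so it has no kernel) this gives $P_{11} = \tfrac12(I + D)$ and $P_{22} = \tfrac12(I - D)$. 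Re-inserting these into the diagonal entries of $P^2 = P$ yields $P_{12}P_{12}^* = P_{12}^*P_{12} = \tfrac14(I - D^2)$, so $T := P_{12}$ is normal with $|T| = \tfrac12(I - D^2)^{1/2}$, and the off-diagonal relations of $P^2 = P$ are then automatically consistent. Because $I - D^2$ is injective, $|T|$ has trivial kernel, and by normality $T$ has dense range, so the polar part $U$ in $T = U|T|$ is a \emph{unitary} on $\clk$; and since $T$ and $|T|$ commute with $D$ while $|T|$ has dense range, $U$ commutes with $D$. Comparing entries now shows $P = P_U$ and, via the formulas for $Q_{ij}$, $Q = Q_U$, so altogether $P = R \oplus I \oplus 0 \oplus P_U$ and $Q = R \oplus 0 \oplus I \oplus Q_U$. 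The one delicate point throughout is this last block, and within it the passage from $P_{12}^*P_{12} = P_{12}P_{12}^* = \tfrac14(I - D^2)$ to a unitary polar factor commuting with $D$: this is exactly where the injectivity of $I - D^2$ --- equivalently, that $D$ has no eigenvalue $1$, which is built into the reduction to the generic part --- is essential. A less self-contained alternative would be to apply Halmos's two-subspaces theorem to the pair $(\ran P, \ran Q)$ and match the resulting normal form against $A$.
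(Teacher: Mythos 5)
Your proposal is correct, but there is nothing in the paper to compare it against line by line: the paper does not prove Theorem \ref{struc} at all, it simply imports it from Shi--Ji--Du \cite[Theorem 3.2]{DJS2016} (just as Theorem \ref{HiNewThm} leans on \cite[Proposition 2.1, Remark 3.1]{DJS2016}). Your argument is a valid self-contained derivation of the cited result. The forward direction (take $R=0$, $U=I_{\clk}$ and verify $P_U^2=P_U$, $Q_U^2=Q_U$, $P_U-Q_U=D\oplus(-D)$, using $UD=DU$ to collapse the cross terms) is routine and complete. In the converse, the chain is sound: $PA^2=A^2P=P-PQP$ gives commutation with $A^2$, hence with its spectral projections; the eigenspace identifications $\ker A^2=\ker A$ and $\ker(A^2-I)=\ker(A-I)\oplus\ker(A+I)$ force block-diagonality with respect to the coarse decomposition — note that this step already uses $\ker D=\{0\}$ and $\ker(I-D^2)=\{0\}$, not only the polar-decomposition step at the end, so it is worth saying there too that these hold because $\ker A$ and $E_{\pm1}(A)$ have been split off (as you do note later, this is exactly the generic-part reduction). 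On $\ker(A\mp I)$ the standard fact $\ker(P-Q\mp I)=(\operatorname{ran}P\cap\ker Q)$, resp. $(\ker P\cap\operatorname{ran}Q)$, gives $P=I\oplus 0$, $Q=0\oplus I$ as you claim. On $\clk\oplus\clk$ your entrywise bookkeeping is correct: commutation of the $P_{ij}$ with $D$ (via the square root), cancellation yielding $2DP_{11}=D(I+D)$ and $2DP_{22}=D(I-D)$, injectivity of $D$ to solve for $P_{11},P_{22}$, then $P_{12}P_{12}^*=P_{12}^*P_{12}=\tfrac14(I-D^2)$, and finally the polar factor of $P_{12}$ is unitary and commutes with $D$ precisely because $I-D^2$ is injective and $|P_{12}|$ has dense range. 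This recovers the parametrization $P=R\oplus I\oplus 0\oplus P_U$, $Q=R\oplus 0\oplus I\oplus Q_U$. Your closing remark that one could instead invoke Halmos's two-subspace theorem for $(\operatorname{ran}P,\operatorname{ran}Q)$ is also a legitimate alternative route, closer in spirit to how \cite{DJS2016} itself is organized.
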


The above result is one of the tools that will be utilized for proving the rank formula. We also need to compute the ranks of $P_U$ and $Q_U$ that we do in the following lemma. Part of the proof of the lemma is motivated by \cite[Theorem 3.3]{DSPS}.

\begin{lemma}\label{rank}
In the setting of Theorem \ref{struc}, we have the following identity:
\[
\text{rank } P_U = \text{rank }Q_U = \dim \mathcal{K}.
\]
\end{lemma}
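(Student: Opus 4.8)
The plan is to compute the ranks of $P_U$ and $Q_U$ directly from their $2\times 2$ block-operator forms on $\clk \oplus \clk$, exploiting the fact that $P_U$ and $Q_U$ are projections (so that $\text{rank} = \dim \ran = \dim(\clk\oplus\clk) - \dim\ker$), together with the spectral decomposition of the positive contraction $D$. Writing $D = \int_{[0,1]} t\, dF_t$, one can split $\clk = \ker(I-D) \oplus \clk'$ where $D$ acts as a contraction with $\|D|_{\clk'}\|$ strictly less than $1$ in the sense that $I - D^2$ is injective on $\clk'$ (more precisely $\clk' = \ran(I-D^2)^{1/2}\overline{\phantom{x}}$, so $(I-D^2)^{1/2}$ is injective there). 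Note however that, in the setting of Theorem \ref{struc}, $\clk$ is the generic part of $A$, so $A$ — and hence $D$ — has no eigenvalue $\pm 1$; thus $I - D^2$ is injective on all of $\clk$ and $(I-D^2)^{1/2}$ has dense range. This is the structural fact I would isolate first.

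Next I would show $\ker P_U = \{0\}$, which immediately gives $\text{rank } P_U = \dim(\clk\oplus\clk) = 2\dim\clk$... but wait — that cannot match the claimed answer $\dim\clk$. So instead the correct computation must be: $P_U$ is a projection with $\text{rank } P_U = \dim\clk$. The cleanest route: compute $\ker P_U$ and show it has dimension exactly $\dim\clk$. Suppose $(x,y)^{t} \in \ker P_U$. Then $(I+D)x + U(I-D^2)^{1/2}y = 0$ and $U^*(I-D^2)^{1/2}x + (I-D)y = 0$. From these two equations, using that $U$ commutes with $D$ (hence with $(I-D^2)^{1/2}$), I would eliminate one variable: apply $(I+D)$ to the second equation or multiply through appropriately to derive a relation forcing $(I-D)(I+D)y - (I-D^2)y = 0$ type identity — i.e. the system is rank-deficient in a controlled way. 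Concretely, from the first equation $x = -(I+D)^{-1}U(I-D^2)^{1/2}y$ wherever $(I+D)$ is invertible (it always is, since $-1\notin\sigma(D)$ as $D\geq 0$); substituting into the second yields $-U^*(I-D^2)^{1/2}(I+D)^{-1}U(I-D^2)^{1/2}y + (I-D)y = 0$, and since $U$ commutes with $D$ this collapses to $-(I-D^2)(I+D)^{-1}y + (I-D)y = -(I-D)y + (I-D)y = 0$, an identity. Hence $\ker P_U = \{(x,y) : x = -(I+D)^{-1}U(I-D^2)^{1/2}y,\ y \in \clk\}$, which is the graph of a bounded operator on $\clk$ and therefore has dimension exactly $\dim\clk$. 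Since $P_U$ is a projection on $\clk\oplus\clk$, $\text{rank } P_U = 2\dim\clk - \dim\ker P_U = 2\dim\clk - \dim\clk = \dim\clk$.

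The same computation applies verbatim to $Q_U$, whose matrix differs from $P_U$ only by swapping $I+D$ and $I-D$ on the diagonal; the off-diagonal entries are identical, and the elimination identity $-(I-D^2)(I-D)^{-1}\cdot\text{(...)} $ goes through in the same way — here one must note $1 \notin \sigma(D)$ so that $(I-D)$ is invertible, which is exactly the genericity of $A$ recorded in the first step. Thus $\dim\ker Q_U = \dim\clk$ and $\text{rank } Q_U = \dim\clk$ as well.

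The main obstacle is purely a bookkeeping one about invertibility: $(I+D)^{-1}$ is harmless since $D\geq 0$, but $(I-D)^{-1}$ requires $1\notin\sigma(D)$, and $(I-D^2)^{1/2}$-manipulations require care because $(I-D^2)^{1/2}$ need not be boundedly invertible even if injective. The way I would sidestep this is to avoid dividing by $(I-D)$ or $(I-D^2)^{1/2}$ altogether and instead argue at the level of the quadratic form / the explicit graph description above, or alternatively observe that $P_U$ and $Q_U = (P_U)$ conjugated by the unitary $\text{diag}(I,-I)$-type symmetry combined with $P \mapsto I - P$ relations, reducing the second rank computation to the first. I would present the graph-of-an-operator argument as the main line since it makes $\dim\ker P_U = \dim\clk$ transparent without any unbounded-inverse subtleties, invoking only that $(I+D)^{-1}$ and $(I-D)^{-1}$ exist as bounded operators — which holds because, in the setting of Theorem \ref{struc}, $D$ is the generic part and so $\sigma(D)\subseteq [0,1)$ with $0$ possibly but $1$ never an eigenvalue, and in fact one should record at the outset that we may harmlessly assume $1\notin\sigma(D)$ after splitting off the (finite- or infinite-dimensional) piece where $D = I$, since there $P_U, Q_U$ act as honest projections of equal rank by a direct $2\times 2$ scalar check.
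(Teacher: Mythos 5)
Your core computation for $P_U$ --- that $\ker P_U$ is exactly the graph $\{(-(I+D)^{-1}U(I-D^2)^{1/2}y,\ y): y\in\clk\}$, using only the commutation of $U$ with $D$ and the invertibility of $I+D$ (which holds since $D\geq 0$) --- is correct, and it is a genuinely different route from the paper's: the paper first shows $P_U$ is injective on $\clk\oplus\{0\}$ and $Q_U$ on $\{0\}\oplus\clk$, settles the infinite-dimensional case by cardinality, and only in the finite-dimensional case identifies $\ran P_U$ with $P_U(\clk\oplus\{0\})$ using that $D$ and $I-D$ are then invertible. However, there is a genuine gap in your treatment of $Q_U$: you invoke $(I-D)^{-1}$ and justify it by asserting that $1\notin\sigma(D)$ in the setting of Theorem \ref{struc}. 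That is not available. In that setting $1$ is never an \emph{eigenvalue} of $D$ (the generic part excludes $\ker(A\mp I)$), but $1$ may lie in the continuous spectrum, so $I-D$ need not be boundedly invertible, and ``splitting off the piece where $D=I$'' removes nothing because $\ker(I-D)=\{0\}$ while the spectrum at $1$ persists. The repair stays entirely inside your method: for $Q_U$ eliminate the \emph{second} variable through the $(I+D)$ corner, i.e. solve $U^*(I-D^2)^{1/2}x+(I+D)y=0$ for $y$; the same cancellation then gives $\ker Q_U=\{(x,\ -(I+D)^{-1}U^*(I-D^2)^{1/2}x): x\in\clk\}$, and $(I-D)^{-1}$ is never needed.

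A second, smaller gap: the count $\text{rank}\, P_U = 2\dim\clk-\dim\ker P_U$ is only meaningful when $\dim\clk<\infty$, whereas the lemma is applied in the paper with $\clk_+$ possibly infinite-dimensional. For infinite $\dim\clk$ you should instead observe either that $\ran P_U=(\ker P_U)^\perp$ is again a graph (of $-T^*$ in the flipped coordinates, where $T$ is the operator whose graph is the kernel), hence has dimension $\dim\clk$, or that your kernel description shows $P_U$ is injective on $\clk\oplus\{0\}$, so $\dim\clk\le\text{rank}\,P_U\le\dim(\clk\oplus\clk)=\dim\clk$. With these two repairs your argument is complete, and it is arguably more uniform than the paper's, since it avoids the case split on $\dim\clk$ and any use of invertibility of $D$ or $I-D$.
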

\begin{proof}
For each $x \in \clk$, the representations of $P_U$ and $Q_U$ imply
\[
P_U(x \oplus 0) = \frac{I + D}{2}x \oplus U^* \frac{(I - D^2)^{\frac{1}{2}}}{2}x,
\]
and
\[
Q_U(0 \oplus x) = U \frac{(I - D^2)^{\frac{1}{2}}}{2}x \oplus \frac{I + D}{2}x.
\]
Note that $D$ is a positive contraction, and hence
\[
x = 0,
\]
whenever
\[
P_U(x \oplus 0) = 0,
\]
or
\[
Q_U(0 \oplus x) = 0.
\]
Consequently
\[
P_U|_{\clk \oplus \{0\}}:\clk \oplus \{0\} \to \clk \oplus \clk,
\]
and
\[
Q_U|_{\{0\} \oplus \clk} : \{0\} \oplus \clk \to \clk \oplus \clk,
\]
are injective operators. Therefore, if
\[
\dim \clk = \infty,
\]
we clearly have
\[
\text{rank }P_U = \text{rank }Q_U = \dim \clk (= \infty).
\]
Now assume that
\[
\dim \clk < \infty.
\]
In this case, $D$ as well as $I-D$ are positive and invertible operators. If $x \in \clk$, then, as in the first part of the proof of this lemma, we compute
\[
\begin{split}
P_U(x \oplus 0) & = \frac{I + D}{2}x \oplus U^* \frac{(I - D^2)^{\frac{1}{2}}}{2}x
\\
& = P_U \Big(0 \oplus U^*\sqrt{\frac{I+D}{I-D}}x\Big),
\end{split}
\]
and by duality
\[
\begin{split}
Q_U(0 \oplus x) & = U \frac{(I - D^2)^{\frac{1}{2}}}{2}x \oplus \frac{I + D}{2}x
\\
& = Q_U \Big(U\sqrt{\frac{I+D}{I-D}}x \oplus 0\Big).
\end{split}
\]
So we find
\[
\ran P_U = \{P_U(x \oplus 0) : x \in \clk\},
\]
and
\[
\ran\, Q_U = \{ Q_U(0 \oplus x) : x \in \clk\}.
\]
Moreover, the vectors on the right-hand sides of $P_U(x \oplus 0)$ and $Q_U(0 \oplus x)$ in the above pair of equalities readily imply that $\tau: \ran P_U \rightarrow \ran Q_U$ defined by
\[
\tau (P_U(x \oplus 0)) = Q_U(0 \oplus x) \qquad (x \in \clk),
\]
is a linear isomorphism. In particular
\[
\mbox{rank} P_U = \mbox{rank} Q_U.
\]
Also, the map
\[
\clk \ni x \mapsto P_U(x \oplus 0) \in \text{ran}P_U,
\]
is clearly a linear isomorphism, which yields
\[
\dim \clk = \text{rank}P_U.
\]
Thus, we have proved that $\dim \clk = \text{rank}P_U = \text{rank}Q_U$. This completes the proof of the lemma.	
\end{proof}

Now we return to isometric pairs. Let $(V_1, V_2)$ be an isometric pair on $\clh$. In the upcoming discussion, we will closely adhere to the strategy laid out in the proof of Theorem \ref{HiNewThm}. Additionally, we will use all the notations that were presented at the outset of Section \ref{sec preliminaries} for isometric pairs. As an example, recall that $\clw = \ker (V_1V_2)^*$. Let
\[
\mathcal{N} = \mathcal{W} \ominus \big(\ker C(V_1, V_2)\big)^\perp,
\]
and also set
\[
\clh_0 = \clw \ominus \big(\mathcal{N} \oplus E_1 \oplus E_{-1}\big).
\]
Therefore
\[
\clw = \mathcal{N} \oplus E_1 \oplus E_{-1} \oplus \clh_0.
\]
This decomposition is comparable with \eqref{eqn H as generic}. Therefore, following the discussion preceding Theorem \ref{HiNewThm}, we recognize that $C(V_1, V_2)|_{\clh_0}$ is the generic part of $C(V_1, V_2)$, and then we consider the spectral decomposition of $C(V_1, V_2)|_{\clh_0}$ as
\[
C(V_1, V_2)|_{\clh_0} = \int_{\sigma\big(C(V_1, V_2)|_{\clh_0}\big)}\lambda dE_\lambda.
\]
Similarly, we also set
\[
\clk_+ = E[0,1]\clh_0,
\]
and
\[
\clk_- = E[-1,0]\clh_0.
\]
We take a brief break in order to offer a definition for later usage.

\begin{definition}\label{def + part}
Let $(V_1, V_2)$ be an isometric pair. The positive generic part of $(V_1,V_2)$ is the closed subspace $\clk_+$ defined by
\[
\clk_+ = E[0,1]\clh_0.
\]
\end{definition}

In other words, the generic part of $(V_1,V_2)$ is the closed subspace corresponding to the positive part of the generic part of $C(V_1, V_2)$.

Returning to our setting of isometric pair $(V_1, V_2)$, we therefore have
\begin{equation}\label{eqn rep of W}
\mathcal{W} = \mathcal{N} \oplus E_1 \oplus E_{-1} \oplus \mathcal{K}_+ \oplus \clk_-.
\end{equation}
With respect to this decomposition, we represent $C(V_1, V_2)|_{\clw}$ as
\begin{equation}\label{eqn C(V1 V2)}
C(V_1, V_2)|_{\clw} = \left[ \begin{array}{ccccc}
0_{\mathcal{N}} &  & & & \\
& I_{E_1} & & & \\
& & -I_{E_{-1}}  & &  \\
& &  &  C(V_1, V_2)|_{\clk_+} & \\
& & & & C(V_1, V_2)|_{\clk_-}\\
\end{array}
\right].
\end{equation}
As $C(V_1, V_2)$ is a difference of two projections, as in the proof of Theorem \ref{HiNewThm}, there is a unitary $u: \clk_+ \to \clk_-$ such that
\begin{equation}\label{HiNew6}
u C(V_1, V_2)|_{\clk_+}u^* = - C(V_1, V_2)|_{\clk_-}.
\end{equation}
Define
\begin{equation}\label{eqn tilde E}
\tilde{\mathcal{E}} := \mathcal{N} \oplus E_1 \oplus E_{-1} \oplus \mathcal{K}_+ \oplus \mathcal{K}_+.
\end{equation}
Consequently, we have the unitary operator (recall the representation of $\clw$ in \eqref{eqn rep of W}) \begin{equation}\label{eqn Def of U}
U:= I_{\mathcal{N}} \oplus I_{E_1} \oplus I_{E_{-1}} \oplus I_{\mathcal{K}_+} \oplus u : \tilde{\cle} \longrightarrow \mathcal{W}.
\end{equation}
Set
\[
\tilde{C} := U^* C(V_1, V_2) U : \tilde{\mathcal{E}} \rightarrow \tilde{\mathcal{E}}.
\]
With respect to the decomposition of $\tilde{\cle}$ as in \eqref{eqn tilde E}, we have
\[
\tilde{C} = \left[ \begin{array}{ccccc}
0_{\mathcal{N}} & & &
\\
& I_{E_1} & & &
\\
& & -I_{E_{-1}}  & &
\\
& &  & D &
\\
& &  &  & -D
\\
\end{array}
\right]
\]
where
\[
D = C(V_1, V_2)|_{\clk_+}.
\]
Now, by \eqref{formula}, we know that $C(V_1,V_2)$ can be expressed as a difference of projections:
\[
C(V_1,V_2) = P_{\clw_1} - P_{V_2 \clw_1} = P_{\clw_2} - P_{V_1 \clw_2},
\]
and hence
\[
\tilde{C} = U^* P_{\clw_1} U - U^* P_{V_2 \clw_1} U = U^* P_{\clw_2} U - U^* P_{V_1 \clw_2} U.
\]
By the difference of projection formulae, Theorem \ref{struc}, there exist a projection $R$ on $\mathcal{N}$ and a unitary $w$ on $\mathcal{K}_+$ that commutes with $D$ such that
\begin{equation}\label{eqn1}
U^* P_{\clw_1} U = R \oplus I_{E_1} \oplus 0 \oplus \left[ \begin{array}{cc} \frac{I+D}{2} & \frac{\sqrt{I - D^2}}{2} w\\
w^*\frac{\sqrt{I - D^2}}{2} & \frac{I - D}{2}
\end{array}
\right],
\end{equation}
and
\begin{equation}\label{eqn2}
U^* P_{V_2\clw_1} U = R \oplus 0 \oplus I_{E_{-1}} \oplus \left[ \begin{array}{cc} \frac{I-D}{2} & \frac{\sqrt{I - D^2}}{2} w\\ w^*\frac{\sqrt{I - D^2}}{2} & \frac{I + D}{2}
\end{array}
\right].
\end{equation}
Using the definition of the unitary $U$ in \eqref{eqn Def of U}, we also obtain from the above that:
\begin{equation}\label{eqn3}
P_{\clw_1} = R \oplus I_{E_1} \oplus 0 \oplus \left[ \begin{array}{cc} \frac{I+D}{2} & \frac{\sqrt{I - D^2}}{2} w u^*\\
uw^*\frac{\sqrt{I - D^2}}{2} & u\frac{I - D}{2}u^*
\end{array}
\right],
\end{equation}
and
\begin{equation}\label{eqn4}
P_{V_2\clw_1} = R \oplus 0 \oplus I_{E_{-1}} \oplus \left[ \begin{array}{cc} \frac{I-D}{2} & \frac{\sqrt{I - D^2}}{2} wu^*\\
uw^*\frac{\sqrt{I - D^2}}{2} & u\frac{I + D}{2}u^*
\end{array}
\right].
\end{equation}
Moreover, since $\clw = \clw_1 \oplus V_1 \clw_2$, we have that $I_{\clw} = P_{\clw_1} + P_{V_1 \clw_2}$, and hence $U^* P_{V_1\clw_2} U = I_{\widetilde{\cle}} - U^* P_{\clw_1} U$. Similarly, $U^* P_{\clw_2} U = I_{\widetilde{\cle}} - U^* P_{V_2\clw_1} U$. Therefore, we conclude, by using \eqref{eqn1} and \eqref{eqn2}, that
\begin{equation}\label{missed}
U^* P_{V_1\clw_2} U = R^\perp \oplus 0 \oplus I_{E_{-1}} \oplus \left[ \begin{array}{cc} \frac{I-D}{2} & -\frac{\sqrt{I - D^2}}{2} w \\
-w^*\frac{\sqrt{I - D^2}}{2} & \frac{I + D}{2}
\end{array}
\right],
\end{equation}
and
\begin{equation}\label{missed_1}
U^* P_{\clw_2} U = R^\perp \oplus I_{E_1} \oplus 0 \oplus \left[ \begin{array}{cc} \frac{I+D}{2} & -\frac{\sqrt{I - D^2}}{2} w \\-w^*\frac{\sqrt{I - D^2}}{2} & \frac{I - D}{2} \\
\end{array}
\right].
\end{equation}

\begin{remark}\label{rem rank form}
Particular attention must be paid to isometric pairs with compact defect operators, as they will be utilized frequently in the subsequent sections. Let $(V_1, V_2)$ be an isometric pair. Suppose that $C(V_1, V_2)$ is compact. By \eqref{eqn K+} and \eqref{eqn K-}, it follows that
\[
\clk_+ = \underset{{\lambda \in \sigma (C(V_1, V_2)) \cap (0, 1)}}{\oplus}E_\lambda,
\]
and
\[
\clk_- = \underset{{\lambda \in \sigma (C(V_1, V_2)) \cap (0, 1)}}{\oplus}E_{-\lambda}.
\]
Consequently
\begin{align*}
D = \displaystyle\bigoplus_{\lambda \in \sigma(C(V_1, V_2)) \cap (0, 1)} \lambda I_{E_{\lambda}}.
\end{align*}
Moreover, in this case, it is evident from the description of the unitary $u : \clk_+ \to \clk_-$ (see \eqref{HiNew6}) that \[u(E_\lambda) = E_{-\lambda}\]
for all $\lambda \in \sigma(C(V_1, V_2)) \cap (0, 1)$.
\end{remark}

We are now ready to establish the desired relation between the rank of $C(V_1, V_2)$ and the rank of the cross-commutator $[V_2^*, V_1]$ (see \eqref{eqn def eigenspace 2} for the definition of $E_{\pm 1}$ and Definition \ref{def + part} for the meaning of $\clk_+$).

\begin{theorem}\label{index}
If $(V_1, V_2)$ is an isometric pair, then
\[
\text{rank} C(V_1, V_2) = \text{rank}[V_2^*, V_1] + \dim E_1 + \dim \clk_+,
\]	
where $\clk_+ \subseteq \clh$ is the positive generic part of $(V_1, V_2)$. If, in addition
\[
\dim E_{-1} < \infty,
\]
then
\[
\text{rank} C(V_1, V_2) = 2 \text{rank}[V_2^*, V_1] + \dim E_1 - \dim E_{-1}.
\]
\end{theorem}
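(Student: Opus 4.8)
The plan is to express both $\text{rank}\,C(V_1,V_2)$ and $\text{rank}[V_2^*,V_1]$ through the three quantities $\dim E_1$, $\dim E_{-1}$, $\dim\clk_+$, and then to eliminate $\dim\clk_+$. Write $C:=C(V_1,V_2)$. Since $C$ is self-adjoint, $\text{rank}\,C=\dim(\ker C)^\perp$; by \eqref{eqn rep of W} and the definition of $\mathcal{N}$ we have $(\ker C)^\perp=E_1\oplus E_{-1}\oplus\clk_+\oplus\clk_-$, while the unitary $u\colon\clk_+\to\clk_-$ of \eqref{HiNew6} gives $\dim\clk_-=\dim\clk_+$. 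Hence
\[
\text{rank}\,C=\dim E_1+\dim E_{-1}+2\dim\clk_+ .
\]
Thus it suffices to show $\text{rank}[V_2^*,V_1]=\dim E_{-1}+\dim\clk_+$: granting this, the first identity follows by rewriting $\dim E_1+\dim E_{-1}+2\dim\clk_+=\dim E_1+(\dim E_{-1}+\dim\clk_+)+\dim\clk_+$, and when $\dim E_{-1}<\infty$ the second follows by substituting $\dim\clk_+=\text{rank}[V_2^*,V_1]-\dim E_{-1}$ (all equalities being read in $\mathbb{Z}_+\cup\{\infty\}$ in the evident way when a dimension is infinite).

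First I would bring the cross-commutator down to $\clw$. From $V_1V_2=V_2V_1$ and $V_2^*V_2=I$ we get $V_2^*V_1V_2=V_1$; right-multiplying by $V_2^*$ and using $V_2V_2^*=I-P_{\clw_2}$ gives
\[
[V_2^*,V_1]=V_2^*V_1\,P_{\clw_2},
\]
so $\overline{\ran[V_2^*,V_1]}=\overline{V_2^*(V_1\clw_2)}$. Now $V_1\clw_2\subseteq\clw$ (since $(V_1V_2)^*(V_1w)=V_2^*w=0$ for $w\in\clw_2$), and by Theorem \ref{bcl} we have $\clw=\clw_2\oplus V_2\clw_1$, on which $V_2^*$ acts as a partial isometry with kernel $\clw_2$ and initial space $V_2\clw_1$. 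Consequently
\[
\text{rank}[V_2^*,V_1]=\dim\overline{P_{V_2\clw_1}(V_1\clw_2)}=\text{rank}(P_{V_2\clw_1}P_{V_1\clw_2}).
\]
Finally $\clw=\clw_1\oplus V_1\clw_2$ forces $P_{\clw_1}P_{V_1\clw_2}=0$, and $C=P_{\clw_1}-P_{V_2\clw_1}$ by \eqref{formula}, so $P_{V_2\clw_1}P_{V_1\clw_2}=-C\,P_{V_1\clw_2}$; hence $\text{rank}[V_2^*,V_1]=\text{rank}(C\,P_{V_1\clw_2})$.

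The main step is to evaluate $\text{rank}(C\,P_{V_1\clw_2})$ from the block forms already established. Conjugating by the unitary $U\colon\tilde\cle\to\clw$ of \eqref{eqn Def of U}, recall (from the matrix form of $\tilde C$ displayed before Remark \ref{rem rank form}) that $\tilde C:=U^*CU=0_{\mathcal{N}}\oplus I_{E_1}\oplus(-I_{E_{-1}})\oplus D\oplus(-D)$ with $D=C|_{\clk_+}$, while \eqref{missed} reads $U^*P_{V_1\clw_2}U=R^\perp\oplus 0_{E_1}\oplus I_{E_{-1}}\oplus\pi$ with $\pi=\tfrac12\begin{bmatrix} I-D & -\sqrt{I-D^2}\,w\\ -w^*\sqrt{I-D^2} & I+D\end{bmatrix}$ on $\clk_+\oplus\clk_+$. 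Multiplying these block-diagonal operators,
\[
U^*(C\,P_{V_1\clw_2})U=0\oplus 0\oplus(-I_{E_{-1}})\oplus M,\qquad M:=\begin{bmatrix} D & 0\\ 0 & -D\end{bmatrix}\pi,
\]
so $\text{rank}(C\,P_{V_1\clw_2})=\dim E_{-1}+\text{rank}\,M$. I then claim $\text{rank}\,M=\dim\clk_+$. For the upper bound, $\text{rank}\,M\le\text{rank}\,\pi=\dim\clk_+$: indeed $\pi$ is the projection $Q_w$ of Theorem \ref{struc} (with positive contraction $D$ and unitary $w$ on $\clk_+$) conjugated by $I_{\clk_+}\oplus(-I_{\clk_+})$, whence Lemma \ref{rank} applies. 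For the lower bound, the first column of $M$, namely $x\mapsto\big(\tfrac12 D(I-D)x,\ \tfrac12 Dw^*\sqrt{I-D^2}\,x\big)$, is injective, because $D$ and $I-D$ are injective on $\clk_+$ ($C|_{\clk_+}$ has no eigenvalue $0$ since $\clk_+$ lies in the generic part, and none at $1$ since $\clk_+\subseteq\clh_0$); since an injective bounded operator on a separable Hilbert space carries a subspace of dimension $\kappa$ to a set with closed span of dimension $\kappa$, this yields $\text{rank}\,M\ge\dim\clk_+$. Therefore $\text{rank}[V_2^*,V_1]=\dim E_{-1}+\dim\clk_+$, and combining with the formula for $\text{rank}\,C$ above gives both asserted identities.

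The delicate point is precisely the equality $\text{rank}\,M=\dim\clk_+$. Because $C$ is only assumed to be a contraction (not compact), $D=C|_{\clk_+}$ need not have closed range, so one cannot simply cancel $D$ from $M=\begin{bmatrix} D&0\\0&-D\end{bmatrix}\pi$ as in the finite-dimensional situation of Lemma \ref{rank}; instead the lower bound must be squeezed out of injectivity of a single column together with the separable-space dimension count, while the upper bound comes from Lemma \ref{rank} after a sign-flipping unitary conjugation. The only other care required is the arithmetic in $\mathbb{Z}_+\cup\{\infty\}$, which is what forces the hypothesis $\dim E_{-1}<\infty$ in the second formula.
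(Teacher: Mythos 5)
Your proposal is correct and follows essentially the same route as the paper: reduce $\text{rank}[V_2^*,V_1]$ to $\text{rank}(P_{V_2\clw_1}P_{V_1\clw_2})$, evaluate it through the block decompositions \eqref{eqn2}--\eqref{missed_1} and Lemma \ref{rank} to get $\text{rank}[V_2^*,V_1]=\dim E_{-1}+\dim\clk_+$, and combine with $\text{rank}\,C(V_1,V_2)=\dim E_1+\dim E_{-1}+2\dim\clk_+$. The only deviations are cosmetic: you pass through $C\,P_{V_1\clw_2}=-P_{V_2\clw_1}P_{V_1\clw_2}$ rather than multiplying the two projections directly, and you replace the paper's cancellation of the injective diagonal factor $\mathrm{diag}(D,-D)$ (which is in fact legitimate, since composing with an injective operator preserves rank) by a two-sided bound on $\text{rank}\,M$.
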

\begin{proof}
Observe that
\[
\begin{split}
[V_2^*, V_1]V_1 V_2 & = V_2^* V_1 V_1 V_2 - V_1 V_2^* V_2 V_1
\\
& = 0,
\end{split}
\]
and similarly
\[
V_1^* V_2^* [V_2^*, V_1] = 0.
\]
Since $V = V_1V_2$, we conclude that
\[
[V_2^*, V_1] = 0 \text{ on } \text{ran} V,
\]
and
\[
\text{ran} [V_2^*, V_1] \subseteq \mathcal{W}.
\]
Since $(\text{ran} V)^\perp = \clw$, it follows that
\[
\text{ rank} [V_2^*, V_1] = \text{rank} [V_2^*, V_1]\bigr\rvert_{\clw}.
\]
By Theorem \ref{bcl}, $\clw = \clw_2 \oplus V_2 \clw_1$, and hence
\begin{align*}
\text{ran} \Big([V_2^*, V_1]\bigr\rvert_{\clw}\Big) &= [V_2^*, V_1](\clw)
\\
&= [V_2^*, V_1](\clw_2 \oplus V_2 \clw_1)
\\
&= V_2^* V_1 (\clw_2)
\\
&= \text{ran}\Big(V_2^* P_{V_1 \clw_2}\Big),
\end{align*}
so that
\[
\text{ rank} \Big([V_2^*, V_1]\bigr\rvert_{\clw}\Big) = \text{rank}\Big(V_2^* P_{V_1 \clw_2}\Big).
\]
Since $V_2$ is an isometry, it is clear that
\[
\begin{split}
\text{rank}\Big(V_2^* P_{V_1 \clw_2}\Big) & = \text{rank}\Big(V_2V_2^* P_{V_1 \clw_2}\Big)
\\
& = \text{rank} \Big(P_{\text{ran} V_2} P_{V_1 \clw_2}\Big).
\end{split}
\]
Again, by Theorem \ref{bcl}, we know that $\clw = \clw_2 \oplus V_2 \clw_1$, and hence
\[
\begin{split}
V_2V_2^* \clw & = V_2V_2^*(\clw_2 \oplus V_2 \clw_1)
\\
& = V_2 \clw_1,
\end{split}
\]
so that
\[
\text{rank}(V_2^* P_{V_1 \clw_2})  = \text{ rank}(P_{V_2 \clw_1} P_{V_1 \clw_2}).
\]
On one hand, by \eqref{eqn2} and \eqref{missed}, we have
\[
\begin{split}
\big(U^* P_{V_2\clw_1} U\big) \big(U^* P_{V_1\clw_2} U \big) & = \Big(R \oplus 0 \oplus I_{E_{-1}} \oplus \left[ \begin{array}{cc} \frac{I-D}{2} & \frac{\sqrt{I - D^2}}{2} w\\
w^*\frac{\sqrt{I - D^2}}{2} & \frac{I + D}{2}
\end{array}\right] \Big)
\\
& \qquad \times \Big(R^\perp \oplus 0 \oplus I_{E_{-1}} \oplus \left[ \begin{array}{cc} \frac{I-D}{2} & -\frac{\sqrt{I - D^2}}{2} w \\-w^*\frac{\sqrt{I - D^2}}{2} & \frac{I + D}{2}
\end{array}
\right]\Big)
\\
& = 0 \oplus 0 \oplus I_{E_{-1}} \oplus \left[ \begin{array}{cc} D & 0 \\ 0 & D \\\end{array}
\right] \left[ \begin{array}{cc} -I & 0 \\0 & I \\
\end{array} \right]
\\
& \qquad \times
\left[ \begin{array}{cc} \frac{I-D}{2} & -\frac{\sqrt{I - D^2}}{2} w \\
-w^*\frac{\sqrt{I - D^2}}{2} & \frac{I + D}{2}
\end{array}\right].
\end{split}
\]
Since $D$ is injective, it follows that
\[
\text{rank} \Big(\big(U^* P_{V_2\clw_1} U \big) \big(U^* P_{V_1\clw_2} U\big)\Big) = \dim E_{-1} + \text{rank} \left[ \begin{array}{cc} \frac{I-D}{2} & -\frac{\sqrt{I - D^2}}{2} w \\
-w^*\frac{\sqrt{I - D^2}}{2} & \frac{I + D}{2}
\end{array}\right].
\]
On the other hand, we know that $\tilde{C} = U^* P_{\clw_2} U - U^* P_{V_1 \clw_2} U$. Then \eqref{missed} and \eqref{missed_1} yield
\[
\begin{split}
\tilde{C} & = \Big(R^\perp \oplus I_{E_1} \oplus 0 \oplus \left[ \begin{array}{cc} \frac{I+D}{2} & -\frac{\sqrt{I - D^2}}{2} w\\
-w^*\frac{\sqrt{I - D^2}}{2} & \frac{I - D}{2} \\
\end{array}
\right]\Big)
\\
& \qquad  - \Big(R^\perp \oplus 0 \oplus I_{E_{-1}} \oplus \left[ \begin{array}{cc} \frac{I-D}{2} & -\frac{\sqrt{I - D^2}}{2} w\\ -w^*\frac{\sqrt{I - D^2}}{2} & \frac{I + D}{2} \\
\end{array}
\right]\Big).
\end{split}
\]
This leads us to the setting of Theorem \ref{struc}, and hence by Lemma \ref{rank}, we conclude
\[
\dim \mathcal{K}_+ = \text{rank} \left[ \begin{array}{cc} \frac{I-D}{2} & -\frac{\sqrt{I - D^2}}{2} w \\
-w^*\frac{\sqrt{I - D^2}}{2} & \frac{I + D}{2} \end{array}
\right].
\]
Therefore
\[
\begin{split}
\text{rank} [V_2^*, V_1] & = \text{rank} \big(P_{V_2 \mathcal{W}_1} P_{V_1 \mathcal{W}_2}\big)
\\
& = \text{ rank} \Big(\big( U^* P_{V_2 \mathcal{W}_1} U \big) \big(U^* P_{V_1 \mathcal{W}_2} U\big)\Big)
\\
& = \dim E_{-1} + \dim \mathcal{K}_+,
\end{split}
\]
and hence
\begin{equation}\label{HiNew8}
\text{rank} [V_2^*, V_1] = \dim E_{-1} + \dim \mathcal{K}_+.
\end{equation}
Since $C(V_1, V_2)|_{\clk_+}$ and $C(V_1, V_2)|_{\clk_-}$ are injective, in view of the representation of $C(V_1, V_2)$ as in \eqref{eqn C(V1 V2)}, we have
\[
\text{rank} C(V_1, V_2) = \dim E_1 + \dim E_{-1} + \dim \mathcal{K}_+ + \dim \mathcal{K}_-.
\]
As $\clk_+$ and $\clk_-$ are unitarily equivalent (see the remark preceding \eqref{HiNew6}), we have
\[
\dim \mathcal{K}_+  = \dim \mathcal{K}_-,
\]
and hence
\begin{equation}\label{HiNew9}
\text{rank} C(V_1, V_2) = \dim E_1 + \dim E_{-1} + 2 \dim \mathcal{K}_+.
\end{equation}
It now follows immediately from \eqref{HiNew8} that
\[
\text{rank}C(V_1, V_2) = \text{rank}[V_2^*, V_1] + \dim E_1 + \dim \clk_+.
\]
This completes the proof of the first part of the theorem. Suppose further that
\[
\dim E_{- 1} < \infty.
\]
From \eqref{HiNew8}, it follows that
\[
2 \text{rank}[V_2^*, V_1] = 2 \dim E_{-1} + 2 \dim \clk_+,
\]
and hence
\[
2 \text{rank}[V_2^*, V_1] - \dim E_{-1} = \dim E_{-1} + 2 \dim \clk_+.
\]
Finally, it follows from \eqref{HiNew9} that, by substituting the value of $\dim E_{-1} + 2 \dim \clk_+$ obtained above
\[
\text{rank}C(V_1, V_2) = 2 \text{rank}[V_2^*, V_1] + \dim E_1 - \dim E_{-1}.
\]
This completes the proof of the theorem.
\end{proof}

From the spectral theorem for compact self-adjoint operators and the second part of the preceding theorem, it follows immediately that:

\begin{coro}\label{cor rank formula}
Let $(V_1, V_2)$ be an isometric pair such that $C(V_1, V_2)$ is compact. Then
\[
\text{rank} C(V_1, V_2) = 2 \text{rank} [V_2^*, V_1] + \dim E_1 - \dim E_{-1}.
\]
\end{coro}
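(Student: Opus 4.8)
The plan is to obtain this as an immediate specialization of the second assertion of Theorem \ref{index}. That assertion holds for an arbitrary isometric pair $(V_1, V_2)$ under the single extra hypothesis $\dim E_{-1} < \infty$, where, following \eqref{eqn def eigenspace 2}, $E_{-1} = \ker\big(C(V_1, V_2) + I\big)$. Hence the only thing I need to check is that compactness of $C(V_1, V_2)$ forces this eigenspace to be finite-dimensional; once that is done, the displayed formula is exactly the conclusion of Theorem \ref{index}, with nothing further to prove.

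To verify $\dim E_{-1} < \infty$, I would invoke the spectral theorem for compact self-adjoint operators. By \eqref{formula}, $C(V_1, V_2)$ is a difference of two projections, hence self-adjoint (and a contraction), so under the hypothesis of the corollary it is a compact self-adjoint operator. Its spectrum is therefore a countable subset of $[-1,1]$ whose only possible accumulation point is $0$; in particular $-1$, if it belongs to $\sigma(C(V_1, V_2))$ at all, is an isolated point of the spectrum, and the corresponding eigenspace $E_{-1}$ is finite-dimensional. If $-1 \notin \sigma(C(V_1, V_2))$, then $E_{-1} = \{0\}$. Either way $\dim E_{-1} < \infty$.

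Applying the second part of Theorem \ref{index} with this hypothesis now satisfied yields
\[
\text{rank}\, C(V_1, V_2) = 2\,\text{rank}[V_2^*, V_1] + \dim E_1 - \dim E_{-1},
\]
which is the desired identity. There is essentially no obstacle here: all the substantive work is packed into Theorem \ref{index}, and the only point to record is the finiteness of $\dim E_{-1}$, which is immediate from the compact self-adjoint spectral theorem. (For later bookkeeping one could note that the same theorem also gives $\dim E_1 < \infty$ and that $\sigma(C(V_1,V_2)) \cap (0,1)$ is at most countable with each $E_\lambda$ finite-dimensional, but none of this is needed for the statement as written.)
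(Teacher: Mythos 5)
Your proposal is correct and matches the paper's own argument: the paper likewise deduces $\dim E_{-1} < \infty$ from the spectral theorem for compact self-adjoint operators (nonzero eigenvalues of a compact operator have finite multiplicity) and then applies the second part of Theorem \ref{index}. Nothing further is needed.
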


We will apply this particular version of the rank formula in the upcoming analysis. As an immediate consequence of the above theorem, we also obtain that:

\begin{coro}\label{HiNew10}
Let $(V_1, V_2)$ be an isometric pair such that
\[
\dim E_1(C(V_1, V_2)) < \infty.
\]
Assume that $[V_2^*, V_1]$ is a finite-rank operator. Then $C(V_1, V_2)$ is a finite-rank operator.
\end{coro}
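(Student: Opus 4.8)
The plan is to read off the result directly from the rank formula of Theorem \ref{index}, and in particular from the auxiliary identity \eqref{HiNew8}, namely
\[
\text{rank}[V_2^*, V_1] = \dim E_{-1} + \dim \clk_+ .
\]
First I would note that the hypothesis ``$[V_2^*, V_1]$ has finite rank'' forces both nonnegative summands on the right-hand side of this identity to be finite: in particular $\dim E_{-1} < \infty$ and $\dim \clk_+ < \infty$. Since $\dim E_{-1} < \infty$, the hypotheses of the second half of Theorem \ref{index} are met, but it is in fact cleaner to use the first (unconditional) formula there,
\[
\text{rank} C(V_1, V_2) = \text{rank}[V_2^*, V_1] + \dim E_1 + \dim \clk_+ .
\]

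Now I would simply observe that each of the three terms on the right is finite: $\text{rank}[V_2^*, V_1] < \infty$ is the standing assumption, $\dim E_1 = \dim E_1(C(V_1, V_2)) < \infty$ is the other standing assumption of the corollary, and $\dim \clk_+ < \infty$ was just deduced from \eqref{HiNew8}. Adding these, $\text{rank} C(V_1, V_2) < \infty$, which is precisely the claim. (Equivalently, one could substitute into the second formula of Theorem \ref{index}, $\text{rank} C(V_1, V_2) = 2\,\text{rank}[V_2^*, V_1] + \dim E_1 - \dim E_{-1}$, whose right-hand side is manifestly finite once $\dim E_1 < \infty$, $\dim E_{-1} < \infty$, and $\text{rank}[V_2^*, V_1] < \infty$.)

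There is essentially no obstacle here: the corollary is a formal bookkeeping consequence of Theorem \ref{index}, the only point worth flagging being that the finiteness of $[V_2^*, V_1]$ is what supplies the finiteness of $\dim \clk_+$ (via \eqref{HiNew8}) and of $\dim E_{-1}$, while the finiteness of $\dim E_1$ must be assumed separately, as it is not controlled by the cross-commutator. I would keep the write-up to a couple of sentences citing Theorem \ref{index} and equation \eqref{HiNew8}.
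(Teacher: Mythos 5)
Your argument is correct and is essentially the paper's own proof: both use \eqref{HiNew8} to get $\dim E_{-1}, \dim \clk_+ < \infty$ from the finite rank of $[V_2^*, V_1]$, and then conclude via the first formula of Theorem \ref{index} together with the hypothesis $\dim E_1 < \infty$. Nothing to add.
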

\begin{proof}
Since $\text{rank}[V_2^*, V_1] < \infty$, it follows from \eqref{HiNew8} that
\[
\dim E_{-1}, \dim \clk_+ < \infty.
\]
Moreover, since, by assumption, $\dim E_1 < \infty$, it follows from Theorem \ref{index} that
\[
\text{rank}C(V_1, V_2) = \text{rank}[V_2^*, V_1] + \dim E_1 + \dim \clk_+ < \infty,
\]
which completes the proof of the corollary.
\end{proof}

We conclude the section with a dimension formula that is of independent interest.

\begin{proposition}
Let $(V_1, V_2)$ be an isometric pair. Suppose
\[
\big(\ker C(V_1, V_2)\big)^\perp = \mathcal{W}.
\]
Then
\begin{align*}
\text{dim} \mathcal{W}_1 = \text{dim}\mathcal{W}_2
\end{align*}
where $\mathcal{W}_i = \ker V_i^*$ for $i = 1, 2$.
In particular, if $\mathcal{W}$ is finite-dimensional, then $\text{dim} \mathcal{W}$ is even.
\end{proposition}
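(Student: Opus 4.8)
The plan is to run the analysis of this section one more time under the extra hypothesis and then read off $\dim \clw_1$ and $\dim \clw_2$ from the explicit difference-of-projections representations of $P_{\clw_1}$ and $P_{\clw_2}$ together with the rank count in Lemma~\ref{rank}. The crucial remark is that $(\ker C(V_1,V_2))^\perp \subseteq \clw$ holds for every isometric pair (recorded in Section~\ref{sec preliminaries}), so the hypothesis $(\ker C(V_1,V_2))^\perp = \clw$ is precisely the statement that the summand $\mathcal{N} = \clw \ominus (\ker C(V_1,V_2))^\perp$ is the zero space. With $\mathcal{N} = \{0\}$, the decomposition \eqref{eqn rep of W} becomes $\clw = E_1 \oplus E_{-1} \oplus \clk_+ \oplus \clk_-$ and the space $\tilde{\cle}$ of \eqref{eqn tilde E} becomes $E_1 \oplus E_{-1} \oplus \clk_+ \oplus \clk_+$, while the projections $R$ on $\mathcal{N}$ in \eqref{eqn1} and $R^\perp$ in \eqref{missed_1} act on $\{0\}$ and drop out.

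Carrying this out: since $U : \tilde{\cle} \to \clw$ is unitary and $P_{\clw_i}$ is the orthogonal projection onto $\clw_i$, one has $\dim \clw_i = \text{rank}\, P_{\clw_i} = \text{rank}(U^* P_{\clw_i} U)$. From \eqref{eqn1} with $\mathcal{N} = \{0\}$ we get $U^* P_{\clw_1} U = I_{E_1} \oplus 0_{E_{-1}} \oplus P_w$, where the corner block on $\clk_+ \oplus \clk_+$ is exactly the operator $P_U$ of Theorem~\ref{struc} with the unitary taken to be $w$; Lemma~\ref{rank} gives $\text{rank}\, P_w = \dim \clk_+$, whence $\dim \clw_1 = \dim E_1 + \dim \clk_+$. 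Likewise, \eqref{missed_1} with $\mathcal{N} = \{0\}$ gives $U^* P_{\clw_2} U = I_{E_1} \oplus 0_{E_{-1}} \oplus P_{-w}$, and Lemma~\ref{rank} applied with the unitary $-w$ yields rank $\dim \clk_+$ for that corner block, so $\dim \clw_2 = \dim E_1 + \dim \clk_+$. Comparing the two identities gives $\dim \clw_1 = \dim \clw_2$.

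For the last assertion I would use Theorem~\ref{bcl}, which says $\clw = \clw_1 \oplus V_1 \clw_2$; as $V_1$ is an isometry, $V_1|_{\clw_2}$ is a unitary onto $V_1 \clw_2$, so $\dim V_1 \clw_2 = \dim \clw_2 = \dim \clw_1$, and therefore $\dim \clw = 2\dim \clw_1$. In particular, if $\clw$ is finite-dimensional then $\dim \clw_1$ is finite and $\dim \clw$ is even.

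The only step needing genuine care — the main (mild) obstacle — is verifying that the $2\times 2$ corner blocks appearing in \eqref{eqn1} and \eqref{missed_1} really are instances of the operator $P_U$ of Theorem~\ref{struc}, so that Lemma~\ref{rank} applies verbatim: in \eqref{eqn1} the block is $P_w$ with $w$ a unitary on $\clk_+$ commuting with $D$, and in \eqref{missed_1} it is $P_{-w}$ with $-w$ again such a unitary. Once this matching is made, Lemma~\ref{rank} handles the finite- and infinite-dimensional cases for $\clk_+$ simultaneously, and the remainder is routine bookkeeping with the orthogonal decomposition \eqref{eqn rep of W}.
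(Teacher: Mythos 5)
Your proposal is correct and follows essentially the same route as the paper: with the hypothesis forcing $\mathcal{N}=\{0\}$, both arguments read off $\dim\mathcal{W}_1$ and $\dim\mathcal{W}_2$ from the representations \eqref{eqn1} and \eqref{missed_1} and apply Lemma \ref{rank} to the $2\times 2$ corner blocks (with unitaries $w$ and $-w$) to get $\dim\mathcal{W}_i=\dim E_1+\dim\mathcal{K}_+$. Your explicit justification of the parity claim via $\mathcal{W}=\mathcal{W}_1\oplus V_1\mathcal{W}_2$ is exactly what the paper's $\dim\mathcal{W}=\dim\mathcal{W}_1+\dim\mathcal{W}_2$ step amounts to.
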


\begin{proof}
As $\big(\ker C(V_1, V_2)\big)^\perp = \clw$, we have that $\mathcal{N} = \mathcal{W} \ominus \big(\ker C(V_1, V_2)\big)^\perp = 0$, and consequently, by \eqref{eqn1} and \eqref{missed_1}, it follows that
\begin{equation*}
U^* P_{\clw_2} U = I_{E_1} \oplus 0 \oplus \left[ \begin{array}{cc} \frac{I+D}{2} & -\frac{\sqrt{I - D^2}}{2} w \\
-w^*\frac{\sqrt{I - D^2}}{2} & \frac{I - D}{2} \\
\end{array}
\right],
\end{equation*}
and
\begin{equation*}
U^* P_{\clw_1} U = I_{E_1} \oplus 0 \oplus \left[ \begin{array}{cc} \frac{I+D}{2} & \frac{\sqrt{I - D^2}}{2} w\\
w^*\frac{\sqrt{I - D^2}}{2} & \frac{I - D}{2} \\
\end{array}
\right],
\end{equation*}
where $w$ is a unitary on $\mathcal{K}_+$ that commutes with $D$. By Lemma \ref{rank}, we have
\[
\begin{split}
\dim \mathcal{K}_+ & =
\text{rank}\left[ \begin{array}{cc} \frac{I+D}{2} & \frac{\sqrt{I - D^2}}{2} w\\
w^*\frac{\sqrt{I - D^2}}{2} & \frac{I - D}{2} \\
\end{array}
\right]
\\
& = \text{rank}\left[ \begin{array}{cc} \frac{I+D}{2} & -\frac{\sqrt{I - D^2}}{2} w\\
-w^*\frac{\sqrt{I - D^2}}{2} & \frac{I - D}{2} \\
\end{array}
\right],
\end{split}
\]
and hence
\[
\begin{split}
\text{dim} \clw_1 & = \text{rank}(U^* P_{\clw_1} U)
\\
& = \dim E_1 + \dim \clk_+
\\
& = \text{rank}( U^* P_{\clw_2} U)
\\
& = \dim \clw_2.
\end{split}
\]
In particular, if $\text{dim}\clw < \infty$, then
\[
\begin{split}
\dim \clw & = \dim \clw_1 + \dim \clw_2
\\
& = 2 \times \dim \clw_1
\\
&= 2 \times \dim \clw_2,
\end{split}
\]	
which completes the proof of the proposition.
\end{proof}

The challenge of formulating rank identities for invariant subspaces is well recognized as a complex and difficult field of study. It not only reveals the structure of invariant subspaces but also entails identities involving numbers. We anticipate that the rank formula established in Theorem \ref{index} possesses inherent value and will be applicable in various different contexts. For instance, consider a closed invariant subspace $\clm$ of $H^2(\D^2)$ (see \eqref{eqn: inv sub Hardy}). As in \eqref{eqn rest of Hardy inv}, define the restriction operators
\[
R_z = M_z|_{\clm} \text{ and } R_w = M_w|_{\clm}.
\]
Clearly, $(R_z, R_w)$ is an isometric pair on $\clm$. Moreover
\[
R_z R_w = M_z M_w|_{\clm},
\]
and hence $(R_z, R_w)$ is a BCL pair on $\clm$. Consequently, Theorem \ref{index} applies to $\clm$ and hence invariant subspaces of $H^2(\D^2)$. In the present context, the rank formula in Theorem \ref{index} should be compared with the rank formula of Yang \cite[Theorem 2.7]{RY JFA}.

In the literature, there appear to be very intricate rank formulae for Hilbert-Schmidt invariant (as well as co-invariant) subspaces of $H^2(\D^2)$ (see \cite{CDS} and references therein). 

\section{On $3$-finite pairs}\label{sec properties of 3 finite}

The purpose of this section is to isolate key properties of irreducible $3$-finite pairs. Some of the results do not require all the assumptions of $3$-finite pairs, and we will point out the needed properties for such results. For the convenience of the subsequent discussion, we shall include an additional stratum of notation: Given a separable Hilbert space $\mathcal{H}$, denote by $B_{\mathcal{H}}$ the set of all ordered orthonormal bases of $\clh$. That is
\[
B_{\clh} = \{\{e_j: j \in \Lambda\}:  \{e_j: j \in \Lambda\} \text{ is an orthonormal basis of } \mathcal{H}\},
\]
where $\Lambda$ denotes a countable set. Let $(V_1, V_2)$ be an isometric pair. For the reader's convenience, we recall that $V = V_1 V_2$, and
\[
\mathcal{W} = \ker V^* \text{ and } \mathcal{W}_i = \ker V_i^*,
\]
for $i = 1, 2$. Recall also that
\[
E_\lambda := \{f \in \clh: C(V_1, V_2)f = \lambda f\},
\]
for all $\lambda \in \mathbb{C}$ (see \eqref{eqn def eigenspace 2}). We begin with a useful property of BCL pairs, stated in the following well-known lemma (see, for instance, \cite[Proposition 4.1]{HQY2015} for a proof). 
\begin{lemma}\label{intersection}
Let $(V_1, V_2)$ be a BCL pair. Then
\[
\clw_1 \cap \clw_2 = E_1.
\]
\end{lemma}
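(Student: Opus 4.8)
The plan is to exploit the defect-operator identity \eqref{formula}, which says $C(V_1,V_2) = P_{\clw_1} - P_{V_2\clw_1}$. Since $C(V_1,V_2)$ is a difference of two projections, a vector $f$ satisfies $C(V_1,V_2)f = f$ precisely when $f \in \ran P_{\clw_1}$ and $f \perp \ran P_{V_2\clw_1}$; equivalently $f \in \clw_1$ and $f \perp V_2\clw_1$. So the first step is to record this clean description:
\[
E_1 = \clw_1 \cap (V_2\clw_1)^\perp.
\]
This is the standard fact that if $P,Q$ are projections then $\ker(P-Q-I) = \ran P \cap \ker Q$, which I would verify in a line using that $0 \le P-Q+I \le 2I$ forces, for $(P-Q)f=f$, the equalities $\|Pf\| = \|f\|$ and $Qf = 0$.

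Next I would bring in the structure of $\clw$ from Theorem \ref{bcl}, namely $\clw = \clw_2 \oplus V_2\clw_1$, together with the containment $\clw_1 \subseteq \clw$ (which follows from the same theorem, since $\clw = \clw_1 \oplus V_1\clw_2$). Given $f \in \clw_1 \subseteq \clw$, decompose $f = f_2 + V_2 g$ with $f_2 \in \clw_2$ and $g \in \clw_1$ according to $\clw = \clw_2 \oplus V_2\clw_1$. Then $f \perp V_2\clw_1$ is equivalent to $V_2 g = 0$, i.e.\ $g = 0$ (as $V_2$ is an isometry), i.e.\ $f = f_2 \in \clw_2$. Hence $f \in \clw_1 \cap (V_2\clw_1)^\perp$ if and only if $f \in \clw_1 \cap \clw_2$. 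Combining with the first step gives $E_1 = \clw_1 \cap \clw_2$.

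For the reverse sanity check one can also argue directly: if $f \in \clw_1 \cap \clw_2$ then $V_1^* f = 0$ and $V_2^* f = 0$, so $V_1V_1^* f = V_2V_2^* f = V_1V_2V_1^*V_2^* f = 0$, whence $C(V_1,V_2)f = f$; this shows $\clw_1 \cap \clw_2 \subseteq E_1$ without using the BCL hypothesis, and only the inclusion $E_1 \subseteq \clw_1 \cap \clw_2$ needs the decomposition $\clw = \clw_2 \oplus V_2\clw_1$. I do not anticipate a genuine obstacle here; the only mildly delicate point is making sure the projection identity $\ker(P - Q - I) = \ran P \cap \ker Q$ is invoked correctly, which is why I would spell it out. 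Since the lemma is explicitly attributed to \cite[Proposition 4.1]{HQY2015}, a short self-contained proof along these lines is all that is expected.
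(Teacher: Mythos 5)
Your proof is correct and follows essentially the same route as the paper: both rest on the identity \eqref{formula} expressing $C(V_1,V_2)$ as a difference of projections together with the elementary positivity argument showing that a unit eigenvector for the eigenvalue $1$ must lie in $\ran P_{\clw_1}\cap\ker P_{V_2\clw_1}$. The only cosmetic difference is that you then invoke the orthogonal decomposition $\clw=\clw_2\oplus V_2\clw_1$ from Theorem \ref{bcl} to convert $(V_2\clw_1)^\perp\cap\clw_1$ into $\clw_1\cap\clw_2$, whereas the paper's argument applies the same inner-product trick to the second representation $C(V_1,V_2)=P_{\clw_2}-P_{V_1\clw_2}$ to obtain $g\in\clw_2$ directly; both are equally valid and of comparable length.
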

\begin{comment} 
\begin{proof}
Suppose $g \in E_1$. Then $C(V_1, V_2)g = g$ and \eqref{formula} implies that
\[
g = P_{\clw_1}g - P_{V_2 \clw_1}g = P_{\clw_2}g - P_{V_1 \clw_2}g.
\]
In other words
\[
0 = P_{\clw_1^\perp}g + P_{V_2 \clw_1}g = P_{\clw_2^\perp}g + P_{V_1 \clw_2}g,
\]
and hence
\[
\begin{split}
\|P_{\clw_1^\perp}g \|^2 + \lvert \lvert P_{V_2 \clw_1}g \rvert \rvert^2 & = \langle P_{\clw_1^\perp}g + P_{V_2 \clw_1}g, g \rangle
= 0.
\end{split}
\]
Similarly
\[
\lvert \lvert P_{\clw_2^\perp}g \rvert \rvert^2 + \lvert \lvert P_{V_1 \clw_2}g \rvert \rvert^2 = 0.
\]
Therefore, $P_{\clw_1^\perp}g = P_{\clw_2^\perp}g = 0$ so that $g \in \clw_1 \cap \clw_2$. Lastly, if $g \in \clw_1 \cap \clw_2$, then it follows immediately from the orthogonal decompositions of $\clw$ as in Theorem \ref{bcl} that
\[
0 = P_{V_1 \clw_2}g = P_{V_2 \clw_1}g.
\]
Consequently
\[
C(V_1, V_2)g = P_{\clw_1}g - P_{V_2 \clw_1}g = g,
\]
showing that $g \in E_1$. The proof is now complete.
\end{proof}
\end{comment} 

The following lemma, in particular, shows that the range of $[V_2^*, V_1]$ is contained in $\mathcal{W}_1 \cap \mathcal{W}_2$ whenever $[V_2^*, V_1]$ is normal.

\begin{lemma}\label{contain}
Let $(V_1, V_2)$ be a BCL pair. If $[V_2^*, V_1]$ is normal, then
\[
\text{ran }[V_2^*, V_1] = \text{ran }[V_1^*, V_2] \subseteq E_1,
\]
and
\[
[V_2^*, V_1]|_{ E_1^\perp} = [V_1^*, V_2]|_{ E_1^\perp} = 0.
\]
\end{lemma}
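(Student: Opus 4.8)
The plan is to set $X := [V_2^*, V_1] = V_2^*V_1 - V_1V_2^*$, so that $X^* = [V_1^*, V_2]$, and to split the argument into three steps: first locate the ranges of $X$ and $X^*$ inside $\clw_1$ and $\clw_2$ respectively using only commutativity, then use normality to push both into $\clw_1 \cap \clw_2 = E_1$, and finally upgrade the resulting equality of closed ranges to the literal equality of ranges asserted in the statement.

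\emph{Step 1.} I would first show, \textbf{without} invoking normality, that $\text{ran}\,X \subseteq \clw_1$ and $\text{ran}\,X^* \subseteq \clw_2$. Taking adjoints in $V_1V_2 = V_2V_1$ gives $V_1^*V_2^* = V_2^*V_1^*$, and combining this with $V_1^*V_1 = I$ yields the one-line computation
\[
V_1^*X = V_1^*V_2^*V_1 - V_2^* = V_2^*V_1^*V_1 - V_2^* = 0,
\]
so that $\text{ran}\,X \subseteq \ker V_1^* = \clw_1$. Interchanging the roles of $V_1$ and $V_2$ (which turns $X$ into $X^*$ and leaves us with the BCL pair $(V_2, V_1)$) gives in exactly the same way $V_2^*X^* = 0$, i.e. $\text{ran}\,X^* \subseteq \clw_2$.

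\emph{Step 2.} Now I would bring in normality of $X$: then $\|Xh\| = \|X^*h\|$ for all $h$, hence $\ker X = \ker X^*$, and therefore $\overline{\text{ran}\,X} = (\ker X^*)^\perp = (\ker X)^\perp = \overline{\text{ran}\,X^*}$. Since $\clw_1$ and $\clw_2$ are closed, Step 1 then forces
\[
\overline{\text{ran}\,X} = \overline{\text{ran}\,X^*} \subseteq \clw_1 \cap \clw_2,
\]
and Lemma \ref{intersection} identifies $\clw_1 \cap \clw_2 = E_1$. In particular $\text{ran}\,X, \text{ran}\,X^* \subseteq E_1$; and since $E_1^\perp \subseteq (\overline{\text{ran}\,X^*})^\perp = \ker X$ and $E_1^\perp \subseteq (\overline{\text{ran}\,X})^\perp = \ker X^*$, we obtain $[V_2^*, V_1]|_{E_1^\perp} = [V_1^*, V_2]|_{E_1^\perp} = 0$.

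\emph{Step 3.} It remains to promote the equality of closures $\overline{\text{ran}\,X} = \overline{\text{ran}\,X^*}$ to the stated equality $\text{ran}\,[V_2^*, V_1] = \text{ran}\,[V_1^*, V_2]$. This is a general property of a normal operator $X$: from the polar decomposition $X = U|X|$, the partial isometry $U$ restricts to a unitary on $\overline{\text{ran}\,X} = \overline{\text{ran}\,X^*}$ commuting with $|X|$, so $\text{ran}\,X = U(\text{ran}|X|) = |X|(\overline{\text{ran}\,X}) = \text{ran}|X|$; since $|X| = |X^*|$ for normal $X$, also $\text{ran}\,X^* = \text{ran}|X^*| = \text{ran}|X|$, whence the two ranges coincide. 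I expect this last step — taking care to get the honest equality of ranges rather than only of their closures — to be the only point needing attention; everything else is the short computation of Step 1 together with Lemma \ref{intersection}.
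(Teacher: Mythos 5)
Your proof is correct and follows essentially the same route as the paper: the computation $V_1^*[V_2^*, V_1] = 0 = V_2^*[V_1^*, V_2]$ placing the two ranges in $\clw_1$ and $\clw_2$ respectively, normality to identify them, and Lemma \ref{intersection} giving $\clw_1 \cap \clw_2 = E_1$. The only (welcome) differences are that you explicitly justify the literal equality $\text{ran}\,[V_2^*, V_1] = \text{ran}\,[V_1^*, V_2]$ via the polar decomposition, a fact the paper simply asserts as a consequence of normality, and that you obtain the vanishing on $E_1^\perp$ from $E_1^\perp \subseteq (\overline{\text{ran}\,[V_1^*,V_2]})^\perp = \ker [V_2^*,V_1]$ instead of the paper's inner-product computation.
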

\begin{proof}
The normality of $[V_2^*, V_1]$ yields
\[
\text{ran} [V_2^*, V_1] = \text{ran} [V_2^*, V_1]^* = \text{ran} [V_1^*, V_2].
\]
Observe that
\[
V_2^*(V_1^* V_2 - V_2 V_1^*) = 0 = V_1^*(V_2^*V_1 - V_1 V_2^*),
\]
that is
\[
 V_2^* [V_1^*, V_2] = 0 = V_1^* [V_2^*, V_1].
\]
Clearly $\text{ran} [V_2^*, V_1] \subseteq \mathcal{W}_1$ and $\text{ran} [V_1^*, V_2] \subseteq \mathcal{W}_2$, and hence, by Lemma \ref{intersection}, we conclude that
\[
\text{ran} [V_2^*, V_1] = \text{ran} [V_1^*, V_2] \subseteq \mathcal{W}_1 \cap \mathcal{W}_2 = E_1.
\]
For the second assertion, suppose $g \in E_1^\perp$ and set
\[
h = [V_2^*, V_1]g.
\]
Note that
\[
||h||^2 = \langle[V_1^*, V_2][V_2^*, V_1]g, g \rangle.
\]
But by the first assertion of this lemma, it is clear that
\[
[V_1^*, V_2][V_2^*, V_1]g \in E_1,
\]
and consequently, $||h||^2 = 0$, that is, $h = 0$. This completes the proof of the lemma.
\end{proof}

From now on, throughout the section, we will deal with irreducible $3$-finite pairs (see Definition \ref{def irred} for irreducible pairs).

\begin{proposition}\label{dime1}
Let $(V_1, V_2)$ be an irreducible $3$-finite pair. Then
\[
\dim E_1 = 1 \text{ and } \dim E_{-1} = 0.
\]
\end{proposition}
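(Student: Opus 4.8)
The statement asserts two things about an irreducible $3$-finite pair $(V_1,V_2)$: that $\dim E_1 = 1$ and $\dim E_{-1} = 0$. I would establish the second equality first and use the rank formula to constrain the first. Recall that by Definition \ref{def n finite}, a $3$-finite pair is a compact normal pair with $\operatorname{rank} C(V_1,V_2) = 3$; in particular $[V_2^*,V_1]$ is compact and normal, and Corollary \ref{cor rank formula} applies. Since $3 = \operatorname{rank} C(V_1,V_2) = 2\operatorname{rank}[V_2^*,V_1] + \dim E_1 - \dim E_{-1}$, the parity forces $\dim E_1 - \dim E_{-1}$ to be odd; combined with $\operatorname{rank}[V_2^*,V_1] \in \{0,1\}$ (the only possibilities keeping the right side equal to $3$ with nonnegative integers), this narrows the options to $\operatorname{rank}[V_2^*,V_1] = 1$ with $\dim E_1 - \dim E_{-1} = 1$, or $\operatorname{rank}[V_2^*,V_1] = 0$ with $\dim E_1 - \dim E_{-1} = 3$.

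\textbf{Ruling out $\dim E_{-1} > 0$ and the doubly commuting case.} The key structural input is irreducibility. If $\operatorname{rank}[V_2^*,V_1] = 0$, the pair is doubly commuting, so the Wold decomposition for doubly commuting pairs \eqref{eqn Wold DC} applies; an irreducible doubly commuting pair can have only one summand, and one checks that the irreducible doubly commuting BCL pairs have $\dim E_1 \le 1$ (indeed $(M_z,M_w)$ on $H^2(\D^2)$ has $E_1$ one-dimensional by property (3) preceding Definition \ref{def defect operator}, giving $\operatorname{rank} C = 1$, not $3$). This eliminates the doubly commuting alternative, leaving $\operatorname{rank}[V_2^*,V_1] = 1$, $\dim E_1 - \dim E_{-1} = 1$. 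Next I would show $\dim E_{-1} = 0$: by \eqref{HiNew8}, $\operatorname{rank}[V_2^*,V_1] = \dim E_{-1} + \dim \clk_+$, so $\dim E_{-1} + \dim \clk_+ = 1$. Since $\operatorname{rank} C(V_1,V_2) = \dim E_1 + \dim E_{-1} + 2\dim \clk_+ = 3$, and $\dim E_1 = 1 + \dim E_{-1}$, substituting gives $1 + 2\dim E_{-1} + 2\dim \clk_+ = 3$, i.e. $\dim E_{-1} + \dim \clk_+ = 1$, consistent but not yet decisive. To force $\dim E_{-1} = 0$ (hence $\dim \clk_+ = 1$ and $\dim E_1 = 1$) versus $\dim E_{-1} = 1$, $\dim \clk_+ = 0$, $\dim E_1 = 2$, I would argue that the latter makes $C(V_1,V_2)$ have trivial generic part, so by Theorem \ref{struc} the pair would decompose as a nontrivial orthogonal sum built from the $E_1$, $E_{-1}$ blocks and the projection $R$ on $\mathcal N$ — contradicting irreducibility unless the whole space collapses, which it cannot since $\dim E_1 = 2$ would itself split off a reducing subspace (each eigenvector in $E_1 = \clw_1 \cap \clw_2$ by Lemma \ref{intersection} generates, together with the $V_i$-orbit, a reducing copy of $H^2(\D^2)$).

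\textbf{Main obstacle.} The delicate point is the last reducibility argument: showing that $\dim E_1 \ge 2$ (or a nonzero generic part together with nonzero $E_{-1}$) genuinely produces a proper nontrivial reducing subspace. The natural approach is to use $E_1 = \clw_1 \cap \clw_2$ (Lemma \ref{intersection}) and the fact (Lemma \ref{contain}) that $\operatorname{ran}[V_2^*,V_1] \subseteq E_1$ is one-dimensional and spanned by a single vector; an eigenvector of $C(V_1,V_2)$ in $E_1$ orthogonal to this range should be a common wandering vector for a doubly-commuting-like subpair, generating a reducing subspace unitarily equivalent to $H^2(\D^2)$ and hence contradicting irreducibility of the whole pair unless $\dim E_1 = 1$. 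Making this generation-of-reducing-subspace argument rigorous — verifying that the closed span of $\{V_1^m V_2^n \xi : m,n \ge 0\}$ together with adjoint images is genuinely reducing and proper — is where the real work lies, and it will lean on the BCL model of Theorem \ref{bcl} and the explicit formulas \eqref{eqn1}–\eqref{missed_1} for the projections in terms of $D$, $w$, and $R$.
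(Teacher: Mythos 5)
Your plan assembles the right ingredients (the rank formula of Corollary \ref{cor rank formula}, Lemmas \ref{intersection} and \ref{contain}, and the idea of generating a reducing subspace from an eigenvector in $E_1$), but the decisive step is precisely the one you postpone as the ``main obstacle'' and never carry out: that if $\dim E_1 \geq 2$ then the orbit $\mathcal{S} = \overline{\text{span}}\{V_1^m V_2^n f : m,n \geq 0\}$ of an eigenvector $f$ of the normal operator $[V_2^*, V_1]|_{E_1}$ is a nonzero proper $(V_1,V_2)$-reducing subspace. That is the entire content of the paper's proof: since $f \in \clw_1 \cap \clw_2$ one has $V_2^* V_1 f = \alpha f$, and by induction $V_2^* V_1^{m+1} f = [V_2^*, V_1]V_1^m f + V_1 V_2^* V_1^m f = V_1 V_2^* V_1^m f$ because $[V_2^*, V_1]$ vanishes on $(\clw_1 \cap \clw_2)^{\perp}$ (Lemma \ref{contain}); properness comes from a second eigenvector $g \perp f$ in $E_1$, which is orthogonal to every $V_1^m V_2^n f$. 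Once $\dim E_1 \leq 1$ is known, both conclusions drop out of the rank formula in two lines (if $\dim E_1 = 0$ then $[V_2^*, V_1] = 0$ and the formula yields a negative rank; if $\dim E_1 = 1$ then $\text{rank}[V_2^*, V_1] \leq 1$ and $3 \leq 2 + 1 - \dim E_{-1}$). Your longer case analysis is therefore unnecessary, and in any event it still hinges on the unproved orbit argument twice: to exclude the doubly commuting alternative and to exclude the case $\dim E_1 = 2$, $\dim E_{-1} = 1$.

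Two of your intermediate assertions are also inaccurate as stated. First, $\text{rank}[V_2^*, V_1] \in \{0,1\}$ is not forced by nonnegativity of the terms alone: $\text{rank}[V_2^*, V_1] = 2$ with $\dim E_1 - \dim E_{-1} = -1$ is numerically consistent with the formula; to exclude it you need $\text{rank}[V_2^*, V_1] \leq \dim E_1$ (from Lemma \ref{contain} and normality) together with $\dim E_1 + \dim E_{-1} \leq \text{rank}\, C(V_1, V_2) = 3$. Second, it is not true that every eigenvector in $E_1$ generates ``a reducing copy of $H^2(\D^2)$'': when the eigenvalue is nonzero the orbit is reducing but the restricted pair is not doubly commuting (for the $2$-finite pair $(M_z, \bar{\alpha} M_z)$ the orbit of the constant function is all of $H^2(\D)$); only an eigenvector annihilated by $[V_2^*, V_1]$, i.e.\ orthogonal to its range, yields a copy of $(M_z, M_w)$. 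The contradiction fortunately needs only a proper nonzero reducing subspace, which the orbit of any eigenvector supplies. Finally, your appeal to Theorem \ref{struc} to ``decompose the pair'' when the generic part is trivial does not by itself contradict irreducibility: that theorem decomposes the two projections on $\clw$, not $\clh$ into $(V_1, V_2)$-reducing pieces; one must pass through $(U, P_{\clw_1})$-reducing subspaces of $\clw$ as in \cite[Corollary 2.2]{DSPS}, or simply use the orbit argument above, as the paper does.
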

\begin{proof}
By virtue of Lemmas \ref{intersection} and \ref{contain}, we already know that
\[
\text{ran} [V_2^*, V_1] = \text{ran} [V_1^*, V_2] \subseteq \mathcal{W}_1 \cap \mathcal{W}_2 = E_1,
\]
and
\[
[V_2^*, V_1]|_{E_1^\perp} = [V_2^*, V_1]^*|_{E_1^\perp} = 0.
\]
In particular, $[V_2^*, V_1]|_{E_1}$ is a normal operator on $E_1$. If possible, let
\[
\dim E_1 > 1,
\]
and let $\{f, g\}$ be an orthonormal set in $E_1$ consisting of eigen vectors of the normal operator $[V_2^*, V_1]|_{E_1}$. Set
\[
\mathcal{S} = \overline{\text{span}} \{V_1^m V_2^n f : m, n \geq 0\}.
\]
We claim that $\mathcal{S}$ reduces $(V_1, V_2)$. This would contradict the fact that $(V_1, V_2)$ is irreducible. Clearly, $\mathcal{S}$ is invariant under $V_1$ and $V_2$. Therefore, to prove the claim, it suffices to show that
\[
V_1^*V_2^n f, V_2^*V_1^n f \in \mathcal{S},
\]
for all $n \geq 1$. We only prove that $V_2^*V_1^n f \in \mathcal{S}$ for all $n \geq 1$ (the proof of the remaining case is similar).  We prove this by induction. For $n=1$, since $f \in \mathcal{W}_1 \cap \mathcal{W}_2$ and $f$ is an eigen vector of $[V_2^*, V_1]$, it follows that
\[
V_2^* V_1f = [V_2^*, V_1]f = \alpha f,
\]
for some scalar $\alpha$, and hence $V_2^*V_1f \in  \mathcal{S}$. Thus, the result is true for $n = 1$. Now, suppose that the result is true for $m \geq 1$, that is, $V_2^* V_1^m f \in \mathcal{S}$. Write
\begin{align*}
V_2^* V_1^{m+1}f = [V_2^*, V_1]V_1^{m}f + V_1 V_2^* V_1^{m}f.
\end{align*}
Since $m \geq 1$, it is clear, in particular, that $V_1^{m}f \in (\mathcal{W}_1 \cap \mathcal{W}_2)^\perp$. Note that Lemma \ref{contain} also implies that
\[
[V_2^*, V_1]|_{(\mathcal{W}_1 \cap \mathcal{W}_2)^\perp} = 0.
\]
Therefore, $[V_2^*, V_1]V_1^{m}f = 0$, and hence
\begin{equation}\label{impeqn}
V_2^* V_1^{m+1}f = V_1 V_2^* V_1^{m}f.
\end{equation}
Since $\mathcal{S}$ is invariant under $V_1$, and by the induction hypothesis $V_2^*V_1^mf \in \mathcal{S}$, it follows that
\[
V_1 V_2^* V_1^{m}f \in \mathcal{S},
\]
that is,
\[
V_2^* V_1^{m+1} f \in \mathcal{S}.
\]
Thus, the result is true for $n = m+1$. Hence, by the principle of mathematical induction, the result is true for all $n \geq 1$. This proves the claim and then the fact that $\dim E_1 \leq 1$. Since $\text{rank}C(V_1, V_2) = 3$, the desired equality $\dim E_1 =1$ follows immediately by an appeal to Lemma \ref{contain} and the rank formula in Corollary \ref{cor rank formula}.

\noindent Now we prove that $\dim E_{-1} = 0$. Since $\dim E_1 = 1$ and $\text{ran} [V_2^*, V_1] \subseteq E_1$, it follows that
\[
\text{rank}[V_2^*, V_1] \leq 1.
\]
As $\text{rank} C(V_1, V_2) = 3$, it follows by the rank formula in Corollary \ref{cor rank formula} that
\[
\begin{split}
3 & = \text{rank} C(V_1, V_2)
\\
& = 2 \text{rank} [V_2^*, V_1] + \dim E_1 - E_{-1}
\\
& \leq 2 + 1 - \dim E_{-1}
\\
& = 3 - \dim E_{-1}.
\end{split}
\]
This proves that $\dim E_{-1} = 0$.
\end{proof} 

The following observation is now straight:

\begin{coro}\label{rankcross}
Let $(V_1, V_2)$ be an irreducible $3$-finite pair. Then
\[
\text{rank} [V_2^*, V_1] = 1.
\]
In particular, $\text{ran} [V_2^*, V_1] = E_1$.
\end{coro}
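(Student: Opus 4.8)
The plan is to read off the conclusion from Proposition \ref{dime1} together with the rank formula of Corollary \ref{cor rank formula}. Let $(V_1, V_2)$ be an irreducible $3$-finite pair. By Lemma \ref{contain}, since $[V_2^*, V_1]$ is normal (it is part of the definition of a compact normal pair that $[V_2^*, V_1]$ is compact and normal), we have $\text{ran}[V_2^*, V_1] \subseteq E_1$. Proposition \ref{dime1} tells us $\dim E_1 = 1$, so immediately $\text{rank}[V_2^*, V_1] \le 1$.

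The only remaining task is to rule out $\text{rank}[V_2^*, V_1] = 0$. If the rank were $0$, then $[V_2^*, V_1] = 0$, so the pair is doubly commuting and the defect operator $C(V_1, V_2)$ satisfies $\ker C(V_1, V_2) \supseteq E_1^\perp \cap \clw \oplus \ldots$; more cleanly, one invokes Corollary \ref{cor rank formula}: $\text{rank}\, C(V_1, V_2) = 2\,\text{rank}[V_2^*, V_1] + \dim E_1 - \dim E_{-1}$. Substituting $\text{rank}[V_2^*, V_1] = 0$, $\dim E_1 = 1$, and $\dim E_{-1} = 0$ (both from Proposition \ref{dime1}) gives $\text{rank}\, C(V_1, V_2) = 1$, contradicting the defining condition $\text{rank}\, C(V_1, V_2) = 3$ of a $3$-finite pair. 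Hence $\text{rank}[V_2^*, V_1] = 1$.

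Finally, for the ``in particular'' clause: we have $\text{ran}[V_2^*, V_1] \subseteq E_1$ with $\dim E_1 = 1$, and now $\text{rank}[V_2^*, V_1] = 1$ means $\text{ran}[V_2^*, V_1]$ is a one-dimensional subspace of the one-dimensional space $E_1$, hence equals $E_1$.

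I do not expect any genuine obstacle here; the statement is a bookkeeping corollary of the two previously established facts (Lemma \ref{contain} and Proposition \ref{dime1}) fed into the rank formula. The only point requiring a moment's care is making sure the rank formula is applied with the correct values of $\dim E_{\pm 1}$, which are exactly what Proposition \ref{dime1} supplies, so there is nothing to grind through.
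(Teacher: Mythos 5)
Your proposal is correct and follows essentially the same route as the paper: the corollary is read off from the rank formula of Corollary \ref{cor rank formula} together with $\dim E_1 = 1$, $\dim E_{-1} = 0$ from Proposition \ref{dime1} and the containment $\text{ran}[V_2^*, V_1] \subseteq E_1$ from Lemma \ref{contain}. Your phrasing as a contradiction (ruling out rank $0$) versus the paper's direct substitution $3 = 2\,\text{rank}[V_2^*,V_1] + 1 - 0$ is an immaterial difference.
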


We continue with an irreducible $3$-finite pair $(V_1, V_2)$. Recall that the symbol $\cong$ stands for unitary equivalance of operators.

\begin{proposition}\label{rank3defect}
Let $(V_1, V_2)$ be an irreducible $3$-finite pair. Then there is a unique $\lambda_{(V_1, V_2)} \in (0, 1)$ such that
\[
\sigma(C(V_1, V_2)) \cap (0, 1) = \{\lambda_{(V_1, V_2)}\}.
\]
Moreover
\[
C(V_1, V_2)|_{(\ker C(V_1, V_2))^\perp} \cong D_{\lambda_{(V_1, V_2)}},
\]
where $D_{\lambda_{(V_1, V_2)}}$ is the diagonal matrix
\[
D_{\lambda_{(V_1, V_2)}} = \begin{bmatrix}
1 &  & \\
& \lambda_{(V_1, V_2)} &  \\
& & -\lambda_{(V_1, V_2)} \\
\end{bmatrix}.
\]
\end{proposition}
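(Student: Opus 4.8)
The plan is to obtain the statement by combining the eigenspace dimension data already in hand with the finite-dimensional spectral normal form of the defect operator. Since $(V_1,V_2)$ is $3$-finite, Definition~\ref{def n finite} gives $\text{rank}\, C(V_1,V_2) = 3$; in particular $C(V_1,V_2)$ is a finite-rank self-adjoint (hence compact) operator, so $(\ker C(V_1,V_2))^\perp$ is exactly three-dimensional and Theorem~\ref{structure} applies. From Proposition~\ref{dime1} we also have $\dim E_1 = 1$ and $\dim E_{-1} = 0$.

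Next I would set $\Lambda = \sigma(C(V_1,V_2)) \cap (0,1)$ and invoke \eqref{eq-C perp E1} together with Theorem~\ref{structure}, which yield the orthogonal decomposition
\[
(\ker C(V_1,V_2))^\perp = E_1 \oplus \Big(\bigoplus_{\lambda\in\Lambda} E_\lambda\Big) \oplus E_{-1} \oplus \Big(\bigoplus_{\lambda\in\Lambda} E_{-\lambda}\Big),
\]
with $\dim E_\lambda = \dim E_{-\lambda}$ for every $\lambda\in\Lambda$. Taking dimensions and substituting $\dim E_1 = 1$, $\dim E_{-1} = 0$, $\text{rank}\, C(V_1,V_2) = 3$ gives $3 = 1 + 2\sum_{\lambda\in\Lambda}\dim E_\lambda$, so $\sum_{\lambda\in\Lambda}\dim E_\lambda = 1$. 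This forces $\Lambda$ to be nonempty (were it empty, the rank would be $1$, a contradiction) and to be a singleton, say $\Lambda = \{\lambda_{(V_1,V_2)}\}$ with $\dim E_{\lambda_{(V_1,V_2)}} = \dim E_{-\lambda_{(V_1,V_2)}} = 1$. Nonemptiness gives existence and the singleton property gives uniqueness, proving $\sigma(C(V_1,V_2)) \cap (0,1) = \{\lambda_{(V_1,V_2)}\}$ with $\lambda_{(V_1,V_2)} \in (0,1)$.

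For the displayed unitary equivalence I would feed the parameters just obtained, namely $l_1 = \dim E_1 = 1$, $l_{-1} = \dim E_{-1} = 0$, and a single eigenvalue $\lambda_{(V_1,V_2)}$ with multiplicity $k_{\lambda_{(V_1,V_2)}} = 1$ so that $D = [\lambda_{(V_1,V_2)}]$, into the block form \eqref{strucdiag} of Theorem~\ref{structure}. The $-I_{\mathbb{C}^{l_{-1}}}$ block disappears while $D$ and $-D$ are scalars, leaving
\[
C(V_1,V_2)|_{(\ker C(V_1,V_2))^\perp} \cong \begin{bmatrix} 1 & & \\ & \lambda_{(V_1,V_2)} & \\ & & -\lambda_{(V_1,V_2)} \end{bmatrix} = D_{\lambda_{(V_1,V_2)}},
\]
as claimed. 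I do not expect a genuine obstacle: the argument is essentially bookkeeping on top of Proposition~\ref{dime1} and Theorem~\ref{structure}, and the only points deserving a line of care are verifying that $(\ker C(V_1,V_2))^\perp$ is finite-dimensional (so that the spectral integral in the statement collapses to this $3\times 3$ diagonal matrix) and that $\Lambda$ cannot be empty; both follow at once from $\text{rank}\, C(V_1,V_2) = 3$.
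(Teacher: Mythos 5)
Your proposal is correct and follows the same route as the paper: the paper's proof likewise combines Proposition \ref{dime1} ($\dim E_1 = 1$, $\dim E_{-1} = 0$) with the rank condition and Theorem \ref{structure}, merely leaving implicit the dimension count $3 = 1 + 2\sum_{\lambda\in\Lambda}\dim E_\lambda$ that you spell out. Your added bookkeeping is exactly the "evident" step the paper omits, so there is nothing to correct.
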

\begin{proof}
By Proposition \ref{dime1}, we know that $\dim E_1 = 1, \dim E_{-1} = 0$. Since $\text{rank} C(V_1, V_2) = 3$, it is evident from Theorem \ref{structure} that there exists $\lambda_{(V_1, V_2)} \in (0, 1)$ such that
\[
\sigma(C(V_1, V_2)) \cap (0, 1) = \{\lambda_{(V_1, V_2)}\},
\]
and $C(V_1, V_2)|_{(\ker C(V_1, V_2))^\perp}$ is unitarily equivalent to $D_{\lambda_{(V_1, V_2)}}$.
\end{proof}

Let $(V_1, V_2)$ be an irreducible $3$-finite pair. Then
\[
\ker(C(V_1, V_2))^\perp = E_1 \oplus E_{\lambda_{(V_1, V_2)}} \oplus E_{-\lambda_{(V_1, V_2)}},
\]
where $\sigma(C(V_1, V_2)) \cap (0, 1) = \{\lambda_{(V_1, V_2)}\}$. As usual, we set
\[
\mathcal{N} = \mathcal{W} \ominus \ker(C(V_1, V_2))^\perp.
\]
Then
\[
\mathcal{W} = E_1 \oplus E_{\lambda_{(V_1, V_2)}} \oplus E_{- \lambda_{(V_1, V_2)}} \oplus \mathcal{N},
\]
and with respect to this decomposition, we have
\[
C(V_1, V_2)|_{\clw} = P_{\mathcal{W}_1} - P_{V_2 \mathcal{W}_1} = \begin{bmatrix}
1 &  &  &\\
& \lambda_{(V_1, V_2)} &  &\\
& & -\lambda_{(V_1, V_2)} & \\
& & & 0_{\mathcal{N}}
\end{bmatrix}.
\]
On the other hand, since $[V_2^*, V_1]$ is normal, and by Corollary \ref{rankcross}, $\text{rank}[V_2^*, V_1] = 1$ with $\text{ran}[V_2^*, V_1] = E_1$, there exist a nonzero scalar $\beta_{(V_1, V_2)}$ such that
\[
\sigma([V_2^*, V_1]) \setminus \{0\} = \{\beta_{(V_1, V_2)}\},
\]
and
\[
[V_2^*, V_1] f = \beta_{(V_1, V_2)} f \qquad (f \in E_1).
\]
Fix a unit vector $f_{(V_1, V_2)} \in E_1$. Clearly
\[
\{f_{(V_1, V_2)}\} \in B_{E_1}.
\]
Thus, in summary, we have the following:
\[
\begin{cases}
\sigma(C(V_1, V_2)) \cap (0, 1) = \{\lambda_{(V_1, V_2)}\}
\\
\sigma([V_2^*, V_1]) \setminus \{0\} = \{\beta_{(V_1, V_2)}\}
\\
\{f_{(V_1, V_2)}\} \in B_{E_1}.
\end{cases}
\]
At this juncture, we standardize some notation that will be used throughout the rest of the paper. Given an irreducible $3$-finite pair $(V_1, V_2)$, we set
\begin{equation}\label{eqn 3 notations 1}
\begin{cases}
\beta_{(V_1, V_2)} = \text{unique nonzero eigenvalue of } [V_2^*, V_1]
\\
\lambda_{(V_1, V_2)} = \text{unique eigenvalue of } C(V_1, V_2) \text{ in } (0, 1)
\\
f_{(V_1, V_2)} = \text{a unit vector in } E_1.	
\end{cases}
\end{equation}
Furthermore, for notational convenience, if the pair $(V_1, V_2)$ is clear from the context, then we simply write
\begin{equation}\label{eqn 3 notations 2}
\lambda = \lambda_{(V_1, V_2)}, \; \beta = \beta_{(V_1, V_2)}, \text{ and } f = f_{(V_1, V_2)}.
\end{equation}
Also, for any $g, h$ in $\mathcal{H}$, we denote by $g \otimes h$ the rank one operator on $\mathcal{H}$ where
\[
(g \otimes h)(k) = \langle k, h \rangle g \qquad (k \in \mathcal{H}).
\]
Thus, in the case of the present scenario, we can write
\[
[V_2^*, V_1] = \beta f \otimes f,
\]
and hence
\begin{equation}\label{eqn cross and beta}
[V_1^*, V_2] = \bar{\beta} f \otimes f.
\end{equation}
Thus, we have
\[
[V_2^*, V_1]f = V_2^* V_1 f = \beta f,
\]
and
\[
[V_1^*, V_2]f = V_1^* V_2 f = \bar{\beta}f.
\]
Therefore
\begin{equation}\label{crosscomm}
\begin{cases}
V_2^* V_1 f = \beta f
\\
V_1^* V_2 f = \bar{\beta}f.
\end{cases}
\end{equation}
Since $f \in \clw_1 \cap \clw_2$, it follows that $C(V_1, V_2)f = f$, and hence,
\[
\{f, e_\lambda, e_{-\lambda}\} \in B_{E_1 \oplus E_\lambda \oplus E_{-\lambda}},
\]
where
\[
\{e_\lambda\} \in B_{E_\lambda} \text{ and } \{e_{-\lambda}\} \in B_{E_{-\lambda}}.
\]
Set
\[
E^\lambda = E_\lambda \oplus E_{-\lambda}.
\]
Now we follow the construction as given in Section \ref{sec rank formula}. By applying Remark \ref{rem rank form}, in this particular situation, we find
\[
\clk_{\pm} = E_{\pm \lambda},
\]
and
\[
D = \lambda I_{E_\lambda}.
\]
Therefore, following the representation of $\clw$ as in \eqref{eqn rep of W}, we have
\[
\mathcal{W} = E_1 \oplus E^\lambda \oplus \mathcal{N},
\]
and so, by \eqref{eqn3} and \eqref{eqn4}, it follows that
\[
P_{\clw_1} = I_{E_1} \oplus \left[ \begin{array}{cc} \frac{I+D}{2} & \frac{\sqrt{I - D^2}}{2} w u^*\\
uw^*\frac{\sqrt{I - D^2}}{2} & u\frac{I - D}{2}u^*
\end{array}
\right] \oplus Q,
\]
and
\[
P_{V_2\clw_1} = 0 \oplus \left[ \begin{array}{cc} \frac{I-D}{2} & \frac{\sqrt{I - D^2}}{2} wu^*\\
uw^*\frac{\sqrt{I - D^2}}{2} & u\frac{I + D}{2}u^*
\end{array}
\right] \oplus Q,
\]
where $u: E_{\lambda} \rightarrow E_{-\lambda}$ and $w: E_{\lambda} \rightarrow E_{\lambda}$ are unitary operators, and $Q: \mathcal{N} \rightarrow \mathcal{N}$ is a projection. Consequently
\[
w u^*: E_{-\lambda} \rightarrow E_{\lambda},
\]
is a unitary, and so, there exists a unimodular constant $\alpha$ such that
\[
wu^* (e_{-\lambda}) = \alpha e_{\lambda}.
\]
Define
\[
Q_{\lambda} = \left[ \begin{array}{cc} \frac{I+D}{2} & \frac{\sqrt{I - D^2}}{2} w u^*\\
uw^*\frac{\sqrt{I - D^2}}{2} & u\frac{I - D}{2}u^*
\end{array}
\right],
\]
and
\[
\tilde{Q}_\lambda =  \left[ \begin{array}{cc} \frac{I-D}{2} & \frac{\sqrt{I - D^2}}{2} wu^*\\
uw^*\frac{\sqrt{I - D^2}}{2} & u\frac{I + D}{2}u^*
\end{array}
\right].
\]
Therefore, $Q_\lambda$ and $\tilde{Q}_\lambda$ are projections defined on $E^\lambda$. The  matrix representations of $Q_\lambda$ and $\tilde{Q}_\lambda$ with respect to
\[
\{e_\lambda, e_{-\lambda}\} \in B_{E^\lambda},
\]
are given by
\[
Q_\lambda = \frac{1}{2} \left[ \begin{array}{cc}
1 + \lambda & \alpha(1-\lambda^2)^{\frac{1}{2}} \\
\bar{\alpha} (1-\lambda^2)^{\frac{1}{2}} & 1 - \lambda\\
\end{array}
\right],
\]
and
\[
\tilde{Q}_\lambda = \frac{1}{2} \left[ \begin{array}{cc}
1 - \lambda & \alpha(1-\lambda^2)^{\frac{1}{2}} \\
\bar{\alpha} (1-\lambda^2)^{\frac{1}{2}} & 1 + \lambda\\
\end{array}
\right].
\]
Thus, with respect to the decomposition $\mathcal{W} = E_1 \oplus E^\lambda \oplus \mathcal{N}$, we have
\begin{equation}\label{eqn5}
P_{\mathcal{W}_1} = \begin{bmatrix}
I_{E_1} &  &  \\
& Q_\lambda &  \\
& & Q
\end{bmatrix} \text{ and } P_{V_2\mathcal{W}_1} = \begin{bmatrix}
0 &  &  \\
& \tilde{Q}_{\lambda} &  \\
& & Q
\end{bmatrix},
\end{equation}
where $Q_\lambda$ and $\tilde{Q}_\lambda$ are given as above. As
\[
\mathcal{W} = \mathcal{W}_1 \oplus V_1 \mathcal{W}_2 = \mathcal{W}_2 \oplus V_2 \mathcal{W}_1,
\]
it follows that $P_{\mathcal{W}_2} = I_\mathcal{W} - P_{V_2 \mathcal{W}_1}$ and $P_{V_1\mathcal{W}_2} = I_\mathcal{W} - P_{\mathcal{W}_1}$, and hence,
\begin{equation}\label{eqn6}
P_{\mathcal{W}_2} =
\begin{bmatrix}
I_{E_1} &  &  \\
& \tilde{Q}_\lambda^\perp &  \\
& & Q^\perp
\end{bmatrix} \text{ and } P_{V_1\mathcal{W}_2} = \begin{bmatrix}
0 &  &  \\
& Q_\lambda^\perp &  \\
& & Q^\perp
\end{bmatrix}.
\end{equation}
Finally, to find a suitable orthonormal basis of $E^\lambda$, we define
\begin{equation}\label{eqn f1 part I}
f_1 = \sqrt{\frac{1+\lambda}{2}} e_\lambda + \bar{\alpha}\sqrt{\frac{1-\lambda}{2}} e_{-\lambda}\text{ and } f_4 = \sqrt{\frac{1+\lambda}{2}} e_\lambda - \bar{\alpha}\sqrt{\frac{1-\lambda}{2}} e_{-\lambda},
\end{equation}
and
\[
f_2 = \frac{f_1 - \lambda f_4}{\sqrt{1 - \lambda^2}} \text{ and } f_3 = \frac{f_4 - \lambda f_1}{\sqrt{1 - \lambda^2}}.
\]
Then, as easy computation reveals that
\begin{equation}\label{eqn f1 part II}
\begin{cases}
& f_2 = \sqrt{\frac{1-\lambda}{2}} e_\lambda + \bar{\alpha}\sqrt{\frac{1+\lambda}{2}} e_{-\lambda}
  \\
  & f_3 = \sqrt{\frac{1-\lambda}{2}} e_\lambda - \bar{\alpha}\sqrt{\frac{1+\lambda}{2}} e_{-\lambda}.
\end{cases}
\end{equation}

At this point, we recall the following useful result concerning the orthonormal basis of the range of projections of our interest (see \cite[Lemma 3.2]{DSPS}):

\begin{lemma}\label{range}
Let $\mathcal{H}$ and $\mathcal{K}$ be Hilbert spaces, $U : \mathcal{H} \rightarrow \mathcal{K}$ a unitary operator, and let $\lambda \in [-1, 1]$. Define the projection $P : \mathcal{H} \oplus \mathcal{K} \rightarrow \mathcal{H} \oplus \mathcal{K}$ by
\begin{equation*}
P = \left[ \begin{array}{cc}
\frac{1+\lambda}{2}I_{\mathcal{H}} & \frac{\sqrt{1-\lambda^2}}{2} U^*
\\
\frac{\sqrt{1-\lambda^2}}{2} U & \frac{1-\lambda}{2}I_{\mathcal{K}}
\\
\end{array}
\right].
\end{equation*}
If $\{ e_i : i \in \Lambda \} \in B_{\mathcal{H}}$, then $\Big\{ \sqrt{\frac{1+\lambda}{2}} e_i \oplus \sqrt{\frac{1-\lambda}{2}} Ue_i : i \in \Lambda \Big \} \in B_{\ran P}$.
\end{lemma}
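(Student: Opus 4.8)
The plan is to identify $\ran P$ explicitly as the graph-type subspace
\[
\mathcal{M} := \Big\{ \sqrt{\tfrac{1+\lambda}{2}}\, h \oplus \sqrt{\tfrac{1-\lambda}{2}}\, Uh : h \in \mathcal{H} \Big\} \subseteq \mathcal{H} \oplus \mathcal{K},
\]
and to observe that the map $T : \mathcal{H} \to \mathcal{H} \oplus \mathcal{K}$ given by $Th = \sqrt{\tfrac{1+\lambda}{2}}\, h \oplus \sqrt{\tfrac{1-\lambda}{2}}\, Uh$ is a unitary from $\mathcal{H}$ onto $\ran P$. Once this is done the lemma follows at once, since a unitary sends every orthonormal basis to an orthonormal basis and $Te_i$ is exactly the vector appearing in the statement.

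First I would record the elementary identity $\tfrac{\sqrt{1-\lambda^2}}{2} = \sqrt{\tfrac{1+\lambda}{2}}\,\sqrt{\tfrac{1-\lambda}{2}}$, valid because $\lambda \in [-1,1]$ forces all the radicands to be nonnegative; this factorization is the algebraic glue behind every computation below. Using it one verifies directly that $P = P^* = P^2$ (confirming that $P$ is indeed an orthogonal projection) and, crucially, that $PT = T$: expanding $P(Th)$ componentwise, the first coordinate equals $\tfrac{1+\lambda}{2}\sqrt{\tfrac{1+\lambda}{2}}\,h + \tfrac{\sqrt{1-\lambda^2}}{2}\sqrt{\tfrac{1-\lambda}{2}}\,U^*Uh = \sqrt{\tfrac{1+\lambda}{2}}\,h$, and similarly the second coordinate equals $\sqrt{\tfrac{1-\lambda}{2}}\,Uh$. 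Hence $\mathcal{M} = \ran T \subseteq \ran P$. That $T$ is an isometry is immediate from $\|Th\|^2 = \tfrac{1+\lambda}{2}\|h\|^2 + \tfrac{1-\lambda}{2}\|Uh\|^2 = \|h\|^2$, so $\mathcal{M}$ is in particular closed.

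For the reverse inclusion I would apply $P$ to the two families of generators $h \oplus 0$ and $0 \oplus k$ of $\mathcal{H} \oplus \mathcal{K}$. The same factorization identity gives $P(h \oplus 0) = \sqrt{\tfrac{1+\lambda}{2}}\,Th$ and $P(0 \oplus k) = \sqrt{\tfrac{1-\lambda}{2}}\,T(U^*k)$, so every vector $P(h \oplus k) = P(h \oplus 0) + P(0 \oplus k)$ lies in the linear subspace $\mathcal{M}$. Since $P$ is idempotent, $\ran P$ is closed and equals $\{Px : x \in \mathcal{H} \oplus \mathcal{K}\}$, whence $\ran P \subseteq \mathcal{M}$. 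Combined with the previous paragraph this yields $\ran P = \mathcal{M} = \ran T$, so $T$ is a unitary of $\mathcal{H}$ onto $\ran P$, and therefore $\big\{\sqrt{\tfrac{1+\lambda}{2}}\,e_i \oplus \sqrt{\tfrac{1-\lambda}{2}}\,Ue_i : i \in \Lambda\big\} = \{Te_i : i \in \Lambda\} \in B_{\ran P}$ whenever $\{e_i : i \in \Lambda\} \in B_{\mathcal{H}}$.

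There is no serious obstacle in this argument; it is essentially a bookkeeping computation that hinges on the identity $\sqrt{1-\lambda^2} = \sqrt{1+\lambda}\,\sqrt{1-\lambda}$. The only place that warrants a moment's care is the pair of degenerate cases $\lambda = \pm 1$, where one of the two square roots vanishes: there $P$ is simply a coordinate projection and $T$ degenerates to the inclusion $\mathcal{H} \hookrightarrow \mathcal{H} \oplus \{0\}$ (respectively $h \mapsto \{0\} \oplus Uh$), and the statement is transparent. In fact these cases need not be singled out, since all the radicals above stay real and nonnegative throughout, so the general computation remains valid verbatim.
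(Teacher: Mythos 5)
Your argument is correct. The paper itself offers no proof of this lemma; it simply quotes it from an external source (Lemma 3.2 of the cited paper of De--Sarkar--Shankar--Sankar), so there is no in-paper argument to compare yours against. Your route is a clean, self-contained verification: the key identity $\frac{\sqrt{1-\lambda^2}}{2}=\sqrt{\frac{1+\lambda}{2}}\sqrt{\frac{1-\lambda}{2}}$ is used correctly, the computations $PT=T$, $P(h\oplus 0)=\sqrt{\frac{1+\lambda}{2}}\,Th$ and $P(0\oplus k)=\sqrt{\frac{1-\lambda}{2}}\,T(U^*k)$ all check out, and together they give $\ran P=\ran T$ with $T$ an isometry of $\mathcal{H}$ onto $\ran P$ (note that the two inclusions already force equality, so the appeal to closedness of $\ran P$ is not even needed). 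Since a unitary carries orthonormal bases to orthonormal bases, the stated family is an orthonormal basis of $\ran P$, and your remark that the degenerate cases $\lambda=\pm 1$ require no separate treatment is accurate.
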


Returning to our setting, in particular, we have the following:
	
\begin{coro}\label{fi}
Let $\{f_i\}_{i=1}^4$ be as in \eqref{eqn f1 part I} and \eqref{eqn f1 part II}. Then
\[
\text{ran} Q_\lambda = \mathbb{C} f_1, \text{ran} \tilde{Q}_\lambda = \mathbb{C} f_2, \text{ran} {Q}_\lambda^\perp = \mathbb{C} f_3, \text{ and } \text{ran} \tilde{Q}_\lambda^\perp = \mathbb{C}f_4.
\]
In particular
\[
\{f_1, f_3\}, \{f_2, f_4\} \in B_{E^\lambda}.
\]
\end{coro}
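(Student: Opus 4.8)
The plan is to recognize each of the four projections $Q_\lambda$, $\tilde Q_\lambda$, $Q_\lambda^\perp$, $\tilde Q_\lambda^\perp$ on $E^\lambda = E_\lambda \oplus E_{-\lambda}$ as being precisely of the shape treated in Lemma \ref{range}, and then to read off the claimed spanning vectors directly from that lemma. Throughout, recall that $D = \lambda I_{E_\lambda}$, that $u : E_\lambda \to E_{-\lambda}$ and $w : E_\lambda \to E_\lambda$ are unitaries, that $\{e_\lambda\} \in B_{E_\lambda}$ and $\{e_{-\lambda}\} \in B_{E_{-\lambda}}$, and that the unimodular scalar $\alpha$ is fixed by $wu^*(e_{-\lambda}) = \alpha e_\lambda$.

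First I would set $U := uw^* : E_\lambda \to E_{-\lambda}$ and note, by taking adjoints in $wu^*(e_{-\lambda}) = \alpha e_\lambda$ and using that $E_{-\lambda} = \mathbb{C} e_{-\lambda}$, that $U e_\lambda = uw^* e_\lambda = \bar\alpha e_{-\lambda}$. Since $\sqrt{I - D^2} = \sqrt{1-\lambda^2}\, I_{E_\lambda}$ and $u$ is unitary (so $u \tfrac{1-\lambda}{2} u^* = \tfrac{1-\lambda}{2} I_{E_{-\lambda}}$), the middle block $Q_\lambda$ of $P_{\clw_1}$ is exactly the projection $P$ of Lemma \ref{range} on $E_\lambda \oplus E_{-\lambda}$ for the parameter $\lambda$ and the unitary $U$ (its off-diagonal entries being $\tfrac{\sqrt{1-\lambda^2}}{2} U^*$ and $\tfrac{\sqrt{1-\lambda^2}}{2} U$). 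Applying Lemma \ref{range} to $\{e_\lambda\} \in B_{E_\lambda}$ gives $\big\{ \sqrt{\tfrac{1+\lambda}{2}} e_\lambda + \sqrt{\tfrac{1-\lambda}{2}} U e_\lambda \big\} = \{ f_1 \} \in B_{\ran Q_\lambda}$, that is, $\ran Q_\lambda = \mathbb{C} f_1$.

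The block $\tilde Q_\lambda$ has the same off-diagonal entries but diagonal blocks $\tfrac{I-D}{2}$ and $u \tfrac{I+D}{2} u^*$, so it is the projection of Lemma \ref{range} for the parameter $-\lambda$ with the same $U$, giving $\ran \tilde Q_\lambda = \mathbb{C}\big( \sqrt{\tfrac{1-\lambda}{2}} e_\lambda + \sqrt{\tfrac{1+\lambda}{2}} U e_\lambda \big) = \mathbb{C} f_2$. For the orthocomplements, I would write out the $2\times 2$ matrices of $Q_\lambda^\perp = I - Q_\lambda$ and $\tilde Q_\lambda^\perp = I - \tilde Q_\lambda$ in the basis $\{e_\lambda, e_{-\lambda}\}$; each is again of the form in Lemma \ref{range}, now with parameter $-\lambda$ (resp.\ $\lambda$) and with $-U$ in place of $U$, the off-diagonal entries having picked up a minus sign. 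Since $(-U) e_\lambda = -\bar\alpha e_{-\lambda}$, Lemma \ref{range} yields $\ran Q_\lambda^\perp = \mathbb{C}\big( \sqrt{\tfrac{1-\lambda}{2}} e_\lambda - \bar\alpha \sqrt{\tfrac{1+\lambda}{2}} e_{-\lambda} \big) = \mathbb{C} f_3$ and $\ran \tilde Q_\lambda^\perp = \mathbb{C}\big( \sqrt{\tfrac{1+\lambda}{2}} e_\lambda - \bar\alpha \sqrt{\tfrac{1-\lambda}{2}} e_{-\lambda} \big) = \mathbb{C} f_4$. Finally, since $Q_\lambda + Q_\lambda^\perp = I_{E^\lambda} = \tilde Q_\lambda + \tilde Q_\lambda^\perp$, the ranges $\mathbb{C} f_1$ and $\mathbb{C} f_3$ (resp.\ $\mathbb{C} f_2$ and $\mathbb{C} f_4$) are orthogonal complements in $E^\lambda$, so $\{f_1, f_3\}, \{f_2, f_4\} \in B_{E^\lambda}$; alternatively this is the one-line check that $\|f_i\| = 1$ and $\langle f_1, f_3\rangle = \langle f_2, f_4\rangle = 0$ using $|\alpha| = 1$.

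The only point requiring care — and hence the main (if minor) obstacle — is the bookkeeping in the first step: passing from the defining relation $wu^*(e_{-\lambda}) = \alpha e_\lambda$ to $U e_\lambda = uw^* e_\lambda = \bar\alpha e_{-\lambda}$, i.e.\ correctly tracking the complex conjugate that appears when one forms $U = (wu^*)^*$, and then making sure the sign of the off-diagonal entries of the complementary projections is absorbed into the unitary $-U$ rather than into the parameter of Lemma \ref{range}. Everything else is a direct invocation of that lemma.
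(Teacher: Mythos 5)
Your proposal is correct and follows essentially the same route as the paper: the corollary is obtained by recognizing each of the four blocks as a projection of the exact form in Lemma \ref{range} (with parameter $\pm\lambda$ and unitary $\pm U$, where $U=uw^*$ acts on $e_\lambda$ by $\bar\alpha$, which is the conjugate-tracking point you flag) and reading off the spanning vectors, the basis claim following since complementary one-dimensional ranges are orthogonal. The paper states this as an immediate consequence of Lemma \ref{range} and the $2\times 2$ matrix representations of $Q_\lambda$ and $\tilde Q_\lambda$ in the basis $\{e_\lambda,e_{-\lambda}\}$; your write-up just makes the bookkeeping explicit.
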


By Theorem \ref{bcl}, we know that $(\mathcal{W}, U, P_{\mathcal{W}_1})$ is a BCL triple associated to $(V_1, V_2)$, where the unitary $U$ on $\clw$ is given by
\begin{equation}\label{unitarydef}
U = \begin{bmatrix}
V_2|_{\mathcal{W}_1} & \\
& V_1^*|_{V_1 \mathcal{W}_2}
\end{bmatrix} : \mathcal{W}_1 \oplus V_1 \mathcal{W}_2 \to V_2\mathcal{W}_1 \oplus \mathcal{W}_2.
\end{equation}
Our goal is now to reveal the action of $U$ on a basis of $\mathcal{W}$. We need some preparatory calculations. It follows immediately from the representations of $P_{\clw_i}$ and $P_{V_j \clw_i}$, $i \neq j$, $i,j=1,2$ (see \eqref{eqn5} and \eqref{eqn6}) that
\[
\mathcal{W}_1 = E_1 \oplus \text{ran} Q_\lambda \oplus \text{ran} Q, \; V_2 \mathcal{W}_1 = \text{ran} \tilde{Q}_\lambda \oplus \text{ran} Q,
\]
and
\[
\mathcal{W}_2 = E_1 \oplus \text{ran} \tilde{Q}_\lambda^\perp \oplus \text{ran} Q^\perp, \; V_1 \mathcal{W}_2 = \text{ran} Q_\lambda^\perp \oplus \text{ran} Q^\perp.
\]
Consequently, by Corollary \ref{fi}, we have
\begin{equation}\label{eqn7}
\begin{aligned} &\mathcal{W}_1  = E_1 \oplus \mathbb{C} f_1 \oplus \text{ran} Q,\\
&V_2 \mathcal{W}_1 = \mathbb{C} f_2 \oplus \text{ran}Q, \\
& \mathcal{W}_2 = E_1 \oplus \mathbb{C} f_4 \oplus \text{ran} Q^\perp, \\ &V_1 \mathcal{W}_2 = \mathbb{C} f_3 \oplus \text{ran} Q^\perp.
\end{aligned}
\end{equation}
Now we are ready to explore the action of the unitary $U$ on $\mathcal{W}$.

\begin{lemma}\label{Uaction}
Let $(V_1, V_2)$ be an irreducible $3$-finite pair. With notations as above, we have the following:
\begin{itemize}
\item[(a)] $U f_3 \in \mathbb{C} f$,
\item[(b)] $U f \in \mathbb{C} f_2$,
\item[(c)] $Uf_1 \in \text{ran}Q$,
\item[(d)] $U^*f_4 \in \text{ran}Q^\perp$,
\item[(e)] $U^* \big(\text{ran}Q^\perp\big) \subset \text{ran}Q^\perp$, and $U\big(\text{ran}Q\big) \subset \text{ran}Q$.
\end{itemize}
\end{lemma}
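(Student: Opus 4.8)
The plan is to exploit the orthogonal decomposition $\clw = \clw_1 \oplus V_1\clw_2 = \clw_2 \oplus V_2\clw_1$ together with the refined descriptions in \eqref{eqn7}, namely $\clw_1 = E_1 \oplus \C f_1 \oplus \ran Q$, $V_2\clw_1 = \C f_2 \oplus \ran Q$, $\clw_2 = E_1 \oplus \C f_4 \oplus \ran Q^\perp$, $V_1\clw_2 = \C f_3 \oplus \ran Q^\perp$, and the explicit definition \eqref{unitarydef} of $U$: on $\clw_1$ it acts as $V_2$, on $V_1\clw_2$ it acts as $V_1^*$, and its range lands in $V_2\clw_1 \oplus \clw_2$. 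The core computational inputs are the relations \eqref{crosscomm}, i.e. $V_2^*V_1 f = \beta f$ and $V_1^*V_2 f = \bar\beta f$, and the fact that $f \in \clw_1 \cap \clw_2 = E_1$, so $V_1^* f = 0$ and $V_2^* f = 0$.

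First I would prove (b): since $f \in \clw_1$, $Uf = V_2 f$, and I claim $V_2 f \in \C f_2$. Indeed $V_2 f$ lies in $V_2\clw_1 = \C f_2 \oplus \ran Q$, so it suffices to show $V_2 f \perp \ran Q$; but $\ran Q \subseteq \clw_1 \cap \clw_2$ is not quite available directly — instead I use that elements of $\ran Q$ are annihilated by the defect structure: from \eqref{eqn5}--\eqref{eqn6}, $\ran Q$ sits inside both $\clw_1$ and $\clw_2$, equivalently $\ran Q \subseteq E_1$ by Lemma \ref{intersection} applied blockwise — more carefully, $P_{\clw_1}$ and $P_{\clw_2}$ both restrict to $Q$ on $\mathcal{N}$, so $\ran Q \subseteq \clw_1 \cap \clw_2$. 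Hence for $q \in \ran Q$, $\langle V_2 f, q\rangle = \langle f, V_2^* q\rangle = 0$ since $q \in \clw_2 = \ker V_2^*$. So $Uf = V_2 f \in \C f_2$, giving (b). Next, (a): $f_3 \in V_1\clw_2$ by \eqref{eqn7}, so $U f_3 = V_1^* f_3$. Writing $f_3 = \frac{f_4 - \lambda f_1}{\sqrt{1-\lambda^2}}$ with $f_1, f_4 \in E^\lambda \subseteq \clw$, and noting $\ran[V_2^*,V_1] = E_1$ with $[V_2^*,V_1]|_{E_1^\perp}=0$ by Corollary \ref{rankcross} and Lemma \ref{contain}, one computes that $V_1^*$ maps $E^\lambda$ into $\C f$: concretely $V_2^* V_1|_{E^\lambda} = 0$ forces $V_1 E^\lambda \subseteq \ker V_2^* = \clw_2$, and combined with the analogous relation this pins $U f_3 = V_1^* f_3$ to lie in $\clw_2$, and then the block form of $U$ (whose range on $V_1\clw_2$ is $\clw_2$) together with the one-dimensionality $\dim E_1 = 1$ identifies the component in $\C f$. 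I expect to need the identity $UU^* = I_{\clw}$ and the adjoint relation $U^* = V_2^*|_{V_2\clw_1} \oplus V_1|_{\clw_2}$ to tie things down cleanly.

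For (e), I would argue that $U$ maps $\ran Q \subseteq \clw_1 \cap \clw_2$ into $V_2\clw_1 \cap \clw_2$: since $\ran Q \subseteq \clw_1$, $U|_{\ran Q} = V_2|_{\ran Q}$; since also $\ran Q \subseteq \clw_2 = \ker V_2^*$, we get $V_2(\ran Q) \perp V_2\clw_1$'s complement — more precisely, $V_2^* V_2 q = q \in \ran Q$ and one checks $V_2 q \in V_2\clw_1$, and $V_2 q \perp \C f_2$ because $\langle V_2 q, f_2\rangle$ reduces via $U^* = V_2^*$ on $V_2\clw_1$ to $\langle q, U^* f_2\rangle = \langle q, f\rangle = 0$ using (b) (that $U^* f_2 \in \C f$). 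This shows $U(\ran Q) \subseteq \ran Q$; the dual statement $U^*(\ran Q^\perp) \subseteq \ran Q^\perp$ follows symmetrically by the same argument with the roles of $(V_1,V_2)$ and $\clw_1,\clw_2$ interchanged, or simply by taking orthogonal complements in $\clw$ and using that $U$ is unitary with $U(\ran Q) = \ran Q$ (so $U$ permutes the complementary summand, but one must check it lands inside $\ran Q^\perp$ rather than merely orthogonal to $\ran Q$ — here the decomposition $\clw = E_1 \oplus E^\lambda \oplus \mathcal{N}$ and the already-established (a)--(d) showing $U$ sends $E_1 \oplus E^\lambda$ into itself close the gap). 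Finally (c) and (d): $f_1 \in \clw_1$ so $Uf_1 = V_2 f_1$, and $f_1 \in E^\lambda$, so $V_2 f_1$ lands in $V_2\clw_1 = \C f_2 \oplus \ran Q$; orthogonality to $f_2$ follows since $U^* f_2 \in \C f \perp E^\lambda \ni f_1$, hence $Uf_1 \in \ran Q$. Statement (d) is the adjoint mirror: $f_4 \in \clw_2$, $U^* f_4 = V_1 f_4 \in V_1\clw_2 = \C f_3 \oplus \ran Q^\perp$, and $U f_3 \in \C f \perp E^\lambda \ni f_4$ gives $\langle U^* f_4, f_3\rangle = \langle f_4, U f_3\rangle = 0$, so $U^* f_4 \in \ran Q^\perp$.

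The main obstacle I anticipate is part (a) (and its companion bookkeeping in (e)): verifying $U f_3 \in \C f$ requires showing simultaneously that $V_1^* f_3$ is a nonzero multiple of $f$ rather than being orthogonal to $f$ or having a stray $\mathcal{N}$-component, and this rests on carefully combining $\dim E_1 = 1$, the vanishing of cross-commutators off $E_1$, and the unitarity of $U$ restricted to the finite-dimensional piece $E_1 \oplus E^\lambda$. Once (a) and (b) are secured, the remaining parts (c), (d), (e) follow by the same orthogonality bookkeeping applied with adjoints, so the bulk of the work is front-loaded into those two computations and the identification of $\ran Q$ as a subspace of $\clw_1 \cap \clw_2$.
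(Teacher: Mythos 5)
Your parts (c), (d), and (e) are essentially the paper's orthogonality bookkeeping and would go through once (a) and (b) are in hand; the genuine gaps are precisely in (a) and (b), the two steps on which everything else rests, and both stem from incorrect identifications of subspaces. For (b), you assert $\text{ran}Q \subseteq \clw_1 \cap \clw_2$ on the grounds that ``$P_{\clw_1}$ and $P_{\clw_2}$ both restrict to $Q$ on $\mathcal{N}$''. That is not what \eqref{eqn5}--\eqref{eqn6} say: $P_{\clw_1}$ and $P_{V_2\clw_1}$ restrict to $Q$ on $\mathcal{N}$, while $P_{\clw_2}$ and $P_{V_1\clw_2}$ restrict to $Q^\perp$. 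Hence $\text{ran}Q \subseteq \clw_1 \cap V_2\clw_1$, and in fact $\text{ran}Q \perp \clw_2$; note also that $\clw_1 \cap \clw_2 = E_1 = \mathbb{C}f$ is orthogonal to $\mathcal{N}$, so your inclusion would force $Q = 0$. Consequently the step $\langle V_2 f, q\rangle = \langle f, V_2^* q\rangle = 0$ ``since $q \in \ker V_2^*$'' is unjustified and (b) is not proved. For (a), the claim $V_2^*V_1|_{E^\lambda} = 0$ is false: $f_1 \in E^\lambda$ lies in $\clw_1$ but not in $\clw_2$ (indeed $\langle f_1, f_4\rangle = \lambda \neq 0$, so $f_1$ cannot equal its projection onto $\clw_2$), hence $V_2^* f_1 \neq 0$ and, since the cross-commutator vanishes on $E_1^\perp$, $V_2^*V_1 f_1 = V_1 V_2^* f_1 \neq 0$. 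Moreover, even the conclusion you try to extract from it---that $Uf_3 = V_1^* f_3$ lies in $\clw_2$---is automatic from the block form of $U$ and does not locate $Uf_3$ in $\mathbb{C}f$, because $\clw_2 = \mathbb{C}f \oplus \mathbb{C}f_4 \oplus \text{ran}Q^\perp$ and nothing in your argument rules out the $f_4$ and $\text{ran}Q^\perp$ components; invoking $\dim E_1 = 1$ cannot do this.

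The missing mechanism (and the one the paper uses) is the eigenvalue relation \eqref{crosscomm}. Expand $Uf = V_2 f = \langle V_2 f, f_2\rangle f_2 + g$ with $g \in \text{ran}Q$ (this component need not vanish a priori), apply $V_1^*$, which annihilates $g$ because $\text{ran}Q \subseteq \clw_1$, and use $V_1^* V_2 f = \bar{\beta} f$ with $\beta \neq 0$ to get $\bar{\beta} f = \langle V_2 f, f_2\rangle V_1^* f_2$; then compute $V_1^* f_2 = \langle f_2, f_3\rangle V_1^* f_3 = -\lambda\, U f_3$ (using $V_1^* f_1 = 0$ and $f_3 \in V_1\clw_2$). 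This exhibits $Uf_3$ as a nonzero multiple of $f$, which is (a), and the symmetric computation starting from $U^* f = V_1 f \in \mathbb{C}f_3 \oplus \text{ran}Q^\perp$ gives $U^* f_2 \in \mathbb{C}f$, which is equivalent to (b) since $U$ is unitary. With (a) and (b) so established, your arguments for (c), (d), and (e) are correct and coincide in spirit with the paper's.
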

\begin{proof}
First, we prove part (a). Since $f \in \mathcal{W}_1$, it follows that
\[
Uf = V_2 f \in V_2 \mathcal{W}_1.
\]
By \eqref{eqn7}, we know that $V_2 \mathcal{W}_1 = \mathbb{C} f_2 \oplus \text{ran}Q$, and hence, there exists $g \in \text{ran}Q$ such that
\[
V_2 f = \langle V_2 f, f_2 \rangle f_2 + g.
\]
As $\text{ran}Q \subset \mathcal{W}_1$, applying $V_1^*$ to both sides of the preceding equation and then using \eqref{crosscomm}, we see that
\[
\bar{\beta} f = \langle V_2 f, f_2 \rangle V_1^* f_2.
\]
Now we compute $V_1^* f_2$. By Corollary \ref{fi}, we know that $\{f_1, f_3 \} \in B_{E^\lambda}$. As $f_2 \in E^\lambda (= E_\lambda \oplus E_{-\lambda})$, we have
\[
f_2 = \langle f_2, f_1 \rangle f_1 + \langle f_2, f_3 \rangle f_3.
\]
By \eqref{eqn7}, we know that $f_1 \in \mathcal{W}_1$. Applying $V_1^*$ to both sides of the preceding identity we obtain that
\[
V_1^* f_2 = \langle f_2, f_3 \rangle V_1^* f_3.
\]
Since $f_3 \in V_1\mathcal{W}_2$ (see \eqref{eqn7}), the definition of $U$ as given by \eqref{unitarydef} yields
\[
V_1^* f_2 = \langle f_2, f_3 \rangle U f_3,
\]
and hence
\[
\bar{\beta} f = \langle V_2 f, f_2 \rangle \langle f_2, f_3 \rangle U f_3.
\]
It is clear from \eqref{eqn f1 part II}  and the fact that $\alpha$ is a unimodular constant that $\langle f_2, f_3 \rangle = -\lambda$. Finally, by the definition of $U$, it follows that $V_2 f = Uf$ and hence
\[
U f_3 = -\frac{\bar{\beta}}{\lambda \langle U f, f_2 \rangle}f.
\]
For (b), we observe, similarly, that $U^* f = V_1 f\in V_1 \mathcal{W}_2$. By \eqref{eqn7}, we know that
\[
V_1 \mathcal{W}_2 = \mathbb{C}f_3 \oplus \text{ran} Q^\perp,
\]
and hence, there exists $h \in \text{ran}Q^\perp$ such that
\[
V_1 f = \langle V_1 f, f_3 \rangle f_3 + h.
\]
Now we follow the steps in (a) precisely to conclude that
\begin{equation}\label{formulauf2}
U^* f_2 =  -\frac{\beta}{\lambda \langle U^* f, f_3 \rangle} f.
\end{equation}
Next, we proceed to prove (c). As $E_1 (= \mathbb{C} f)$ and $\text{ran} Q_\lambda( = \mathbb{C} f_1)$ are orthogonal, we have that	$\langle f, f_1 \rangle = 0$. Therefore
\[
\langle Uf, Uf_1 \rangle = 0.
\]
Since $Uf \in \mathbb{C} f_2$ (see part (b)), it follows that $\langle Uf_1, f_2 \rangle = 0$. As $f_1 \in \mathcal{W}_1$, \eqref{unitarydef} together with \eqref{eqn7} imply
\[
Uf_1 \in V_2 \mathcal{W}_1 = \mathbb{C} f_2 \oplus \text{ran}Q.
\]
Therefore, $\langle Uf_1, f_2\rangle = 0$ implies that $Uf_1 \in \text{ran}Q$. Since the proof of (d) is analogous to that of (c), the specifics are omitted. Finally, we turn to (e). Clearly \eqref{eqn7} implies $E_1 (= \mathbb{C} f)$ and $\text{ran}Q^\perp$ are orthogonal subspaces of $\mathcal{W}_2$. Consequently
\[
U^* f \perp U^*(\text{ran}Q^\perp).
\]
By part (a), $U^*f \in \mathbb{C}f_3$ and hence
\[
f_3 \perp U^*(\text{ran}Q^\perp).
\]
The definition of $U$ as in \eqref{unitarydef} together with \eqref{eqn7} yield that
\[
U^*(\text{ran}Q^\perp) \subseteq U^* \mathcal{W}_2 = V_1 \mathcal{W}_2 = \mathbb{C} f_3 \oplus \text{ran}Q^\perp,
\]
and consequently, $U^*(\text{ran }Q^\perp) \subseteq \text{ran}Q^\perp$. The proof of $U(\text{ran }Q) \subseteq \text{ran}Q$ is similar.
\end{proof}

The following theorem provides a summary of the major findings concerning irreducible $3$-finite pairs of isometries so far. The final claim is an addition that states that the absolute value of the nonzero eigenvalue of the cross-commutator is same as the second largest eigenvalue of the defect operator.

\begin{theorem}\label{alphalambda}
Let $(V_1, V_2)$ be an irreducible $3$-finite pair of isometries. Then
\begin{enumerate}
\item $\text{rank}[V_2^*, V_1] = 1$ and $\text{ran}[V_2^*, V_1] = E_1$.
\item $E_{-1} = \{0\}$.
\item There exist unique $\beta_{(V_1, V_2)} \in \mathbb{C} \setminus \{0\}$ and a unit vector $f_{(V_1, V_2)} \in E_1$ such that
\[
E_1 = \mathbb{C} f_{(V_1, V_2)},
\]
and
\[
[V_2^*, V_1] f_{(V_1, V_2)} = \beta_{(V_1, V_2)} f_{(V_1, V_2)}.
\]
\item There is a unique $\lambda_{(V_1, V_2)} \in (0,1)$ such that
\[
\sigma(C(V_1, V_2)) \cap (0, 1) = \{\lambda_{(V_1, V_2)}\},
\]
and
\[
C(V_1, V_2)|_{(\ker C(V_1, V_2))^\perp} \cong \begin{bmatrix}
1 &  & \\
& \lambda_{(V_1, V_2)} &  \\
& & -\lambda_{(V_1, V_2)} \\
\end{bmatrix}.
\]
\item $|\beta_{(V_1, V_2)}| = \lambda_{(V_1, V_2)}$.
\end{enumerate}
\end{theorem}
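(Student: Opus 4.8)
Parts (1)--(4) are a compilation of results already established: (1) is Corollary~\ref{rankcross}; (2) and the first two displays of (3) follow from Proposition~\ref{dime1} together with Lemma~\ref{contain} and Corollary~\ref{rankcross} (normality of $[V_2^*,V_1]$ and $\mathrm{rank}\,[V_2^*,V_1]=1$ with range $E_1=\mathbb{C}f$ force a single nonzero eigenvalue $\beta$, and $f$ may be taken to be a unit vector); and (4) is Proposition~\ref{rank3defect}. Thus the only genuinely new assertion is (5), the identity $|\beta_{(V_1,V_2)}|=\lambda_{(V_1,V_2)}$, and the plan is to read it off directly from the action of the associated unitary $U$ on $\clw$, as computed in Lemma~\ref{Uaction}.

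Here is the argument I would carry out. Since $U$ is unitary on $\clw$ and $\|f\|=\|f_2\|=\|f_3\|=1$, parts (a) and (b) of Lemma~\ref{Uaction} yield unimodular scalars $c,d$ with $Uf=c\,f_2$ and $U^*f=d\,f_3$; in particular $\langle Uf,f_2\rangle=c$ and $\langle U^*f,f_3\rangle=d$ are nonzero, so the denominators appearing in Lemma~\ref{Uaction} and in \eqref{formulauf2} are legitimate. From $U^*f=d\,f_3$ I get $f_3=\bar d\,U^*f$, and applying $U$ gives $Uf_3=\bar d\,f$. On the other hand, Lemma~\ref{Uaction}(a) asserts $Uf_3=-\dfrac{\bar\beta}{\lambda c}\,f$. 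Comparing these two expressions and using $f\neq 0$ forces $\bar d=-\dfrac{\bar\beta}{\lambda c}$, and taking moduli (with $|c|=|d|=1$) gives $1=\dfrac{|\beta|}{\lambda}$, that is, $|\beta_{(V_1,V_2)}|=\lambda_{(V_1,V_2)}$. Equivalently, one may run the same computation with \eqref{formulauf2} and part (b): $Uf=c\,f_2$ gives $U^*f_2=\bar c\,f$, which matched against $U^*f_2=-\dfrac{\beta}{\lambda d}f$ again yields $|\beta|=\lambda$.

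There is essentially no obstacle remaining at this stage; the conceptual work has already been done in Proposition~\ref{dime1}, Corollary~\ref{rankcross}, Proposition~\ref{rank3defect}, and Lemma~\ref{Uaction}, where $E_1$, $\lambda$, $\beta$ were pinned down and $U$ was tracked through the refined orthonormal bases $\{f_i\}_{i=1}^4$ of $E^\lambda$. The one point deserving a word of care is the non-vanishing of $\langle Uf,f_2\rangle$ and $\langle U^*f,f_3\rangle$, which is immediate from unitarity of $U$ and the fact that $Uf$ and $U^*f$ are unit vectors lying in the one-dimensional spaces $\mathbb{C}f_2$ and $\mathbb{C}f_3$, respectively.
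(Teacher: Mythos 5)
Your proposal is correct and follows essentially the same route as the paper: parts (1)--(4) are indeed just a compilation of Proposition \ref{dime1}, Corollary \ref{rankcross}, and Proposition \ref{rank3defect}, and (5) is extracted from Lemma \ref{Uaction} together with the unimodularity of $\langle Uf, f_2\rangle$ and $\langle U^*f, f_3\rangle$. The only cosmetic difference is that you read off $|\beta|=\lambda$ by matching $Uf_3=\bar d f$ (equivalently $U^*f_2=\bar c f$) against the explicit formula \eqref{formulauf2} already derived inside the proof of Lemma \ref{Uaction}, whereas the paper recomputes the same relation via $\beta=\langle U^*f, Uf\rangle=-\lambda\,\langle U^*f,f_3\rangle\langle f_2,Uf\rangle$; the underlying computation is identical.
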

\begin{proof}
We only need to prove (5). Keeping the foregoing notational convention, we use $\lambda$, $\beta$, and $f$ for $\lambda_{(V_1, V_2)}$, $\beta_{(V_1, V_2)}$, and $f_{(V_1, V_2)}$ respectively. It follows from \eqref{crosscomm} that
\[
V_2^*V_1 f = \beta f.
\]
Since $f$ is a unit vector in $E_1$, we have
\[
\begin{split}
\beta & = \langle \beta f, f \rangle
\\
& = \langle V_2^*V_1 f, f \rangle
\\
& = \langle U^*f, Uf \rangle,
\end{split}
\]
where the last equality follows from the definition of $U$. By parts (a) and (b) of Lemma \ref{Uaction}, we then have
\[
\begin{split}
\beta & = \big\langle \langle U^*f, f_3\rangle f_3, \langle Uf, f_2 \rangle f_2 \big\rangle
\\
& = - \langle U^*f, f_3\rangle \langle f_2, Uf \rangle \lambda,
\end{split}
\]
as $\langle f_3, f_2 \rangle = - \lambda$. The result now follows from the fact that $\langle U^*f, f_3\rangle$ and $\langle f_2, Uf \rangle$ are unimodular constants.
\end{proof}

We record the following particular but useful fact for convenient future retrieval:
\begin{equation}\label{all about 3}
\text{ran}[V_2^*, V_1] = E_1 = \mathbb{C} f_{(V_1, V_2)} = \clw_1 \cap \clw_2,
\end{equation}
where $E_1 = \ker (C(V_1, V_2) - I_{\clh})$. The final equality is a consequence of Lemma \ref{intersection}.

\section{Classification of $3$-finite pairs}\label{sec class of 3 finite}

The purpose of this section is to present a complete classification of irreducible $3$-finite pairs in terms of computable unitary invariants. The structural results of the preceding section will be used thoroughly. Therefore, we continue with $(V_1, V_2)$, an irreducible $3$-finite pair on $\clh$. We adhere to the notational convention established in Section \ref{sec properties of 3 finite} (more specifically, see \eqref{eqn 3 notations 1} and \eqref{eqn 3 notations 2}):
\begin{align*}
&\beta = \beta_{(V_1, V_2)} = \text{unique nonzero eigenvalue of } [V_2^*, V_1],\\
&\lambda = \lambda_{(V_1, V_2)} = \text{unique eigenvalue of } C(V_1, V_2) \text{ in } (0, 1),\\
&f = f_{(V_1, V_2)} = \text{a unit vector in } E_1.	
\end{align*}
Recall that the unitary $U$ of the corresponding BCL triple $(\clw, U, P_{\clw_1})$ is given by (see Theorem \ref{bcl} or \eqref{unitarydef})
\[
U = \begin{bmatrix}
V_2|_{\mathcal{W}_1} & \\
& V_1^*|_{V_1 \mathcal{W}_2}
\end{bmatrix} : \mathcal{W}_1 \oplus V_1 \mathcal{W}_2 \to V_2\mathcal{W}_1 \oplus \mathcal{W}_2.
\]
Also recall from Theorem \ref{alphalambda} (or see \eqref{all about 3}) that
\[
\text{ran}[V_2^*, V_1] = E_1 = \mathbb{C} f = \clw_1 \cap \clw_2,
\]
and $[V_2^*, V_1] f = \beta f$. For all $i = 1, 2, 3, 4$, set
\begin{equation}\label{tilde}
g_i = \frac{1}{\langle f_3, U^*f \rangle} f_i.
\end{equation}
By \eqref{formulauf2} and the fact $\langle f_3, U^*f \rangle$ is a unimodular constant, we have
\begin{equation}\label{finalformula}
\langle U^*f, g_3 \rangle = 1 \text{ and } \langle U^*{g_2}, f \rangle = - \frac{\beta}{\lambda}.
\end{equation}
By Corollary \ref{fi}, we also have
\[
\{g_1, g_3\}, \{g_2, g_4\} \in B_{E^\lambda}.
\]
Further, it follows from Lemma \ref{Uaction} that
\[
U^m f_1 \in \text{ran}Q,
\]
and
\[
U^{*m} f_4 \in \text{ran}Q^\perp,
\]
for all $m \geq 1.$	Set
\[
\widetilde{\mathcal{W}} = \overline{\text{span}} \{f, U^m f_1, U^{*m} f_4 : m \geq 0 \}.
\]
We show that:

\begin{lemma}\label{Etilde}
$\widetilde{\mathcal{W}}$ reduces $(U, P_{\mathcal{W}_1})$.
\end{lemma}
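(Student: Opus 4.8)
The plan is to show that $\widetilde{\mathcal{W}}$ is invariant under $U$, $U^*$, and $P_{\mathcal{W}_1}$. Since $P_{\mathcal{W}_1}$ is the projection appearing in the BCL triple, it suffices (for the reducing-under-the-projection part) to check that the generating vectors $f$, $U^m f_1$ ($m\geq 0$), and $U^{*m} f_4$ ($m\geq 0$) are each mapped by $P_{\mathcal{W}_1}$ back into $\widetilde{\mathcal{W}}$. Recall from \eqref{eqn7} that $\mathcal{W}_1 = E_1 \oplus \mathbb{C} f_1 \oplus \operatorname{ran} Q$ and $\mathcal{W}_2 = E_1 \oplus \mathbb{C} f_4 \oplus \operatorname{ran} Q^\perp$, so $P_{\mathcal{W}_1}$ acts as the identity on $E_1 \oplus \mathbb{C}f_1 \oplus \operatorname{ran}Q$ and as $0$ on $\mathbb{C}f_4 \oplus \operatorname{ran}Q^\perp$; equivalently, modulo $E_1 = \mathbb{C}f$, the subspaces $\operatorname{ran}Q$ and $\operatorname{ran}Q^\perp$ are the ranges of $P_{\mathcal{W}_1}$ and $P_{V_1\mathcal{W}_2}$ on $\mathcal{N}$. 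By Lemma \ref{Uaction}(e) we have $U^m f_1 \in \operatorname{ran}Q \subseteq \mathcal{W}_1$ and $U^{*m} f_4 \in \operatorname{ran}Q^\perp \subseteq \mathcal{W}_2$ for $m \geq 1$, so $P_{\mathcal{W}_1}(U^m f_1) = U^m f_1$ and $P_{\mathcal{W}_1}(U^{*m} f_4) = 0$ for $m \geq 1$; for $m = 0$, $P_{\mathcal{W}_1}f_1 = f_1$ since $f_1 \in \operatorname{ran}Q_\lambda \subseteq \mathcal{W}_1$, and $P_{\mathcal{W}_1}f_4$ lands in $E^\lambda \cap \mathcal{W}_1 = \mathbb{C}f_1$, which is in $\widetilde{\mathcal{W}}$; and $P_{\mathcal{W}_1}f = f$ since $f \in \mathcal{W}_1$. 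So invariance under $P_{\mathcal{W}_1}$ is immediate once the $U$-structure is in place.

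The real content is invariance under $U$ and $U^*$. For $U$: the vectors $U^m f_1$ with $m \geq 1$ clearly stay in $\widetilde{\mathcal{W}}$, and $U f_1$ itself is a generator. For $f$, part (b) of Lemma \ref{Uaction} gives $Uf \in \mathbb{C}f_2$; since $\{g_1, g_3\}$ and $\{g_2,g_4\}$ are both orthonormal bases of $E^\lambda$, I can express $f_2$ as a combination of $f_1$ and $f_3 = \langle f_3, U^*f\rangle U^* f$ (using Lemma \ref{Uaction}(a), $U f_3 \in \mathbb{C}f$, hence $U^* f \in \mathbb{C}f_3$), so $Uf \in \operatorname{span}\{f_1, U^*f\} \subseteq \widetilde{\mathcal{W}}$; wait — I should rather apply $U$ directly: $Uf \in \mathbb{C}f_2$, and I need $f_2 \in \widetilde{\mathcal{W}}$. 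Writing $f_2 = \langle f_2, f_1\rangle f_1 + \langle f_2, f_3\rangle f_3$ and using $f_3 \in \mathbb{C}\,U^*f \subseteq \widetilde{\mathcal{W}}$ (from Lemma \ref{Uaction}(a)) and $f_1 \in \widetilde{\mathcal{W}}$, we get $f_2 \in \widetilde{\mathcal{W}}$, hence $Uf \in \widetilde{\mathcal{W}}$. For $U^{*m}f_4$ with $m \geq 0$: $U(U^{*m}f_4) = U^{*(m-1)}f_4 \in \widetilde{\mathcal{W}}$ for $m \geq 1$, and for $m = 0$ we need $U f_4 \in \widetilde{\mathcal{W}}$; by Lemma \ref{Uaction}(d), $U^* f_4 \in \operatorname{ran}Q^\perp$, but that does not directly give $U f_4$, so instead I express $f_4$ in the basis $\{f_1, f_3\}$: $f_4 = \langle f_4, f_1\rangle f_1 + \langle f_4, f_3\rangle f_3$, and then $Uf_4 = \langle f_4, f_1\rangle Uf_1 + \langle f_4, f_3\rangle Uf_3 \in \operatorname{span}\{Uf_1, f\} \subseteq \widetilde{\mathcal{W}}$ using Lemma \ref{Uaction}(a). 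By symmetry (swapping the roles of $V_1 \leftrightarrow V_2$, $U \leftrightarrow U^*$, $f_1 \leftrightarrow f_4$, $f_2 \leftrightarrow f_3$, $Q \leftrightarrow Q^\perp$), invariance under $U^*$ follows from the analogous statements: $U^* f \in \mathbb{C}f_3 \subseteq \widetilde{\mathcal{W}}$, $U^* f_4$ and its iterates stay in $\widetilde{\mathcal{W}}$, $U^{*}(U^m f_1) = U^{m-1}f_1 \in \widetilde{\mathcal{W}}$ for $m \geq 1$, and $U^* f_1 = \langle f_1, f_2\rangle U^* f_2 + \langle f_1, f_4 \rangle U^* f_4$ with $U^* f_2 \in \mathbb{C}f$ (by \eqref{formulauf2}/Lemma \ref{Uaction}) lands in $\widetilde{\mathcal{W}}$.

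The main obstacle, and the one requiring care rather than cleverness, is bookkeeping the low-order terms ($m = 0$): the generators $f_1$ and $f_4$ are \emph{not} eigenvectors of $U$ or $U^*$, so one cannot simply shift indices, and one must use the explicit change-of-basis relations among $f_1, f_2, f_3, f_4$ in $E^\lambda$ together with parts (a)--(d) of Lemma \ref{Uaction} to re-express $Uf_4$ and $U^*f_1$ in terms of the allowed generators. Once one verifies that $E^\lambda = \operatorname{span}\{f_1, f_3\} = \operatorname{span}\{f_2, f_4\}$ (Corollary \ref{fi}) so that each $f_i$ is a linear combination of the others and that $U$ maps $\mathbb{C}f_3 \to \mathbb{C}f$, $\mathbb{C}f \to \mathbb{C}f_2$, $\mathbb{C}f_1 \to \operatorname{ran}Q$, with the mirror statements for $U^*$, the closure of $\operatorname{span}\{f, U^m f_1, U^{*m} f_4\}$ is manifestly carried into itself by both $U$ and $U^*$, and the lemma follows. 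I would also note that since $\widetilde{\mathcal{W}}$ reduces the unitary $U$ and the projection $P_{\mathcal{W}_1}$, it reduces the BCL triple $(\mathcal{W}, U, P_{\mathcal{W}_1})$, which is the stated conclusion.
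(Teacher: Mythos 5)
Your proof is correct and follows essentially the same route as the paper's: the only nontrivial cases are $Uf$, $U^*f$, $Uf_4$, $U^*f_1$, and the $m=0$ terms under $P_{\mathcal{W}_1}$, and you treat them exactly as the paper does, via Lemma \ref{Uaction}, Corollary \ref{fi}, and the change-of-basis relations inside $E^\lambda$. One wording fix: justify $f_2, f_3 \in \widetilde{\mathcal{W}}$ directly from $f_2, f_3 \in \text{span}\{f_1, f_4\}$ (both generators of $\widetilde{\mathcal{W}}$), as the paper does via $e_{\pm\lambda}$, rather than through the inclusion $f_3 \in \mathbb{C}\, U^*f \subseteq \widetilde{\mathcal{W}}$, which presupposes $U^*f \in \widetilde{\mathcal{W}}$ before that has been established.
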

\begin{proof}
Clearly, $f, f_1, f_4 \in \widetilde{\mathcal{W}}$. Recall that $\{e_{\pm \lambda}\} \in B_{E_{\lambda} \oplus E_{-\lambda}}$, and then \eqref{eqn f1 part I} implies
\[
f_1 \pm f_4 = \text{a scalar multiple of } e_{\pm \lambda},
\]
and consequently
\[
E_{\lambda} \oplus E_{-\lambda} = \text{span}\{f_1 \pm f_4\}.
\]
Thus
\[
E_1 \oplus E_\lambda \oplus E_{-\lambda} \subset \widetilde{\mathcal{W}}.
\]
First, we claim that $\widetilde{\mathcal{W}}$ reduces $U$. We know by Lemma \ref{Uaction} that $U f \in \text{span}\{f_2\}$ and $U^* f \in \text{ span} \{f_3\}$, and hence, by Corollary \ref{fi} we conclude that
\[
U f, U^* f \in E_\lambda \oplus E_{-\lambda}.
\]
Therefore, $U f, U^* f \in \widetilde{\mathcal{W}}$. It just remains to show that $U^*f_1, Uf_4 \in \widetilde{\mathcal{W}}$. To this end, observe that Corollary \ref{fi} implies
\[
f_1 \in \text{ran}Q_\lambda \subset E_\lambda \oplus E_{-\lambda} = \text{span} \{f_2, f_4\},
\]
and consequently, it follows from Lemma \ref{Uaction} that
\[
U^*f_1 \in \text{span} \{U^*f_2, U^*f_4 \} = \text{span} \{f, U^* f_4 \} \subset \widetilde{\mathcal{W}}.
\]
Similarly, by Corollary \ref{fi}, we have
\[
f_4 \in E_\lambda \oplus E_{-\lambda} = \text{span} \{f_1, f_3 \}.
\]
By Lemma \ref{Uaction}, we have
\[
Uf_4 \in \text{span} \{U f_1, U f_3\} = \text{span} \{U f_1, f\} \subset \widetilde{\mathcal{W}}.
\]
We have thus proved that $\widetilde{\mathcal{W}}$ reduces $U$.

\noindent We now show that $\widetilde{\mathcal{W}}$ reduces $P_{\mathcal{W}_1}$. As $f \in \mathcal{W}_1$, we have $P_{\mathcal{W}_1}(f) = f$. One easily observes from \eqref{eqn7} and Lemma \ref{Uaction} that
\[
U^m f_1 \in \mathcal{W}_1,
\]
and hence,
\[
P_{\mathcal{W}_1}(U^m f_1) = U^m f_1,
\]
for all $m \geq 0$. By \eqref{eqn7} and Lemma \ref{Uaction}
\[
U^{*m}f_4 \in \text{ran}Q^\perp \subseteq V_1 \mathcal{W}_2,
\]
and consequently
\[
P_{\mathcal{W}_1}(U^{*m} f_4) = 0,
\]
for all $m \geq 1$. Finally, since
\[
f_4 \in E_\lambda \oplus E_{-\lambda} = \text{span} \{f_1, f_3\},
\]
and $f_3 \in V_1 \mathcal{W}_2$ (by Equation \eqref{eqn7}), we have that
\[
P_{\mathcal{W}_1}f_3 = 0,
\]
and therefore
\[
P_{\mathcal{W}_1}f_4 \in \mathbb{C} f_1 \subset \widetilde{\mathcal{W}}.
\]
This shows that $\widetilde{\mathcal{W}}$ reduces $P_{\mathcal{W}_1}$.
\end{proof}

Note that a BCL pair $(V_1, V_2)$ on $\mathcal{H}$ is irreducible if and only if $(U, P_{\mathcal{W}_1})$ is irreducible \cite[Corollary 2.2]{DSPS}. From the above lemma, we obtain:

\begin{coro}\label{onb}
$\{f, f_1, f_3, U^mf_1, U^{*m}f_4: m \geq 1  \} \in B_{\mathcal{W}}$.
\end{coro}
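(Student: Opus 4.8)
The plan is to upgrade Lemma~\ref{Etilde} to the equality $\widetilde{\clw}=\clw$, and then to recognise the displayed family as an orthonormal basis of $\clw$. First I would invoke that, since $(V_1,V_2)$ is irreducible, so is $(U,P_{\clw_1})$ by \cite[Corollary 2.2]{DSPS}. As Lemma~\ref{Etilde} shows that $\widetilde{\clw}=\overline{\text{span}}\{f,U^mf_1,U^{*m}f_4:m\ge 0\}$ reduces $(U,P_{\clw_1})$ and is nonzero (it contains the unit vector $f$), irreducibility gives $\widetilde{\clw}=\clw$. Next I would trade $f_4$ for $f_3$ at the base of the two $U$-orbits: by Corollary~\ref{fi} (and the computation in the proof of Lemma~\ref{Etilde}, which shows $f_1\pm f_4$ are nonzero multiples of $e_{\pm\lambda}$) one has $E^\lambda=\text{span}\{f_1,f_3\}=\text{span}\{f_1,f_4\}$, so
\[
\text{span}\{f,f_1,f_3\}=\mathbb{C}f\oplus E^\lambda=\text{span}\{f,f_1,f_4\},
\]
whence the closed span of $\{f,f_1,f_3,U^mf_1,U^{*m}f_4:m\ge 1\}$ equals the closed span of $\{f,U^mf_1,U^{*m}f_4:m\ge 0\}=\widetilde{\clw}=\clw$.

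It then remains to check that the family is orthonormal. Normalisation is immediate: $f$ is a unit vector by choice, $f_1$ and $f_3$ are unit vectors by the explicit formulas \eqref{eqn f1 part I}--\eqref{eqn f1 part II} (using $|\alpha|=1$ and $e_\lambda\perp e_{-\lambda}$), and $U$ is unitary, so $\|U^mf_1\|=\|U^{*m}f_4\|=1$. For the orthogonality relations I would work inside the decomposition $\clw=E_1\oplus E^\lambda\oplus\mathcal{N}$ with $\mathcal{N}=\text{ran}\,Q\oplus\text{ran}\,Q^\perp$: here $f\in E_1$, the pair $\{f_1,f_3\}$ is orthonormal in $E^\lambda$ by Corollary~\ref{fi}, and by parts (c), (d), (e) of Lemma~\ref{Uaction} one has $U^mf_1\in\text{ran}\,Q$ and $U^{*m}f_4\in\text{ran}\,Q^\perp$ for all $m\ge 1$. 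Thus $\{f,f_1,f_3\}$ is orthonormal and orthogonal to both tails, while the two tails are mutually orthogonal because $\text{ran}\,Q\perp\text{ran}\,Q^\perp$. Finally, within one tail, for $m>n\ge 1$ one reduces $\langle U^mf_1,U^nf_1\rangle$ to $\langle U^{m-n}f_1,f_1\rangle$ with $U^{m-n}f_1\in\text{ran}\,Q$ (Lemma~\ref{Uaction}(c),(e)) and $f_1\in E^\lambda$, so this vanishes; the same argument with $\text{ran}\,Q^\perp$ handles $\{U^{*m}f_4\}$. An orthonormal family whose closed span is $\clw$ is an orthonormal basis of $\clw$, which is the claim.

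The argument carries essentially no analytic content; the only care needed is bookkeeping. The step I expect to be the fussiest is organising the pairwise inner-product checks around the three-block structure $E_1\oplus E^\lambda\oplus(\text{ran}\,Q\oplus\text{ran}\,Q^\perp)$, so that the $U$-invariance of $\text{ran}\,Q$ and $\text{ran}\,Q^\perp$ from Lemma~\ref{Uaction}(e) can be applied uniformly to all higher powers; verifying that swapping $f_4$ for $f_3$ leaves the closed span untouched is then a one-line consequence of Corollary~\ref{fi}.
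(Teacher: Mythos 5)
Your proposal is correct and follows essentially the same route as the paper: irreducibility of $(U,P_{\clw_1})$ combined with Lemma~\ref{Etilde} forces $\widetilde{\clw}=\clw$, the exchange of $f_4$ for $f_3$ is justified by $\text{span}\{f_1,f_4\}=E_\lambda\oplus E_{-\lambda}=\text{span}\{f_1,f_3\}$, and orthonormality comes from $f\in E_1$, $\{f_1,f_3\}\in B_{E^\lambda}$, and $U^mf_1\in\text{ran}\,Q$, $U^{*m}f_4\in\text{ran}\,Q^\perp$ for $m\ge 1$. You merely spell out the pairwise orthogonality checks (including the within-tail reduction via unitarity and the $U$-invariance of $\text{ran}\,Q$, $\text{ran}\,Q^\perp$) that the paper leaves implicit.
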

\begin{proof}
As $(V_1, V_2)$ is irreducible, there is no non-trivial closed $(U, P_{\mathcal{W}_1})$-reducing subspace of $\mathcal{W}$ and hence, it follows immediately from the above lemma that
\[
\widetilde{\mathcal{W}} = \mathcal{W}.
\]
Since $f \in E_1$, and
\[
\text{span}\{f_1, f_4\} = E_\lambda \oplus E_{-\lambda} = \text{span}\{f_1, f_3\},
\]
and $U^m f_1 \in \text{ran}Q$, $U^{*m} f_4 \in \text{ran}Q^\perp$ for all $m \geq 1$, it follows that $\{f, f_1, f_3, U^m f_1, U^{*m} f_4 : m \geq 1\}$ is an orthonormal basis for $\mathcal{W}$.
\end{proof}

We are now ready to prove the main result of this section, which serves as a foundational component of the article's overall contribution. Recall (see  Theorem \ref{alphalambda}) again that for an irreducible $3$-finite pair $(V_1, V_2)$,
\[
\beta_{(V_1, V_2)} = \text{unique nonzero eigenvalue of } [V_2^*, V_1].
\]

\begin{theorem}\label{charac}
Let $(V_1, V_2)$ on $\clh$ and $(\tilde{V}_1, \tilde{V}_2)$ on $\tilde{\clh}$ be irreducible $3$-finite pairs. Then $(V_1, V_2)$ and $(\tilde{V}_1, \tilde{V}_2)$ are jointly unitarily equivalent if and only if
\[
\beta_{(V_1, V_2)} = \beta_{(\tilde{V}_1, \tilde{V}_2)}.
\]
\end{theorem}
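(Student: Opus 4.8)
The plan is to prove this as a direct corollary of the structural analysis built up in Sections~\ref{sec properties of 3 finite} and \ref{sec class of 3 finite}. The forward implication (unitary equivalence $\Rightarrow$ equality of eigenvalues) is immediate: if $U_0\colon\clh\to\tilde\clh$ is a unitary intertwining the two pairs, then it intertwines the cross-commutators, $U_0[V_2^*,V_1]=[\tilde V_2^*,\tilde V_1]U_0$, so $[V_2^*,V_1]$ and $[\tilde V_2^*,\tilde V_1]$ are unitarily equivalent and in particular have the same spectrum; since by Corollary~\ref{rankcross} and Theorem~\ref{alphalambda} each has exactly one nonzero eigenvalue, those eigenvalues coincide.

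For the converse, I would work entirely inside the BCL model. By Theorem~\ref{bcl}, each pair is determined up to joint unitary equivalence by its BCL triple $(\clw,U,P_{\clw_1})$, and by \cite[Corollary 2.2]{DSPS} two such pairs are jointly unitarily equivalent iff the triples $(\clw,U,P_{\clw_1})$ and $(\tilde\clw,\tilde U,P_{\tilde\clw_1})$ are unitarily equivalent. So the goal is to produce a unitary $\Omega\colon\clw\to\tilde\clw$ with $\Omega U=\tilde U\Omega$ and $\Omega P_{\clw_1}=P_{\tilde\clw_1}\Omega$, assuming $\beta:=\beta_{(V_1,V_2)}=\beta_{(\tilde V_1,\tilde V_2)}=:\tilde\beta$. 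First I would note that $\lambda_{(V_1,V_2)}=|\beta|=|\tilde\beta|=\lambda_{(\tilde V_1,\tilde V_2)}$ by part (5) of Theorem~\ref{alphalambda}, so both defect operators have the \emph{same} eigenvalue data; write $\lambda$ for the common value. Then I would invoke Corollary~\ref{onb}: $\{f,f_1,f_3,U^mf_1,U^{*m}f_4:m\geq 1\}$ is an orthonormal basis of $\clw$, and similarly $\{\tilde f,\tilde f_1,\tilde f_3,\tilde U^m\tilde f_1,\tilde U^{*m}\tilde f_4:m\geq 1\}$ is an orthonormal basis of $\tilde\clw$. The natural candidate is the unitary $\Omega$ sending each listed basis vector to its tilde-counterpart: $f\mapsto\tilde f$, $U^mf_1\mapsto\tilde U^m\tilde f_1$ (all $m\geq0$), $U^{*m}f_4\mapsto\tilde U^{*m}\tilde f_4$ (all $m\geq1$). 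What must be checked is that $\Omega$ intertwines $U$ with $\tilde U$ and $P_{\clw_1}$ with $P_{\tilde\clw_1}$.

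The intertwining of $P_{\clw_1}$ with $P_{\tilde\clw_1}$ should follow from the description of $\clw_1$ and $V_1\clw_2$ in \eqref{eqn7} together with Lemma~\ref{Uaction}: $\{f, U^mf_1 : m\geq 0\}$ and $\{U^{*m}f_4:m\geq 1\}\cup\{$ the $\clw_1$-component of $f_3\}$ split the basis exactly along $\clw_1$ and $V_1\clw_2=\clw_1^\perp$, and the same holds on the tilde side, so $\Omega$ matches the two projections after one checks that $P_{\clw_1}f_3$ and $P_{\clw_1}f_4$ are carried correctly (here the normalization $g_i=f_i/\langle f_3,U^*f\rangle$ from \eqref{tilde} and the normalized identities \eqref{finalformula} are what make the coefficients on the two sides literally agree rather than merely agree up to a phase). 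The intertwining with $U$ is where the argument has real content: on the vectors $U^mf_1$ ($m\geq0$) and $U^{*m}f_4$ ($m\geq1$) it is essentially definitional, but on $f$, $f_1$, $f_3$, $f_4$ — which mix under $U$ — one must trace through parts (a)--(d) of Lemma~\ref{Uaction}, which express $Uf_3$, $Uf$, $U^*f_2$, $U^*f_1$ etc.\ in terms of $f$ and the $f_i$, and verify that the \emph{scalar coefficients} appearing there depend only on $\lambda$ and $\beta$ (via $\langle f_3,f_2\rangle=-\lambda$ and the normalized quantities $\langle U^*f,g_3\rangle=1$, $\langle U^*g_2,f\rangle=-\beta/\lambda$ of \eqref{finalformula}) and hence coincide with the corresponding tilde-coefficients.

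I expect the main obstacle to be precisely this bookkeeping of phases: Lemma~\ref{Uaction} only pins down $Uf$, $Uf_3$, etc.\ up to unimodular scalars (the quantities $\langle Uf,f_2\rangle$, $\langle U^*f,f_3\rangle$), so a naive basis-matching map need not intertwine $U$ on the nose. The resolution is to first \emph{renormalize} the orthonormal frame — choosing $f$, then $g_i$ as in \eqref{tilde}, and if necessary adjusting the phase of $e_{-\lambda}$ (equivalently the unimodular constant $\alpha$ in \eqref{eqn f1 part I}) — so that all the transition scalars for $U$ and for $P_{\clw_1}$ become \emph{canonical} functions of $(\lambda,\beta)$ alone; do the identical renormalization on the tilde side; and only then define $\Omega$ by matching frames. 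Because $\lambda$ and $\beta$ agree by hypothesis, the two renormalized BCL triples are then literally identical, so $\Omega$ is the required unitary equivalence, and pulling back through Theorem~\ref{bcl} gives $(V_1,V_2)\cong(\tilde V_1,\tilde V_2)$.
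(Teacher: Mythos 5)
Your proposal follows essentially the same route as the paper's proof: reduce to unitary equivalence of the BCL triples, take the frame from Corollary~\ref{onb}, and resolve the phase ambiguity by the renormalization $g_i = f_i/\langle f_3, U^*f_{(V_1,V_2)}\rangle$ of \eqref{tilde}, after which the transition scalars (namely \eqref{finalformula}, $\langle f_3,f_2\rangle=-\lambda$, and $g_4=\sqrt{1-\lambda^2}\,g_3+\lambda g_1$) depend only on $(\lambda,\beta)$ and the frame-matching unitary intertwines both $U$ with $\tilde U$ and $P_{\clw_1}$ with $P_{\tilde\clw_1}$. This is precisely the paper's construction of the unitary $\Pi$, so your plan is correct and essentially identical to the published argument.
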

\begin{proof}
If $(V_1, V_2)$ and $(\tilde{V}_1, \tilde{V}_2)$ are unitarily equivalent, then clearly $\beta_{(V_1, V_2)} = \beta_{(\tilde{V}_1, \tilde{V}_2)}$. For the non-trivial direction, assume that
\[
\beta: = \beta_{(V_1, V_2)} = \beta_{(\tilde{V}_1, \tilde{V}_2)}.
\]
As usual, we define
\[
\clw = \ker (V_1 V_2)^*, \;\clw_i = \ker V_i^*,
\]
and
\[
\tilde{\clw} = \ker (\tilde{V}_1 \tilde{V}_2)^*,\;  \tilde{\clw}_i = \ker \tilde{V}_i^*,
\]
for all $i=1,2$. Define
\[
\lambda:= |\beta|.
\]
By Theorem \ref{alphalambda}, we have
\[
\sigma(C(V_1, V_2)) \cap (0, 1) = \sigma(C(\tilde V_1, \tilde V_2)) \cap (0, 1) = \{\lambda\}.
\]
In view of Theorem \ref{bcl}, we consider the BCL triples $(\clw, U, P_{\clw_1})$ and $(\tilde{\clw}, \tilde{U}, P_{\tilde{\clw}_1})$, where $U$ is the unitary on $\clw$ given by	
\begin{equation*}
U = \begin{bmatrix}
V_2|_{\mathcal{W}_1} & \\
& V_1^*|_{V_1 \mathcal{W}_2}
\end{bmatrix} : \clw_1 \oplus V_1 \mathcal{W}_2 \to V_2 \mathcal{W}_1 \oplus \mathcal{W}_2,
\end{equation*}
and $\tilde{U}$ is the unitary on $\tilde{\clw}$ given by
\begin{equation*}
\tilde{U} = \begin{bmatrix}
\tilde{V}_2|_{\tilde{\clw}_1} & \\
& \tilde{V}_1^*|_{\tilde{V}_1 \tilde{\clw_2}}
\end{bmatrix} : \tilde\clw_1 \oplus \tilde{V_1} \tilde{\mathcal{W}_2} \to \tilde V_2 \tilde{\mathcal{W}_1} \oplus \tilde{\mathcal{W}_2}.
\end{equation*}
It suffices to show that the triples $(\clw, U, P_{\clw_1})$ and $(\tilde{\clw}, \tilde{U}, P_{\tilde{\clw}_1})$ are unitarily equivalent, that is, we claim that there is a unitary $\Pi : \clw \to \tilde{\clw}$ such that
\[
\Pi U \Pi^{-1} = \tilde{U} \text{ and } \Pi P_{\clw_1} \Pi^{-1} = P_{\tilde{\clw}_1}.
\]
We recall that $E_\mu := \ker (C(V_1, V_2) - \mu I_{\clh})$ for a scalar $\mu$ (see \eqref{eqn def eigenspace 2}). Set
\[
\tilde{E}_\mu = \ker (C(\tilde{V}_1, \tilde{V}_2) - \mu I_{\tilde \clh}).
\]
We know (see \eqref{all about 3} or Theorem \ref{alphalambda}) that
\[
\text{ran}[V_2^*, V_1] = E_1 = \mathbb{C} f_{(V_1, V_2)} = \clw_1 \cap \clw_2,
\]
and
\[
\text{ran}[\tilde{V}_2^*, \tilde{V}_1] = \tilde{E}_1 = \mathbb{C} f_{(\tilde{V}_1, \tilde{V}_2)} = \tilde{\clw}_1 \cap \tilde{\clw}_2.
\]
As in the construction of Section \ref{sec properties of 3 finite}, suppose
\[
\{e_{\pm \lambda}\} \in B_{E_{\lambda} \oplus E_{-\lambda}} \text{ and } \{\tilde{e}_{\pm \lambda}\} \in B_{\tilde{E}_\lambda \oplus \tilde{E}_{-\lambda} }.
\]
Also, as in Corollary \ref{fi} (or see \eqref{eqn f1 part I} and \eqref{eqn f1 part II}), define
\[
f_1 = \sqrt{\frac{1+\lambda}{2}} e_{\lambda} + \bar{\alpha}\sqrt{\frac{1-\lambda}{2}} e_{-\lambda},\;  f_4 = \sqrt{\frac{1+\lambda}{2}} e_{\lambda} - \bar{\alpha}\sqrt{\frac{1-\lambda}{2}} e_{-\lambda},
\]
and
\[
f_2 = \frac{f_1 - \lambda f_4}{\sqrt{1 - {\lambda}^2}}, \text{ and }  f_3 = \frac{f_4 - \lambda f_1}{\sqrt{1 - {\lambda}^2}},
\]
for some unimodular constant $\alpha$. Clearly, $f_i \in \clw$, $i=1,2,3,4$. Following \eqref{tilde}, define
\[
g_i:= \frac{1}{\langle f_3, U^* f_{(V_1, V_2)} \rangle} f_i,
\]
for all $i = 1, 2, 3, 4$. By (a) and (b) of Lemma \ref{Uaction} and \eqref{finalformula}, we conclude that
\begin{equation}\label{unitaryfinal_1}
\begin{cases}
U f_{(V_1, V_2)} = -\frac{\bar{\beta}}{\lambda} {g}_2,
\\
U {g}_3 = f_{(V_1, V_2)},
\end{cases}
\end{equation}	
where the first equality follows from the fact that
\[
U f_{(V_1, V_2)} = \langle U f_{(V_1, V_2)}, {g}_2 \rangle {g}_2.
\]
Corollary \ref{onb} yields
\[
\{f_{(V_1, V_2)}, {g}_1, {g}_3, U^m g_1, U^{*m} {g}_4: m \geq 1\} \in B_{\clw}.
\]
Similarly, we have the vectors $\tilde{f}_i \in \tilde{\clw}$, $i=1,2,3,4$, defined by
\[
\tilde{f}_1 = \sqrt{\frac{1+\lambda}{2}} \tilde{e}_{\lambda} + \bar{\tilde{\alpha}}\sqrt{\frac{1-\lambda}{2}} \tilde{e}_{-\lambda},\;  \tilde{f}_4 = \sqrt{\frac{1+\lambda}{2}} \tilde{e}_{\lambda} - \bar{\tilde{\alpha}}\sqrt{\frac{1-\lambda}{2}} \tilde{e}_{-\lambda},
\]
and
\[
\tilde{f}_2 = \frac{\tilde{f}_1 - \lambda \tilde{f}_4}{\sqrt{1 - {\lambda}^2}}, \text{ and }  \tilde{f}_3 = \frac{\tilde{f}_4 - \lambda \tilde{f}_1}{\sqrt{1 - {\lambda}^2}},
\]
for some unimodular constant $\tilde{\alpha}$. Also, define
\[
\tilde{g}_i:= \frac{1}{\langle \tilde{f}_3, \tilde{U}^* f_{(\tilde{V_1}, \tilde{V_2})} \rangle} \tilde{f}_i,
\]
for all $i = 1, 2, 3, 4$. As above, we again have
\begin{equation}\label{unitaryfinal_2}
\begin{cases}
\tilde{U} f_{(\tilde{V_1}, \tilde{V_2})} = -\frac{\bar{\beta}}{\lambda} \tilde{g}_2
\\
\tilde{U} \tilde{g}_3 = f_{(\tilde{V_1}, \tilde{V_2})},
\end{cases}
\end{equation}			
and
\[
\{f_{(\tilde{V_1}, \tilde{V_2})}, \tilde{g}_1, \tilde{g}_3, \tilde{U}^m \tilde{g}_1, \tilde{U}^{*m} \tilde{g}_4: m \geq 1\} \in B_{\tilde \clw}.
\]
A simple computation shows that
\begin{equation}\label{tilde4}
\begin{cases}
{g}_4 = (\sqrt{1 - \lambda^2}) {g}_3 + \lambda {g}_1
\\
\tilde{g}_4 = (\sqrt{1 - \lambda^2}) \tilde{g}_3 + \lambda \tilde{g}_1.
\end{cases}
\end{equation}	
Define the unitary $\Pi: \clw \to \tilde{\clw}$ by
\[
\Pi f_{(V_1, V_2)} = f_{(\tilde{V_1}, \tilde{V_2})},
\]
and
\[
\Pi f =
\begin{cases}
\tilde{g}_j & \mbox{if }f = g_j \text{ for } j=1,3
\\
\tilde{U}^m \tilde{g}_1 & \mbox{if } f = U^m g_1
\\
\tilde{U}^{*m} \tilde{g}_4 & \mbox{if } f = {U}^{*m} {g}_4,
\end{cases}
\]
for $m \geq 1$. It is clear from the definition of $\Pi$ and the identity \eqref{tilde4} that
\[
\Pi{g}_4 = \tilde{g}_4,
\]
and consequently
\[
\begin{split}
\Pi{g}_2 = \Pi \Big(\frac{{g}_1 - \lambda {g}_4}{\sqrt{1 - \lambda^2}} \Big) = \frac{\tilde{g}_1 - \lambda \tilde{g}_4}{\sqrt{1 - \lambda^2}},
\end{split}
\]
that is,
\begin{equation}\label{tilde2}
\Pi {g}_2 = \tilde{g}_2.
\end{equation}
It is now easy to see that
\[
(\Pi U \Pi^{-1})\tilde{g}_1 = \tilde{U} \tilde{g}_1.
\]
Moreover
\[
\begin{split}
(\Pi U \Pi^{-1})\tilde{g}_3 = \Pi U g_3
= \Pi f_{(V_1, V_2)} = f_{(\tilde{V_1}, \tilde{V_2})} = \tilde{U} \tilde{g_3},
\end{split}
\]
where the second and fourth equalities follow by an appeal to \eqref{unitaryfinal_1} and \eqref{unitaryfinal_2} respectively.
For all $m \geq 1$, we also have
\[
\begin{split}
(\Pi U \Pi^{-1})\big(\tilde{U}^m \tilde{g}_1\big) & = (\Pi U)\big(U^m g_1 \big) = \Pi \big(U^{m+1} g_1\big) = (\tilde{U})^{m+1} \tilde{g}_1 = \tilde{U} \big(\tilde{U}^{m} \tilde{g}_1 \big),
\end{split}
\]
and similarly,
\[
(\Pi U \Pi^{-1})\big(\tilde{U}^{*m} \tilde{g}_4\big) = \tilde{U}^{*(m-1)} \tilde{g}_4 = \tilde{U}(\tilde{U}^{*m} \tilde{g}_4).
\]
Finally, by \eqref{unitaryfinal_1}, \eqref{tilde2}, and \eqref{unitaryfinal_2}, it follows that
\[
\begin{split}
(\Pi U \Pi^{-1}) f_{(\tilde{V_1}, \tilde{V_2})} = \Pi U f_{(V_1, V_2)} &= \Pi \big(-\frac{\bar{\beta}}{\lambda} {g}_2 \big) = -\frac{\bar{\beta}}{\lambda}\tilde{g}_2 =  \tilde{U}f_{(\tilde{V_1}, \tilde{V_2})}.
\end{split}
\]
This proves that $\Pi U \Pi^{-1} = \tilde{U}.$ The verification of $\Pi P_{\clw_1} \Pi^{-1} = P_{\tilde{\clw}_1}$ is easy and is left to the reader. This completes the proof of the theorem.
\end{proof}

Now it is important to furnish explicit examples of irreducible $3$-finite pairs. We start with invariant subspaces of $L^2(\T^2)$. For each nonzero $r$ in $(-1, 1)$ and unimodular function $\varphi \in L^\infty(\T^2)$, define
\[
\cll_{\vp, r} = \varphi \Big(H^2(\mathbb{D}^2) \bigoplus \Big(\bigoplus_{j = 0}^\infty z^j \text{span} \Big\{ \frac{\bar{w}}{1 - r z \bar{w}}\Big\} \Big)\Big)
\]
Then $\mathcal{L}_{\varphi, r}$ is (jointly) invariant under $(L_z, L_w)$, and
\[
[(L_w|_{\cll_{\vp, r}})^*, L_z|_{\cll_{\vp, r}}] = [(L_w|_{\cll_{\vp, r}})^*, L_z|_{\cll_{\vp, r}}]^* \neq 0.
\]
In fact, Izuchi and Ohno proved that $\mathcal{L}_{\varphi, r}$ are the only invariant subspaces of $L^2(\T^2)$ that satisfies the above self-adjoint condition (see \cite[Theorem 1 ]{IO94}). This observation was one of the keys to the construction of invariant subspaces of $H^2(\D^2)$ with self-adjoint and nonzero cross-commutators \cite{II2006}.

\begin{example}\label{bidiskex}
For each  nonzero $r$ in $(-1, 1)$, there exists an inner function $\vp \in H^\infty(\mathbb{D}^2)$ such that \cite[Theorem 2]{II2006}
\begin{equation} \label{submodule}
\cls_{r} = \vp \Big(H^2(\mathbb{D}^2) \bigoplus \Big(\bigoplus_{j = 0}^\infty z^j \text{span} \Big\{ \frac{\bar{w}}{1 - r z \bar{w}}\Big\} \Big) \Big).
\end{equation}
is an invariant subspace of $H^2(\mathbb{D}^2)$ (see \eqref{eqn: inv sub Hardy} for the definition of invariant subspaces of $H^2(\D^2)$) and
\[
[(M_w|_{\cls_{r}})^*, M_z|_{\cls_{r}}]^* = [(M_w|_{\cls_{r}})^*, M_z|_{\cls_{r}}] \neq 0,
\]
and
\[
\text{rank}[(M_w|_{\cls_{r}})^*, M_z|_{\cls_{r}}] = 1.
\]
A simple computation reveals that (see \cite[proof of Theorem 3]{II2006}) $r$ is the only nonzero eigenvalue of $[(M_w|_{\cls_{r}})^*, M_z|_{\cls_{r}}]$. Moreover, the pair $(M_z|_{\cls_{r}}, M_w|_{\cls_{r}})$ is an irreducible $3$-finite pair (See \cite[Example 8.9, Page 246]{Yang-S}, and
\[
\sigma(C(M_z|_{\cls_{r}}, M_w|_{\cls_{r}})) \cap (0,1) = \{|r|\}.
\]
That is, $|r|$ is the unique eigenvalue of the defect operator of $(M_z|_{\cls_{r}}, M_w|_{\cls_{r}})$ lying in $(0, 1)$.
\end{example}

\begin{example}\label{bidiskex_1}
Let $\gamma$ be a unimodular constant and let $r$ be a nonzero real number in $(-1, 1)$. Consider the submodule $\cls_{r}$ as in \eqref{submodule}, and then, consider the isometric pair $(\gamma M_z|_{\cls_{r}}, M_w|_{\cls_{r}})$ on $\cls_{r}$. It is easy to see that
\[
C\big(\gamma M_z|_{\cls_{r}}, M_w|_{\cls_{r}}\big) = C(M_z|_{\cls_{r}}, M_w|_{\cls_{r}}),
\]
and
\[
[(M_w|_{\cls_{r}})^*, \gamma M_z|_{\cls_{r}}] = \gamma [(M_w|_{\cls_{r}})^*, M_z|_{\cls_{r}}].
\]
It follows immediately from the discussion of Example \ref{bidiskex} that $(\gamma M_z|_{\cls_{r}}, M_w|_{\cls_{r}})$ is an irreducible $3$-finite pair with
\[
\sigma(C(\gamma M_z|_{\cls_{r}}, M_w|_{\cls_{r}})) \cap (0,1) = \{|r|\},
\]
and
\[
\sigma([(M_w|_{\cls_{r}})^*, \gamma M_z|_{\cls_{r}}]) \cap (0,1) = \{\gamma r\}.
\]
\end{example}

The above example, together with Theorem \ref{charac}, immediately yields a complete characterization of irreducible 3-finite pairs.

\begin{theorem}\label{irrfinal}
Let $(V_1, V_2)$ be an irreducible $3$-finite pair on a Hilbert space $\mathcal{H}$. Then there exist $\lambda \in (0, 1)$, a unimodular constant $\gamma$, and an inner function $\vp \in H^\infty(\D^2)$ (depending on $\lambda$) such that
\[
\sigma(C(V_1, V_2)) \cap (0,1)  = \{\lambda\},
\]
and
\[
\sigma([V_2^*, V_1]) \setminus \{0\} = \{\gamma \lambda\},
\]
and
\[
(V_1, V_2) \cong (\gamma M_z|_{\cls_{\lambda}}, M_w|_{\cls_{\lambda}})
\]
where $\cls_{\lambda}$ is the invariant subspace of $H^2(\mathbb{D}^2)$ of the form
\[
\cls_{\lambda} = \vp \Big(H^2(\mathbb{D}^2) \bigoplus \Big(\bigoplus_{j = 0}^\infty z^j \text{span} \Big\{ \frac{\bar{w}}{1 - \lambda z \bar{w}}\Big\} \Big) \Big).
\]

\end{theorem}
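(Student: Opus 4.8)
The plan is to reduce the statement to the two structural results already established for irreducible $3$-finite pairs: the spectral synthesis of Theorem~\ref{alphalambda} and the classification-by-a-single-scalar of Theorem~\ref{charac}. First I would apply Theorem~\ref{alphalambda} to the given irreducible $3$-finite pair $(V_1,V_2)$. This yields the unique nonzero eigenvalue $\beta := \beta_{(V_1,V_2)}$ of the (normal, rank-one) cross-commutator $[V_2^*,V_1]$ and the unique eigenvalue $\lambda := \lambda_{(V_1,V_2)} \in (0,1)$ of $C(V_1,V_2)$ lying in $(0,1)$, together with the identity $|\beta| = \lambda$. Setting $\gamma := \beta/\lambda = \beta/|\beta|$ produces a unimodular constant with $\beta = \gamma\lambda$. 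The two displayed spectral assertions are then immediate from Theorem~\ref{alphalambda}: $\sigma(C(V_1,V_2)) \cap (0,1) = \{\lambda\}$ and $\sigma([V_2^*,V_1]) \setminus \{0\} = \{\beta\} = \{\gamma\lambda\}$.

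Next I would produce the model pair realizing these invariants. Since $\lambda$ is a nonzero real number in $(-1,1)$, Example~\ref{bidiskex}, applied with $r = \lambda$, supplies an inner function $\vp \in H^\infty(\D^2)$ (depending only on $\lambda$) such that the subspace $\cls_{\lambda}$ defined by \eqref{submodule} is an invariant subspace of $H^2(\D^2)$, the pair $(M_z|_{\cls_{\lambda}}, M_w|_{\cls_{\lambda}})$ is an irreducible $3$-finite pair, and $\sigma\big(C(M_z|_{\cls_{\lambda}}, M_w|_{\cls_{\lambda}})\big) \cap (0,1) = \{\lambda\}$. Twisting the first coordinate by $\gamma$ as in Example~\ref{bidiskex_1}, the pair $(\gamma M_z|_{\cls_{\lambda}}, M_w|_{\cls_{\lambda}})$ is again an irreducible $3$-finite pair with the same defect operator, and its cross-commutator $[(M_w|_{\cls_{\lambda}})^*, \gamma M_z|_{\cls_{\lambda}}] = \gamma[(M_w|_{\cls_{\lambda}})^*, M_z|_{\cls_{\lambda}}]$ has unique nonzero eigenvalue $\gamma\lambda = \beta$. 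Thus, in the notation of Theorem~\ref{charac}, the scalar invariant of $(\gamma M_z|_{\cls_{\lambda}}, M_w|_{\cls_{\lambda}})$ equals $\beta = \beta_{(V_1,V_2)}$.

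Finally I would invoke Theorem~\ref{charac}: two irreducible $3$-finite pairs are jointly unitarily equivalent if and only if their unique nonzero cross-commutator eigenvalues agree. Applying this to $(V_1,V_2)$ and the model pair $(\gamma M_z|_{\cls_{\lambda}}, M_w|_{\cls_{\lambda}})$, both of which carry the scalar $\beta$, gives $(V_1,V_2) \cong (\gamma M_z|_{\cls_{\lambda}}, M_w|_{\cls_{\lambda}})$, which together with the spectral identities already noted completes the proof.

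The main obstacle has in fact already been overcome before reaching this point: the genuinely hard ingredients are Theorem~\ref{charac} — the assertion that the single scalar $\beta$ is a \emph{complete} unitary invariant, which rests on the basis-level analysis of Section~\ref{sec properties of 3 finite} and the explicit transport of the associated BCL triples — and the existential construction of $\cls_{\lambda}$ due to Izuchi and Izuchi via Rudin's class of inner functions (Example~\ref{bidiskex}). In the present assembly, the only subtlety requiring care is phase bookkeeping: one must match the full complex number $\beta = \gamma\lambda$, not merely its modulus $\lambda$, which is precisely why one twists the model by $\gamma$ and chooses the real parameter to be $r = \lambda$ rather than an arbitrary number of modulus $\lambda$.
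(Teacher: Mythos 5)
Your proposal is correct and follows essentially the same route as the paper: the paper likewise extracts $\lambda$ and $\gamma$ from Theorem~\ref{alphalambda} (via $|\beta_{(V_1,V_2)}|=\lambda$), builds the model pair from Examples~\ref{bidiskex} and~\ref{bidiskex_1}, and concludes the unitary equivalence by the completeness of the invariant $\beta$ in Theorem~\ref{charac}. Your version merely makes the appeal to Theorem~\ref{charac} more explicit than the paper's terse proof, which is a fair reading of the intended argument.
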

\begin{proof}
The existence of $\lambda$ and $\gamma$ follow from Theorem \ref{alphalambda}. Here, note that
\[
[V_2^*, V_1] f = \beta f,
\]
and $\gamma$ is the unique unimodular constant such that $\beta = \gamma \lambda$ (compare with $(3)$ and $(5)$ of Theorem \ref{alphalambda}). With this particular $\lambda$ and $\gamma$, we now apply Example \ref{bidiskex_1} to conclude that
\[
(V_1, V_2) \cong (\gamma M_z|_{\cls_{\lambda}}, M_w|_{\cls_{\lambda}}),
\]
for some inner function $\vp \in H^\infty(\D^2)$. This completes the proof of the theorem.
\end{proof}

The following intriguing and direct ramifications of Theorem \ref{charac} and Example \ref{bidiskex_1} are worth highlighting: Let $\gamma_1$ and $\gamma_2$ be unimodular constants, and let $r$ be a nonzero number in $(-1, 1)$. Consider the invariant subspace $\cls_{r}$ of $H^2(\mathbb{D}^2)$ as defined in \eqref{submodule}. Then:
\begin{enumerate}
\item $(\gamma_1 M_z|_{\cls_{\lambda}}, M_w|_{\cls_{\lambda}}) \cong (\gamma_2 M_z|_{\cls_{\lambda}}, M_w|_{\cls_{\lambda}})$ if and only if $\gamma_1 = \gamma_2$.
\item $(\gamma_1 M_z|_{\cls_{\lambda}}, M_w|_{\cls_{\lambda}}) \cong (M_z|_{\cls_{\lambda}}, \gamma_2 M_w|_{\cls_{\lambda}})$ if and only if $\gamma_1 = \bar{\gamma}_2$.
\end{enumerate}

\section{Classification of $2$-finite pairs}\label{sec 2 finite}

The focus of this section is a complete characterization of irreducible $2$-finite pairs. Let $(V_1, V_2)$ be an irreducible $2$-finite pair on $\mathcal{H}$. First, we claim the following crucial spectral property:
\[
\sigma(C(V_1, V_2)) \setminus \{0\} = \{\pm 1\}.
\]
Indeed, an argument similar to the proof of Proposition \ref{dime1} yields that, in this case also
\[
\dim E_1 = 1.
\]
Since $\text{rank} C(V_1, V_2) = 2$, it follows immediately from Theorem \ref{structure} that
\[
\dim E_{-1} = 1.
\]
We conclude that the only nonzero eigenvalues of $C(V_1, V_2)$ are $\{\pm 1\}$, and
\[
\dim E_1 = 1 = \dim E_{-1}.
\]
Therefore
\[
(\ker C(V_1, V_2))^\perp = E_1 \oplus E_{-1}.
\]
Let $e_1$ and $e_{-1}$ denote unit vectors in $E_1$ and $E_{-1}$ respectively. Then Corollary \ref{cor rank formula} and Lemmas \ref{intersection} and \ref{contain} imply that
\[
\text{rank}[V_2^*, V_1] = \dim E_1 = 1,
\]
and
\[
\text{ran}[V_2^*, V_1] = E_1,
\]
and
\[
[V_2^*, V_1]|_{E_1^\perp} = 0.
\]
In particular, there exists a nonzero scalar $\alpha$ such that
\[
[V_2^*, V_1] = \alpha e_1 \otimes e_1.
\]
Same computation as in Section \ref{sec properties of 3 finite} (see, in particular, the equality \eqref{crosscomm}) yields that
\[
V_2^* V_1 e_1 = \alpha e_1, \text{ and } V_1^* V_2e_1 = \overline{\alpha}e_1.
\]
Let
\[
\mathcal{N} = \mathcal{W} \ominus (\ker C(V_1, V_2))^\perp.
\]
Clearly, with respect to the decomposition $\mathcal{W} = E_1 \oplus E_{-1} \oplus \mathcal{N}$, we have
\[
C(V_1, V_2)|_{\mathcal{W}} = \begin{bmatrix}
I_{E_1} &  &  \\
& -I_{E_{-1}} &  \\
& & 0
\end{bmatrix}.
\]
Recall that $C(V_1, V_2)|_{\mathcal{W}} = P_{\mathcal{W}_1}- P_{V_2 \mathcal{W}_1}$. Hence
\[
P_{\mathcal{W}_1}- P_{V_2 \mathcal{W}_1} = \begin{bmatrix}
I_{E_1} &  &  \\
& -I_{E_{-1}} &  \\
& & 0
\end{bmatrix}.
\]
Thus, Theorem \ref{struc} implies
\[
P_{\mathcal{W}_1} = \begin{bmatrix}
	I_{E_1} &  &  \\
	& 0 &  \\
	& & Q
\end{bmatrix},
\]
and
\[
P_{V_2\mathcal{W}_1} = \begin{bmatrix}
0 &  &  \\
& I_{E_{-1}} &  \\
& & Q
\end{bmatrix},
\]
where $Q : \mathcal{N} \to \mathcal{N}$ is a projection. Then
\[
P_{\mathcal{W}_2} = I_{\mathcal{W}} - P_{V_2 \mathcal{W}_1} = \begin{bmatrix}
I_{E_1} &  &  \\
& 0 &  \\
& & Q^\perp
\end{bmatrix},
\]
and
\[
P_{V_1\mathcal{W}_2} = I_{\mathcal{W}} - P_{\mathcal{W}_1} = \begin{bmatrix}
0 &  &  \\
& I_{E_{-1}} &  \\
& & Q^\perp
\end{bmatrix}.
\]
Consider the unitary $U$ on $\mathcal{W}$ as given by Theorem \ref{bcl}:
\[
\clw = \clw_1 \oplus V_1 \clw_2 = \clw_2 \oplus V_2 \clw_1,
\]
and
\[
U = \begin{bmatrix}
V_2|_{\clw_1} & \\
& V_1^*|_{V_1 \clw_2}
\end{bmatrix} : \clw_1 \oplus V_1 \clw_2 \to V_2\clw_1 \oplus \clw_2.
\]
Since
\[
\mathcal{W}_1 = E_1 \oplus \text{ran}Q,
\]
and
\[
V_2{\mathcal{W}_1} = E_{-1} \oplus \text{ran}Q,
\]
and since $U = V_2$ on $\mathcal{W}_1$, it follows that
\[
V_2e_1 = Ue_1 = \langle Ue_1, e_{-1} \rangle e_{-1} + g,
\]
for some $g \in \text{ran}Q$. Since $\text{ran}Q \subset \mathcal{W}_1$, applying $V_1^*$ on both sides of the preceding equation and then using the definition of $U$, we obtain that
\[
\begin{split}
\overline{\alpha} e_1 & = V_1^* V_2 e_1
\\
& = \langle Ue_1, e_{-1} \rangle V_1^*e_{-1}
\\
& = \langle Ue_1, e_{-1} \rangle U e_{-1}.
\end{split}
\]
This shows that
\[
U(E_{-1}) = E_1.
\]
Moreover, since $\langle Ue_1, e_{-1}\rangle$ is a unimodular constant, we have
\[
|\alpha| = 1.
\]
As
\[
\mathcal{W}_2 = E_1 \oplus \text{ran}Q^\perp, \;  V_1{\mathcal{W}_2} = E_{-1} \oplus \text{ran}Q^\perp,
\]
and $U^* = V_1$ on $\mathcal{W}_2$, there exists $h \in \text{ran}Q^\perp$ such that
\[
V_1e_{1} = U^*e_{1} = \langle U^* e_{1}, e_{-1} \rangle e_{-1} + h.
\]
As before, since $\text{ran}Q^\perp \subset \mathcal{W}_2$, by applying $V_2^*$ on both sides of the preceding equality and then using the definition of $U$ we find that
\[
\begin{split}
\alpha e_1 & = V_2^* V_1 e_1
\\
& = \langle U^* e_1, e_{-1} \rangle V_2^*e_{-1}
\\
& = \langle U^*e_1, e_{-1} \rangle U^* e_{-1}.
\end{split}
\]
This shows that
\[
U^*(E_{-1}) = E_1,
\]
and hence, $E_1 \oplus E_{-1}$ reduces $U$. On the other hand, we know that $E_1 \subset \mathcal{W}_1$ and $E_{-1} (\subset V_1 \mathcal{W}_2)$ is orthogonal to $\mathcal{W}_1$. It is now obvious that $E_1 \oplus E_{-1}$ reduces $P_{\mathcal{W}_1}$. In other words, $E_1 \oplus E_{-1} \subseteq \clw$ reduces $(U, P_{\mathcal{W}_1})$. But $(V_1, V_2)$ and equivalently $(U, P_{\mathcal{W}_1})$ is irreducible. Therefore,
\[
\mathcal{W} = E_1 \oplus E_{-1}.
\]
Then $\mathcal{N} = \{0\}$ and hence
\[
\mathcal{W}_1 = E_1 = \mathcal{W}_2.
\]
At this point, we recall the following result from Bercovici, Douglas, and Foias \cite[Corollary 4.3]{BDF}:

\begin{theorem}\label{shift}
Let $(T_1, T_2)$ be an irreducbible isometric pair. Suppose
\[
\dim(\ker T_i^*) < \infty \qquad (i=1,2).
\]
Then each $T_i$, $i=1,2$, is either shift, or a constant multiple of the identity.
\end{theorem}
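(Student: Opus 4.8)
\textit{Proof proposal.} The plan is to play the Wold decomposition of $T_1$ off against irreducibility, and then argue symmetrically for $T_2$. Let $\clh = \mathcal{H}_u \oplus \mathcal{H}_s$ be the Wold decomposition of the single isometry $T_1$, so $\mathcal{H}_u = \bigcap_{n\ge 0}\mathrm{ran}(T_1^n)$ is $T_1$-reducing, $T_1|_{\mathcal{H}_u}$ is unitary, and $T_1|_{\mathcal{H}_s}$ is a shift of multiplicity $\dim\ker T_1^* < \infty$. Since $T_2$ commutes with $T_1$ it maps each $\mathrm{ran}(T_1^n)$ into itself, hence $T_2\mathcal{H}_u\subseteq\mathcal{H}_u$; the key point will be that $\mathcal{H}_u$ actually \emph{reduces} $(T_1,T_2)$, i.e.\ $T_2^*\mathcal{H}_u\subseteq\mathcal{H}_u$ as well. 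Granting this, irreducibility (Definition~\ref{def irred}) forces $\mathcal{H}_u=\{0\}$ or $\mathcal{H}_u=\clh$. In the first case $T_1$ is a shift, and we are done. In the second $T_1$ is unitary, and then by the Fuglede--Putnam theorem $T_2$, hence also $T_2^*$, commutes with every spectral projection $E$ of $T_1$; as $E$ trivially commutes with $T_1$ and $T_1^*$, the space $\mathrm{ran}(E)$ is $(T_1,T_2)$-reducing, so $E\in\{0,I\}$, the spectral measure of $T_1$ is a point mass, and $T_1=\alpha_1 I$ with $|\alpha_1|=1$. The same argument applied to $T_2$ in place of $T_1$ (using $\dim\ker T_1^*<\infty$ at the corresponding point) gives the conclusion for $T_2$, completing the proof.

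Everything therefore reduces to the lemma that $\mathcal{H}_u=\mathcal{H}_u(T_1)$ reduces $T_2$, and this is where the finiteness hypotheses enter. Write $T_2$ in block form $\begin{bmatrix} A & B\\ 0 & C\end{bmatrix}$ on $\mathcal{H}_u\oplus\mathcal{H}_s$ (the lower-left block vanishes because $T_2\mathcal{H}_u\subseteq\mathcal{H}_u$). From $T_2^*T_2=I$ one reads off that $A=T_2|_{\mathcal{H}_u}$ is an isometry and $A^*B=0$, so $\mathrm{ran}(B)\subseteq\ker A^*$; from $T_1T_2=T_2T_1$ and $T_1=(T_1|_{\mathcal{H}_u})\oplus(T_1|_{\mathcal{H}_s})$ one gets $(T_1|_{\mathcal{H}_u})B=B(T_1|_{\mathcal{H}_s})$, whence $(T_1|_{\mathcal{H}_s})^*(B^*B)(T_1|_{\mathcal{H}_s})=B^*B$. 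Since $T_1|_{\mathcal{H}_s}$ is a shift, the Brown--Halmos theorem identifies $B^*B$ with an (operator-valued, Hardy-space) Toeplitz operator, which is $0$ or of infinite rank. On the other hand $\dim\ker A^*\le\dim\ker T_2^*<\infty$: indeed $\{0\}\oplus\ker C^*\subseteq\ker T_2^*$, and the map $u\mapsto(u,v_u)$ with $C^*v_u=-B^*u$ embeds $\ker A^*$ into $\ker T_2^*$ (using $C^*C=I-B^*B$ and the surjectivity of $C^*$ in the relevant case). Hence $\mathrm{rank}(B)\le\dim\ker A^*<\infty$, which forces $B^*B=0$, i.e.\ $B=0$, so $\mathcal{H}_u$ reduces $T_2$. (The theorem itself is \cite[Corollary~4.3]{BDF}; alternatively one can split off the purely unitary part of $T_1T_2$ first, as recorded before Definition~\ref{def BCL pairs}, and then argue inside the finite-multiplicity Berger--Coburn--Lebow model of Theorem~\ref{bcl}.)

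The step I expect to be the main obstacle is precisely this reducing lemma, and within it the two inputs: that a nonzero Toeplitz operator has infinite rank, and that $\ker A^*$ is finite-dimensional. The former is classical---conjugating $B^*B=T_\Psi$ by powers of the shift shows that $\langle T_\Psi z^n f,z^n f\rangle$ is independent of $n$, which rules out $T_\Psi$ being compact unless $\Psi=0$. The latter needs some care about whether $-B^*u$ lies in the range (not merely the closure of the range) of $C^*$; this is automatic when $C$ has closed range, and the borderline case $\|B\|=1$ must be treated separately---there one checks that a non-isometric $B$ compatible with the shift structure of $T_1|_{\mathcal{H}_s}$ would force $\mathcal{H}_s$ to be finite-dimensional, hence $T_1$ unitary, which is the case already handled. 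All remaining ingredients (the Wold and block-matrix bookkeeping, the Fuglede--Putnam step, and the symmetry between $T_1$ and $T_2$) are routine.
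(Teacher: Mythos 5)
The paper itself gives no proof of this statement---it is imported verbatim from Bercovici--Douglas--Foias \cite[Corollary 4.3]{BDF}---so your argument has to stand on its own. Its architecture is reasonable: the Fuglede--Putnam/spectral-projection step in the unitary case is correct, the identity $S_1^*(B^*B)S_1=B^*B$ together with compactness does force $B^*B=0$ (you do not even need Brown--Halmos; $S_1^nf\to 0$ weakly plus compactness suffices), and once the reducing lemma is granted the rest is routine. The genuine gap is inside the reducing lemma, exactly at the point you flag: everything hinges on $\dim\ker A^*<\infty$ (equivalently, $B$ of finite rank), and your embedding of $\ker A^*$ into $\ker T_2^*$ needs $B^*u\in\mathrm{ran}(C^*)$ for every $u\in\ker A^*$. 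That is available only when $C^*$ is surjective, i.e.\ essentially when $\|B\|<1$. The case $\|B\|=1$ is not a negligible borderline: it is precisely what occurs in genuine non-doubly-commuting examples (in modified-bishift-type pairs the corner $B$ is an isometry), and your proposed patch---that a non-isometric $B$ ``compatible with the shift structure'' would force $\mathcal{H}_s$ to be finite-dimensional---is asserted without argument, does not address the isometric-$B$ possibility at all, and is not the right mechanism: nothing about $\|B\|=1$ constrains $\dim\mathcal{H}_s$, and what the hypotheses actually force to be finite-dimensional is $\ker A^*$, not $\mathcal{H}_s$.

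The lemma is true, but the finiteness of $\ker A^*$ comes from the unitary $U_1$ acting on it, not from the size of $B$. Since $A$ commutes with $U_1$, so does $A^*$, hence $\mathcal{F}:=\ker A^*$ reduces $U_1$; set $\omega:=U_1|_{\mathcal{F}}$, and recall $\mathrm{ran}\,B\subseteq\mathcal{F}$ and $U_1B=BS_1$. Because $C^*C+B^*B=I$, the map $J:\mathcal{H}_s\to\mathcal{H}_s\oplus\mathcal{F}$, $Jh=Ch\oplus Bh$, is an isometry intertwining $S_1$ with $W:=S_1\oplus\omega$, and one checks directly that $\ker T_2^*$ is isomorphic to $\ker J^*=(\mathcal{H}_s\oplus\mathcal{F})\ominus\mathrm{ran}\,J$, so $\mathrm{ran}\,J$ has finite codimension. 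If $\dim\mathcal{F}=\infty$, then $\mathcal{G}:=\mathrm{ran}\,J\cap(\{0\}\oplus\mathcal{F})$ is nonzero and of finite codimension in $\mathcal{F}$; it is $\omega$-invariant, hence (having the same finite codimension as $\omega\mathcal{G}$) satisfies $\omega\mathcal{G}=\mathcal{G}$, so every $g\in\mathcal{G}$ lies in $W^n\,\mathrm{ran}\,J=JS_1^n\mathcal{H}_s$ for all $n$, and $\bigcap_n JS_1^n\mathcal{H}_s=\{0\}$ because $S_1$ is a shift and $J$ is injective---a contradiction. Hence $\dim\ker A^*<\infty$, $B$ has finite rank, and your Toeplitz/compactness step gives $B=0$. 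With this replacement your strategy goes through; as written, however, the crucial finiteness claim is not established, and the $\|B\|=1$ case you defer is exactly where the theorem's content lies.
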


Returning to our context, we immediately have the following:

\begin{coro}
$V_1$ and $V_2$ are unilateral shifts.
\end{coro}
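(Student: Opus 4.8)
The plan is to invoke the Bercovici--Douglas--Foias dichotomy (Theorem \ref{shift}) and then discard the degenerate alternative it permits. From the analysis carried out immediately above, we have already established that for an irreducible $2$-finite pair $(V_1, V_2)$ the wandering subspaces collapse, namely
\[
\clw_1 = E_1 = \clw_2,
\]
so that $\dim(\ker V_i^*) = \dim \clw_i = 1 < \infty$ for $i = 1, 2$. Since $(V_1, V_2)$ is irreducible by hypothesis, Theorem \ref{shift} applies verbatim and yields that each $V_i$ is either a unilateral shift or a scalar multiple of the identity operator on $\clh$.

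The remaining step is to rule out the second option. Note first that $\clh \neq \{0\}$, since $\dim \clw_1 = 1$. Suppose, towards a contradiction, that $V_i = \mu I_{\clh}$ for some scalar $\mu$. As $V_i$ is an isometry on a nonzero space, $|\mu| = 1$, so $V_i^* = \bar{\mu} I_{\clh}$ is invertible and hence $\ker V_i^* = \{0\}$. This contradicts $\dim \clw_i = \dim \ker V_i^* = 1$. Therefore neither $V_1$ nor $V_2$ can be a scalar multiple of the identity, and we conclude that both $V_1$ and $V_2$ are unilateral shifts; in fact, since $\dim \clw_i = 1$, each $V_i$ is the unilateral shift of multiplicity one, i.e.\ unitarily equivalent to $M_z$ on $H^2(\D)$.

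I do not anticipate any genuine obstacle here, as all the substantive work is already contained in the preceding reduction to $\clw_1 = E_1 = \clw_2$; the only point that needs care is verifying that the hypotheses of Theorem \ref{shift} are met (irreducibility, which is assumed, and finite-dimensionality of the kernels, which is the one-dimensionality just obtained) and that these kernels are \emph{nonzero}, which is precisely what forces the shift alternative over the trivial scalar one.
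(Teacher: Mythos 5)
Your proof is correct and follows the paper's route exactly: the paper derives the corollary immediately from Theorem \ref{shift} once $\clw_1 = E_1 = \clw_2$ gives $\dim\ker V_i^* = 1 < \infty$, and the scalar-multiple alternative is excluded precisely because a unimodular multiple of the identity has trivial adjoint kernel. Your write-up merely makes this last elimination explicit, which the paper leaves implicit.
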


Since
\[
\mathcal{W}_1 = \ker V_1^* = \text{span}\{e_1\},
\]
we conclude that $V_1$ is a unilateral shift of multiplicity one, that is, $V_1 \cong M_z$ on $H^2(\mathbb{D})$. More specifically
\[
W(V_1^n e_1) = z^n \qquad (n \geq 0),
\]
defines a unitary $W : \mathcal{H} \to H^2(\mathbb{D})$ such that
\[
WV_1 = M_z W.
\]
Then $(V_1, V_2)$ on $\mathcal{H}$ is jointly unitarily equivalent to $(M_z, WV_2W^*)$ on $H^2(\mathbb{D})$. As $WV_2W^*$ commutes with $M_z$, there exists an inner function $\theta \in H^\infty(\mathbb{D})$ such that
\[
WV_2W^* = M_{\theta}.
\]
Again it follows from
\[
\begin{split}
\dim (\ker M_{\theta}^*) & = \dim (\ker WV_2^*W^*)
\\
& = \dim (\ker V_2^*)
\\
& = \dim \mathcal{W}_2
\\
&= 1,
\end{split}
\]
that
\[
\theta(z) = c \frac{z-a}{1 - \bar{a}z} \qquad (z \in \mathbb{D}),
\]
for some $a \in \mathbb{D}$ and unimodular constant $c$. Consequently,
\[
\ker M_\theta^* = \text{span}\{k_a\},
\]
where
\[
k_a(z) = \frac{1}{1 - \bar{a}z} \qquad (z \in \mathbb{D}),
\]
is the \textit{Szeg\"{o} kernel} on $\D$. But
\[
\mathcal{W}_1 = \mathcal{W}_2,
\]
implies that
\[
\mathbb{C} = \ker M_z^* = \ker M_\theta^* = \text{span } \{k_a\},
\]
which forces that $a = 0$. We have thus proved that:
\[
(V_1, V_2) \text{ on } \mathcal{H} \cong (M_z, cM_z) \text{ on } H^2(\mathbb{D}),
\]
for some  unimodular constant $c$. Consequently
\begin{align*}
	\alpha &= \text{ nonzero eigenvalue of the cross-commutator $[V_2^*, V_1]$ on $\mathcal{H}$ } \\
	&= \text{ nonzero eigenvalue of the cross-commutator $[(cM_z)^*, M_z] $ on $H^2(\mathbb{D})$}\\
	&= \bar{c}.
\end{align*}

The summary of the above observations provides a complete classification of irreducible $2$-finite pairs:

\begin{theorem}\label{rank2}
Let $(V_1, V_2)$ be an irreducible $2$-finite pair. Then $\{\pm 1\}$ are the only nonzero eigenvalues of $C(V_1, V_2)$. Moreover
\[
\text{rank}[V_2^*,V_1] = 1,
\]
and there exists a unimodular constant $\alpha$ such that
\[
\sigma([V_2^*,V_1])\setminus \{0\} = \{\alpha\}.
\]
Moreover
\[
(V_1, V_2) \cong (M_z, \bar{\alpha}M_z).
\]
Conversely, if $\alpha$ is unimodular constant, then $(M_z, \bar{\alpha}M_z)$ on $H^2(\mathbb{D})$ is an irreducible $2$-finite pair with $\{\pm 1\}$ as the only nonzero eigenvalues of $C(M_z, \bar{\alpha}M_z)$.
\end{theorem}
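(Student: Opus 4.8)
The plan is to pin down the spectral picture of the defect operator first, then use the Berger--Coburn--Lebow unitary of Theorem~\ref{bcl} together with irreducibility to force the wandering space $\clw$ to be two-dimensional, and finally to read off the model $(M_z,\bar\alpha M_z)$ by elementary Hardy-space function theory; the converse is a direct computation on $H^2(\D)$. First I would show $\dim E_1=1$ by the cyclic-subspace argument used in the proof of Proposition~\ref{dime1}: if $\dim E_1\ge 2$, choose orthonormal eigenvectors of the normal operator $[V_2^*,V_1]|_{E_1}$ and check that the cyclic subspace of one of them reduces $(V_1,V_2)$, contradicting irreducibility; on the other hand $\text{rank}\,C(V_1,V_2)=2>0$ rules out a doubly commuting pair, so $E_1\ne\{0\}$ by the identities \eqref{HiNew8}--\eqref{HiNew9}, whence $\dim E_1=1$. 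Feeding $\dim E_1=1$ and $\text{rank}\,C(V_1,V_2)=2$ into \eqref{HiNew9} (equivalently Theorem~\ref{structure}) forces $\dim E_{-1}=1$ and $\clk_+=\{0\}$; thus the only nonzero eigenvalues of $C(V_1,V_2)$ are $\pm1$. Corollary~\ref{cor rank formula} then yields $\text{rank}[V_2^*,V_1]=1$, and Lemmas~\ref{intersection} and~\ref{contain} identify its range as $E_1$, so $[V_2^*,V_1]=\alpha\,e_1\otimes e_1$ for a unit vector $e_1\in E_1$ and a nonzero scalar $\alpha$, and hence $V_2^*V_1e_1=\alpha e_1$ and $V_1^*V_2e_1=\bar\alpha e_1$.

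Next I would collapse $\clw$ onto $E_1\oplus E_{-1}$. Writing $\clw=E_1\oplus E_{-1}\oplus\mathcal N$ with $\mathcal N=\clw\ominus(E_1\oplus E_{-1})$ and using $\clk_+=\{0\}$, Theorem~\ref{struc} puts the projections $P_{\clw_1},P_{V_2\clw_1}$ (and hence $P_{\clw_2},P_{V_1\clw_2}$) in block-diagonal form $\mathrm{diag}(\,\cdot\,,\,\cdot\,,Q)$ for a projection $Q$ on $\mathcal N$. Expanding $Ue_1=V_2e_1\in V_2\clw_1=E_{-1}\oplus\ran Q$ and applying $V_1^*$ (which annihilates the $\ran Q$-part since $\ran Q\subseteq\clw_1=\ker V_1^*$, and acts as $U$ on $V_1\clw_2\supseteq E_{-1}$) to the relation $V_1^*V_2e_1=\bar\alpha e_1$ gives $U(E_{-1})=E_1$; the symmetric computation with $V_1e_1=U^*e_1$ gives $U^*(E_{-1})=E_1$. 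These two facts show that $U$ interchanges the one-dimensional spaces $E_1$ and $E_{-1}$, so $E_1\oplus E_{-1}$ reduces $U$; moreover $Ue_1\in E_{-1}$ forces the scalar $\langle Ue_1,e_{-1}\rangle$ to be unimodular, which yields $|\alpha|=1$. Since $E_1\subseteq\clw_1$ and $E_{-1}\subseteq V_1\clw_2\perp\clw_1$, the space $E_1\oplus E_{-1}$ also reduces $P_{\clw_1}$; as $(V_1,V_2)$, equivalently $(U,P_{\clw_1})$ by \cite[Corollary~2.2]{DSPS}, is irreducible and $E_1\oplus E_{-1}\ne\{0\}$, we conclude $\clw=E_1\oplus E_{-1}$, so $\mathcal N=\{0\}$ and $\clw_1=E_1=\clw_2$.

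It then remains to identify the pair and to verify the converse. Since $\dim\clw_i=1<\infty$, Theorem~\ref{shift} says each $V_i$ is a unilateral shift or a scalar multiple of the identity, and the latter is impossible because $\ker V_i^*=E_1\ne\{0\}$; thus both are shifts, and $\clw_1=\C e_1$ makes $V_1$ a shift of multiplicity one, so $V_1\cong M_z$ on $H^2(\D)$. Transporting $V_2$ through the intertwiner and using that its image commutes with $M_z$ gives $(V_1,V_2)\cong(M_z,M_\theta)$ for an inner $\theta\in H^\infty(\D)$; the constraint $\dim\ker M_\theta^*=\dim\clw_2=1$ forces $\theta(z)=c\,(z-a)/(1-\bar a z)$ for some $a\in\D$ and unimodular $c$, and then $\ker M_\theta^*=\C k_a$ together with $\clw_1=\clw_2$ forces $a=0$, so $\theta=cz$ and $(V_1,V_2)\cong(M_z,cM_z)$; comparing cross-commutators, $[(cM_z)^*,M_z]=\bar c\,(I-M_zM_z^*)$, whose unique nonzero eigenvalue is $\bar c$, so $\alpha=\bar c$ and $(V_1,V_2)\cong(M_z,\bar\alpha M_z)$. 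For the converse, for unimodular $\alpha$ the pair $(M_z,\bar\alpha M_z)$ on $H^2(\D)$ has product $\bar\alpha M_{z^2}$, a shift (so it is a BCL pair), cross-commutator $[(\bar\alpha M_z)^*,M_z]=\alpha(I-M_zM_z^*)$, which is compact, normal, and rank one, and defect operator $C(M_z,\bar\alpha M_z)=(I-M_zM_z^*)-(M_zM_z^*-M_{z^2}M_{z^2}^*)$, i.e.\ the difference of the projections onto the constants and onto $\C z$, so it has rank $2$ and nonzero spectrum $\{\pm1\}$; irreducibility follows since a projection commuting with $M_z$ on $H^2(\D)$ is multiplication by an idempotent $H^\infty$-function, hence $0$ or $I$, so any subspace reducing the pair reduces $M_z$ and is trivial. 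Hence $(M_z,\bar\alpha M_z)$ is an irreducible $2$-finite pair with the stated properties.

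The main obstacle is the middle step: extracting from the single rank-one relation $V_2^*V_1e_1=\alpha e_1$ and its adjoint enough information about the Berger--Coburn--Lebow unitary $U$ to conclude that $E_1\oplus E_{-1}$ is reducing for $(U,P_{\clw_1})$, since it is only then that irreducibility annihilates $\mathcal N$ and reduces everything to a two-dimensional wandering space. Carefully tracking how $U$ and $U^*$ act on the two one-dimensional eigenspaces, through the block structure of the projections supplied by Theorem~\ref{struc}, is the technical crux; once $\clw$ is known to be two-dimensional, the spectral bookkeeping in the first paragraph and the Hardy-space identification in the third are routine.
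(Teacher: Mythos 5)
Your proposal is correct and follows essentially the same route as the paper's proof: the rank formula and Theorem \ref{structure} give $\dim E_1=\dim E_{-1}=1$ and $\mathcal{K}_+=\{0\}$, the block structure from Theorem \ref{struc} together with the BCL unitary shows $E_1\oplus E_{-1}$ reduces $(U,P_{\mathcal{W}_1})$ so that irreducibility kills $\mathcal{N}$ and gives $\mathcal{W}_1=E_1=\mathcal{W}_2$, and then Theorem \ref{shift} plus the Hardy-space commutant argument identifies $(M_z,\bar{\alpha}M_z)$. The only differences are cosmetic: you spell out the unimodularity of $\alpha$ via $U(E_1)=E_{-1}$ and write out the converse computation and irreducibility of $(M_z,\bar{\alpha}M_z)$, which the paper leaves to the reader.
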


Note that the pair $(M_z, \bar{\alpha}M_z)$ is acting on the Hardy space $H^2(\mathbb{D})$. The details of the converse part of the above result are routine, and we leave the details to the reader.

The following result, which is an immediate consequence of Theorem \ref{rank2}, says that for a $2$-finite pair, the nonzero eigenvalue of the cross-commutator is a complete invariant.

\begin{theorem}\label{thm 2 finite unit inv}
Let $(V_1, V_2)$ on $\clh$ and $(\tilde{V_1}, \tilde{V_2})$ on $\tilde{\clh}$ be irreducible $2$-finite pairs. Let
\[
\sigma([V_2^*,V_1])\setminus \{0\} = \{\alpha\},
\]
and
\[
\sigma([\tilde{V_2}^*,\tilde{V_1}])\setminus \{0\} = \{\tilde{\alpha}\}.
\]
Then \[(V_1, V_2) \cong (\tilde{V_1}, \tilde{V_2}),
\]
if and only if
\[
\alpha = \tilde{\alpha}.
\]
\end{theorem}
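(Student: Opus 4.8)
The plan is to reduce everything to the explicit model already established in Theorem \ref{rank2}. That theorem tells us precisely what an irreducible $2$-finite pair looks like up to unitary equivalence: it is $(M_z, \bar{\alpha} M_z)$ on $H^2(\mathbb{D})$, where $\alpha$ is the unique nonzero eigenvalue of the cross-commutator. So the whole statement becomes a matter of tracking this single scalar through the unitary equivalence.

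First I would dispose of the easy direction. If $U: \clh \to \tilde{\clh}$ is a unitary with $U V_i = \tilde{V}_i U$ for $i = 1, 2$, then $U V_i^* = \tilde{V}_i^* U$ as well, hence $U [V_2^*, V_1] = [\tilde{V}_2^*, \tilde{V}_1] U$. Thus $[V_2^*, V_1]$ and $[\tilde{V}_2^*, \tilde{V}_1]$ are unitarily equivalent operators, so they have the same spectrum, and in particular the same unique nonzero eigenvalue: $\alpha = \tilde{\alpha}$.

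For the converse, suppose $\alpha = \tilde{\alpha}$. By Theorem \ref{rank2} applied to each pair, we have
\[
(V_1, V_2) \cong (M_z, \bar{\alpha} M_z) \text{ on } H^2(\mathbb{D}),
\]
and
\[
(\tilde{V}_1, \tilde{V}_2) \cong (M_z, \bar{\tilde{\alpha}} M_z) \text{ on } H^2(\mathbb{D}).
\]
Since $\alpha = \tilde{\alpha}$, the two model pairs coincide, so $(V_1, V_2) \cong (M_z, \bar{\alpha} M_z) \cong (\tilde{V}_1, \tilde{V}_2)$ by transitivity of joint unitary equivalence. This completes the argument.

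There is essentially no obstacle here, since all the substantive content is already packaged in Theorem \ref{rank2}; the only point requiring a line of care is the observation that joint unitary equivalence of pairs passes to unitary equivalence of the cross-commutators (so that the nonzero eigenvalue is genuinely a unitary invariant), and that the scalar $\alpha$ extracted from the model in Theorem \ref{rank2} is exactly this same eigenvalue — both of which are immediate from the definitions and from the computation $\alpha = \bar{c}$ recorded just before the statement of Theorem \ref{rank2}.
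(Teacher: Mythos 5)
Your proof is correct and follows exactly the route the paper intends: the paper presents this result as an immediate consequence of Theorem \ref{rank2}, namely that both pairs reduce to the model $(M_z, \bar{\alpha}M_z)$ on $H^2(\mathbb{D})$, while the forward direction is just invariance of the cross-commutator's spectrum under joint unitary equivalence. Nothing is missing.
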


\section{Classification of $1$-finite pairs}\label{sec 1 finite}

This short section classifies irreducible $1$-finite pairs. In contrast to $3$ and $2$-finite pairs, this class is simple, and the structure can be easily derived. Indeed, this is simply the pair of shifts on $H^2(\D^2)$:

\begin{theorem}\label{thm 1 finite}
Let $(V_1, V_2)$ be an irreducible isometric pair on a Hilbert space $\clh$. Then $(V_1, V_2)$ is $1$-finite if and only if
\[
(V_1, V_2) \cong (M_z, M_w) \text{ on } H^2(\D^2).
\]
\end{theorem}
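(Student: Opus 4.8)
The plan is to prove the nontrivial direction: if $(V_1, V_2)$ is an irreducible $1$-finite pair, then $(V_1, V_2) \cong (M_z, M_w)$ on $H^2(\D^2)$. The converse --- that $(M_z, M_w)$ on $H^2(\D^2)$ is an irreducible isometric pair with $\operatorname{rank} C(M_z, M_w) = \operatorname{rank} P_{\mathbb C} = 1$ --- is routine from the identity $I - M_z M_z^* - M_w M_w^* + M_z M_w M_z^* M_w^* = P_{\mathbb C}$ recalled in the introduction, together with the standard fact that $(M_z, M_w)$ on $H^2(\D^2)$ has no nontrivial reducing subspace.

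First I would pin down the eigenspace structure of $C(V_1, V_2)$. Since $(V_1, V_2)$ is $1$-finite, $C := C(V_1, V_2)$ is a compact self-adjoint contraction, a difference of two projections, with $\operatorname{rank} C = 1$. By Corollary \ref{cor rank formula}, $1 = \operatorname{rank} C = 2\operatorname{rank}[V_2^*, V_1] + \dim E_1 - \dim E_{-1}$. Since $\operatorname{ran}[V_2^*, V_1] \subseteq E_1$ whenever the cross-commutator is normal (Lemma \ref{contain}) --- and here $[V_2^*,V_1]=0$ is the cleanest possibility I should aim for --- and $\dim E_1 \le \operatorname{rank} C = 1$, the only consistent solution is $\operatorname{rank}[V_2^*, V_1] = 0$, $\dim E_1 = 1$, $\dim E_{-1} = 0$. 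So $(V_1, V_2)$ is \emph{doubly commuting}, $C = P_{E_1}$ is a rank-one projection, and by Lemma \ref{intersection} we have $\clw_1 \cap \clw_2 = E_1$ is one-dimensional; spanned by a unit vector $f$.

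Next, since $(V_1, V_2)$ is a BCL pair, Theorem \ref{bcl} gives $\clw = \clw_1 \oplus V_1\clw_2 = \clw_2 \oplus V_2\clw_1$, and from the matrix form of $C$ on $\clw$ recalled after Theorem \ref{bcl} (namely $C|_\clw = P_{\clw_1} - U P_{\clw_1} U^*$ restricted appropriately, or directly $C|_\clw = P_{\clw_1} - P_{V_2\clw_1}$), the condition $\operatorname{rank} C = 1$ with $\ker C \supseteq \mathcal N$ forces $P_{\clw_1} = P_{V_2\clw_1}$ on $\mathcal N^\perp \ominus E_1$; combined with $\dim E_1 = 1$ this should give $\clw_1 = E_1 \oplus \mathcal R$, $V_2\clw_1 = \mathcal R$, $\clw_2 = E_1 \oplus \mathcal R$, $V_1\clw_2 = \mathcal R$ for some common subspace $\mathcal R$, and then the BCL unitary $U$ maps $V_1\clw_2 = \mathcal R$ onto $V_2\clw_1 = \mathcal R$ while fixing the $E_1$-directions appropriately. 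The irreducibility of $(V_1, V_2)$, equivalently of the pair $(U, P_{\clw_1})$ by \cite[Corollary 2.2]{DSPS}, should then force $\mathcal R = \{0\}$: any $U$-reducing subspace inside $\mathcal R$ that also reduces $P_{\clw_1}$ would descend to a reducing subspace of the original pair. Hence $\clw = E_1 = \mathbb C f$, so $\clw_1 = \clw_2 = \mathbb C f$ is one-dimensional.

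Finally, with $\dim \clw_1 = \dim \clw_2 = 1$ finite and $(V_1, V_2)$ irreducible, I would either invoke Theorem \ref{shift} (Bercovici--Douglas--Foias) to conclude each $V_i$ is a shift --- a constant multiple of the identity is impossible since $\clw = \ker(V_1V_2)^*$ is nonzero --- or argue directly that $V = V_1V_2$ is a shift (as $(V_1,V_2)$ is a BCL pair) with one-dimensional wandering subspace, hence $V \cong M_z$ on $H^2(\D)$. Then, using $\clw = \clw_1 \oplus V_1\clw_2$ with both summands one-dimensional, the standard Wold-type decomposition relative to $V$ organizes $\clh \cong H^2(\D) \otimes \clw$; tracking where $V_1$ and $V_2$ send $f$ and $V_1 f$ and using double commutativity, $V_1$ and $V_2$ act as the two coordinate shifts on $H^2(\D) \otimes \mathbb C^2 \cong H^2(\D^2)$ (this is exactly the Wold decomposition for doubly commuting pairs, \eqref{eqn Wold DC}, specialized to the case where each of the four summands is at most the single copy $\clh_{ss} = H^2(\D^2)$). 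The main obstacle I anticipate is the bookkeeping in this last identification: showing cleanly that double commutativity plus the one-dimensionality of both $\clw_1$ and $\clw_2$ and irreducibility force $\clh$ to be exactly $H^2(\D^2)$ with the coordinate shifts, rather than some a priori larger doubly commuting shift-shift model; the Słociński Wold decomposition \eqref{eqn Wold DC} together with $\dim \clw_1 = \dim \clw_2 = 1$ is the tool that resolves this, since it pins the multiplicities of both shifts to one.
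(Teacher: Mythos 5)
Your opening reduction is correct and matches the paper: the rank formula of Corollary \ref{cor rank formula}, combined with Lemma \ref{intersection} and Lemma \ref{contain}, forces $\operatorname{rank}[V_2^*,V_1]=0$, $\dim E_1=1$, $E_{-1}=\{0\}$, so the pair is doubly commuting with $C(V_1,V_2)=P_{E_1}$. The middle of your argument, however, contains a genuine error. From $C(V_1,V_2)=P_{\clw_1}-P_{V_2\clw_1}=P_{\clw_2}-P_{V_1\clw_2}=P_{E_1}$ you may conclude $\clw_1=E_1\oplus V_2\clw_1$ and $\clw_2=E_1\oplus V_1\clw_2$, but \emph{not} that $V_2\clw_1$ and $V_1\clw_2$ coincide as a common subspace $\mathcal{R}$: in the target model $(M_z,M_w)$ on $H^2(\D^2)$ these two spaces are the closed spans of $\{w^{n+1}:n\geq 0\}$ and of $\{z^{n+1}:n\geq 0\}$, which are mutually orthogonal, and $\clw_1\neq\clw_2$. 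Moreover, irreducibility does not force $\mathcal{R}=\{0\}$: irreducibility of $(U,P_{\clw_1})$ only says that no nontrivial subspace of $\clw$ reduces the pair, and you have not produced any candidate reducing subspace inside $\mathcal{R}$; indeed for $(M_z,M_w)$ the pair $(U,P_{\clw_1})$ is irreducible while $\clw$ is infinite-dimensional. Consequently your intermediate conclusion $\clw_1=\clw_2=\mathbb{C}f$ is false --- it contradicts the very statement being proved, since $\ker M_z^*$ and $\ker M_w^*$ in $H^2(\D^2)$ are infinite-dimensional --- and everything downstream collapses: Theorem \ref{shift} is unavailable (it requires $\dim\clw_i<\infty$), $V=V_1V_2$ has infinite-dimensional wandering subspace rather than being $M_z$ on $H^2(\D)$, and $H^2(\D)\otimes\mathbb{C}^2$ is not $H^2(\D^2)$.

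The space that is one-dimensional is $\clw_1\cap\clw_2=E_1$, not $\clw_1$ or $\clw_2$, and that is all that is needed; this is how the paper finishes from your (correct) first step. Since the pair is doubly commuting, apply S{\l}oci\'{n}ski's Wold decomposition \eqref{eqn Wold DC}; its four summands reduce $(V_1,V_2)$, so irreducibility leaves exactly one nonzero summand. Because $C(W_1,W_2)=0$ whenever one of $W_1,W_2$ is unitary, while $C(V_1,V_2)\neq 0$, the surviving summand must be $\clh_{ss}$, and then $\clh=\clh_{ss}=\bigoplus_{m,n\geq 0}V_1^mV_2^n\big(\clw_1\cap\clw_2\big)$ with $\dim(\clw_1\cap\clw_2)=\dim E_1=1$; the map $V_1^mV_2^nf\mapsto z^mw^n$ extends to a unitary implementing $(V_1,V_2)\cong(M_z,M_w)$ on $H^2(\D^2)$.
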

\begin{proof}
It is a standard fact that $(M_z, M_w)$ is irreducible. Moreover, $(M_z, M_w)$ is doubly commuting, that is, $[M_w^*,M_z] = 0$, and also (see the identity (3) preceding Definition \ref{def defect operator})
\[
C(M_z, M_w) = P_{\mathbb{C}}.
\]
Therefore, $(M_z, M_w)$ on $H^2(\D^2)$ is an irreducible $1$-finite pair. For the reverse direction, consider an $1$-finite irreducible isometric pair $(V_1, V_2)$ on a Hilbert space $\clh$. We again recall that
\begin{enumerate}
\item $E_1 = \ker V_1^* \cap \ker V_2^*$ (see Lemma \ref{intersection}),
\item $[V_2^*,V_1]|_{E_1^\perp} = 0$, and $\text{ran} [V_2^*,V_1] \subseteq E_1$ (see Lemma \ref{contain}), and
\item $\text{rank} C(V_1, V_2) = 2 \text{rank} [V_2^*,V_1] + \dim E_1 - \dim E_{-1}$ (see Corollary \ref{cor rank formula}).
\end{enumerate}

\noindent By assumption, $\text{rank} C(V_1, V_2) = 1$. Then, in view of Theorem \ref{structure}, we know that either $1$ or $-1$ is the only nonzero eigenvalue of $C(V_1, V_2)$. If $-1$ is the only nonzero eigenvalue of $C(V_1, V_2)$, then
\[
\dim E_1 = 0.
\]
Therefore
\[
\text{rank} [V_2^*, V_1] = 0,
\]
and hence, by the rank identity (3) above, we have
\[
\begin{split}
\text{rank} C(V_1, V_2) & = 2 \times 0 + 0 - 1
\\
& = -1,
\end{split}
\]
an impossibility. Therefore, $1$ is the only nonzero eigenvalue of $C(V_1, V_2)$. Then
\[
E_{-1} = \{0\}.
\]
Since $\text{rank} C(V_1, V_2) = 1$, the rank identity in (3) above again forces that
\[
[V_2^*, V_1] = 0,
\]
that is, $(V_1, V_2)$ is a doubly commuting pair on $\clh$. The Wold decomposition of doubly commuting pairs \eqref{eqn Wold DC} yields the orthogonal decomposition into reducing subspaces
\[
\clh = \clh_{uu} \oplus \clh_{us} \oplus \clh_{su} \oplus \clh_{ss},
\]
where $V_1|_{\clh_{ij}}$ is a shift if $i=s$ and unitary if $i=u$, and $V_2|_{\clh_{ij}}$ is a shift if $j=s$ and unitary if $j=u$. Nevertheless, due to the irreducibility of $(V_1, V_2)$, precisely one summand will survive. We claim that $\clh_{ss}$ is the one who will last. Indeed, if $(W_1, W_2)$ is an isometric pair such that at least one of $W_1$ and $W_2$ is unitary, then an easy computation reveals that
\[
C(W_1, W_2) = 0.
\]
Consequently, in the present situation, we have that
\[
C(V_1|_{\clh_{ij}}, V_2|_{\clh_{ij}}) = 0,
\]
whenever at least one of $i,j$ is $u$. Therefore
\[
\clh = \clh_{ss} \neq \{0\}.
\]
The representation of shift part of the Wold decomposition for doubly commuting pairs \cite[Theorem 3.1]{JS} yields
\[
\clh_{ss} = \bigoplus_{m,n \geq 0} V_1^m V_2^n \Big(\ker (V_1|_{\clh_{ss}})^* \cap \ker (V_2|_{\clh_{ss}})^*\Big).
\]
However, we know from (1) above that
\[
E_1 = \ker (V_1|_{\clh_{ss}})^* \cap \ker (V_2|_{\clh_{ss}})^*.
\]
Since $\dim E_1 = 1$, there exists unit vector $f \in \clh$ such that
\[
E_1 = \mathbb{C} f.
\]
Therefore, there exists a unitary $U: \clh_{ss} \rightarrow H^2(\D^2)$ such that
\[
U(V_1^m V_2^n f) = z^m w^n \qquad (m,n \geq 0).
\]
Moreover, $U V_1 = M_z U$ and $U V_2 = M_w U$ (see \cite{JS} for more details), that is, $(V_1, V_2) \cong (M_z, M_w)$. This completes the proof of the theorem.
\end{proof}

With this, we now have a thorough understanding of irreducible $n$-finite pairings for all $n = 1,2,3$. In the following section aims to show that irreducible $1, 2$, and $3$-finite pairs are all irreducible $n$-finite pairs.

\section{Compact normal pairs}\label{sec class n finite}

In this section, we obtain complete representations of compact normal pairs. As we will see, aggregating all previously learned results will archive this. Indeed, we will see that along with the $3$ and $2$ and $1$-finite pairs obtained before, shift-unitary pairs (see Definition \ref{def doubly comm}) will also serve as the fundamental building blocks of compact normal pairs.

We fix a compact normal pair $(V_1, V_2)$ on $\clh$. As usual, following \eqref{eqn def eigenspace 2}, we write
\[
E_\lambda:= E_\lambda(C(V_1, V_2)) \qquad (\lambda \in \mathbb{R}).
\]
Recall from Lemmas \ref{intersection} and \ref{contain} that
\[
\text{ran}[V_2^*, V_1] = \text{ran}[V_2^*, V_1]^* \subseteq E_1 = \clw_1 \cap \clw_2,
\]
where $\clw_i = \ker V_i^*$, $i=1,2$, and
\[
[V_2^*, V_1]|_{E_1^\perp} = [V_1^*, V_2]|_{E_1^\perp} = 0.
\]
We first consider the case when $\dim E_1 \geq 1$. The case when $E_1 = \{0\}$ is easy and will be discussed later in Remark \ref{e1zero}. Let
\[
\dim E_1 := k \in \mathbb{N} \cup \{\infty\},
\]
and suppose $\{f_1, \ldots, f_k\}$ is an orthonormal basis of $E_1$ consisting of eigen vectors of $[V_2^*, V_1]$ (by treating $[V_2^*, V_1]|_{E_1}$ on $E_1$ as a normal operator). There exist scalars $\{\lambda_1, \ldots,\lambda_k\}$  (possibly repeated) such that
\[
[V_2^*, V_1]f_i = \lambda_i f_i,
\]
for $i = 1, 2, \ldots, k$. Finally, for each $i=1, \ldots, k$, we define a closed subspace $\clh_i$ of $\clh$ as
\[
\mathcal{H}_i := \overline{\text{span}}\{V_1^m V_2^n f_i: m, n \geq 0\}.
\]
These spaces are of interest, which we now analyze thoroughly. First, we prove that these spaces are jointly reducing (see Definition \ref{def irred}).

\begin{lemma}\label{Hi}
$\mathcal{H}_i$ reduces $(V_1, V_2)$ for all $i = 1, 2, \ldots, k$.
 \end{lemma}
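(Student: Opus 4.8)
The plan is to show that each $\mathcal{H}_i$ is invariant under $V_1$, $V_2$, $V_1^*$, and $V_2^*$. Invariance under $V_1$ and $V_2$ is immediate from the definition $\mathcal{H}_i = \overline{\text{span}}\{V_1^m V_2^n f_i : m, n \geq 0\}$, since multiplying a generating vector $V_1^m V_2^n f_i$ by $V_1$ or $V_2$ produces another generating vector. So the whole content is to prove that $V_1^* (V_1^m V_2^n f_i) \in \mathcal{H}_i$ and $V_2^* (V_1^m V_2^n f_i) \in \mathcal{H}_i$ for all $m, n \geq 0$. By symmetry between the two variables (the roles of $V_1$ and $V_2$ are interchangeable, with $\lambda_i$ replaced by $\bar\lambda_i$ in view of $[V_1^*, V_2] = [V_2^*, V_1]^*$), it suffices to handle $V_2^*(V_1^m V_2^n f_i)$.

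The key structural facts I would invoke, all already available in the excerpt, are: (i) $f_i \in \clw_1 \cap \clw_2 = E_1$, so $V_1^* f_i = V_2^* f_i = 0$; (ii) $V_2^* V_1 f_i = [V_2^*, V_1] f_i = \lambda_i f_i$, from the defining eigen-equation together with $V_1 V_2^* f_i = 0$; and (iii) the range of $[V_2^*, V_1]$ lies in $E_1$, while $[V_2^*, V_1]|_{E_1^\perp} = 0$ (Lemma \ref{contain}), so that $[V_2^*, V_1]$ kills $V_1^m V_2^n f_i$ whenever $(m,n) \neq (0,0)$ — because such a vector is orthogonal to $E_1 = \clw_1 \cap \clw_2$ (it lies in $\text{ran}\,V_1$ or $\text{ran}\,V_2$). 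The core computation is the commutation identity
\[
V_2^* V_1^{m} = [V_2^*, V_1] V_1^{m-1} + V_1 V_2^* V_1^{m-1},
\]
mirroring \eqref{impeqn} in the proof of Proposition \ref{dime1}. For $m \geq 2$ the first term vanishes by (iii), giving $V_2^* V_1^m f_i = V_1 V_2^* V_1^{m-1} f_i$, and for $m = 1$ it equals $\lambda_i f_i$ by (ii); an induction on $m$ then shows $V_2^* V_1^m f_i \in \mathcal{H}_i$ for all $m \geq 0$ (the case $m=0$ being $V_2^* f_i = 0$). To pass from $V_2^* V_1^m f_i$ to the general vector $V_2^*(V_1^m V_2^n f_i)$, I would note that $V_2^*(V_1^m V_2^n f_i) = V_2^* V_2 (V_1^m V_2^{n-1} f_i) = V_1^m V_2^{n-1} f_i \in \mathcal{H}_i$ whenever $n \geq 1$, since $V_2$ is an isometry and $V_1, V_2$ commute; the only genuinely new case is $n = 0$, which is exactly the induction just carried out. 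Then the symmetric argument with the roles of the indices swapped yields $V_1^*(V_1^m V_2^n f_i) \in \mathcal{H}_i$, completing the proof that $\mathcal{H}_i$ reduces $(V_1, V_2)$.

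The main obstacle — really the only subtle point — is making sure the eigenvector structure of $[V_2^*, V_1]|_{E_1}$ is genuinely used: the argument hinges on $f_i$ being an \emph{eigen}vector, so that $V_2^* V_1 f_i$ is a scalar multiple of $f_i$ and hence stays inside $\mathcal{H}_i$, rather than an arbitrary element of $E_1$ which could have nonzero components along other $f_j$. This is why we chose the basis $\{f_1, \ldots, f_k\}$ to diagonalize the normal operator $[V_2^*, V_1]|_{E_1}$ at the outset. Everything else is the same bookkeeping used in Proposition \ref{dime1}, now run without the irreducibility hypothesis and tracking a single eigenvector at a time.
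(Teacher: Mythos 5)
Your proposal is correct and takes essentially the same approach as the paper: the paper's proof of this lemma simply states that it is identical to the reducibility argument for $\cls$ in Proposition \ref{dime1}, and your reduction to $V_2^*V_1^m f_i$ (and its symmetric counterpart), the commutation identity, the vanishing of $[V_2^*,V_1]$ off $E_1$ via Lemma \ref{contain}, and the eigenvector observation reproduce exactly that argument adapted to a fixed $f_i$.
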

\begin{proof}
The proof is exactly the same as the proof of the reducibility of $\cls$ in Proposition \ref{dime1}.
\end{proof}

Moreover, we claim that:

\begin{lemma}\label{irreducuble}
$(V_1|_{\mathcal{H}_i}, V_2|_{\mathcal{H}_i})$ is irreducible for all $i = 1, 2, \ldots, k$.
\end{lemma}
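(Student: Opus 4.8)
The goal is to show that each $\clh_i = \overline{\text{span}}\{V_1^m V_2^n f_i : m,n \geq 0\}$ is irreducible under the restricted pair $(V_1|_{\clh_i}, V_2|_{\clh_i})$. The natural strategy is to compute the defect operator and cross-commutator of the restricted pair, verify it is again a compact normal pair, and then decompose it further into irreducible $n$-finite pieces using the rank formula; the key point will be that the spanning vector $f_i$ forces any reducing subspace to be everything.

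First I would record the basic structural facts about $\clh_i$. Since $f_i \in \clw_1 \cap \clw_2 = E_1$, we have $V_1^* V_2 f_i = \bar\lambda_i f_i$ and $V_2^* V_1 f_i = \lambda_i f_i$ (as in \eqref{crosscomm}), and moreover $[V_2^*, V_1]$ annihilates $E_1^\perp$. Using this together with Lemma \ref{contain} applied to the restricted pair — which is itself a BCL pair since $(V_1|_{\clh_i})(V_2|_{\clh_i})$ is the restriction of the shift $V_1 V_2$ to a reducing subspace, hence a shift — one sees that $[V_2^*|_{\clh_i}, V_1|_{\clh_i}]$ has rank at most one, with range contained in $\mathbb{C} f_i$, and that $C(V_1|_{\clh_i}, V_2|_{\clh_i})$ is a compression/restriction of $C(V_1, V_2)$, hence compact and self-adjoint with the same spectral bounds. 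Thus $(V_1|_{\clh_i}, V_2|_{\clh_i})$ is a compact normal pair, and in fact $\dim E_1(C(V_1|_{\clh_i}, V_2|_{\clh_i})) \leq \dim(E_1 \cap \clh_i)$. The rank formula (Corollary \ref{cor rank formula}) then bounds $\text{rank}\, C(V_1|_{\clh_i}, V_2|_{\clh_i}) \leq 3$, so the restricted pair is a direct sum of irreducible $n$-finite pairs with $n \in \{1,2,3\}$ by the results of the preceding sections.

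The crux is then a minimality/cyclicity argument. Suppose $\cls \subseteq \clh_i$ is a nonzero closed $(V_1|_{\clh_i}, V_2|_{\clh_i})$-reducing subspace. The decomposition of $\clh_i$ into irreducible $n$-finite summands (and a possible shift-unitary part, which here would be absent since $\clh_i$ is generated by a single $E_1$-vector and $C$ is compact) must respect $\cls$. I would argue that $f_i$ is, up to scalar, the unique unit vector in $E_1(C(V_1|_{\clh_i}, V_2|_{\clh_i}))$ — this is where the single-generator structure is used, mirroring the argument in Proposition \ref{dime1} that $\dim E_1 \leq 1$ for the cyclic subspace $\cls = \overline{\text{span}}\{V_1^m V_2^n f\}$. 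Concretely: if $g \in E_1 \cap \clh_i$ were orthogonal to $f_i$, then $\overline{\text{span}}\{V_1^m V_2^n g\}$ would be a proper reducing subspace of $\clh_i$ not containing $f_i$, contradicting $\clh_i = \overline{\text{span}}\{V_1^m V_2^n f_i\}$ once one checks $f_i \perp \overline{\text{span}}\{V_1^m V_2^n g\}$ using $g \perp f_i$, $g \in \clw_1 \cap \clw_2$, and the commutator relations (the same induction as in \eqref{impeqn}). Hence $\dim E_1(C(V_1|_{\clh_i}, V_2|_{\clh_i})) = 1$ and $E_1(C(V_1|_{\clh_i}, V_2|_{\clh_i})) = \mathbb{C} f_i$.

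Finally, since any reducing subspace $\cls$ of a compact normal pair splits off its own $E_1$-part, and here $E_1$ is one-dimensional and spanned by $f_i$, either $\cls \supseteq \mathbb{C} f_i$ or $\cls \subseteq$ the orthocomplement of the $E_1$-summand. In the first case $\cls \supseteq \overline{\text{span}}\{V_1^m V_2^n f_i\} = \clh_i$, so $\cls = \clh_i$. In the second case I would show $\cls = \{0\}$: the orthocomplement of the "$E_1$-generated part" inside $\clh_i$ is zero, because $\clh_i$ is by definition the closed span of the orbit of $f_i \in E_1$. This forces irreducibility. \textbf{The main obstacle} I anticipate is the bookkeeping showing that a reducing subspace of a general compact normal pair genuinely splits off its $E_1$-eigenspace as a direct summand compatibly — i.e. that one cannot have a reducing subspace "diagonally positioned" relative to the $E_1$-decomposition; this requires knowing that the $n$-finite summands attached to distinct $E_1$-basis vectors with, say, equal cross-commutator eigenvalues are not further entangled, which should follow from the explicit models in Theorems \ref{irrfinal}, \ref{rank2}, \ref{thm 1 finite} but needs to be invoked carefully.
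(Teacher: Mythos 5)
The operative core of your proposal is correct and is essentially the paper's own proof: one shows $E_1\big(C(V_1,V_2)|_{\clh_i}\big) = \mathbb{C} f_i$ (if $g \in E_1 \cap \clh_i$ is orthogonal to $f_i$, then $g \in \ker V_1^* \cap \ker V_2^*$ gives $g \perp V_1^m V_2^n f_i$ for all $(m,n) \neq (0,0)$, hence $g \perp \clh_i$ and so $g = 0$), and then, for a nonzero reducing subspace $\cls \subseteq \clh_i$, one uses that $\cls$ reduces the compact self-adjoint operator $C(V_1,V_2)|_{\clh_i}$, so the one-dimensional eigenspace $\mathbb{C} f_i$ lies wholly in $\cls$ or in $\clh_i \ominus \cls$; whichever side contains $f_i$ contains the whole orbit $\{V_1^m V_2^n f_i : m,n \ge 0\}$ and hence equals $\clh_i$, forcing $\cls = \clh_i$. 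This is exactly the argument in the paper.

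The scaffolding you wrap around this core, however, is both unnecessary and circular, and as written you treat part of it as load-bearing. You assert that the restricted pair "is a direct sum of irreducible $n$-finite pairs with $n\in\{1,2,3\}$ by the results of the preceding sections," and you propose to resolve your declared "main obstacle" by invoking the explicit models of Theorems \ref{irrfinal}, \ref{rank2} and \ref{thm 1 finite}. At this point in the paper no decomposition of a (possibly reducible) compact normal pair into irreducible summands is available: Sections \ref{sec properties of 3 finite}--\ref{sec 1 finite} only classify pairs already assumed irreducible, and the decomposition is precisely what Section \ref{sec class n finite} proves \emph{using} the present lemma, so this appeal is circular. (Also, the bound $\text{rank}\, C(V_1|_{\clh_i}, V_2|_{\clh_i}) \le 3$ via Corollary \ref{cor rank formula} presupposes $\dim E_1(C(V_1,V_2)|_{\clh_i}) \le 1$, which you establish only afterwards.) Finally, the "main obstacle" you flag is a non-issue for this lemma: once $\dim E_1\big(C(V_1,V_2)|_{\clh_i}\big) = 1$ is known, a reducing subspace cannot be "diagonally positioned" relative to that eigenspace, since any eigenspace of a self-adjoint operator reduced by $\cls$ splits as $(E \cap \cls) \oplus (E \cap \cls^{\perp})$ and a one-dimensional space must lie entirely in one side; no appeal to the models is needed. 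The entanglement worry about distinct $E_1$-vectors with equal eigenvalues concerns the decomposition of the ambient space $\clh$, not the irreducibility of a single $\clh_i$. Excise the circular material and your remaining two steps already constitute a complete proof, identical in substance to the paper's.
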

\begin{proof}
Fix an $i$. Note that
\[
C(V_1|_{\mathcal{H}_i}, V_2|_{\mathcal{H}_i}) = C(V_1, V_2)|_{\mathcal{H}_i}.
\]
By the definition of $\mathcal{H}_i$, we have
\[
E_1\big(C(V_1|_{\mathcal{H}_i}, V_2|_{\mathcal{H}_i})\big) = \mathbb{C} f_i.
\]
Suppose $\mathcal{K}$ is a nonzero closed subspace of $\clh_i$. Assume that $\clk$ reduces $(V_1, V_2)$. Since
\[
\dim E_1\big(C(V_1|_{\mathcal{H}_i}, V_2|_{\mathcal{H}_i})\big) = 1,
\]
an easy consequence of the spectral theorem for compact and self-adjoint operators (cf. \cite[Lemma 2.6]{DSPS}) implies that $f_i \in \mathcal{K}$, and consequently, $\mathcal{K} = \mathcal{H}_i$. This completes the proof.
\end{proof}

The following orthogonality relation will be useful in what follows.

\begin{lemma}\label{lemma inner prod 0}
$\big\langle V_2^mf_i, V_1^nf_j \big \rangle = 0$ for $i \neq j$ and $m, n \in \Z_+$.
\end{lemma}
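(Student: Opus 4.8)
\textbf{Proof proposal for Lemma \ref{lemma inner prod 0}.}

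The plan is to reduce the whole computation to one algebraic identity and then peel off powers of $V_1$ and $V_2$ one at a time. Throughout, recall that $f_i,f_j\in E_1=\clw_1\cap\clw_2$ by Lemma \ref{intersection}, so that $V_1^*f_i=V_2^*f_i=0$ (and likewise for $f_j$), that $\langle f_i,f_j\rangle=0$ since $\{f_1,\dots,f_k\}$ is orthonormal and $i\neq j$, and that $[V_2^*,V_1]f_j=\lambda_j f_j$ by the choice of the $f_j$ as eigenvectors of the normal operator $[V_2^*,V_1]|_{E_1}$ (here one also uses $\text{ran}\,[V_2^*,V_1]\subseteq E_1$ from Lemma \ref{contain}).

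First I would establish the identity
\[
V_2^*V_1^n f_j=\lambda_j V_1^{n-1}f_j \qquad (n\ge 1).
\]
For $n=1$ this is immediate from $V_2^*V_1 f_j=[V_2^*,V_1]f_j+V_1V_2^*f_j=\lambda_j f_j$. For $n\ge 2$, write $V_2^*V_1^n f_j=[V_2^*,V_1]V_1^{n-1}f_j+V_1V_2^*V_1^{n-1}f_j$; the first summand vanishes because $V_1^{n-1}f_j\in V_1^{n-1}\clw_1$ is orthogonal to $\clw_1\supseteq E_1$, so $V_1^{n-1}f_j\in E_1^{\perp}$, and $[V_2^*,V_1]|_{E_1^{\perp}}=0$ by Lemma \ref{contain}. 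Hence $V_2^*V_1^n f_j=V_1 V_2^*V_1^{n-1}f_j$ for $n\ge 2$, and iterating this down to the case $n=1$ gives the displayed identity.

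Finally I would prove the lemma by induction on $\min(m,n)$. If $\min(m,n)=0$, say $m=0$, then $\langle f_i,V_1^n f_j\rangle=\langle f_i,f_j\rangle=0$ when $n=0$, while for $n\ge 1$ it equals $\langle V_1^*f_i,V_1^{n-1}f_j\rangle=0$; the case $n=0$, $m\ge 1$ is symmetric, using $\langle V_2^m f_i,f_j\rangle=\langle V_2^{m-1}f_i,V_2^*f_j\rangle=0$. If $\min(m,n)\ge 1$, then
\[
\langle V_2^m f_i,V_1^n f_j\rangle=\langle V_2^{m-1}f_i,V_2^*V_1^n f_j\rangle=\overline{\lambda_j}\,\langle V_2^{m-1}f_i,V_1^{n-1}f_j\rangle=0
\]
by the identity above together with the induction hypothesis (note $\min(m-1,n-1)<\min(m,n)$). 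This closes the induction.

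There is no genuinely hard step here; the only point requiring a little care is the bookkeeping guaranteeing $V_1^{n-1}f_j\in E_1^{\perp}$ (which needs $n-1\ge 1$ together with $E_1\subseteq\clw_1$) so that Lemma \ref{contain} applies. It is worth noting that the hypothesis $i\neq j$ enters the argument in exactly one place, namely the base case $\langle f_i,f_j\rangle=0$.
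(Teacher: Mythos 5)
Your proposal is correct and follows essentially the same route as the paper: both proofs rest on the identity $V_2^*V_1^n f_j=\lambda_j V_1^{n-1}f_j$ (obtained from $[V_2^*,V_1]f_j=\lambda_j f_j$ and the vanishing of $[V_2^*,V_1]$ on $E_1^\perp$), the paper iterating it to compute $V_2^{*m}V_1^n f_j$ in closed form while you package the same iteration as an induction on $\min(m,n)$, peeling one factor at a time. The bookkeeping you flag ($V_1^{n-1}f_j\in E_1^\perp$ via the wandering property of $\clw_1$ and $E_1\subseteq\clw_1$) matches the paper's use of \eqref{impeqn} in Proposition \ref{dime1}, so there is nothing to fix.
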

\begin{proof}
Since $f_j \in \clw_1 \cap \clw_2$, it follows that
\[
\begin{split}
V_2^*V_1 f_j & = [V_2^*, V_1]f_j + V_1 V_2^* f_j
\\
& = \lambda_j f_j.
\end{split}
\]
Then, similar computation as in the proof of Proposition \ref{dime1} (or, see \eqref{impeqn}) yields that (note that $V_2^* V_1^n f_j  = V_1^{n-1}V_2^*V_1f_j$)
\[
V_2^* V_1^n f_j = V_1^{n-1}V_2^*V_1f_j = \lambda_j V_1^{n-1}f_j,
\]
for all $n \geq 1$. Repeated application of the above yields
\[
V_2^{*m} V_1^n f_j =
\begin{cases}
\lambda_j^m V_1^{n-m}f_j & \mbox{if } m \leq n\\
0 & \mbox{if } m > n,
\end{cases}
\]
where the final equality is due to the fact that $V_2^{*(m-n)} f_j = 0$ for $m>n$. Then the above equality implies
\[
\begin{split}
\big\langle V_2^mf_i, V_1^nf_j \big \rangle & = \big\langle f_i, V_2^{*m}V_1^nf_j \big \rangle
\\
& = \begin{cases}
\big\langle f_i, \lambda_j^m V_1^{n-m} f_j \big \rangle & \text{ if } m \leq n,\\
0 & \text{ if } m > n.
\end{cases}
\\
& = 0,
\end{split}
\]
and completes the proof of the lemma.
\end{proof}

It is now natural to expect that:

\begin{lemma}\label{ortho}	
$\mathcal{H}_i \perp \mathcal{H}_j$ for all $i \neq j$.
\end{lemma}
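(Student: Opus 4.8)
The goal is to show $\mathcal{H}_i \perp \mathcal{H}_j$ for $i \neq j$. Since $\mathcal{H}_i = \overline{\text{span}}\{V_1^m V_2^n f_i : m, n \geq 0\}$ and similarly for $\mathcal{H}_j$, it suffices to prove that
\[
\big\langle V_1^m V_2^n f_i, V_1^p V_2^q f_j \big\rangle = 0
\]
for all $m, n, p, q \in \Z_+$ whenever $i \neq j$. The strategy is to reduce this to the orthogonality relation already established in Lemma \ref{lemma inner prod 0}, namely $\langle V_2^a f_i, V_1^b f_j \rangle = 0$ for $i \neq j$, by peeling off the isometries one at a time and using the commutativity $V_1 V_2 = V_2 V_1$ together with the fact that $V_1^* V_1 = V_2^* V_2 = I$.

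The key step is to massage the inner product $\langle V_1^m V_2^n f_i, V_1^p V_2^q f_j\rangle$ into a usable form. First, using commutativity, rewrite both sides so that all $V_1$'s come before all $V_2$'s; then move one factor across the inner product. For instance, assuming without loss of generality $m \le p$ (otherwise take adjoints, which swaps the roles of $i$ and $j$ but the statement is symmetric), apply $V_1^{*m}$ to cancel against $V_1^m$: since $V_1$ is an isometry, $\langle V_1^m x, V_1^p y\rangle = \langle x, V_1^{p-m}y\rangle$. This gives $\langle V_2^n f_i, V_1^{p-m} V_2^q f_j\rangle$. Now I want to move the $V_2$-powers around. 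Using commutativity, $V_1^{p-m}V_2^q f_j = V_2^q V_1^{p-m}f_j$, and since $f_j \in \clw_1 \cap \clw_2$ we have from the computation in Lemma \ref{lemma inner prod 0} that $V_2^{*s}V_1^{p-m}f_j$ is either a scalar multiple of $V_1^{p-m-s}f_j$ (when $s \le p-m$) or zero. Pulling the $V_2^n$ across: $\langle V_2^n f_i, V_2^q V_1^{p-m}f_j\rangle = \langle f_i, V_2^{*n}V_2^q V_1^{p-m}f_j\rangle$. If $n \le q$ this is $\langle f_i, V_2^{q-n}V_1^{p-m}f_j\rangle$; if $n > q$ then $V_2^{*(n-q)}$ hits $V_2^q V_1^{p-m}f_j = V_1^{p-m}V_2^q f_j$, and since $f_j \in \clw_2$ we get $V_2^{*(n-q)}V_1^{p-m}V_2^q f_j = V_1^{p-m}V_2^{*(n-q-q)}\cdots$ — more carefully, commute $V_1^{p-m}$ out and use $V_2^{*r}V_2^q f_j = V_2^{q-r}f_j$ or $0$. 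In all cases the inner product reduces to an expression of the form (scalar)$\cdot \langle f_i, V_2^{a}V_1^{b}f_j\rangle$ or (scalar)$\cdot\langle V_2^{a'}f_i, V_1^{b'}f_j\rangle$ with $a, b \ge 0$, or else it is manifestly $0$.

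Finally, any surviving term of the form $\langle V_2^{a'} f_i, V_1^{b'} f_j\rangle$ with $i \neq j$ vanishes by Lemma \ref{lemma inner prod 0} directly; and a term $\langle f_i, V_2^a V_1^b f_j\rangle = \langle V_2^{*?}\cdots\rangle$ can be rewritten via commutativity as $\langle V_2^{b'} f_i, V_1^{a'} f_j \rangle$-type expressions after noting $\langle f_i, V_2^a V_1^b f_j\rangle = \langle f_i, V_1^b V_2^a f_j\rangle$ and then, since $f_i \in \clw_1 \cap \clw_2$, applying the same $V_2^*$/$V_1^*$-cancellation identities on the left factor to land in the scope of Lemma \ref{lemma inner prod 0}. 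The main obstacle is purely bookkeeping: there are several cases depending on the orderings of $m$ vs.\ $p$ and $n$ vs.\ $q$, and one must be careful that the repeated use of $V_i^* f_j = 0$ (for $f_j$ in the wandering subspace) is applied to the correct factor after commuting the isometries past each other. A clean way to organize this is to first prove the auxiliary identity
\[
V_2^{*s} V_1^t f_j = \begin{cases} \lambda_j^s\, V_1^{t-s} f_j & s \le t \\ 0 & s > t\end{cases}
\]
(which is exactly what appears inside the proof of Lemma \ref{lemma inner prod 0}, and holds symmetrically with the roles of $1$ and $2$ swapped since $V_2^* V_1 f_j = V_1^* V_2 f_j \cdot$(unimodular) — more precisely $V_1^* V_2 f_j = \bar\lambda_j f_j$ follows the same way), record it as a displayed formula, and then the main computation is a two-line reduction invoking it twice, once on each side of the inner product.
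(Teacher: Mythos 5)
Your proposal is correct and takes essentially the same route as the paper: reduce $\big\langle V_1^{m}V_2^{n}f_i, V_1^{p}V_2^{q}f_j\big\rangle$ to zero by canceling isometric factors across the inner product, handling the cases where the exponent differences have the same sign via $f_i, f_j \in \clw_1\cap\clw_2$ (plus orthonormality of the $f_i$'s), and invoking Lemma \ref{lemma inner prod 0} (equivalently the identity $V_2^{*s}V_1^{t}f_j = \lambda_j^{s}V_1^{t-s}f_j$ or $0$ from its proof) for the mixed cases. The bookkeeping in your middle paragraph is momentarily muddled, but the clean organization you settle on at the end is exactly the paper's argument, so there is no gap.
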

\begin{proof}
Suppose $i \neq j$. It is enough to show that
\[
\{V_1^m V_2^nf_i : m, n \geq 0\} \perp \{V_1^m V_2^n f_j : m, n \geq 0\}.
\]
Let $m_1, n_1, m_2, n_2 \geq 0$. Since $f_i, f_j \in \mathcal{W}_1 \cap \mathcal{W}_2$, it follows that
\[
\big\langle V_1^{m_1}V_2^{n_1}f_i, V_1^{m_2}V_2^{n_2}f_j \big\rangle = 0,
\]
whenever $m_1 \geq m_2, n_1 \geq n_2$ or $m_1 \leq m_2, n_1 \leq n_2$. If $m_1 < m_2$ and $n_1 > n_2$, then Lemma \ref{lemma inner prod 0} implies
\[
\begin{split}
\big\langle V_1^{m_1}V_2^{n_1}f_i, V_1^{m_2}V_2^{n_2}f_j \big\rangle & = \big\langle V_2^{n_1-n_2}f_i, V_1^{m_2-m_1}f_j \big\rangle
\\
& = 0.
\end{split}
\]
The remaining case when $m_1 > m_2$ and $n_1 < n_2$ is treated in a similar manner and is left to the reader.
\end{proof}

The nonzero part of the defect operator is contained in the direct sum of $\clh_i$'s. More specifically:

\begin{lemma}\label{kernel}
$\big(\ker C(V_1, V_2)\big)^\perp \subseteq \oplus_{i = 1}^k \mathcal{H}_i.$
\end{lemma}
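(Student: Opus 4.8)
The plan is to show that $\big(\ker C(V_1,V_2)\big)^\perp$, which we know equals
$E_1 \oplus \big(\bigoplus_{\lambda \in \Lambda} E_\lambda \oplus E_{-\lambda}\big)$ by \eqref{eq-C perp E1} (where $\Lambda = \sigma(C(V_1,V_2))\cap(0,1)$), is contained in $\bigoplus_{i=1}^k \mathcal{H}_i$. Since $E_1 = \mathbb{C}f_1 \oplus \cdots$ is spanned by the $f_i$'s and each $f_i \in \mathcal{H}_i$, the summand $E_1$ is clearly inside $\bigoplus_i \mathcal{H}_i$. So the work is to handle the eigenspaces $E_{\pm\lambda}$ for $\lambda \in \Lambda$. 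First I would recall from \eqref{formula} that $C(V_1,V_2) = P_{\clw_1} - P_{V_2\clw_1}$, so $\big(\ker C(V_1,V_2)\big)^\perp \subseteq \clw$; hence it suffices to prove that every eigenvector of $C(V_1,V_2)|_\clw$ with eigenvalue $\mu \in \sigma(C(V_1,V_2))\setminus\{0,1\}$ lies in $\bigoplus_i \mathcal{H}_i$.

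The key structural input is the explicit action of the BCL unitary $U$ on $\clw$, exactly as developed for the irreducible $3$-finite case in Section \ref{sec properties of 3 finite}, but now carried out simultaneously over all the eigenspaces. Concretely, fix $\lambda \in \Lambda$ and write $E^\lambda = E_\lambda \oplus E_{-\lambda}$; following the computation leading to \eqref{eqn5}, \eqref{eqn6} and Corollary \ref{fi}, choose for each such $\lambda$ an orthonormal basis of $E_\lambda$ and the corresponding vectors $f_1^{(\lambda,\cdot)}, f_2^{(\lambda,\cdot)}, f_3^{(\lambda,\cdot)}, f_4^{(\lambda,\cdot)}$ in $E^\lambda$. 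The point is that $P_{\clw_1}f_3 = 0$ while $f_1 \in \clw_1$, so $f_1 \perp f_3$ forces (via $U$ unitary and the computation in Lemma \ref{Uaction}(a),(b)) that $U f_3 \in E_1$ and $U f \in E^\lambda$ whenever $f \in E_1$ with $[V_2^*,V_1]f \neq 0$. The crucial observation is that the image $U f$ of $f_i \in E_1$ involves only the $f_2$-vector of the \emph{single} eigenspace $E^{\lambda_i}$ attached to the eigenvalue $\lambda_i = $ (the absolute value of) the cross-commutator eigenvalue of $f_i$; this is forced by the orthogonality relations of Lemma \ref{lemma inner prod 0} (or directly: $V_2^*V_1 f_i = \lambda_i f_i$, and the $f_2$-vector is the unique vector in $E^{\lambda_i}$ with nonzero $\clw_1^\perp$-component on which $V_1^*$ acts nontrivially). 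So I would argue that, for each $\lambda$, the two-dimensional (per basis vector) space $E^\lambda$ is spanned by $f_i$-translates under $U$: precisely, $f$, $U^m f_1$, $U^{*m} f_4$ for $m \geq 0$ span a $U$- and $P_{\clw_1}$-reducing subspace of $\clw$ (this is the content of Lemma \ref{Etilde} applied in this more general setting), and since $f, f_1, f_3$ all lie in $\clh_i$ for the appropriate $i$ (being built from $f_i$ and its $V_1, V_2$-translates), the whole reducing subspace lies in $\clh_i$.

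The main obstacle I anticipate is bookkeeping: matching each eigenvector $e_\lambda \in E_\lambda$ (and its partner $e_{-\lambda} \in E_{-\lambda}$) to the correct index $i$, i.e. showing that the decomposition $E^\lambda = \bigoplus$ (over the $f_i$ with $|\lambda_i| = \lambda$) of two-dimensional pieces is compatible with the $\clh_i$'s. The clean way to do this is to avoid choosing bases of $E_\lambda$ up front and instead argue backwards from $E_1$: for each $i$, set $\mathcal{W}^{(i)} = \overline{\text{span}}\{f_i, U^m f_1^{(i)}, U^{*m} f_4^{(i)} : m \geq 0\}$ where $f_1^{(i)}, f_4^{(i)}$ are the vectors in $E^{|\lambda_i|}$ naturally associated to $f_i$ via $U f_i = c\, f_2^{(i)}$ and the linear relations \eqref{eqn f1 part I}--\eqref{eqn f1 part II} — note $f_2^{(i)}$ is determined by $f_i$, and then $f_1^{(i)}, f_3^{(i)}, f_4^{(i)}$ are determined by $f_2^{(i)}$ through those relations. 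Then Lemma \ref{Etilde}'s argument shows $\mathcal{W}^{(i)}$ reduces $(U, P_{\clw_1})$, and as in Lemma \ref{Hi} one checks $\mathcal{W}^{(i)} \subseteq \clh_i$ (every generator is a polynomial in $V_1, V_2$ applied to $f_i$, modulo the identifications $U = V_2$ on $\clw_1$ and $U^* = V_1$ on $\clw_2$). Finally, summing over $i$: the spans $\{f_1^{(i)}\}$ and $\{f_4^{(i)}\}$ together exhaust $E_\lambda \oplus E_{-\lambda}$ for each $\lambda$ because, by \eqref{eqn f1 part I}, $f_1^{(i)} \pm f_4^{(i)}$ is a nonzero scalar multiple of the corresponding $e_{\pm\lambda_i}$ eigenvector, and the $f_i$ with $|\lambda_i| = \lambda$ are in bijection with an orthonormal basis of $E_\lambda$ (this bijection is exactly the content of the rank-one-per-eigenspace structure: $[V_2^*,V_1]$ restricted to $E_1$ is normal and its eigenvalue-$\lambda$-in-modulus eigenvectors correspond to the $\lambda$-eigenvectors of $C(V_1,V_2)$ via the $U$-action, cf. \eqref{HiNew6} and Remark \ref{rem rank form}). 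Hence $\bigoplus_i \mathcal{W}^{(i)} \supseteq E_1 \oplus \bigoplus_\lambda (E_\lambda \oplus E_{-\lambda}) = \big(\ker C(V_1,V_2)\big)^\perp$, and since $\mathcal{W}^{(i)} \subseteq \clh_i$ the lemma follows.
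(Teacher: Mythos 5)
There is a genuine gap, and it sits exactly at the step you yourself flag as the ``main obstacle.'' Your argument constructs, for each $f_i$ with $\lambda_i\neq 0$, vectors $f_1^{(i)},\dots,f_4^{(i)}$ inside $E_{|\lambda_i|}\oplus E_{-|\lambda_i|}$ and then claims that these exhaust $E_\lambda\oplus E_{-\lambda}$ because ``the $f_i$ with $|\lambda_i|=\lambda$ are in bijection with an orthonormal basis of $E_\lambda$.'' That bijection is precisely the nontrivial content of Lemma \ref{kernel}: a priori $E_\lambda$ could contain vectors orthogonal to every $\clh_i$, and nothing you cite rules this out --- \eqref{HiNew6} and Remark \ref{rem rank form} only describe the unitary $u:\clk_+\to\clk_-$ intertwining the positive and negative generic parts, and Lemma \ref{lemma inner prod 0} only gives orthogonality of translates of $f_i$ and $f_j$ for $i\neq j$; neither ties the multiplicity of $\lambda$ in $\sigma(C(V_1,V_2))$ to the number of cross-commutator eigenvalues of modulus $\lambda$. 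Moreover, the structural facts you import from Section \ref{sec properties of 3 finite} (Lemma \ref{Uaction}, Lemma \ref{Etilde}, Corollary \ref{fi}, and the block forms \eqref{eqn5}--\eqref{eqn7}) were derived under the standing hypotheses of an \emph{irreducible $3$-finite} pair, where $\dim E_1=\dim E_{\pm\lambda}=1$ and $E_{-1}=\{0\}$; in the general compact normal setting these eigenspaces may have arbitrary (even infinite) multiplicity, so the assertion $Uf_i=c\,f_2^{(i)}$ with $f_2^{(i)}$ ``the unique vector in $E^{\lambda_i}$ with nonzero $\clw_1^\perp$-component'' is not even well defined when $\dim E_{\lambda_i}>1$ and would need its own proof. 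Finally, your quoted decomposition of $\big(\ker C(V_1,V_2)\big)^\perp$ omits the summand $E_{-1}$ appearing in \eqref{eq-C perp E1}; $E_{-1}$ is nonzero exactly when $2$-finite blocks occur, and your scheme, built on the $3$-finite constructions, never shows $E_{-1}\subseteq\bigoplus_i\clh_i$.

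The paper avoids all of this bookkeeping by arguing on the complement: set $\clh_0=\clh\ominus\big(\bigoplus_{i=1}^k\clh_i\big)$, which reduces $(V_1,V_2)$. Since $E_1\subseteq\bigoplus_i\clh_i$ and $[V_2^*,V_1]$ vanishes on $E_1^\perp$ (Lemma \ref{contain}), the restricted pair has zero cross-commutator and $\dim E_1\big(C(V_1|_{\clh_0},V_2|_{\clh_0})\big)=0$; then Corollary \ref{HiNew10} gives finite rank, and the rank formula of Corollary \ref{cor rank formula} yields $\text{rank}\,C(V_1|_{\clh_0},V_2|_{\clh_0})=-\dim E_{-1}\big(C(V_1|_{\clh_0},V_2|_{\clh_0})\big)\le 0$, forcing $C(V_1|_{\clh_0},V_2|_{\clh_0})=0$, which is the lemma. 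If you wish to salvage a direct approach, note that $C(V_1,V_2)V_1f_i=-\lambda_i V_2f_i$ and $C(V_1,V_2)V_2f_i=-\bar{\lambda}_iV_1f_i$, which does place eigenvectors of the defect operator inside each $\clh_i$; but the exhaustion step still requires a counting argument, and the rank formula is the tool the paper uses to supply it.
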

\begin{proof}
Let
\[
\clh_0 = \mathcal{H} \ominus \big(\oplus_{i = 1}^k \mathcal{H}_i\big).
\]
Then $\clh_0$ reduces $(V_1, V_2)$, and consequently, $(V_1|_{\clh_0}, V_2|_{\clh_0})$ is a BCL pair on $\clh_0$. Clearly
\[
C(V_1|_{\clh_0}, V_2|_{\clh_0}) = C(V_1, V_2)|_{\clh_0},
\]
and
\[
[(V_2|_{\clh_0})^*, V_1|_{\clh_0}] = [V_2^*, V_1]|_{\clh_0}.
\]
By the definition of $\mathcal{H}_i$'s, we have
\[
E_1 \subset \oplus_{i = 1}^k \mathcal{H}_i.
\]
Since $[V_2^*, V_1] = 0$ on $E_1^\perp$, we conclude that
\[
[(V_2|_{\clh_0})^*, V_1|_{\clh_0}] = 0,
\]
and
\[
\dim E_1(C(V_1|_{\clh_0}, V_2|_{\clh_0})) = 0.
\]
Therefore, it follows from Corollary \ref{HiNew10} that $C(V_1|_{\clh_0}, V_2|_{\clh_0})$ has finite rank, and consequently, Corollary \ref{cor rank formula} implies
\[
\begin{split}
\text{rank }C(V_1|_{\clh_0}, V_2|_{\clh_0}) &= 2\text{rank } [(V_2|_{\clh_0})^*, V_1|_{\clh_0}] + \dim E_1(C(V_1|_{\clh_0}, V_2|_{\clh_0}))
\\
& \qquad - \dim E_{-1}(C(V_1|_{\clh_0}, V_2|_{\clh_0}))
\\
&=- \dim E_{-1}(C(V_1|_{\clh_0}, V_2|_{\clh_0})).
\end{split}
\]
Therefore,
\[
C(V_1|_{\clh_0}, V_2|_{\clh_0}) = 0,
\]
which completes the proof.
\end{proof}

Before we proceed to the final lemma of this section, we fix some notations. Set
\begin{equation}\label{eqn def of H0}
\clh_0 := \mathcal{H} \ominus (\bigoplus_{i = 1}^k \mathcal{H}_i),
\end{equation}
and for each $i=0, \ldots, k$, define
\[
(V_{1,i}, V_{2,i}):= (V_1|_{\clh_i}, V_2|_{\clh_i}).
\]
Clearly, $(V_{1,i}, V_{2,i})$ is a BCL pair on $\clh_i$, $i=0, 1, \ldots, k$. We have the following (see Definition \ref{def doubly comm} for the notion of shift-unitary pairs):

\begin{lemma}\label{slocinski}
$(V_{1,0}, V_{2,0})$ is a shift-unitary pair.
\end{lemma}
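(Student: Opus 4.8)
The plan is to show that the BCL pair $(V_{1,0}, V_{2,0})$ on $\clh_0$ (as defined in \eqref{eqn def of H0}) is doubly commuting and that its Wold decomposition has no $\clh_{uu}$, $\clh_{su}$, or $\clh_{ss}$ summand. First I would record that $\clh_0$ reduces $(V_1, V_2)$ by Lemma \ref{ortho} (each $\clh_i$ reduces $(V_1, V_2)$ by Lemma \ref{Hi}, hence so does their orthogonal sum and its complement). In the course of proving Lemma \ref{kernel} it was already established that
\[
[(V_2|_{\clh_0})^*, V_1|_{\clh_0}] = [V_2^*, V_1]|_{\clh_0} = 0,
\]
and that $C(V_1|_{\clh_0}, V_2|_{\clh_0}) = C(V_1, V_2)|_{\clh_0} = 0$ with $\dim E_1(C(V_{1,0}, V_{2,0})) = 0$ and $\dim E_{-1}(C(V_{1,0}, V_{2,0})) = 0$. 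So $(V_{1,0}, V_{2,0})$ is a doubly commuting BCL pair whose defect operator vanishes. By S\l oci\'{n}ski's Wold decomposition for doubly commuting pairs \eqref{eqn Wold DC}, we can write
\[
\clh_0 = \clh_{uu} \oplus \clh_{us} \oplus \clh_{su} \oplus \clh_{ss}
\]
into $(V_1, V_2)$-reducing pieces.

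Next I would eliminate the unwanted summands one by one. The $\clh_{uu}$ piece must be zero because $(V_{1,0}, V_{2,0})$ is a BCL pair: $V_1 V_2$ is a shift on $\clh_0$, but $V_1 V_2$ restricted to $\clh_{uu}$ is a product of commuting unitaries, hence unitary, forcing $\clh_{uu} = \{0\}$. For $\clh_{ss}$: on this summand both $V_1|_{\clh_{ss}}$ and $V_2|_{\clh_{ss}}$ are shifts, and by the S\l oci\'{n}ski structure of the shift part (see \cite[Theorem 3.1]{JS}, as used in the proof of Theorem \ref{thm 1 finite})
\[
\clh_{ss} = \bigoplus_{m,n \geq 0} V_1^m V_2^n \Big(\ker(V_1|_{\clh_{ss}})^* \cap \ker(V_2|_{\clh_{ss}})^*\Big).
\]
But $\ker(V_1|_{\clh_0})^* \cap \ker(V_2|_{\clh_0})^* = E_1(C(V_{1,0}, V_{2,0})) = \{0\}$ by Lemma \ref{intersection} applied to the BCL pair $(V_{1,0}, V_{2,0})$, so the wandering space of the doubly commuting shift part on $\clh_{ss}$ is trivial, whence $\clh_{ss} = \{0\}$. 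This leaves $\clh_0 = \clh_{us} \oplus \clh_{su}$, which is exactly the definition of a shift-unitary pair (Definition \ref{def doubly comm}).

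\textbf{Main obstacle.} The step I expect to require the most care is the identification $\ker(V_1|_{\clh_0})^* \cap \ker(V_2|_{\clh_0})^* = E_1(C(V_{1,0}, V_{2,0}))$ and the verification that this equals $\{0\}$ — one must be careful that the kernels are taken relative to the restricted operators on $\clh_0$ rather than on $\clh$, but since $\clh_0$ reduces both $V_1$ and $V_2$ this is routine: $\ker(V_i|_{\clh_0})^* = (\ker V_i^*) \cap \clh_0 = \clw_i \cap \clh_0$, and Lemma \ref{intersection} gives $\clw_1 \cap \clw_2 \cap \clh_0 = E_1 \cap \clh_0$, which is $\{0\}$ because $E_1 \subseteq \bigoplus_{i=1}^k \clh_i = \clh_0^\perp$ by construction of the $\clh_i$. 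Everything else is bookkeeping with the Wold decompositions already invoked elsewhere in the paper.
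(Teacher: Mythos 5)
Your proposal is correct and follows essentially the same route as the paper: apply S\l oci\'{n}ski's Wold decomposition to the doubly commuting BCL pair $(V_{1,0},V_{2,0})$, kill $\clh_{uu}$ using that $V_1V_2$ is a shift, and kill $\clh_{ss}$ via the wandering-subspace description from \cite[Theorem 3.1]{JS} together with $\ker V_{1,0}^*\cap\ker V_{2,0}^* = E_1(C(V_{1,0},V_{2,0})) = \{0\}$ from Lemma \ref{intersection}. The only cosmetic difference is that you get double commutativity directly from the vanishing of $[V_2^*,V_1]|_{\clh_0}$ (established in the proof of Lemma \ref{kernel}), whereas the paper deduces it from $C(V_{1,0},V_{2,0})=0$ via \cite[Theorem 6.5]{MSS}; both are fine.
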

\begin{proof}
By Lemma \ref{kernel},
\[
C(V_{1,0}, V_{2,0}) = 0,
\]
which implies that $(V_{1,0}, V_{2,0})$ is a doubly commuting BCL pair on $\mathcal{H}_0$ \cite[Theorem 6.5]{MSS}. By \eqref{eqn Wold DC}, the Wold decomposition for doubly commuting pairs, there is a unique orthogonal decomposition of $\clh_0$ into $(V_{1,0}, V_{2,0})$-reducing subspaces
\[
\clh_0 = \clh_{uu} \oplus \clh_{us} \oplus \clh_{su} \oplus \clh_{ss},
\]
where $V_{1,0}$ on $\clh_{ij}$ is a shift (respectively, unitary) if $i = s$ (respectively, $i = u$) and $V_{2,0}$ on $\clh_{ij}$ is a shift (respectively, unitary) if $j = s$ (respectively, $j = u$). As $(V_{1,0}, V_{2,0})$ is a BCL pair, we must have that
\[
\clh_{uu} = \{0\}.
\]
By the construction of $\clh_{ss}$ \cite[Theorem 3.1]{JS}, it follows that
\[
\clh_{ss} = \bigoplus_{m, n \geq 0} V_{1,0}^m V_{2,0}^n \big(\ker V_{1,0}^* \cap \ker V_{2, 0}^*\big).
\]
On the other hand, Lemma \ref{intersection} implies
\[
E_1(C(V_{1, 0}, V_{2, 0})) = \ker V_{1,0}^* \cap \ker V_{2, 0}^*.
\]
Then
\[
\clh_{ss} = \bigoplus_{m, n \geq 0} V_{1,0}^m V_{2,0}^n E_1(C(V_{1, 0}, V_{2, 0})).
\]
Observe that
\[
E_1(C(V_{1, 0}, V_{2, 0})) = E_1(C(V_1, V_2)|_{\clh_0}) = \{0\},
\]
and hence
\[
\clh_{ss} = \{0\}.
\]
Thus
\[
\clh_{uu} = \clh_{ss} = \{0\}.
\]
Hence, $\clh_0 = \clh_{us} \oplus \clh_{su}$ and the result follows.
\end{proof}

Let us establish one terminology for the purpose of future reference.
 

\begin{definition}
Let $(V_1, V_2)$ on $\clh$ be a compact normal pair. Let $\clh_0$ be as in \eqref{eqn def of H0}. The shift-unitary part of $(V_1, V_2)$ is the pair $(V_{1, 0}, V_{2, 0})$ defined by
\[
(V_{1, 0}, V_{2, 0}) = (V_1|_{\clh_0}, V_2|_{\clh_0}).
\]
\end{definition}

 
 
 
\begin{remark}\label{e1zero}
Let $(V_1, V_2)$ on $\clh$ be a compact normal pair. Assume that $E_1 = \{0\}$. By Lemma \ref{contain}, we know
\[
[V_2^*, V_1] = 0,
\]
and so, by Corollary \ref{HiNew10}, $C(V_1,V_2)$ has finite rank, and consequently, it follows from Corollary \ref{cor rank formula} that
\[
C(V_1, V_2) = 0.
\]
Therefore, in this case, $\clh = \clh_0$ and consequently
\[
(V_1, V_2) = (V_{1, 0}, V_{2, 0}).
\]
An appeal to Lemma \ref{slocinski} immediately yields that $(V_1, V_2)$ on $\clh$ is a shift-unitary pair.
\end{remark}

The following theorem highlights all of the results achieved so far in this section:

\begin{theorem}\label{summ_1}
Let $(V_1, V_2)$ be a compact normal pair on $\clh$. Define
\[
k:= \text{dim} E_1(C(V_1, V_2)) \in [0, \infty].
\]
Then the following holds:
\begin{enumerate}
\item There exist $k+1$ closed $(V_1, V_2)$-reducing subspaces $\{\clh_j\}_{j=0}^k$ such that
\[
\clh = \bigoplus_{j=0}^k \clh_j,
\]
where
\[
\clh_j:= \overline{\text{span}}\{V_1^m V_2^n f_j: m, n \geq 0\} \qquad (j=1,\ldots, k),
\]
and
\[
\clh_0 = \clh \ominus (\bigoplus_{j=1}^k \clh_j),
\]
and $\{f_i\}_{i=1}^k$ is an orthonormal basis of $E_1(C(V_1, V_2))$ consisting of eigenvectors of the cross-commutator $[V_2^*, V_1]$.
\item $(V_{1,i}, V_{2,i})$ on $\clh_i$ is irreducible, where
\[
(V_{1,i}, V_{2,i}):= (V_1|_{\clh_i}, V_2|_{\clh_i}),
\]
for all $i=1, \ldots, k$.
\item $(V_{1, 0}, V_{2, 0})$ on $\clh_0$ is a shift-unitary pair, where
\[
(V_{1, 0}, V_{2, 0}) = (V_1|_{\clh_0}, V_2|_{\clh_0}).
\]
\end{enumerate}
\end{theorem}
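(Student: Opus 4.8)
The plan is to assemble Theorem~\ref{summ_1} as a bookkeeping summary of the five lemmas just proved, treating the case $k\ge 1$ and the case $k=0$ separately. First I would dispose of the degenerate case: if $k=\dim E_1(C(V_1,V_2))=0$, then Remark~\ref{e1zero} already gives $\clh=\clh_0$ and that $(V_1,V_2)=(V_{1,0},V_{2,0})$ is a shift-unitary pair, so all three assertions hold vacuously for the index set $\{0\}$. For the rest of the proof I assume $k\ge 1$ and fix, as in the discussion preceding Lemma~\ref{Hi}, an orthonormal basis $\{f_i\}_{i=1}^k$ of $E_1(C(V_1,V_2))$ consisting of eigenvectors of the normal operator $[V_2^*,V_1]|_{E_1}$, together with the subspaces $\clh_i=\overline{\text{span}}\{V_1^mV_2^nf_i: m,n\ge 0\}$ and $\clh_0=\clh\ominus(\bigoplus_{i=1}^k\clh_i)$.

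Next I would verify assertion (1). That each $\clh_i$ ($i\ge1$) reduces $(V_1,V_2)$ is exactly Lemma~\ref{Hi}; that the $\clh_i$ are pairwise orthogonal is Lemma~\ref{ortho}; and $\clh_0$ is the orthogonal complement of their sum by construction, hence also $(V_1,V_2)$-reducing (the orthocomplement of a reducing subspace is reducing). This gives the orthogonal decomposition $\clh=\bigoplus_{j=0}^k\clh_j$. Assertion (2), the irreducibility of $(V_{1,i},V_{2,i})$ for $i=1,\dots,k$, is precisely Lemma~\ref{irreducuble}. Assertion (3), that the shift-unitary part $(V_{1,0},V_{2,0})$ is a shift-unitary pair, is Lemma~\ref{slocinski}; here one should note that Lemma~\ref{slocinski} is stated for the $\clh_0$ of \eqref{eqn def of H0}, which coincides with the $\clh_0$ used here, so it applies directly.

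Since every ingredient is already in place, there is essentially no obstacle — the only thing to be careful about is consistency of notation: one must check that the $\clh_0$ appearing in Lemma~\ref{kernel}, Lemma~\ref{slocinski}, and \eqref{eqn def of H0} is the same subspace used in the statement, and that the hypotheses of Corollary~\ref{HiNew10} and Corollary~\ref{cor rank formula} invoked inside those lemmas are met (they are, because $C(V_1,V_2)$ is compact and $[V_2^*,V_1]$ vanishes on $E_1^\perp$ by Lemma~\ref{contain}). So the proof is simply a citation chain, which I would write as: ``Assertion (1) follows from Lemmas~\ref{Hi} and~\ref{ortho}; assertion (2) is Lemma~\ref{irreducuble}; assertion (3) is Lemma~\ref{slocinski}. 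The case $k=0$ is Remark~\ref{e1zero}.''

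\begin{proof}
If $k=0$, that is $E_1(C(V_1,V_2))=\{0\}$, then by Remark~\ref{e1zero} we have $\clh=\clh_0$ and $(V_1,V_2)=(V_{1,0},V_{2,0})$ is a shift-unitary pair; the index set is $\{0\}$, there are no subspaces $\clh_i$ with $i\ge 1$, and assertions (1), (2) hold trivially while (3) is the content of Remark~\ref{e1zero}.

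Now assume $k\ge 1$. Choose an orthonormal basis $\{f_i\}_{i=1}^k$ of $E_1(C(V_1,V_2))$ consisting of eigenvectors of $[V_2^*,V_1]$ (regarding $[V_2^*,V_1]|_{E_1}$ as a normal operator on $E_1$, which is legitimate by Lemma~\ref{contain}), and define $\clh_i=\overline{\text{span}}\{V_1^mV_2^nf_i: m,n\ge 0\}$ for $i=1,\dots,k$ and $\clh_0$ as in \eqref{eqn def of H0}. By Lemma~\ref{Hi}, each $\clh_i$ ($i=1,\dots,k$) reduces $(V_1,V_2)$, and by Lemma~\ref{ortho} the subspaces $\clh_1,\dots,\clh_k$ are pairwise orthogonal. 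Since $\clh_0$ is the orthogonal complement of the reducing subspace $\bigoplus_{i=1}^k\clh_i$, it too reduces $(V_1,V_2)$, and we obtain the orthogonal decomposition
\[
\clh=\bigoplus_{j=0}^k\clh_j,
\]
which proves (1). Assertion (2), namely that $(V_{1,i},V_{2,i})=(V_1|_{\clh_i},V_2|_{\clh_i})$ is irreducible for all $i=1,\dots,k$, is exactly Lemma~\ref{irreducuble}. Finally, the subspace $\clh_0$ defined here coincides with the one in \eqref{eqn def of H0}, so Lemma~\ref{slocinski} applies and shows that $(V_{1,0},V_{2,0})=(V_1|_{\clh_0},V_2|_{\clh_0})$ is a shift-unitary pair, which is assertion (3). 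This completes the proof.
\end{proof}
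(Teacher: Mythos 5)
Your proof is correct and follows essentially the same route as the paper, which presents Theorem \ref{summ_1} precisely as a summary of Lemmas \ref{Hi}, \ref{ortho}, \ref{irreducuble}, \ref{slocinski} and Remark \ref{e1zero}. The citation chain you give, including the observation that $\clh_0$ reduces $(V_1,V_2)$ as the orthocomplement of a reducing subspace and that the $k=0$ case is Remark \ref{e1zero}, matches the paper's argument.
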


We continue with the assumptions and conclusion of the previous theorem. Our aim is to analyze the structure of the irreducible pair $(V_{1,i}, V_{2,i})$ on $\mathcal{H}_i$ for $i = 1, 2, \cdots, k$. Note that the structure of $(V_{1,0}, V_{2,0})$ is clear from part (3) of the preceding theorem.

We fix an $i \in \{1, \cdots, k\}$, and set
\[
E_{1,i} := E_1 \big(C(V_{1,i}, V_{2,i})\big).
\]
It is clear from the definition of the space $\mathcal{H}_i$ and Lemma \ref{intersection} that
\begin{equation}\label{ranke1}
E_{1,i}=  \ker V_{1,i}^* \cap \ker V_{2, i}^* = \mathbb{C} f_i.
\end{equation}
Then
\[
[V_{2, i}^*, V_{1,i}]|_{\clh_i \ominus E_{1,i}} = 0.
\]
Moreover, $[V_{2, i}^*, V_{1,i}] f_i = \lambda_i f_i$ implies that
\[
\text{rank}[V_{2, i}^*, V_{1,i}] =
\begin{cases}
0 & \mbox{if } \lambda_i = 0
\\
 1 & \mbox{if }  \lambda_i \neq 0.
\end{cases}
\]
We first consider the case when $\lambda_i = 0$. In this case, $[V_{2,i}^*, V_{1, i}] = 0$ and hence, $(V_{1,i}, V_{2,i})$ on $\mathcal{H}_i$ is doubly commuting. By \cite[Theorem 6.5]{MSS}, $C(V_{1,i}, V_{2,i}) \geq 0$ and therefore
\[
\dim E_{-1} \big(C(V_{1,i}, V_{2,i})\big) = 0.
\]
Consequently, by the second part of Theorem \ref{index}, we have
\[
\text{rank}C(V_{1,i}, V_{2,i}) = \dim E_{1,i} = 1.
\]
Then
\[
\ker \big(C(V_{1,i}, V_{2,i})\big)^\perp = E_{1,i},
\]
and hence
\[
\mathcal{W}_i := \ker (V_{1,i} V_{2,i})^* = E_{1,i} \oplus (\mathcal{W}_i \ominus E_{1,i}).
\]
With respect to this decomposition of $\mathcal{W}_i$, we write
\[
C(V_{1,i}, V_{2,i})|_{\mathcal{W}_i} = \begin{bmatrix}
	I_{E_{1,i}} &    \\
	& 0
\end{bmatrix}.
\]
As $(V_{1,i}, V_{2,i})$ is an irreducible doubly commuting BCL pair on $\mathcal{H}_i$ with nonzero defect operator, it follows directly from the construction of Wold decomposition for doubly commuting pairs \cite[Theorem 3.1]{JS} that
\[
\mathcal{H}_i = \bigoplus_{m, n \geq 0} V_{1,i}^m V_{2,i}^n E_{1,i},
\]
and $V_{1,i}$ and $V_{2,i}$ are unilateral shifts. More specifically
\[
(V_{1,i}, V_{2,i}) \cong (M_z, M_w) \text{ on } H^2(\mathbb{D}^2).
\]
Now we consider the case when $\lambda_i \neq 0$. Since $\text{rank}[V_{2, i}^*, V_{1,i}] = 1$, an appeal to Corollary \ref{HiNew10} yields that
\[
\text{rank} C(V_{1,i}, V_{2,i})< \infty,
\]
and consequently, Corollary \ref{cor rank formula} implies
\begin{equation}\label{rankeqn}
\text{rank}C(V_{1,i}, V_{2,i}) = 3 - \dim E_{-1, i},
\end{equation}
where
\[
E_{-1, i}: = E_{-1}(C(V_{1,i}, V_{2,i})).
\]
Clearly
\[
\dim E_{-1, i} \leq 3.
\]
If $\dim E_{-1, i} = 3$, then \eqref{rankeqn} implies $C(V_{1,i}, V_{2,i}) = 0$, a contradiction. Therefore,
\[
\dim E_{-1, i} \in \{0,1,2\}.
\]
We now consider three separate cases:
\begin{enumerate}
\item[(i)] Suppose $\dim E_{-1, i} = 2$. Since $\dim E_{1,i} = 1$ (see \eqref{ranke1}), it follows that
\[
\text{rank} C(V_{1,i}, V_{2,i}) \geq 3.
\]
On the other hand, it follows from \eqref{rankeqn} that, in this case
\[
\text{rank}C(V_{1,i}, V_{2,i}) = 1,
\]
a contradiction. Therefore
\[
\dim E_{-1, i} \neq 2.
\]
\item[(ii)] Suppose $\dim E_{-1, i} = 1$. We know, by \eqref{rankeqn}, that
\[
\text{rank}C(V_{1,i}, V_{2,i}) = 2.
\]
Thus, $(V_{1,i}, V_{2,i})$ is an irreducible $2$-finite pair. This class of pairs was classified earlier in Theorem \ref{rank2}, leading us to conclude the existence of a unimodular constant $\alpha$ such that
\[
\sigma([V_{2,i}^*, V_{1, i}]) \setminus \{0\} = \{\alpha\}.
\]
\item[(iii)] Finally, if $\dim E_{-1, i} = 0$, then \eqref{rankeqn} again implies that
\[
\text{rank}C(V_{1,i}, V_{2,i}) = 3.
\]
Therefore, in this case, $(V_{1,i}, V_{2,i})$ is an irreducible $3$-finite pair on $\clh_i$, which was classified in Theorem \ref{irrfinal}. Consequently, there exist $\lambda \in (0, 1)$ and unimodular constant $\gamma$ such that
\[
\sigma(C(V_{1,i}, V_{2,i})) \cap (0, 1) = \{\lambda\},
\]
and
\[
\sigma([V_{2,i}^*, V_{1, i}]) \setminus \{0\} = \{\gamma \lambda\}.
\]
\end{enumerate}

Summarizing the foregoing discussion, we have:

\begin{proposition}\label{summ_2}
In the setting of Theorem \ref{summ_1}, fix $i \in \{1, \ldots, k\}$. Then
\[
\text{rank}[V_{2, i}^*, V_{1,i}] \in \{0,1\},
\]
and we have the following:

\begin{enumerate}
\item If $\text{rank}[V_{2, i}^*, V_{1,i}] = 0$, then
\[
(V_{1,i}, V_{2,i}) \cong (M_z, M_w) \text{ on } H^2(\D^2).
\]
\item If $\text{rank}[V_{2, i}^*, V_{1,i}] = 1$, then
\[
\text{rank} C(V_{1,i}, V_{2,i}) \in \{2,3\}.
\]
\begin{itemize}
\item[(a)] If $\text{rank} C(V_{1,i}, V_{2,i}) = 2$, then $(V_{1,i}, V_{2,i})$ is an irreducible $2$-finite pair, and
\[
\sigma(C(V_{1,i}, V_{2,i})) \setminus \{0\} = \{\pm 1\}, \text{ and } \sigma([V_{2,i}^*, V_{1, i}]) \setminus \{0\} = \{\alpha\}
\]
for some unimodular constant $\alpha$.
	
\item[(b)] If $\text{rank} C(V_{1,i}, V_{2,i}) = 3$, then $(V_{1,i}, V_{2,i})$ is an irreducible $3$-finite pair, and there exist $\lambda \in (0, 1)$, and a unimodular constant $\gamma$ such that
\[
\sigma(C(V_{1,i}, V_{2,i})) \cap (0, 1) = \{\lambda\},
\]
and
\[
\sigma([V_{2,i}^*, V_{1, i}]) \setminus \{0\} = \{\lambda \gamma\}.
\]
	
\end{itemize}
\end{enumerate}
\end{proposition}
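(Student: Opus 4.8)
The plan is to run a case analysis on $\text{rank}[V_{2,i}^*,V_{1,i}]$, feeding each outcome into the classification theorems of the previous sections. First I would record the basic structural facts about the restricted pair. Since $\clh_i$ reduces $(V_1,V_2)$ by Theorem \ref{summ_1}, the pair $(V_{1,i},V_{2,i})$ is a BCL pair on $\clh_i$ (the restriction of the shift $V_1V_2$ to a reducing subspace is again a shift), with $C(V_{1,i},V_{2,i}) = C(V_1,V_2)|_{\clh_i}$ and $[V_{2,i}^*,V_{1,i}] = [V_2^*,V_1]|_{\clh_i}$; in particular the cross-commutator is again compact and normal, so $(V_{1,i},V_{2,i})$ is a compact normal pair. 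From the definition of $\clh_i$ and Lemma \ref{intersection} one gets $E_{1,i} := E_1(C(V_{1,i},V_{2,i})) = \ker V_{1,i}^* \cap \ker V_{2,i}^* = \mathbb{C}f_i$, and then Lemma \ref{contain} gives $\text{ran}[V_{2,i}^*,V_{1,i}] \subseteq E_{1,i}$ together with $[V_{2,i}^*,V_{1,i}]|_{E_{1,i}^\perp} = 0$. Since $\dim E_{1,i} = 1$ and $[V_{2,i}^*,V_{1,i}]f_i = \lambda_i f_i$, this already yields $\text{rank}[V_{2,i}^*,V_{1,i}] = 0$ when $\lambda_i = 0$ and $=1$ when $\lambda_i \neq 0$, which is the claimed dichotomy.

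Next I would treat the case $\lambda_i = 0$. Here $[V_{2,i}^*,V_{1,i}] = 0$, so $(V_{1,i},V_{2,i})$ is doubly commuting; by \cite[Theorem 6.5]{MSS} its defect operator is positive, hence $E_{-1}(C(V_{1,i},V_{2,i})) = \{0\}$, and Corollary \ref{cor rank formula} gives $\text{rank}\,C(V_{1,i},V_{2,i}) = \dim E_{1,i} = 1$. Thus $(V_{1,i},V_{2,i})$ is an irreducible $1$-finite pair (irreducibility from Theorem \ref{summ_1}), and Theorem \ref{thm 1 finite} forces $(V_{1,i},V_{2,i}) \cong (M_z,M_w)$ on $H^2(\D^2)$.

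For the case $\lambda_i \neq 0$ I would invoke the sharp rank formula. Since $\text{rank}[V_{2,i}^*,V_{1,i}] = 1$ and $\dim E_{1,i} = 1 < \infty$, Corollary \ref{HiNew10} shows $C(V_{1,i},V_{2,i})$ has finite rank, and Corollary \ref{cor rank formula} then gives $\text{rank}\,C(V_{1,i},V_{2,i}) = 2 + \dim E_{1,i} - \dim E_{-1,i} = 3 - \dim E_{-1,i}$. The delicate step is to pin down $\dim E_{-1,i}$: it cannot be $2$ or more, because $E_{1,i} \perp E_{-1,i}$ and $C(V_{1,i},V_{2,i})$ is injective on $E_{1,i}\oplus E_{-1,i}$, so $\text{rank}\,C(V_{1,i},V_{2,i}) \geq \dim E_{1,i} + \dim E_{-1,i} \geq 3$, incompatible with the value $3 - \dim E_{-1,i} \leq 1$ coming from the formula. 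Hence $\dim E_{-1,i} \in \{0,1\}$ and $\text{rank}\,C(V_{1,i},V_{2,i}) \in \{2,3\}$. If the rank is $2$, then $(V_{1,i},V_{2,i})$ is an irreducible $2$-finite pair and Theorem \ref{rank2} yields $\sigma(C(V_{1,i},V_{2,i}))\setminus\{0\} = \{\pm 1\}$ and $\sigma([V_{2,i}^*,V_{1,i}])\setminus\{0\} = \{\alpha\}$ for a unimodular constant $\alpha$; if the rank is $3$, then $(V_{1,i},V_{2,i})$ is an irreducible $3$-finite pair and Theorem \ref{irrfinal} (together with Theorem \ref{alphalambda}) produces $\lambda \in (0,1)$ and a unimodular constant $\gamma$ with $\sigma(C(V_{1,i},V_{2,i}))\cap(0,1) = \{\lambda\}$ and $\sigma([V_{2,i}^*,V_{1,i}])\setminus\{0\} = \{\gamma\lambda\}$.

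I do not anticipate a serious obstacle, since every substantive ingredient---the rank formula, the classifications of $1$-, $2$-, and $3$-finite pairs, and Lemmas \ref{intersection} and \ref{contain}---is already in hand, and the proof is essentially bookkeeping. The two points demanding care are verifying that $(V_{1,i},V_{2,i})$ genuinely qualifies as a compact normal pair (so that those classification theorems apply to it) and the elimination of $\dim E_{-1,i} \geq 2$, which must be carried out through the \emph{exact} identity in Corollary \ref{cor rank formula} rather than a crude inequality, since the entire case split hinges on the precise value $3 - \dim E_{-1,i}$.
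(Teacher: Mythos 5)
Your proposal is correct and follows essentially the same route as the paper's own argument: identify $E_{1,i}=\mathbb{C}f_i$ via Lemmas \ref{intersection} and \ref{contain}, split on whether $\lambda_i=0$, and use Corollary \ref{HiNew10} together with the rank identity of Corollary \ref{cor rank formula} to force $\dim E_{-1,i}\in\{0,1\}$ before invoking Theorems \ref{thm 1 finite}, \ref{rank2}, and \ref{irrfinal}. The only cosmetic deviations are that you cite Theorem \ref{thm 1 finite} where the paper reruns the Wold argument directly, and in the $\lambda_i=0$ case the rank identity should be justified either by first getting finite rank of the defect from Corollary \ref{HiNew10} or, as in the paper, by the second part of Theorem \ref{index} since $\dim E_{-1,i}=0<\infty$.
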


In view of this, the main result concerning a complete description of compact normal pairs can now be stated. In essence, the description is a summary of all the major outcomes in this paper so far, specifically Theorem \ref{irrfinal}, and Theorem \ref{rank2}, Theorem \ref{summ_1}, and Proposition \ref{summ_2}.

\begin{theorem}\label{main}
Let $(V_1, V_2)$ be a compact normal pair on $\clh$. Define
\[
k: = \dim E_1 \big(C(V_1, V_2)\big) \in [0, \infty].
\]
Then there exist $k+1$ closed $(V_1, V_2)$-reducing subspaces $\{\clh_i\}_{i=0}^k$ of $\clh$ such that
\[
\clh = \bigoplus_{i=0}^k \clh_i.
\]
Set
\[
(V_{1,i}, V_{2,i}) = (V_1|_{\clh_i}, V_2|_{\clh_i}),
\]
for all $i=0, 1, \ldots, k$. Then, we have the following:
\begin{enumerate}
\item $(V_{1,0}, V_{2,0})$ on $\clh_0$ is a shift-unitary pair.
\item For each $i=1, \ldots, k$, the pair $(V_{1,i}, V_{2,i})$ on $\clh_i$ is irreducible and is unitarily equivalent to one of the following three pairs:
\begin{enumerate}
\item $(M_z, M_w)$ on $H^2(\D^2)$.
\item $(M_z, {\alpha} M_z)$ on $H^2(\D)$ for some unimodular constant $\alpha$.
\item $(\gamma M_z|_{\cls_{\lambda}}, M_w|_{\cls_{\lambda}})$ on $\cls_{\lambda}$ where $\cls_{\lambda}$ is the invariant subspace of $H^2(\D^2)$ given by
\[
\cls_{\lambda} = \vp \Big(H^2(\mathbb{D}^2) \bigoplus \Big(\bigoplus_{j = 0}^\infty z^j \text{span} \Big\{ \frac{\bar{w}}{1 - \lambda z \bar{w}}\Big\} \Big) \Big),
\]
for some $\lambda \in (0,1)$, unimodular constant $\gamma$ and inner function $\vp \in H^\infty(\D^2)$.
\end{enumerate}
\end{enumerate}
\end{theorem}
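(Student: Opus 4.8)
The statement is essentially a bookkeeping assembly of the tools developed in the paper, so my strategy is to decompose the given compact normal pair into irreducible pieces along eigenvectors of the cross-commutator, identify the leftover summand, and then invoke the classification theorems for each piece. I would proceed in four stages.

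\emph{Stage 1: producing the decomposition.} Start from $(V_1,V_2)$ compact normal on $\clh$, set $k := \dim E_1(C(V_1,V_2))$. If $k=0$, then Lemma \ref{contain} forces $[V_2^*,V_1]=0$, Corollary \ref{HiNew10} forces $C(V_1,V_2)$ to be finite rank, and then Corollary \ref{cor rank formula} forces $C(V_1,V_2)=0$; so $\clh=\clh_0$ is doubly commuting, and Lemma \ref{slocinski} (via the Wold decomposition for doubly commuting pairs) makes it shift-unitary — exactly conclusion (1) with no summands in (2). If $k\geq 1$, treat $[V_2^*,V_1]|_{E_1}$ as a normal operator on $E_1$ and pick an orthonormal eigenbasis $\{f_i\}_{i=1}^k$ with $[V_2^*,V_1]f_i=\lambda_i f_i$. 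Define $\clh_i:=\overline{\text{span}}\{V_1^mV_2^n f_i : m,n\geq 0\}$ for $i\geq 1$ and $\clh_0 := \clh\ominus(\bigoplus_{i\geq 1}\clh_i)$. This is precisely the setup of Theorem \ref{summ_1}: Lemma \ref{Hi} gives that each $\clh_i$ reduces $(V_1,V_2)$, Lemma \ref{irreducuble} gives irreducibility of $(V_{1,i},V_{2,i})$ for $i\geq 1$, Lemma \ref{ortho} gives mutual orthogonality, and Lemma \ref{kernel} together with Lemma \ref{slocinski} handles $\clh_0$. So Stage 1 is literally a citation of Theorem \ref{summ_1}, yielding $\clh=\bigoplus_{i=0}^k\clh_i$, the reducing property, and conclusion (1).

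\emph{Stage 2 and 3: identifying the pieces.} For a fixed $i\in\{1,\dots,k\}$ note $C(V_{1,i},V_{2,i})=C(V_1,V_2)|_{\clh_i}$ and, by \eqref{ranke1}, $E_1(C(V_{1,i},V_{2,i}))=\mathbb{C}f_i$, hence $[V_{2,i}^*,V_{1,i}]f_i=\lambda_i f_i$ and $[V_{2,i}^*,V_{1,i}]$ vanishes on the orthogonal complement, so $\text{rank}[V_{2,i}^*,V_{1,i}]\in\{0,1\}$. If the rank is $0$ the pair is doubly commuting, $C(V_{1,i},V_{2,i})\geq 0$ (so $\dim E_{-1}=0$), and Corollary \ref{cor rank formula} gives $\text{rank}\,C(V_{1,i},V_{2,i})=\dim E_{1,i}=1$, i.e. it is irreducible $1$-finite, and Theorem \ref{thm 1 finite} identifies it with $(M_z,M_w)$ on $H^2(\D^2)$ — this is case (a). If the rank is $1$, then Corollary \ref{HiNew10} makes $C(V_{1,i},V_{2,i})$ finite rank and Corollary \ref{cor rank formula} gives $\text{rank}\,C(V_{1,i},V_{2,i})=3-\dim E_{-1,i}$; ruling out $\dim E_{-1,i}\in\{2,3\}$ by the dimension/rank bounds (as in the computation preceding Proposition \ref{summ_2}) leaves rank $2$ or $3$. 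Rank $2$ is an irreducible $2$-finite pair, classified by Theorem \ref{rank2} as $(M_z,\overline{\alpha}M_z)$ on $H^2(\D)$ — case (b); rank $3$ is an irreducible $3$-finite pair, classified by Theorem \ref{irrfinal} as $(\gamma M_z|_{\cls_\lambda}, M_w|_{\cls_\lambda})$ — case (c). This is just Proposition \ref{summ_2} combined with the explicit models from Sections \ref{sec class of 3 finite}--\ref{sec 1 finite}.

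\emph{Stage 4: bookkeeping on $\alpha$ in case (b).} The only genuinely new wrinkle is a cosmetic one: Theorem \ref{rank2} produces $(M_z,\overline{\alpha}M_z)$ where $\alpha$ is the nonzero eigenvalue of the cross-commutator, whereas the statement writes $(M_z,\alpha M_z)$ for \emph{some} unimodular $\alpha$ — since $\overline{\alpha}$ is again an arbitrary unimodular constant this is the same family, and one reconciles the two by relabeling. Thus the \textbf{main obstacle} here is not a hard argument at all — it is making sure every invocation is legitimate, in particular that the hypotheses of Corollary \ref{cor rank formula} (compactness of the defect, inherited from compactness on the whole space via $C(V_{1,i},V_{2,i})=C(V_1,V_2)|_{\clh_i}$) and of Corollary \ref{HiNew10} ($\dim E_1 < \infty$, which is $1$ on each $\clh_i$) genuinely hold on each reducing summand, and that the case $k=0$ and the case $k=\infty$ are both covered (the indexing $\{\clh_i\}_{i=0}^k$ with $k\in[0,\infty]$ absorbs both). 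With those checks in place the theorem follows by assembling Theorem \ref{summ_1}, Proposition \ref{summ_2}, Theorem \ref{irrfinal}, Theorem \ref{rank2}, and Theorem \ref{thm 1 finite}.
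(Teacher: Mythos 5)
Your proposal is correct and follows essentially the same route as the paper, which proves Theorem \ref{main} precisely by assembling Theorem \ref{summ_1}, Proposition \ref{summ_2}, and the classification results Theorem \ref{irrfinal}, Theorem \ref{rank2}, and Theorem \ref{thm 1 finite}. The added checks (compactness of the restricted defect operator, the $k=0$ and $k=\infty$ cases, and the $\alpha$ versus $\overline{\alpha}$ relabeling) match the paper's treatment.
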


In the following section, we use this result to explain a complete set of unitary invariants for compact normal pairs. The discourse that precedes Proposition \ref{summ_2} also gives, in particular, that
\[
\text{rank} C(V_{1,i}, V_{2,i}) = 1, 2, \text{ or } 3,
\]
for all $i=1, \ldots, k$. This yields the complete list of irreducible $n$-finite pairs. More specifically:

\begin{coro}\label{cor irred n finite}
An irreducible $n$-finite pair is either $1$-finite, $2$-finite, or $3$-finite.
\end{coro}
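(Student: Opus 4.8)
The cleanest route is to read the statement off from the structure theorem for compact normal pairs. Given an irreducible $n$-finite pair $(V_1, V_2)$ on $\clh$, it is in particular a compact normal pair, so Theorem \ref{main} applies and produces a decomposition $\clh = \bigoplus_{i=0}^k \clh_i$ into $(V_1, V_2)$-reducing subspaces in which $(V_{1,0}, V_{2,0}) = (V_1|_{\clh_0}, V_2|_{\clh_0})$ is a shift-unitary pair and, for each $i = 1, \dots, k$, the pair $(V_{1,i}, V_{2,i})$ is irreducible and unitarily equivalent to one of the three model pairs $(M_z, M_w)$ on $H^2(\D^2)$, $(M_z, \alpha M_z)$ on $H^2(\D)$, or $(\gamma M_z|_{\cls_\lambda}, M_w|_{\cls_\lambda})$ on $\cls_\lambda$. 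The plan is then to exploit the irreducibility of $(V_1, V_2)$ to force exactly one of the summands $\clh_i$ to be nonzero.

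The summand $\clh_0$ is ruled out as the surviving one: a shift-unitary pair splits as $\clh_0 = \clh_{us} \oplus \clh_{su}$, and on each of these two blocks one of the coordinate isometries is unitary, so the defect operator vanishes there (cf.\ the elementary computation in the proof of Theorem \ref{thm 1 finite}); hence $C(V_1, V_2) = 0$, which is incompatible with $(V_1, V_2)$ being $n$-finite for $n \geq 1$. Consequently one of the irreducible pieces $(V_{1,i}, V_{2,i})$ with $i \geq 1$ must equal $(V_1, V_2)$, so $(V_1, V_2)$ is unitarily equivalent to one of the three model pairs. Since these are $1$-finite, $2$-finite, and $3$-finite respectively (Theorem \ref{thm 1 finite}: $C(M_z, M_w) = P_{\C}$ has rank $1$; Theorem \ref{rank2}: $C(M_z, \bar{\alpha} M_z)$ has rank $2$; Example \ref{bidiskex} together with Theorem \ref{irrfinal}: $C(\gamma M_z|_{\cls_\lambda}, M_w|_{\cls_\lambda})$ has rank $3$) and $\mathrm{rank}\,C(\cdot,\cdot)$ is manifestly a unitary invariant, it follows that $n \in \{1, 2, 3\}$.

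If one prefers an argument not routed through the full strength of Theorem \ref{main}, the same conclusion follows directly from the rank formula. The reducing-subspace argument used in the proof of Proposition \ref{dime1} (which uses only the normality of $[V_2^*, V_1]$, Lemmas \ref{intersection} and \ref{contain}, and irreducibility) gives $\dim E_1 \leq 1$, whence $\mathrm{rank}\,[V_2^*, V_1] \leq \dim E_1 \leq 1$ by Lemma \ref{contain}. If $\mathrm{rank}\,[V_2^*, V_1] = 0$ the pair is doubly commuting, so $C(V_1, V_2) \geq 0$ by \cite[Theorem 6.5]{MSS}, $\dim E_{-1} = 0$, and Corollary \ref{cor rank formula} gives $n = \dim E_1 \leq 1$; if $\mathrm{rank}\,[V_2^*, V_1] = 1$ then $\dim E_1 = 1$ and Corollary \ref{cor rank formula} gives $n = 3 - \dim E_{-1} \leq 3$. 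Either way $n \leq 3$. The only genuine obstacle in either approach is the one already surmounted in Proposition \ref{dime1}, namely establishing $\dim E_1 \leq 1$ (a second $[V_2^*, V_1]$-eigenvector in $E_1$ would generate a proper reducing subspace); everything else is bookkeeping with the rank identity of Corollary \ref{cor rank formula}.
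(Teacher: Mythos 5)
Your first argument is exactly the paper's route: the paper deduces the corollary from Theorem \ref{main} together with the case analysis preceding Proposition \ref{summ_2}, and the points you spell out (irreducibility collapses the decomposition to a single summand, the shift-unitary summand is excluded because its defect operator vanishes while an $n$-finite pair has $\text{rank}\, C(V_1,V_2)=n\geq 1$, and the three surviving models have defect rank $1$, $2$, $3$) are precisely the steps the paper leaves implicit, so this part is correct and matches the paper. Your second, self-contained route is also valid and is worth noting: it bypasses Theorem \ref{main} entirely by running the Proposition \ref{dime1} reducing-subspace argument (which indeed uses only irreducibility, Lemmas \ref{intersection} and \ref{contain}, and normality of the compact cross-commutator) to get $\dim E_1\leq 1$, hence $\text{rank}\,[V_2^*,V_1]\leq 1$, and then reading $n\leq 3$ off Corollary \ref{cor rank formula}; this is essentially the same bookkeeping the paper performs for each summand $(V_{1,i},V_{2,i})$ before Proposition \ref{summ_2} (where $\dim E_1=1$ holds by construction), so what it buys is a direct proof of the numerical bound $n\in\{1,2,3\}$ without invoking the classification Theorems \ref{rank2} and \ref{irrfinal}, at the cost of not identifying the pair with a concrete model.
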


Keep in mind that $n$-finite pairs with $n>3$ still exist, but irreducible $n$-finite pairs for $n = 1, 2$, and $3$ will build them up. In particular, $n$-finite pairs, $n>3$, are always reducible.  

\section{Complete unitary invariants}\label{sec unitary inv}

In this section, we analyze the main results in terms of unitary invariants. First, we note, as already pointed out in the results obtained so far (see Theorem \ref{irrfinal}, Theorem \ref{rank2}, Theorem \ref{summ_1}, and Proposition \ref{summ_2}), that the decomposition in Theorem \ref{main} is unique (up to unitary equivalence) and canonical. In order to be more specific, let us continue with the assumptions and outcomes of Theorem \ref{main}. Recall that $\{f_1, \ldots, f_k\}$ was assumed to be an orthonormal basis of $E_1\big(C(V_1, V_2)\big)$ consisting of eigen vectors of $[V_2^*, V_1]$, where
\[
k = \dim E_1(C(V_1, V_2)) \in [0, \infty].
\]
Then
\[
\mathcal{H}_i := \overline{\text{span}}\{V_1^m V_2^n f_i : m, n \geq 0\},
\]
reduces $(V_1, V_2)$ for all $i=1, \ldots, k$. Moreover, we have the remaining space
\[
\clh_0 := \clh \ominus (\bigoplus_{i = 1}^k \clh_i).
\]
Recall
\[
(V_{1,i}, V_{2,i}) := (V_1|_{\clh_i}, V_2|_{\clh_i}),
\]
for all $i=0, 1, \ldots, k$, where $(V_{1,0}, V_{2,0})$ on $\clh_0$ is a shift-unitary pair, and for each $i \in \{1, \ldots, k\}$, the pair $(V_{1,i}, V_{2,i})$ on $\clh_i$ satisfies the following properties:
	
\begin{enumerate}
\item $(V_{1,i}, V_{2,i})$ is an irreducible $1$-finite pair if and only if
\[
(V_{1,i}, V_{2,i}) \cong (M_z, M_w) \text{ on } H^2(\mathbb{D}^2).
\]
Moreover, in this case
\[
\text{rank}[V_{2,i}^*, V_{1,i}] = 0.
\]
\item $(V_{1,i}, V_{2,i})$ is an irreducible $2$-finite pair if and only if there exists unimodular constant $\alpha$ such that
\[
(V_{1,i}, V_{2,i}) \cong (M_z, \alpha M_z) \text{ on } H^2(\D).
\]
Moreover, in this case
\[
\text{rank}[V_{2,i}^*, V_{1,i}] = 1,
\]
and
\[
\sigma([V_{2,i}^*, V_{1,i}]) \setminus \{0\} = \{\bar{\alpha}\}.
\]
\item $(V_{1,i}, V_{2,i})$ is an irreducible $3$-finite pair if and only if there exist $\lambda \in (0, 1)$ and a unimodular constant $\gamma$ such that
\[
(V_{1,i}, V_{2,i}) \cong (\gamma M_z|_{\cls_{\lambda}}, M_w|_{\cls_{\lambda}}) \text{ on } \cls_{\lambda}.
\]
Moreover, in this case
\[
\text{rank}[V_{2,i}^*, V_{1,i}] = 1,
\]
and
\[
\sigma([V_{2,i}^*, V_{1,i}]) \setminus \{0\} = \{\lambda \gamma\}.
\]
\end{enumerate}

Now we turn to the problem of computing complete unitary invariants. We fix a compact normal pair $(V_1, V_2)$ on a Hilbert space $\clh$. We adhere to the conclusion and the identical notation as presented in the preceding discussion and Theorem \ref{main}. Recall the notation that
\[
k = \dim E_1(C(V_1, V_2)).
\]
Suppose
\[
k > 0.
\]
We construct a sequence
\[
\alpha_{(V_1,V_2)} = \{\alpha_i\}_{i=1}^k \subseteq \mathbb{C},
\]
as follows:
\[
\alpha_i :=
\begin{cases}
0 & \mbox{if } \text{ rank}[V_{2, i}^*, V_{1, i}] = 0 \\
\sigma([V_{2, i}^*, V_{1, i}]) \setminus \{0\} & \mbox{if} \text{ rank} [V_{2, i}^*, V_{1, i}] = 1.
\end{cases}
\]
If
\[
k = 0,
\]
then we define
\[
\alpha_{(V_1,V_2)} = \text{ empty sequence}.
\]
The \textit{cardinality} of the sequence $\{\alpha_i\}$ is the number $k = \dim E_1(C(V_1, V_2))$.

\begin{definition}
The sequence
\[
\alpha_{(V_1,V_2)} = \{\alpha_i\}_{i=1}^k \subseteq \mathbb{C},
\]
is referred to as the fundamental sequence associated to the isometric pair $(V_1, V_2)$.
\end{definition}

The term ``fundamental sequence'' finds its rationale in the fact that this sequence is determined by the action of the cross-commutator on the fundamental building blocks consisting of irreducible $n$-finite pairs, $n=1,2,3$. In terms of fundamental sequence, the discussion at the beginning of this section results in the following:
\begin{enumerate}
\item $\alpha_i = 0$ if and only if
\[
(V_{1,i}, V_{2,i}) \cong (M_z, M_w) \text{ on } H^2(\mathbb{D}^2).
\]
\item $|\alpha_i| = 1$ if and only if
\[
(V_{1,i}, V_{2,i}) \cong (M_z, \bar{\alpha_i} M_z) \text{ on } H^2(\D).
\]
\item $0 < |\alpha_i|< 1$ if and only if
\[
(V_{1,i}, V_{2,i}) \text{ on } \clh_i \cong (\gamma M_z|_{\cls_{\lambda}}, M_w|_{\cls_{\lambda}}) \text{ on } H^2(\D^2),
\]
where $\lambda \in (0,1)$, and $\gamma$ is a unimodular constant such that $\lambda\gamma = \alpha_i$.
\end{enumerate}
Therefore, we have the following:
\[
(V_1|_{\clh_0^\perp}, V_2|_{\clh_0^\perp})  \cong M_1 \oplus M_2 \oplus M_3,
\]
where
\[
M_1 = \bigoplus_{\{i: \alpha_i = 0 \}} (M_z, M_w),
\]
and
\[
M_2= \bigoplus_{\{i: |\alpha_i| = 1\}}(M_z, \bar{\alpha_i} M_z),
\]
and
\[
M_3 = \bigoplus_{\substack{\{i: 0 < |\alpha_i| < 1 \text{ with } \alpha_i = \lambda_i \gamma_i, \\  \lambda_i \in (0, 1),  |\gamma_i| = 1\}}}(\gamma_i M_z|_{\cls_{\lambda_i}}, M_w|_{\cls_{\lambda_i}}).
\]
In summary of the above discussion, the shift-unitary part, along with the fundamental sequence, serves as a complete unitary invariant for compact normal pairs. More formally:

\begin{theorem}\label{them complete unit inv}
Let $(V_1, V_2)$ be a compact normal pair on $\clh$. Let
\[
\alpha_{(V_1,V_2)} = \{\alpha_i\}_{i=1}^k,
\]
denote the fundamental sequence associated to $(V_1, V_2)$ with cardinality $k \in [0, \infty]$. Also, let $(V_{1,0}, V_{2,0})$ on $\clh_0$ denote the shift-unitary part of $(V_1,V_2)$.

(i) Then
\[
(V_1|_{\clh_0^\perp}, V_2|_{\clh_0^\perp})  \cong M_1 \oplus M_2 \oplus M_3,
\]
where $M_i$, $i=1,2,3$, are defined as above.

(ii) Let $(\tilde{V_1}, \tilde{V_2})$ on $\tilde{\clh}$ be another compact normal pair. Suppose $\tilde \alpha_{(\tilde V_1, \tilde V_2)} = \{\tilde \alpha_i\}_{i=1}^{\tilde k}$ is the associated fundamental sequence with cardinality $\tilde{k} \in [0, \infty]$. Assume that $(\tilde V_{1,0}, \tilde V_{2,0})$ on $\tilde \clh_0$ denotes the shift-unitary part of $(\tilde V_1, \tilde V_2)$. Then the following are equivalent:
\begin{enumerate}
\item $(V_1, V_2) \cong (\tilde{V_1}, \tilde{V_2})$.
\item $(V_{1,0}, V_{2,0}) \cong (\tilde V_{1,0}, \tilde V_{2,0})$, and $[V_2^*, V_1]|_{{\clh_0}^\perp} \cong [\tilde{V_2}^*, \tilde{V_1}]|_{{\tilde{\clh_0}^\perp}}$.
\item $(V_{1,0}, V_{2,0}) \cong (\tilde V_{1,0}, \tilde V_{2,0})$, $k = \tilde{k}$, and there exists a permutation $\sigma$ of $\{1, 2, \cdots, k\}$ such that
\[
\alpha_i = \tilde \alpha_{\sigma(i)} \qquad (i = 1, 2, \cdots, k).
\]
\end{enumerate}
\end{theorem}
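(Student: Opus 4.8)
The plan is to prove Theorem~\ref{them complete unit inv} by leaning entirely on the decomposition supplied by Theorem~\ref{main}, together with the complete invariants already extracted in the $2$- and $3$-finite cases (Theorem~\ref{rank2}, Theorem~\ref{irrfinal}, Theorem~\ref{charac}, Theorem~\ref{thm 2 finite unit inv}). Part (i) is essentially a restatement of the running computation preceding the theorem: the three summands $M_1, M_2, M_3$ are just the subsums of $\bigoplus_{i=1}^k (V_{1,i}, V_{2,i})$ grouped according to whether $\alpha_i = 0$, $|\alpha_i| = 1$, or $0 < |\alpha_i| < 1$, and the identification of each $(V_{1,i}, V_{2,i})$ with its model pair is exactly Proposition~\ref{summ_2}. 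So I would simply assemble part (i) from the material already on the page. For part (ii), the implication $(1)\Rightarrow(2)$ is immediate: a unitary $U:\clh \to \tilde\clh$ intertwining the pairs carries $C(V_1,V_2)$ to $C(\tilde V_1,\tilde V_2)$, hence carries $E_1(C(V_1,V_2))$ onto $E_1(C(\tilde V_1,\tilde V_2))$ and $\clh_0$ (which is canonically defined as $\clh\ominus\bigoplus_{i\ge 1}\clh_i$, equivalently as the largest reducing subspace on which the defect vanishes and $E_1 = \{0\}$) onto $\tilde\clh_0$; so $U|_{\clh_0}$ and $U|_{\clh_0^\perp}$ witness $(V_{1,0},V_{2,0})\cong(\tilde V_{1,0},\tilde V_{2,0})$ and $[V_2^*,V_1]|_{\clh_0^\perp}\cong[\tilde V_2^*,\tilde V_1]|_{\tilde\clh_0^\perp}$. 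The implication $(3)\Rightarrow(1)$ is also routine: given the permutation $\sigma$, match $(V_{1,i},V_{2,i})$ with $(\tilde V_{1,\sigma(i)},\tilde V_{2,\sigma(i)})$ using the converse directions of Theorem~\ref{thm 2 finite unit inv}, Theorem~\ref{charac} (equal nonzero eigenvalue $\Rightarrow$ unitary equivalence), and the trivial $1$-finite case, take the direct sum of the intertwiners with a unitary implementing $(V_{1,0},V_{2,0})\cong(\tilde V_{1,0},\tilde V_{2,0})$.

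The substantive implication is $(2)\Rightarrow(3)$: extracting the numerical data from the operator-theoretic condition $[V_2^*,V_1]|_{\clh_0^\perp}\cong[\tilde V_2^*,\tilde V_1]|_{\tilde\clh_0^\perp}$. Here the key observation is that on $\clh_0^\perp = \bigoplus_{i=1}^k \clh_i$ the cross-commutator is block-diagonal, $[V_2^*,V_1]|_{\clh_0^\perp} = \bigoplus_{i=1}^k [V_{2,i}^*,V_{1,i}]$, and by \eqref{all about 3}, \eqref{ranke1}, and the discussion before Proposition~\ref{summ_2}, each block $[V_{2,i}^*,V_{1,i}]$ is the rank-$\le 1$ normal operator $\alpha_i\, (f_i\otimes f_i)$ — that is, a scalar $\alpha_i$ on the one-dimensional space $\mathbb{C}f_i$ and $0$ on its orthocomplement inside $\clh_i$. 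Consequently $[V_2^*,V_1]|_{\clh_0^\perp}$ is a normal operator whose nonzero spectrum, counted with multiplicity (i.e.\ as a multiset of eigenvalues with their eigenspace dimensions), is precisely $\{\alpha_i : \alpha_i\ne 0\}$, and whose kernel has codimension exactly $k = \dim E_1(C(V_1,V_2))$ when all $\alpha_i\ne 0$ — but in general $k$ is recovered as $k = \operatorname{rank}(I - C(V_1,V_2))$ restricted appropriately, or more simply: $k$ is the number of summands, which I claim is itself a unitary invariant readable off $C(V_1,V_2)$ as $\dim E_1(C(V_1,V_2))$. Since $C(V_1,V_2)$ is determined (up to the intertwining unitary) by the pair, and $E_1$ is carried to $E_1$, we get $k = \tilde k$ directly from $(1)$; but in $(2)$ we only assumed equivalence of the cross-commutators and of the shift-unitary parts, so I need to recover $k$ from those. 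The clean way: $k = \dim E_1(C(V_1,V_2)) = \dim(\clw_1\cap\clw_2)$ by Lemma~\ref{intersection}, and on $\clh_0^\perp$ this equals $\dim\ker\big([V_2^*,V_1]|_{\clh_0^\perp}\big)^\perp$ plus the number of $i$ with $\alpha_i = 0$. The index-$0$ ones are exactly the $1$-finite summands $(M_z,M_w)$ on $H^2(\D^2)$, and their count is a separate invariant — this is the one genuinely delicate bookkeeping point.

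To handle that point cleanly, I would argue as follows. The number of $i$ with $\alpha_i = 0$ equals the multiplicity of the summand $(M_z, M_w)$ in the decomposition of $(V_1|_{\clh_0^\perp}, V_2|_{\clh_0^\perp})$; since $(M_z,M_w)$ on $H^2(\D^2)$ is the unique irreducible $1$-finite pair (Theorem~\ref{thm 1 finite}) and irreducible decompositions of the remaining BCL pair are unique up to unitary equivalence and reordering (this uniqueness follows because the $\clh_i$ are canonically attached to the eigenvectors $f_i$ of $C(V_1,V_2)$ via Theorem~\ref{main}, and any two irreducible decompositions refine to the same one by a Schur-type argument on reducing subspaces), this multiplicity is a unitary invariant of $(V_1|_{\clh_0^\perp}, V_2|_{\clh_0^\perp})$. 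But $(V_1|_{\clh_0^\perp}, V_2|_{\clh_0^\perp})$ is determined by $(V_{1,0},V_{2,0})$ together with $(V_1,V_2)$ — and under condition $(2)$ we do not have $(V_1,V_2)$ directly. The resolution: condition $(2)$ gives $[V_2^*,V_1]|_{\clh_0^\perp} \cong [\tilde V_2^*,\tilde V_1]|_{\tilde\clh_0^\perp}$, which pins down the multiset $\{\alpha_i : \alpha_i\ne 0\}$ (with multiplicities) and the dimension of the kernel's orthocomplement, hence pins down all the $M_2$ and $M_3$ summands, but it does \emph{not} see the $M_1$ summands (on which the cross-commutator vanishes identically — indeed $[M_w^*,M_z]=0$). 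So strictly speaking $(2)$ as literally stated may be insufficient unless one also knows $k=\tilde k$; I expect the intended reading is that $(2)$ implicitly includes that the \emph{full} pairs restricted to $\clh_0^\perp$ have unitarily equivalent cross-commutators \emph{as operators on spaces that also record} $k$ — or that the theorem in \cite{RefToFullVersion} states $(2)$ with an extra clause. The honest fix I would put in the proof: replace the verification of $(2)\Rightarrow(3)$ by first observing $(1)\Rightarrow(3)$ directly (from Theorem~\ref{main}, $k = \dim E_1(C(V_1,V_2))$ is an invariant, and the multiset of $\alpha_i$ is the spectral data of $[V_2^*,V_1]$ on $\bigoplus\clh_i$, invariant under the intertwiner), then $(3)\Rightarrow(1)$ as above, and finally $(1)\Leftrightarrow(2)$ separately, where $(2)\Rightarrow(1)$ uses that the shift-unitary part plus the cross-commutator data plus the already-established count $k$ reconstruct everything. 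The main obstacle, then, is precisely this: ensuring that the count of $1$-finite ($\alpha_i=0$) summands — which is invisible to the cross-commutator — is genuinely recoverable from the hypotheses in $(2)$, which I would resolve by noting it equals $\dim E_1(C(V_1,V_2)) - \#\{i:\alpha_i\ne 0\}$ and that $\dim E_1(C(V_1,V_2))$ is encoded in the data (either via the full defect operator, whose equivalence should be part of $(2)$, or via $\dim(\clw_1\cap\clw_2)$ which the cross-commutator equivalence controls up to the kernel issue just described).
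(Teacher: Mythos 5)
Your handling of (i), of $(1)\Rightarrow(2)$, and of $(3)\Rightarrow(1)$ matches what the paper does: the paper offers no separate proof of this theorem (it is stated as a summary of the Section \ref{sec unitary inv} discussion together with Theorems \ref{main}, \ref{charac}, \ref{thm 2 finite unit inv} and \ref{thm 1 finite}), and those steps are indeed routine. Note that $\clh_0$ is canonical because $\bigoplus_{i\ge 1}\clh_i=\overline{\text{span}}\{V_1^mV_2^nf: f\in E_1,\ m,n\ge 0\}$ depends only on $E_1(C(V_1,V_2))$, so an intertwining unitary carries it onto the corresponding space; and $(3)\Rightarrow(1)$ is assembled blockwise from the converse halves of the classification theorems, so the uniqueness-of-decomposition (``Schur-type'') discussion you gesture at is not needed there. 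For $(1)\Rightarrow(3)$ the cleanest route, which also clarifies the issue below, is that $\text{ran}[V_2^*,V_1]\subseteq E_1$ and $[V_2^*,V_1]|_{E_1^\perp}=0$, so $E_1$ reduces $[V_2^*,V_1]$ and the fundamental sequence $\{\alpha_i\}_{i=1}^k$ (zeros included) is exactly the eigenvalue list, with multiplicities, of the normal operator $[V_2^*,V_1]|_{E_1}$ acting on the $k$-dimensional space $E_1$; an intertwiner maps $E_1$ onto $\tilde E_1$ and conjugates these restrictions, giving $k=\tilde k$ and the permutation.

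The difficulty you flag in $(2)\Rightarrow(3)$ is genuine, but your proposed repair does not close it. The operator $[V_2^*,V_1]|_{\clh_0^\perp}$ determines only the multiset of nonzero $\alpha_i$ and the dimension of its kernel; since every $\clh_i$ is infinite-dimensional, that kernel is countably infinite-dimensional whenever $k\ge 1$, so the number of indices with $\alpha_i=0$ (the $1$-finite summands, on which the cross-commutator vanishes identically) is invisible to it. Concretely, $(M_z,M_w)$ on $H^2(\D^2)$ and $(M_z\oplus M_z,\ M_w\oplus M_w)$ on $H^2(\D^2)\oplus H^2(\D^2)$ are compact normal pairs with trivial shift-unitary parts and zero cross-commutators, so they satisfy condition $(2)$ verbatim, yet $k=1\neq 2=\tilde k$ and the defect operators have ranks $1$ and $2$, so $(1)$ and $(3)$ fail; hence no argument can establish $(2)\Rightarrow(3)$ with $(2)$ read literally, and your fallback claim that $\dim E_1(C(V_1,V_2))$ is ``encoded in the data'' of $(2)$ (via the defect operator, which is not part of $(2)$, or via $\dim(\clw_1\cap\clw_2)$, which the cross-commutator does not control precisely because of the kernel issue) is circular. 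The correct repairs are to add $k=\tilde k$ to $(2)$, or to add $C(V_1,V_2)|_{\clh_0^\perp}\cong C(\tilde V_1,\tilde V_2)|_{\tilde \clh_0^\perp}$ (whose eigenvalue-$1$ multiplicity is $k$), or, most naturally, to replace the restriction to $\clh_0^\perp$ by the restriction to $E_1$, i.e. require $[V_2^*,V_1]|_{E_1}\cong[\tilde V_2^*,\tilde V_1]|_{\tilde E_1}$, after which $(2)\Leftrightarrow(3)$ is immediate from the spectral theorem for normal operators. The paper, presenting the theorem without proof, does not address this point either; so your diagnosis is a genuine correction, but your write-up should end with the counterexample and a corrected hypothesis rather than with the inconclusive speculation about the ``intended reading.''
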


Our aim in the following remark is to examine the unitary equivalence of the shift-unitary parts of compact normal pairs (more specifically, the first part of (3) of the above theorem). We assert that this part is the simplest of the entire equivalence problem.

\begin{remark}\label{rem shift unit}
Let $(V_1, V_2)$ be a shift-unitary pair on a Hilbert space $\clh$. First we write the two summands of reducing subspaces as (see Definition \ref{def doubly comm})
\[
\clh = \clh_{us} \oplus \clh_{su}.
\]
By symmetry, it is now evident to explore unitary equivalence of $(V_1|_{\clh_{su}}, V_2|_{\clh_{su}})$. More generally, we consider a commuting pair $(W_1, W_2)$ on a Hilbert space $\clk$ such that $W_1$ is a shift and $W_2$ is a unitary. Therefore, there exist a Hilbert space $\clw$ and a unitary $U: \clk \rightarrow H^2_{\clw}(\D)$ such that (refer to the discourse given at the outset of Section \ref{sec preliminaries})
\[
U W_1 = M_z U.
\]
Since $W_2$ is a unitary commuting and $*$-commuting with the shift $W_1$, it follows that a constant function yields the analytic Toeplitz representation of $W_2$ on $H^2_{\clw}(\D)$. In other words, there exists a unitary operator $W \in \clb(\clw)$ such that
\[
U W_2 = (I_{H^2(\D)} \otimes W) U.
\]
Therefore, the pair $\{W, \clw\}$ is a complete set of unitary invariants for shift-unitary pairs of the above type.
\end{remark}

Given the outcomes of this paper, we are compelled to pose the following natural question:

\begin{question}\label{quest 1}
Classify isometric pairs $(V_1, V_2)$ acting on Hilbert spaces such that
\[
[V_2^*, V_1] = \text{compact}.
\]
\end{question}

It perhaps necessitates different methodologies. As far as the present methodology is concerned, our approach involved identifying a complete list of irreducible compact normal pairs and thereafter representing a typical compact normal pair as a direct sum of them. In the present paper, the irreducible $n$-finite pairs, where $n$ is equal to $1, 2$, and $3$, as well as the shift-unitary pairs, have served as distinguished building blocks.

An essential step in answering the question might include identifying certain irreducible pairs of isometries that satisfy the above compactness condition. A further strategy could involve incorporating the defect operator of isometric pairs, as was demonstrated in this paper. Defect operators are indeed very significant; however, they alone do not provide substantial information about pairs. For instance, consider the pairs $(M_z, M_z)$ and $(M_z, \gamma M_z)$ on the Hardy space $H^2(\mathbb{D})$, where
\[
\gamma \in \mathbb{T} \setminus \{1\},
\]
is a fixed scalar. An easy computation yields
\[
C(M_z, M_z) = C(M_z, \gamma M_z).
\]
However, it is easy to see that $(M_z, M_z)$ and $(M_z, \gamma M_z)$ are not jointly unitarily equivalent. As observed earlier, these are the examples of 2-finite pairs. Some of the results of the present paper could also be helpful in answering the above question. For instance, Theorem \ref{index} is true for all isometric pairs.

Question \ref{quest 1} has an $n$-variable analogy, $n > 2$. It is important to note, nevertheless, that operator and function theory depart considerably when $n$ rises from two to three or even larger. 

\begin{question}\label{quest 2}
Classify $n$-tuples, $n > 2$, of commuting isometries $(V_1, \ldots, V_n)$ acting on Hilbert spaces such that
\[
[V_i^*, V_j] = \text{compact},
\]
or
\[
[V_i^*, V_j] = \text{compact and normal},
\]
for all $i\neq j$.
\end{question}

\vspace{0.2in}

\noindent\textbf{Acknowledgement:} The first named author is supported by the MATRICS research grant, SERB (File Number: MTR/2022/000921). The research of the second and third named authors is supported in part by TARE (TAR/2022/000063) by SERB, Department of Science \& Technology (DST), Government of India. The fourth-named author's research was supported by the NBHM Postdoctoral Fellowship (Order No. 0204/10/(24)/2023/R\&D-II/2802) during his postdoctoral tenure at IIT Goa.

\begin{comment}

		\end{comment}

\end{document}